\newcommand{\nc}{\wt}
\newcommand{\ol}{\bar}
\newcommand{\bv}{{\mathbf{v}}}
\newcommand{\al}{\alpha}
\newcommand{\f}[1]{{\mathbf{#1}}}
\newcommand{\ff}[1]{\bolds{#1}}
\newcommand{\bb}{\mathbb}
\newcommand{\cal}{\mathcal}
\newcommand{\scr}{\mathscr}
\newcommand{\fra}{\mathfrak}
\newcommand{\wh}{\widehat}
\newcommand{\wt}{\widetilde}
\newcommand{\me}{\mathrm{e}}
\newcommand{\ii}{\mathrm{i}}
\newcommand{\dd}{\mathrm{d}}
\newcommand{\col}{\dvtx}
\newcommand{\st}{\col}
\newcommand{\deq}{:=}
\newcommand{\eqd}{=:}
\renewcommand{\epsilon}{\varepsilon}
\renewcommand{\P}{\mathbb{P}}
\newcommand{\E}{\mathbb{E}}
\newcommand{\R}{\mathbb{R}}
\newcommand{\C}{\mathbb{C}}
\newcommand{\N}{\mathbb{N}}
\newcommand{\IE}{\mathbb{I} \mathbb{E}}
\newcommand{\tr}{\operatorname{Tr}}
\newcommand{\re}{\operatorname{Re}}
\newcommand{\im}{\operatorname{Im}}
\newtheorem{theorem}{Theorem}[section]
\newtheorem{lemma}[theorem]{Lemma}
\newtheorem{corollary}[theorem]{Corollary}
\newtheorem{proposition}[theorem]{Proposition}
\begin{document}
\begin{frontmatter}

\title{Spectral statistics of Erd{\H o}s--R\'enyi graphs I: Local~semicircle law}
\runtitle{Erd{\H o}s--R\'enyi graphs}

\begin{aug}
\author[A]{\fnms{L\'aszl\'o} \snm{Erd\H os}\thanksref{t1}\ead[label=e1]{lerdos@math.lmu.de}},
\author[B]{\fnms{Antti} \snm{Knowles}\corref{}\thanksref{t2}\ead[label=e2]{knowles@math.harvard.edu}},
\author[B]{\fnms{Horng-Tzer} \snm{Yau}\thanksref{t2,t3}\ead[label=e3]{htyau@math.harvard.edu}}\\
\and
\author[C]{\fnms{Jun} \snm{Yin}\thanksref{t4}\ead[label=e4]{jyin@math.wisc.edu}}
\runauthor{Erd\H os, Knowles, Yau and Yin}
\affiliation{University of Munich, Harvard University, Harvard
University and University~of~Wisconsin}
\address[A]{L. Erd\H os\\
Institute of Mathematics\\
University of Munich \\
Theresienstrasse 39\\
D-80333 Munich\\
Germany \\
\printead{e1}} 
\address[B]{A. Knowles\\
H.-T. Yau\\
Department of Mathematics\\
Harvard University\\
Cambridge, Massachusetts 02138\\
USA \\
\printead{e2}\\
\hphantom{E-mail: }\printead*{e3}}
\address[C]{J. Yin\\
Department of Mathematics\\
University of Wisconsin \\
Madison, Wisconsin 53706\\
USA \\
\printead{e4}}
\end{aug}

\thankstext{t1}{Supported in part
by SFB-TR 12 Grant of the German Research Council.}

\thankstext{t2}{Supported in part by NSF Grant DMS-07-57425.}

\thankstext{t3}{Supported in part by NSF Grant DMS-08-04279.}

\thankstext{t4}{Supported in part by NSF Grant DMS-10-01655.}

\received{\smonth{4} \syear{2011}}
\revised{\smonth{11} \syear{2011}}

%
\begin{abstract}
We consider the ensemble of adjacency matrices of Erd{\H o}s--R\'enyi
random graphs, that is, graphs on $N$ vertices where every edge is
chosen independently and with probability $p \equiv p(N)$. We rescale
the matrix so that its bulk eigenvalues are of order one. We prove
that, as long as $p N \to\infty$ (with a speed at least logarithmic in
$N$), the density of eigenvalues of the Erd{\H o}s--R\'enyi ensemble is
given by the Wigner semicircle law for spectral windows of length
larger than $N^{-1}$ (up to logarithmic corrections). As a consequence,
all eigenvectors are proved to be completely delocalized in the sense
that the $\ell^\infty$-norms of the $\ell^2$-normalized eigenvectors
are at most of order $N^{-1/2}$ with a very high probability. The
estimates in this paper will be used in the companion paper [Spectral
statistics of Erd\H{o}s--R\'enyi graphs II: Eigenvalue spacing and the
extreme eigenvalues (2011) Preprint] to prove the universality of
eigenvalue distributions both in the bulk and at the spectral edges
under the further restriction that $p N \gg N^{2/3}$.
\end{abstract}

%
\begin{keyword}[class=AMS]
\kwd{15B52}
\kwd{82B44}.
\end{keyword}
\begin{keyword}
\kwd{Erd{\H o}s--R\'enyi graphs}
\kwd{local semicircle law}
\kwd{density of states}.
\end{keyword}

\pdfkeywords{15B52, 82B44, Erdos--Renyi graphs,
local semicircle law, density of states}

\end{frontmatter}

\section{Introduction}

The universality of random matrices has been a central subject since
the pioneering work of Wigner~\cite{W},
Gaudin~\cite{G}, Mehta~\cite{M}
and Dyson~\cite{Dy}. The problem can roughly be divided
into the bulk universality in the interior of the spectrum and the edge
universality near the spectral edge. The bulk
and edge universalities for invariant ensembles have been extensively
studied; see, for example,~\cite{BI,DKMVZ1,DKMVZ2,PS}
and~\cite{AGZ,De1,De2} for a review. A key contributing factor to
the progress in the study of invariant ensembles is
the existence of explicit formulas for the joint density function of
the eigenvalues.
There is no such explicit formula for noninvariant ensembles and,
hence, our understanding of them is much more limited.
The most prominent examples for noninvariant ensembles
are the Wigner matrices with i.i.d. non-Gaussian matrix elements.
The edge universality of Wigner matrices can be proved via the moment
method and its various generalizations; see,
for example,~\cite{SS,Sosh,So1}.
The bulk universality for general classes of Wigner matrices was listed in
Mehta's book~\cite{M} as Conjectures 1.2.1 and 1.2.2 on page 7.
We shall refer to these two conjectures collectively as the
Wigner--Dyson--Gaudin--Mehta conjecture, in recognition of the pioneering
works of Wigner, Dyson, Gaudin and Mehta listed above. It remained
unsolved until very recently. This is mainly due to the fact that all
existing methods on local eigenvalue statistics
depended on explicit formulas, which are not available for Wigner matrices.
In a series of papers \cite
{ESY1,ESY2,ESY3,EPRSY,ESY4,ESYY,EYY,EYY2,EYYrigidity}, a new approach
to understanding the local
eigenvalue statistics was developed.
In particular, it led to the first proof~\cite{EPRSY} of the
Wigner--Dyson--Gaudin--Mehta conjecture
for Hermitian Wigner matrices whose entries have smooth distributions.
This approach is based on three basic ingredients:
(1) a local semicircle law---a precise estimate of the local
eigenvalue density
down to energy scales containing around $(\log N)^C$ eigenvalues; (2)
the eigenvalue distribution of Gaussian
divisible ensembles via an estimate on the rate of decay to local
equilibrium of the Dyson Brownian motion~\cite{Dy};
(3) a density argument which shows that for any probability
distribution of the matrix elements there exists a Gaussian
divisible distribution such that the two associated Wigner ensembles
have identical local eigenvalue statistics down to
the scale $1/N$. Furthermore, the edge universality can also be
obtained by some modifications of (1) and (3)
\cite{EYYrigidity}. The class of ensembles to which this method
applies is extremely general; in particular, it
includes any (generalized) Wigner matrices under the sole assumption
that the distributions of the matrix elements
have a uniform subexponential decay. We remark that the universality of
Wigner matrices, under certain restrictions on
the distribution of the matrix entries, was also established in \cite
{TV,TV2}. We shall discuss these results in the
companion paper~\cite{EKYY2}.

In this paper and its companion~\cite{EKYY2},
we extend the approach (1)--(3) to cover a class of sparse matrices.
This class includes the Erd{\H o}s--R\'enyi
matrices, which we now introduce.
Symmetric $N\times N$ matrices with $0$--$1$ entries arise naturally as
adjacency matrices
of graphs on $N$ vertices. Since every nonoriented graph can be
uniquely characterized by its adjacency matrix, we
shall from now talk about matrix ensembles
(with $0$--$1$ entries) and graph ensembles interchangeably. We shall
always normalize the matrices so that
their spectra typically lie in an interval of length of order one.
One common random graph ensemble is the Erd{\H o}s--R\'enyi graph
\cite{ER1,ER2}. In it each edge is chosen independently and
with\vadjust{\goodbreak}
probability $p \equiv p(N)$. Since each row and
column of the adjacency matrix has typically $pN$ nonzero entries, it
is sparse as long as $p\ll1$. We shall refer to
$pN$ as the sparseness parameter of the matrix.\looseness=-1

Our goal
in this paper, and in its companion~\cite{EKYY2},
is to establish both the bulk and edge universalities for the
Erd{\H o}s--R\'enyi ensemble under the restriction
$p N \gg N^{2/3}$. In other words, we prove that
the eigenvalue gap distributions in the bulk and near the edges are
given by those of the Gaussian Orthogonal Ensemble
(GOE)
provided that $p N \gg N^{2/3}$. We remark that the law of the
Erd{\H o}s--R\'enyi ensemble is even more singular
than that of the Bernoulli Wigner matrices, since the matrix elements
are highly concentrated around $0$. Another way
of expressing the singular nature of the Erd{\H o}s--R\'enyi ensemble is
to say that the moments of the matrix entries
decay much more slowly than in the case of Wigner matrices.

The matrix elements of the Erd{\H o}s--R\'enyi ensemble take on the
values $0$ and~$1$. Hence, they do not satisfy the
mean zero condition which typically appears in the random matrix
literature. Due to the nonzero mean of the entries,
the largest eigenvalue of the Erd{\H o}s--R\'enyi ensemble is very large
and far away from the rest of the spectrum,
which by our normalization lies in the interval $[-2, 2]$. By the edge
universality of the Erd{\H o}s--R\'enyi ensemble,
we therefore mean that the probability distribution of the second
largest eigenvalue is given by the distribution of
the largest eigenvalue of the GOE, which is the well-known Tracy--Widom
distribution.

As the first step of the general strategy to establish universality, we
shall prove a local semicircle law,
Theorem~\ref{LSCTHMA}, stating that the eigenvalue distribution of
the Erd{\H o}s--R\'enyi ensemble in any
spectral window of size $\eta$ containing on average $N \eta\sim
(\log N)^C$ eigenvalues is given by the Wigner semicircle law. Theorem
\ref{LSCTHMA} is valid in the bulk and at the edges as long as the
parameter $p \equiv p(N)$ satisfies $p N \to
\infty$ with a rate at least logarithmic in~$N$.
Similar results but for much larger spectral windows [of lengths at
least $\eta\sim(pN)^{-1/10} $] were recently proved in~\cite{TVW}.

We note that the semicircle law for Wigner matrices in spectral windows
of size $\eta\sim N^{-1/2}$ has been known for some time~\cite{BMT,GZ}.
The semicircle law in the smallest possible spectral window (of
size $\eta\gtrsim N^{-1} $ in the bulk) was established in \cite
{ESY2,ESY3}. This estimate, referred to as the local semicircle law,
has become a fundamental tool in the proofs of the universality of
random matrices in~\cite{EPRSY} as well as in the subsequent works
\cite{TV,ESY4}. The local semicircle law in~\cite{ESY2,ESY3} is
optimal in terms of the range of $\eta$, but
the error estimates, of order $(N \eta)^{-1/2}$ in the bulk and with a
coefficient deteriorating near the spectral edges, were not optimal.
Optimal error estimates, uniform throughout the entire spectrum and
valid for more general classes of Wigner matrices, were obtained in
\cite{EYYrigidity}. The local semicircle law proved in this paper can
also be viewed as a generalization of the results
in~\cite{EYYrigidity} in two unrelated directions: (a) the laws of the
matrix entries are much more singular, and
(b) the matrix entries have nonzero mean.

Besides eigenvalues, eigenvectors also play a fundamental role in the
theory of
random matrices. One important motivation for their study is that
random matrices
can be viewed as mean-field approximations of random Schr\"odinger
operators where delocalization of eigenfunctions
is a key signature for the metallic or conducting phase. Another
question about eigenvectors of random graphs is the
size of their nodal domains, which can studied using delocalization
bounds~\cite{DLL}. It was first proved in
\cite{ESY1} that eigenvectors for Wigner matrices are completely
delocalized, partly motivated
by a conjecture of T. Spencer. The method was refined in \cite
{ESY2,ESY3}, and was also adapted in~\cite{TV,TVW}.
The key observation behind the proof is that the delocalization
estimate for eigenvectors
follows from the local semicircle law
provided that the spectral windows can be made sufficiently small.
Thus, Theorem~\ref{LSCTHMA} also implies, with
$\bv_\alpha$
denoting the $\ell^2$-normalized eigenvectors, that
%
%
\begin{equation}\label{ev}
\max\{{\| \bv_\alpha\|_\infty\st1 \leq\alpha\leq N }
\}
\leq\frac{(\log N)^{C}}{\sqrt{N}}
\end{equation}
holds with exponentially high probability with some constant $C$
(Theorem~\ref{theoremdelocalization}).
This establishes the complete delocalization of all eigenvectors as
long as the sparseness parameter $p N$ increases at
least logarithmically in $N$. In particular, this result gives the
optimal answer to a question posed in Section 3.3 of
\cite{DLL}, asking whether $\|\f v_\alpha\|_\infty\leq
N^{-1/2 +
o(1)}$ holds for all eigenvectors $\f v_\alpha$. In
fact, this question was originally posed for fixed $p$, but our result
shows that the bound conjectured in~\cite{DLL}
holds even for $p \geq(\log N)^C N^{-1}$.
It was recently proved in~\cite{TVW} that $\| \bv_\al\|_\infty\leq
(p N)^{-1/2}$ away from the spectral edge; some
earlier results were obtained in~\cite{DP}. These results established
only the lower bound $pN$ on the localization
length; the complete
delocalization (\ref{ev}) corresponds to the optimal localization
length of order $N$.

Our main result on the bulk and edge universalities will require a
further condition
%
%
\begin{equation}\label{res}
p N \gg N^{2/3}.
\end{equation}
This and related issues will be discussed in the second paper~\cite{EKYY2}.

\section{Definitions and results}\label{secdef}

We begin this section by introducing a class of $N \times N$ sparse
random matrices $A \equiv A_N$.
Here $N$ is a large
parameter. (Throughout the following we shall often refrain from
explicitly indicating $N$-dependence.)

The motivating example is the \textit{Erd\H{o}s--R\'enyi matrix}, or the
adjacency matrix of the \textit{Erd\H{o}s--R\'enyi random graph}. Its
entries are independent (up to the constraint that
the matrix be symmetric), and equal to $1$ with probability $p$ and $0$
with probability $1 - p$. For our purposes it
is convenient to replace $p$ with the new parameter $q \equiv q(N)$,
defined through $p = q^2 / N$. Moreover, we
rescale the matrix in such a way that its bulk eigenvalues typically
lie in an interval of size of order one.\vadjust{\goodbreak}

Thus, we are led to the following definition. Let $A = (a_{ij})$ be the
symmetric $N\times N$ matrix whose entries
$a_{ij}$ are independent
(up to the symmetry constraint $a_{ij}=a_{ji}$) and each element is
distributed according to
%
%
\begin{equation}\label{sparsedef}
a_{ij} =
\frac{\gamma}{q}
\cases{
1, &\quad with probability $\dfrac{q^2}{N}$,
\vspace*{2pt}\cr
0, &\quad with probability $1 - \dfrac{q^2}{N}$.}
\end{equation}
Here $\gamma\deq(1 - q^2 / N)^{-1/2}$ is a scaling introduced for
convenience. The parameter $q \leq N^{1/2}$
expresses the sparseness of the matrix; it may depend on $N$.
Since $A$ typically has $q^2 N$ nonvanishing entries, we find that if
$q \ll N^{1/2}$, then the matrix is sparse.

We extract the mean of each matrix entry and write
\[
A = H + \gamma q | \f e \rangle\langle\f e |,
\]
where the entries of $H$ (given by $h_{ij} = a_{ij} - \gamma q/N$) have
mean zero, and we defined the vector
%
%
\begin{equation} \label{defofbe}
\f e \equiv\f e_N \deq\frac{1}{\sqrt{N}} (1,\ldots,
1)^T.
\end{equation}
(As above, we often neglect the subscript $N$ of $\f e$; the precise
value of this subscript will always be clear from
the context.) Here we use the notation $| \f e \rangle\langle\f e |$ to
denote the orthogonal projection onto $\f e$, that is,
$(| \f e \rangle\langle\f e |)_{ij} \deq N^{-1}$.

One readily finds that the matrix elements of $H$ satisfy the moment bounds
%
%
\begin{equation}
\E h_{ij}^2 = \frac{1}{N},\qquad
\E\vert h_{ij} \vert^p \leq\frac{1}{N q^{p - 2}},
\end{equation}
where $p \geq2$.

More generally, we consider the following class of random matrices
with noncentered entries characterized by two parameters $q$ and $f$,
which may be $N$-dependent. The parameter $q$ expresses how singular
the distribution of $h_{ij}$ is; in particular, it expresses
the sparseness of $A$ for the special case (\ref{sparsedef}). The
parameter $f$ determines the nonzero expectation value
of the matrix elements.

Throughout the following we shall make use of a (possibly
$N$-dependent) quantity $\xi\equiv\xi_N$ satisfying
%
%
\begin{equation} \label{boundsonxi}
1 + a_0 \leq\xi\leq A_0 \log\log N
\end{equation}
for some fixed positive constants $a_0 > 0$ and $A_0 \geq10$. The
parameter $\xi$ will be used as an exponent in
logarithmic corrections as well as probability estimates.
%
%
\begin{definition}[($H$)] \label{definitionofH}
Fix a parameter $\xi\equiv\xi_N$ satisfying (\ref{boundsonxi}).
We consider $N \times N$ random matrices $H = (h_{ij})$ whose entries
are real and independent up to the symmetry
constraint\vadjust{\goodbreak} $h_{ij} = h_{ji}$. We assume that the elements of $H$
satisfy the moment conditions
%
%
\begin{equation} \label{momentconditions}
\E h_{ij} = 0,\qquad \E\vert h_{ij} \vert^2 = \frac{1}{N}
,\qquad
\E\vert h_{ij} \vert^p \leq\frac{C^p}{N q^{p - 2}}
\end{equation}
for $1 \leq i,j \leq N$ and $3 \leq p \leq(\log N)^{A_0 \log\log N}$,
where $C$ is a positive constant. Here $q \equiv
q(N)$ satisfies
%
%
\begin{equation} \label{lowerboundond}
(\log N)^{3 \xi} \leq q \leq C N^{1/2}
\end{equation}
for some positive constant $C$.
\end{definition}

Note that the entries of $H$ exhibit a slow decay of moments. The
variance is of order $N^{-1}$, but higher moments decay at a rate
proportional to inverse powers of $q$ and not $N^{1/2}$. Thus, unlike
for Wigner matrices, the entries of sparse matrices satisfying
Definition~\ref{definitionofH} do not have a natural scale. (The
entries of a Wigner matrix live on the scale $N^{-1/2}$, which means
that the high moments decay at a rate proportional to inverse powers of
$N^{1/2}$. See Remark~\ref{remarkWigner} below for a more precise statement.)
%
%
\begin{definition}[($A$)] \label{definitionofA}
Let $H$ satisfy Definition~\ref{definitionofH}. Define
%
%
\begin{equation} \label{A=H+fee}
A \deq H + f | \f e \rangle\langle\f e |,
\end{equation}
where $f \equiv f(N)$ is a deterministic number that satisfies
%
%
\begin{equation} \label{upperboundonf}
0 \leq f \leq N^{C}
\end{equation}
for some constant $C > 0$.
\end{definition}
%
%
\begin{remark}
For definiteness, and bearing the Erd\H{o}s--R\'enyi matrix in mind, we
restrict ourselves to real symmetric matrices
satisfying Definition~\ref{definitionofA}. However, our proof
applies equally to complex Hermitian sparse matrices.
\end{remark}
%
%
\begin{remark}
To simplify the presentation, we assume that all matrix elements of $H$
have identical variance $1/N$. As in~\cite{EYY}, Section 5, one may,
however, easily generalize this condition and require that the
variances be bounded by $C/N$ and their column sums (hence, also the
row sums) be equal to $1$. Thus, one may, for instance, consider
Erd\H{o}s--R\'enyi graphs in which a vertex cannot link to itself
(i.e., the diagonal elements of $A$ vanish).
\end{remark}
%
%
\begin{remark} \label{remarkWigner}
In particular, we may take $H$ to be a Wigner matrix whose entries have
subexponential decay,
\[
\P({N^{1/2} \vert h_{ij} \vert\geq x}) \leq C \exp
(-x^{1/\theta})
\]
for some positive constants $\theta$ and $C$. Indeed, in this case we get
\[
\E h_{ij} = 0,\qquad \E\vert h_{ij} \vert^2 = \frac
{1}{N},\qquad
\E\vert h_{ij} \vert^p \leq C \frac{(\theta p)^{\theta
p}}{N^{p/2}}\qquad (p \geq3).\vadjust{\goodbreak}
\]
Now we choose
\[
q \deq N^{1/2} ({\theta(\log N)^{ A_0 \log\log N}}
)^{-\theta}.
\]
Since $q^{-1} \leq(\log N)^{C \log\log N} N^{-1/2}$, we find that all
factors $q^{-1}$ in error estimates such as (\ref{scm}) and (\ref
{Gijestimate}) below may be replaced with $N^{-1/2}$ at the expense of
a larger exponent in the preceding logarithmic factors. In fact, using
Lemma~\ref{lemmamsc} below, it is easy to see that in this case all
terms depending on $q$ in estimates such as (\ref{scm}) and (\ref
{Gijestimate}) may dropped, as they are bounded by the other error
terms. In particular, Theorem~\ref{LSCTHM} generalizes Theorem 2.1 of
\cite{EYYrigidity}.\vspace*{-2pt}
\end{remark}

We shall frequently have to deal with events of very high probability,
for which the following definition is useful. It
is characterized by two positive parameters, $\xi$~and $\nu$, where
$\xi$ is subject to (\ref{boundsonxi}).\vspace*{-2pt}
%
%
\begin{definition}[(High probability events)]
We say that an $N$-dependent event $\Omega$ holds with \textit{$(\xi,
\nu)$-high probability} if
%
%
\begin{equation} \label{highprob}
\P(\Omega^c) \leq\me^{-\fra\nu(\log N)^\xi}
\end{equation}
for $N \geq N_0(\nu, a_0, A_0)$.

Similarly, for a given event $\Omega_0$, we say that $\Omega$
\textit{holds with $(\xi, \nu)$-high probability on
$\Omega_0$} if
\[
\P(\Omega_0 \cap\Omega^c) \leq\me^{- \nu(\log N)^\xi}
\]
for $N \geq N_0(\nu, a_0, A_0)$.\vspace*{-2pt}
\end{definition}
%
%
\begin{remark}
In the following we shall not keep track of the explicit value
of~$\nu$; in fact, we allow $\nu$ to decrease from one line to another
without introducing a new notation. It will be clear from the proof
that such reductions of $\nu$ occur only at a few, finitely many steps.
Hence, all of our results will hold for $\nu\leq \nu_0$, where $\nu_0$
depends only on the constants $C$ in Definition~\ref{definitionofH} and
the parameter $\Sigma$ in (\ref{definitionD}) below. (In particular,
$\nu$ is independent of $\xi$.)\vspace*{-2pt}
\end{remark}

We shall use $C$ and $c$ to denote generic positive constants which may
only depend on the constants in assumptions such
as (\ref{boundsonxi}) and (\ref{momentconditions}).
Typically, $C$ denotes a large constant and $c$ a small constant.
Note that the fundamental large parameter of our model is $N$, and the
notation $\gg, \ll, O(\cdot), o(\cdot)$ always
refers to the limit $N \to\infty$. Here $a \ll b$ means $a = o(b)$.
We write $a \sim b$ for $C^{-1} a \leq b \leq C a$.

We now list our results. We introduce the spectral parameter
\[
z = E + \ii\eta,
\]
where $E \in\R$ and $\eta> 0$. Let $\Sigma\geq3$ be a fixed but
arbitrary constant and define the domain
%
%
\begin{equation} \label{definitionD}
D \deq\{{z \in\C\st\vert E \vert\leq\Sigma, 0 <
\eta
\leq3}\}.\vadjust{\goodbreak}
\end{equation}
We define the density of the semicircle law
%
%
\begin{equation} \label{defrhosc}
\varrho_{\mathrm{sc}}(x) \deq\frac{1}{2 \pi} \sqrt{[4 - x^2]_+},
\end{equation}
and, for $\im z > 0$, its Stieltjes transform
%
%
\begin{equation} \label{defmsc}
m_{\mathrm{sc}}(z) \deq\int_\R\frac{\varrho_{\mathrm{sc}}(x)}{x - z} \,\dd x.
\end{equation}
The Stieltjes transform $m_{\mathrm{sc}}(z) \equiv m_{\mathrm{sc}}$ may also be
characterized as the unique solution of
%
%
\begin{equation} \label{identityofmsc}
m_{\mathrm{sc}} + \frac{1}{z + m_{\mathrm{sc}}} = 0
\end{equation}
satisfying $\im m_{\mathrm{sc}}(z) > 0$ for $\im z > 0$. We define the resolvent
\[
G(z) \deq(H - z)^{-1}
\]
as well as the Stieltjes transform of the empirical eigenvalue density
\[
m(z) \deq\frac{1}{N} \tr G(z).
\]
For $x \in\R$ we define the distance $\kappa_x$ to the spectral edge through
%
%
\begin{equation} \label{defkappa}
\kappa_x \deq\bigl\vert\vert x \vert- 2 \bigr\vert.
\end{equation}

%
\begin{theorem}[(Local semicircle law for $H$)] \label{LSCTHM}
There are universal constants $C_1, C_2 > 0$ such that the following holds.
Suppose that $H$ satisfies Definition~\ref{definitionofH}.
Moreover, assume that
%
%
\begin{equation} \label{assumptionsforSLSC}
\xi= \frac{A_0 (1 + o(1))}{2} \log\log N,\qquad q \geq
(\log N)^{C_1 \xi}.
\end{equation}
Then there is a constant $\nu> 0$, depending on $A_0$, $\Sigma$ and
the constants $C$ in (\ref{momentconditions}) and
(\ref{lowerboundond}), such that the following holds.

We have the local semicircle law: the event
%
%
\begin{equation}\label{scm}
\bigcap_{z \in D} \biggl\{{\vert m(z) - m_{\mathrm{sc}}(z) \vert
\leq(\log N)^{C_2 \xi} \biggl({\min\biggl\{{\frac{1}{q^2 \sqrt
{\kappa_E+\eta}}, \frac{1}{q}}\biggr\} + \frac{1}{N \eta}}
\biggr)}\biggr\}\hspace*{-35pt}
\end{equation}
holds with $(\xi,\nu)$-high probability.
Moreover, we have the following estimate on the individual matrix
elements of $G$. The event
%
%
\begin{eqnarray}\label{Gijestimate}
&&\bigcap_{z \in D} \Biggl\{\max_{1 \leq i,j \leq N} \vert
G_{ij}(z) - \delta_{ij} m_{\mathrm{sc}}(z) \vert\nonumber\\[-8pt]\\[-8pt]
&&\hspace*{0pt}\qquad\leq(\log N)^{C_2 \xi}
\Biggl({\frac{1}{q} + \sqrt{\frac{\im m_{\mathrm{sc}}(z)}{N \eta}} + \frac
{1}{N \eta}}\Biggr)\Biggr\}\nonumber
\end{eqnarray}
holds with $(\xi,\nu)$-high probability.
\end{theorem}

The results of Theorem~\ref{LSCTHM} may be interpreted as follows.
Consider first the bulk, that is, $\kappa_E \geq c > 0$. Then Theorem
\ref{LSCTHM} states roughly that
%
%
\begin{equation} \label{roughineq1}
\vert m(z) - m_{\mathrm{sc}}(z) \vert\lesssim\frac{1}{q^2}
+ \frac{1}{N \eta},\qquad
\vert G_{ij}(z) - \delta_{ij} m_{\mathrm{sc}}(z) \vert\lesssim
\frac{1}{q} + \frac{1}{\sqrt{N \eta}},\hspace*{-28pt}
\end{equation}
up to logarithmic factors.
Since $\vert m_{\mathrm{sc}}(z) \vert\sim1$,
both estimates are stable in a sense that they identify the leading
order terms of $m$ and $G_{ii}$ down to the optimal scale $\eta\gtrsim N^{-1}$.
Note that choosing $\eta\lesssim N^{-1}$ in
\[
\im m(z) = \frac{1}{N\eta}\sum_{\alpha} \frac{\eta^2}{(E -
\lambda_\alpha)^2 + \eta^2}
\]
allows one to resolve individual eigenvalues $\lambda_\al$ of $H$.
Therefore, below the scale $\eta\lesssim N^{-1}$ the quantities $m$
and $G_{ii}$
become strongly fluctuating and these fluctuations are larger than the
main term. In the regime $\eta\geq\break(\log N)^{C \xi} N^{-1}$ in which
the fluctuations are smaller than the main term, a spectral window of
size $\eta$ contains at least $(\log N)^{C \xi}$ eigenvalues, hence,
an averaging takes place.

The factor $1/q$ on the right-hand side of the second inequality of
(\ref{roughineq1}) arises from the strong fluctuations of the matrix
entries $h_{ij}$, which take on values of size $q^{-1}$ with
probability of order $q^2 N^{-1}$. Indeed, it is apparent from the
representations
(\ref{Gijformula}) and (\ref{self-consistent}) that $G_{ij} =
m_{\mathrm{sc}}^2 h_{ij} + \cdots\,$, that is, $G_{ij}$ has a component that
fluctuates on the scale $q^{-1}$. The improvement from $q^{-1}$ to
$q^{-2}$ in the first inequality of (\ref{roughineq1}) arises from
an averaging in the summation
$m= N^{-1}\sum_i G_{ii}$. If the random variables in the average were
independent, one would expect the averaging to yield an improvement of
order $N^{-1/2}$; however, in our case there are strong dependencies,
which result in the more modest gain of order $q^{-1}$.

At the edge ($\kappa_E = 0$), the estimates (\ref{scm}) and (\ref
{Gijestimate}) may be roughly stated as
\[
\vert m(z) - m_{\mathrm{sc}}(z) \vert\lesssim\frac{1}{q} +
\frac{1}{N \eta},\qquad
\vert G_{ij}(z) - \delta_{ij} m_{\mathrm{sc}}(z) \vert\lesssim
\frac{1}{q} + \frac{\eta^{1/4}}{\sqrt{N \eta}} + \frac{1}{N \eta}.
\]

Now we formulate the local semicircle law
for the matrix $A$ given in Definition~\ref{definitionofA}. Define
the quantities
%
%
\begin{equation}\label{tildeg}
\nc G(z) \deq(A - z)^{-1},\qquad \nc m(z) \deq\frac
{1}{N} \tr\nc G(z).
\end{equation}

%
\begin{theorem}[(Local semicircle law for $A$)] \label{LSCTHMA}
There are universal constants $C_1, C_2 > 0$ such that the following holds.
Suppose that $A$ satisfies Definition~\ref{definitionofA}, and that
$\xi$ and $q$ satisfy (\ref{assumptionsforSLSC}).
Then there is a constant $\nu> 0$---depending on $A_0$, $\Sigma$ and
the constants $C$ in (\ref{momentconditions}),
(\ref{lowerboundond}) and (\ref{upperboundonf})---such that
the following holds.

We have the local semicircle law: the event
%
%
\begin{equation}\label{scmA}
\bigcap_{z \in D} \biggl\{{\vert\nc m(z) - m_{\mathrm{sc}}(z)
\vert\leq(\log N)^{C_2 \xi} \biggl({\min\biggl\{{\frac{1}{q^2
\sqrt{\kappa_E+\eta}}, \frac{1}{q}}\biggr\} + \frac{1}{N \eta
}}\biggr)}\biggr\}\hspace*{-35pt}
\end{equation}
holds with $(\xi,\nu)$-high probability.
Moreover, we have the following estimate on the individual matrix
elements of $\nc G$. If the assumption (\ref{upperboundonf}) is
strengthened to
%
%
\begin{equation} \label{fupperbound}
0 \leq f \leq C_0 N^{1/2}
\end{equation}
for some constant $C_0$, then the event
%
%
\begin{eqnarray}\label{GijestimateA}
&&\bigcap_{z \in D} \Biggl\{\max_{1 \leq i,j \leq N} \vert\nc
G_{ij}(z) - \delta_{ij} m_{\mathrm{sc}}(z) \vert\nonumber\\[-8pt]\\[-8pt]
&&\qquad\hspace*{0pt}\leq(\log N)^{C_2 \xi}
\Biggl({\frac{1}{q} + \sqrt{\frac{\im m_{\mathrm{sc}}(z)}{N \eta}} + \frac
{1}{N \eta}}\Biggr)\Biggr\}\nonumber
\end{eqnarray}
holds with $(\xi,\nu)$-high probability, where $\nu$ also depends on $C_0$.
\end{theorem}

Next, let $\lambda_1 \leq\cdots\leq\lambda_N$ be the ordered
family of eigenvalues of $H$, and let $\f u_1,\ldots, \f
u_N$ denote the associated eigenvectors. Similarly, we denote the
ordered eigenvalues of $A$ by $\mu_1 \leq\cdots\leq
\mu_N$ and the associated\vspace*{1pt} eigenvectors by $\f v_1,\ldots, \f v_N$. We
use the notation $\f u_\alpha= (u_\alpha(i))_{i
= 1}^N$ and $\f v_\alpha= (v_\alpha(i))_{i = 1}^N$ for the vector
components. All eigenvectors are $\ell^2$-normalized
and have real components.

We state our main result about the local density of states of $A$.
For $E_1 < E_2$ define the counting functions
%
%
\begin{eqnarray} \label{empiricalintegrateddensity}
\cal N_{\mathrm{sc}}(E_1, E_2) &\deq& N \int_{E_1}^{E_2} \varrho_{\mathrm{sc}}(x)
\,\dd x,\nonumber\\[-8pt]\\[-8pt]
\nc{\cal N}(E_1, E_2) &\deq&\vert\{{\alpha\st E_1 < \mu
_\alpha\leq E_2}\} \vert.\nonumber
\end{eqnarray}

%
\begin{theorem}[(Local density of states)] \label{thmlocaldensityofstates}
Suppose that $A$ satisfies Definition~\ref{definitionofA} and that
$\xi$ and $q$ satisfy (\ref{assumptionsforSLSC}). Then there is a
constant $\nu> 0$---depending on $A_0$, $\Sigma$ and the constants $C$
in (\ref{momentconditions}), (\ref{lowerboundond}) and
(\ref{upperboundonf})---as well as a constant $C > 0$ such that the
following holds.

For any $E_1$ and $E_2$ satisfying $E_2 \geq E_1 + (\log N)^{C \xi}
N^{-1}$ we have
%
%
\begin{eqnarray} \label{localdensityatedge}
&&\nc{\cal N}(E_1, E_2)\nonumber\\
&&\qquad= \cal N_{\mathrm{sc}}(E_1, E_2)\\
&&\qquad\quad{}\times \biggl[{1 + O
\biggl({(\log N)^{C \xi} \biggl({\frac{1}{N (E_2 - E_1)^{3/2}} +
\frac{1}{q^2 (E_2 - E_1)}}\biggr)}\biggr)}\biggr]\nonumber
\end{eqnarray}
with $(\xi,\nu)$-high probability.\vadjust{\goodbreak}

Away from the spectral edge we have a stronger statement. Fix $\kappa
_* > 0$. Then, for any $E_1$ and $E_2$ satisfying
$E_2 \geq E_1 + (\log N)^{C \xi} N^{-1}$ as well as $\kappa_{E_1}
\geq\kappa_*$ and $\kappa_{E_2} \geq\kappa_*$, we
have
%
%
\begin{equation} \label{localdensityinbulk}
\nc{\cal N}(E_1, E_2) = \cal N_{\mathrm{sc}}(E_1, E_2) \biggl[{1 + O
\biggl({(\log N)^{C \xi} \biggl({\frac{1}{N (E_2 - E_1)} + \frac
{1}{q^2}}\biggr)}\biggr)}\biggr]\hspace*{-35pt}
\end{equation}
with $(\xi,\nu)$-high probability, where the constant in $O(\cdot)$
depends on $\kappa_*$.
\end{theorem}
%
%
\begin{remark}
Both results (\ref{localdensityatedge}) and (\ref{localdensityinbulk})
are special cases of a more general,
uniform, estimate; see Proposition~\ref{thmgenerallocaldensityofstates}.
\end{remark}
%

In the recent work~\cite{TVW}, the asymptotics of the local density of
states was also established, but only in much
larger spectral windows, of size at least $(Np)^{-1/10} = q^{-1/5}$,
and with a weaker error estimate.

Our next result concerns the integrated densities of states,
%
%
\begin{equation}
n_{\mathrm{sc}}(E) \deq\frac{1}{N} \cal N_{\mathrm{sc}}(-\infty,
E),\qquad
\nc{\fra n}(E) \deq\frac{1}{N} \nc{\cal
N}(-\infty, E).
\end{equation}

%
\begin{theorem}[(Integrated density of states)]
\label{thmgeneralintegrateddensity} Suppose that $A$ satisfies Definition
\ref{definitionofA} and that $\xi$ and $q$ satisfy
(\ref{assumptionsforSLSC}). Then there is a constant $\nu>
0$---depending on $A_0$, $\Sigma$ and the constants $C$ in (\ref
{momentconditions}), (\ref{lowerboundond}) and
(\ref{upperboundonf})---as well as a constant $C > 0$ such that the
event
%
%
\begin{equation} \label{n-nsc}
\bigcap_{E \in[-\Sigma, \Sigma]} \biggl\{{\vert\nc{\fra
n}(E) - n_{\mathrm{sc}}(E) \vert\leq(\log N)^{C \xi} \biggl({\frac
{1}{N} + \frac{1}{q^3} + \frac{\sqrt{\kappa_E}}{q^2}}
\biggr)}\biggr\}
\end{equation}
holds with $(\xi,\nu)$-high probability.
\end{theorem}

Next, we prove that the $N - 1$ first eigenvalues of $A$ are close to
their classical locations predicted by the
semicircle law. Denote by $\gamma_\alpha$ the classical location of
the $\alpha$th eigenvalue, defined through
%
%
\begin{equation} \label{defofgamma}
n_{\mathrm{sc}}(\gamma_\alpha) = \frac{\alpha}{N} \qquad\mbox{for }
\alpha= 1,\ldots, N.
\end{equation}
The following theorem compares the locations of the eigenvalues $\mu
_1,\ldots, \mu_{N - 1}$ to their classical locations
$\gamma_1,\ldots, \gamma_{N - 1}$. It is well known that the largest
eigenvalue $\mu_N$ of the Erd\H{o}s--R\'enyi matrix
is much larger than $\gamma_N$. This holds for more general sparse
matrices as well; more precisely, if $f \geq1 + c$,
then $\mu_N \approx f + f^{-1}$ is separated from $\mu_{N - 1}
\approx2$ by a gap of order one. The precise behavior
of $\mu_N$ in this regime is established in Theorem \ref
{theoremlargesteigenvalue} below.
%
%
\begin{theorem}[(Eigenvalue locations)] \label{thmeigenvaluelocations}
Suppose that $A$ satisfies Definition~\ref{definitionofA} and that
$\xi$ satisfies (\ref{assumptionsforSLSC}). Let $\phi$ be an exponent
satisfying $0 < \phi\leq1/2$, and set $q = N^{\phi}$. Then there is a
constant $\nu> 0$---depending on\vadjust{\goodbreak} $A_0$, $\Sigma$ and the constants $C$
in (\ref{momentconditions}), (\ref{lowerboundond}) and
(\ref{upperboundonf})---as well as a constant $C > 0$ such that the
following holds.

We have with $(\xi,\nu)$-high probability that
%
%
\begin{equation} \label{mainestimateonQ}
\sum_{\alpha= 1}^{N - 1} \vert\mu_\alpha- \gamma_\alpha\vert^2
\leq(\log N)^{C \xi} ({N^{1 - 4 \phi} + N^{4/3 - 8 \phi
}}).
\end{equation}
Moreover, for all $\alpha= 1,\ldots, N - 1$ we have with $(\xi,\nu
)$-high probability that
%
%
\begin{eqnarray} \label{detailedestimateforlargephi}
\vert\mu_\alpha- \gamma_\alpha\vert&\leq&(\log N)^{C \xi}
\bigl(N^{-2/3} \bigl[{\wh\alpha^{-1/3} + \f1 \bigl({\wh\alpha\leq
(\log N)^{C \xi} (1 + N^{1 - 3 \phi})}\bigr)}\bigr]\hspace*{-32pt}\nonumber\\[-8pt]\\[-8pt]
&&\hspace*{143.6pt}{} + N^{2/3 - 4
\phi} \wh\alpha
^{-2/3} + N^{-2 \phi}\bigr),\hspace*{-35pt}\nonumber
\end{eqnarray}
where we abbreviated $\wh\alpha\deq\min\{{\alpha, N - \alpha}\}$.
\end{theorem}
%
%
\begin{remark}
Under the assumption $\phi\geq1/3$, the estimate (\ref
{detailedestimateforlargephi}) simplifies to
%
%
\begin{equation}
\vert\mu_\alpha- \gamma_\alpha\vert\leq(\log N)^{C \xi}
({N^{-2/3} \wh\alpha^{-1/3} + N^{-2 \phi}}),
\end{equation}
which holds with $(\xi,\nu)$-high probability.
\end{remark}
%
%
\begin{remark}
Theorems~\ref{thmlocaldensityofstates}, \ref
{thmgeneralintegrateddensity} and~\ref{thmeigenvaluelocations}
also hold---with the same proof---for the matrix $H$. More precisely,
Theorem~\ref{thmlocaldensityofstates} holds with $\nc{\cal
N}(E_1, E_2)$ replaced with
\[
\cal N(E_1, E_2) \deq\vert\{{\alpha\st E_1 < \lambda
_\alpha\leq E_2}\} \vert,
\]
Theorem~\ref{thmgeneralintegrateddensity} holds with $\nc{\fra
n}(E)$ replaced with
\[
\fra n(E) \deq\frac{1}{N} \cal
N(-\infty, E),
\]
and Theorem~\ref{thmeigenvaluelocations} holds with $\mu_\alpha$
replaced with $\lambda_\alpha$.
\end{remark}

Our final result shows that the eigenvectors of $A$ are
\textit{completely delocalized}.

%
\begin{theorem}[(Complete delocalization of eigenvectors)]
\label{theoremdelocalization} Suppose that $A$ satisfies Definition
\ref{definitionofA} and (\ref{fupperbound}). Then there is a constant
$\nu> 0$---depending on~$A_0$, $\Sigma$ and the constants $C$ in (\ref
{momentconditions}), (\ref{lowerboundond}) and
(\ref{upperboundonf})---such that the following statements hold for any
$\xi$ satisfying (\ref{boundsonxi}).

We have with $(\xi,\nu)$-high probability
%
%
\begin{equation} \label{delocforbulk}
\max_{\alpha< N} \|\f v_\alpha\|_\infty\leq\frac
{(\log N)^{4 \xi}}{\sqrt{N}}.
\end{equation}
Moreover, we have with $(\xi,\nu)$-high probability
%
%
\begin{equation} \label{l2estimate}
\|\f v_N - \f e \|_2 = \frac{1}{f} + O\Biggl({\sqrt
{\frac{1}{f^3} + \frac{(\log N)^\xi}{f \sqrt{N}}}}\Biggr).
\end{equation}
If additionally $f \leq C$ for some constant $C$, then we have with
$(\xi,\nu)$-high probability
%
%
\begin{equation} \label{delocforconstantf}
\|\f v_N \|_\infty\leq\frac{(\log N)^{4 \xi}}{\sqrt
{N}}.
\end{equation}
Finally, there exists positive constants $C, C_0$ such that if $f \geq
C_0 (\log N)^{\xi}$, then we have with $(\xi,\nu)$-high probability
%
%
\begin{equation} \label{delocalizationofumax}
\|\f v_N - \f e \|_\infty\leq C \frac{(\log N)^\xi
}{\sqrt
{N} f}.
\end{equation}
\end{theorem}
%
%
\begin{remark}
If $f$ does not grow with $N$, then the components $v_N(i)$ of the
largest eigenvector fluctuate, and we do not expect
(\ref{delocalizationofumax}) to hold. However, a delocalization
bound similar to (\ref{delocforconstantf}) holds
for all $f$. In (\ref{delocforconstantf}) this bound was proved for
$f \leq C$. In fact, a slight modification of our
proof yields complete delocalization for the values of $f$ not covered
by Theorem~\ref{theoremdelocalization}, that is,
$1 \ll f \leq C_0 (\log N)^\xi$. We claim that in this case we have
with $(\xi,\nu)$-high probability
%
%
\begin{equation} \label{generaldeloc}
\|\f v_N \|_\infty\leq\frac{(\log N)^{C \xi}}{\sqrt
{N}}.
\end{equation}
The required modifications are sketched at the end of Section \ref
{sectionproofofdeloc} below.
\end{remark}
%
%
\begin{remark} \label{remarkdelocforH}
Similarly, if $H$ satisfies Definition~\ref{definitionofH}, all of
its eigenvectors are delocalized in the sense that
\[
\max_{\alpha} \|\f u_\alpha\|_\infty\leq\frac
{(\log
N)^{4 \xi}}{\sqrt{N}}
\]
with $(\xi,\nu)$-high probability.
The proof is a straightforward application of (\ref{weakGii}) below
and the estimate (\ref{pfofdeloc}) applied to
$G_{jj}$.
\end{remark}

In the recent work~\cite{TVW}, a weaker upper bound of size
$(Np)^{-1/2} = q^{-1}$ was established for the
$\ell^\infty$-norm of the eigenvectors of $A$ associated with
eigenvalues away from the spectral edge.

\section{The weak local semicircle law for $H$} \label{sectionWLSC}

In this section we introduce and prove a weak version of the local
semicircle law for the matrix $H$. This result is
weaker than our final result for $H$, Theorem~\ref{LSCTHM}, but it
will be used as an a priori bound for the proof of
Theorem~\ref{LSCTHM}. Moreover, Theorem~\ref{WLSCTHM} holds under
slightly weaker assumptions on $\xi$ than Theorem
\ref{LSCTHM}, and is for this reason a more suitable tool for proving
eigenvector delocalization, Theorem~\ref{theoremdelocalization}; see
Section~\ref{sectionproofofdeloc} for details.

We shall prove Theorem~\ref{WLSCTHM} (the weak local semicircle law)
for spectral parameters $z$ in the set
%
%
\begin{equation} \label{definitionDL}
D_L \deq\{{z \in\C\st\vert E \vert\leq\Sigma,
(\log
N)^LN^{-1} \leq\eta\leq3}\} \subset D,
\end{equation}
where the parameter $L \equiv L(N)$ will always satisfy
%
%
\begin{equation} \label{lowerboundonL}
L \geq8 \xi.
\end{equation}

%
\begin{theorem}[(Weak local semicircle law for $H$)] \label{WLSCTHM}
Let $H$ satisfy Definition~\ref{definitionofH}.
Then there are constants $\nu> 0$ and $C > 0$ such that the following
statements hold for any $\xi$ satisfying
(\ref{boundsonxi}) and $L$ satisfying (\ref{lowerboundonL}).

The events
%
%
\begin{equation} \label{weakGij}
\bigcap_{z \in D_L} \biggl\{{\max_{i\neq j}\vert G_{ij}(z) \vert
\leq\frac{C}{q} + \frac{C (\log N)^{2 \xi}}{\sqrt{N \eta
}}}\biggr\}
\end{equation}
and
%
%
\begin{equation} \label{weakGii}
\bigcap_{z \in D_L} \biggl\{{\max_{i}\vert G_{ii}(z)-m(z) \vert
\leq\frac{C(\log N)^\xi}{q} + \frac{C (\log N)^{2 \xi}}{\sqrt{N
\eta}}}\biggr\}
\end{equation}
hold with $(\xi,\nu)$-high probability.
Furthermore, we have the weak local semicircle law: the event
%
%
\begin{equation} \label{weakSCL}
\bigcap_{z \in D_L} \biggl\{{\vert m(z) - m_{\mathrm{sc}}(z) \vert\leq
\frac{C (\log N)^{\xi}}{\sqrt{q}} + \frac{C (\log N)^{2 \xi}}{(N
\eta)^{1/3}}}\biggr\}
\end{equation}
holds with $(\xi,\nu)$-high probability.
\end{theorem}

Roughly, Theorem~\ref{WLSCTHM} states that
%
%
\begin{equation}\label{weakrough1}
\vert G_{ij} - \delta_{ij} m(z) \vert\lesssim\frac
{1}{q} + \frac{1}{\sqrt{N \eta}}
\end{equation}
and
%
%
\begin{equation}\label{weakrough2}
\vert m(z) - m_{\mathrm{sc}}(z) \vert\lesssim\frac{1}{\sqrt
{q}} + \frac{1}{(N \eta)^{1/3}}.
\end{equation}
Comparing with the strong local semicircle law, Theorem~\ref{LSCTHM},
we note
that the error bound in (\ref{weakrough1}) for $G_{ij}$ is already optimal
in the bulk. However, unlike Theorem~\ref{LSCTHM},
the quantity $G_{ii}$ is compared to $m$ and not $m_{\mathrm{sc}}$.

On the other hand, the estimate (\ref{weakrough2}) is considerably weaker
than the corresponding bound in (\ref{roughineq1}).
The smaller power $1/3$ in the factor $(N \eta)^{-1/3}$ reflects the
instability
near the edge; it appears because we insist on having uniform bounds up
to the edge.
If we were interested only in the bulk, it would be easy to repeat the
proof of Theorem~\ref{WLSCTHM} to obtain $(N\eta\kappa)^{-1/2}$,
thus replacing the power $1/3$ with $1/2$.
The price would be a coefficient which blows up at the edge.\vadjust{\goodbreak}

As in Theorem~\ref{LSCTHM}, the estimates of Theorem~\ref{WLSCTHM}
are stable down to the optimal scale $\eta\gtrsim N^{-1}$, uniformly
up to the edge. Thus, the difference between Theorems~\ref{LSCTHM} and
\ref{WLSCTHM} lies only in the precision of the estimates.

In order to prove Theorem~\ref{WLSCTHM}, we first collect some basic
tools and notation.

\subsection{Preliminaries}
The following lemma collects some useful properties of $m_{\mathrm{sc}}$ defined
in (\ref{identityofmsc}).
%
%
\begin{lemma} \label{lemmamsc}
For $z = E + \ii\eta\in D_L$ abbreviate $\kappa\equiv\kappa_E$.
Then we have
%
%
\begin{equation} \label{boundsonmsc}
\vert m_{\mathrm{sc}}(z) \vert\sim1,\qquad \vert1 - m_{\mathrm{sc}}(z)^2
\vert\sim
\sqrt{\kappa+ \eta}.
\end{equation}
Moreover,
\[
\im m_{\mathrm{sc}}(z) \sim
\cases{
\sqrt{\kappa+ \eta}, &\quad if $\vert E \vert\leq2$,
\vspace*{2pt}\cr
\dfrac{\eta}{\sqrt{\kappa+ \eta}}, &\quad if $\vert E \vert\geq2$.}
\]
Here the implicit constants in $\sim$ depend on $\Sigma$ in (\ref
{definitionD}).
\end{lemma}
\begin{pf}
The proof is an elementary calculation; see Lemma 4.2 in~\cite{EYY2}.
\end{pf}

In order to streamline notation, we shall often omit the explicit
dependence of quantities on the spectral parameter $z
\in D_L$; thus, we write, for instance, \mbox{$G_{ij}(z) \equiv G_{ij}$}.
Define the $z$-dependent quantities
%
%
\begin{eqnarray} \label{deflambda}
\Lambda_o &\deq&\max_{i \neq j} \vert G_{ij} \vert,\qquad
\Lambda_d \deq\max_i \vert G_{ii} - m_{\mathrm{sc}}
\vert,\nonumber\\[-8pt]\\[-8pt]
\Lambda&\deq&\vert m - m_{\mathrm{sc}} \vert,\qquad v_i \deq
G_{ii} -
m_{\mathrm{sc}}.\nonumber
\end{eqnarray}

%
\begin{definition} \label{definitionofminor}
Let $\bb T \subset\{1,\ldots, N\}$. Then we define $H^{(\bb T)}$ as
the $(N - \vert\bb T \vert) \times(N - \vert\bb T \vert)$
minor of $H$ obtained by removing all rows and columns of $H$ indexed
by $i \in\bb T$. Note that we keep the names of
indices of $H$ when defining $H^{(\bb T)}$.

More formally, for $i \in\{1,\ldots, N\}$ we define the operation
$\pi_i$ on the
probability space by
%
%
\begin{equation} \label{definitionofpi}
(\pi_i(H))_{kl} \deq\f1 (k \neq i) \f1 (l \neq i) h_{kl}.
\end{equation}
For $\bb T \subset\{1,\ldots, N\}$ we also write $\pi_{\bb T} \deq
\prod_{i \in\bb T} \pi_i$. Then we define
\[
H^{(\bb T)} \deq((\pi_{\bb T}(H))_{ij})_{i,j \notin\bb T}.
\]
The quantities $G^{(\bb T)}(z)$, $\lambda_\alpha^{(\bb T)}$, $\f
u_\alpha^{(\bb T)}$, etc. are defined in the obvious
way using $H^{(\bb T)}$. Here $\alpha= 1,\ldots, \alpha_{\max}$,
where $\alpha_{\max} \deq N - \vert\bb T \vert$.

Moreover, we use the notation
\[
\sum_i^{(\bb T)} \deq\mathop{\sum_{i = 1 }}_{ i \notin\bb
T}^N,
\]
and abbreviate $(i) = (\{i\})$ as well as $(\bb T i) = (\bb T \cup\{i\})$.

We also set
%
%
\begin{equation}\label{mTdef}
m^{(\bb T)} \deq\frac{1}{N} \sum_i^{(\bb T)} G^{(\bb T)}_{ii}.
\end{equation}
\end{definition}

Note that we choose the normalization $N^{-1}$ instead of the more
natural $(N - \vert\bb T \vert)^{-1}$ in (\ref{mTdef});
this is simply a convenient choice for later applications.

The next\vspace*{1pt} lemma collects the main identities of the resolvent matrix
elements $G_{ij}^{(\bb T) }$. Its proof is
elementary linear algebra; see, for example,~\cite{EYY}.

%
\begin{lemma} \label{lemmaresolventid}
For $i,j \neq k$ we have
%
%
\begin{equation} \label{GijGijk}
G_{ij} = G_{ij}^{(k)} + \frac{G_{ik} G_{kj}}{G_{kk}}.
\end{equation}
For $i \neq j$ we have
%
%
\begin{equation} \label{Gijformula}
G_{ij} = - G_{ii} G_{jj}^{(i)} ({h_{ij} - Z_{ij}}),\qquad
G_{ii} = (h_{ii}-z - Z_{ii})^{-1},
\end{equation}
where we defined, for arbitrary $i,j \in\{1,\ldots,N\}$,
%
%
\begin{equation} \label{defofZij}
Z_{ij} \deq\f h_i \cdot G^{(ij)} \f h_j = \sum_{k,l}^{(ij)}
h_{ik} G^{(ij)}_{kl} h_{lj}.
\end{equation}
Here $\f h_i$ denotes the vector given by the
$i$th column of $H$. Note that in expressions of the form (\ref
{defofZij}) it is implied that the $i$th and $j$th
entries of $\f h_i$ and $\f h_j$ have been removed; we do not indicate
this explicitly, as it is always clear from the
context.
\end{lemma}
%
%
\begin{remark} \label{remarkidentitiesfoHT}
Lemma~\ref{lemmaresolventid} remains trivially valid for the minors
$H^{(\bb T)}$ of~$H$. For instance,
(\ref{GijGijk}) reads
\[
G_{ij}^{(\bb T)} = G_{ij}^{(\bb T k)} + \frac{G_{ik}^{(\bb T)}
G_{kj}^{(\bb T)}}{G_{kk}^{(\bb T)}}
\]
for $i,j, k \notin\bb T$ and $i,j \neq k$.
\end{remark}
%
%
\begin{definition}\label{defEi}
We denote by $\E_i$ the partial expectation with respect to the
variables $\f h_i = (h_{ij})_{j = 1}^N$, and set $\IE_i
X \deq X - \E_i X$.
\end{definition}

We abbreviate
%
%
\begin{equation} \label{definitionofZi}
Z_i \deq\IE_i Z_{ii} = \IE_i \sum_{k,l}^{(i)} h_{ik}
G^{(i)}_{kl} h_{li}
= \sum_{k,l}^{(i)}\biggl( h_{ik}h_{li} - \frac{1}{N}\delta_{kl}\biggr)
G^{(i)}_{kl}.
\end{equation}

The following trivial large deviation estimate provides a bound on the
matrix elements of $H$.
%
%
\begin{lemma} \label{lemmaOmegah}
For $C$ large enough we have with $(\xi,\nu)$-high probability
\[
\vert h_{ij} \vert\leq\frac{C}{q}.
\]
\end{lemma}
\begin{pf}
The claim follows by choosing $p = \nu(\log N)^\xi$ in (\ref
{momentconditions}) and applying Markov's inequality.
\end{pf}

We collect here the large deviation estimates for random variables
whose moments decay slowly. Their proof is given in the
\hyperref[app]{Appendix}.
%
%
\begin{lemma} \label{lemma_lde}
\textup{(i)}
Let $(a_i)$ be a family of centered and independent random variables satisfying
%
%
\begin{equation} \label{generalizedmomentcondition}
\E\vert a_i \vert^p \leq\frac{C^p}{N^\gamma q^{\alpha p +
\beta}}
\end{equation}
for all $2 \leq p \leq(\log N)^{A_0 \log\log N}$, where $\alpha\geq
0$ and $\beta, \gamma\in\R$. Then there is a
$\nu> 0$, depending only on $C$ in (\ref{generalizedmomentcondition}),
such that for all $\xi$ satisfying
(\ref{boundsonxi}) we have with $(\xi,\nu)$-high probability
%
%
\begin{equation} \label{generalizedLDE}
\biggl\vert\sum_i A_i a_i \biggr\vert\leq(\log N)^\xi
\biggl[{\frac{\sup_i \vert A_i \vert}{q^\alpha} + \biggl({\frac
{1}{N^\gamma q^{\beta+ 2 \alpha}} \sum_i \vert A_i \vert^2}
\biggr)^{1/2}}\biggr].
\end{equation}

\textup{\hphantom{i}(ii)}
Let $a_1,\ldots, a_N$ be centered and independent random variables satisfying
%
%
\begin{equation} \label{boundonmomentsofsparseentries}
\E\vert a_i \vert^p \leq\frac{C^p}{N q^{p - 2}}
\end{equation}
for $2 \leq p \leq(\log N)^{A_0 \log\log N}$.
Then there is a $\nu> 0$, depending only on $C$ in (\ref
{boundonmomentsofsparseentries}), such that for all $\xi$
satisfying (\ref{boundsonxi}), and for any $A_i \in\C$ and $B_{ij}
\in\C$, we have with $(\xi,\nu)$-high probability
%
%
\begin{eqnarray}
\label{aA}\quad
\Biggl\vert\sum_{i = 1}^N A_i a_i \Biggr\vert
&\leq&
(\log
N)^\xi\Biggl[{\frac{\max_i \vert A_i \vert}{q} + \Biggl({\frac
{1}{N} \sum_{i = 1}^N \vert A_i \vert^2}\Biggr)^{1/2}}\Biggr],
\\
\label{aaBd}
\hspace*{28pt}\Biggl\vert\sum_{i = 1}^N \ol{a}_i B_{ii} a_i - \sum_{i = 1}^N
\sigma^2_i B_{ii} \Biggr\vert
& \leq&
(\log N)^\xi\frac{B_d}{q},
\\
\label{aaBo}
\biggl\vert\sum_{1 \leq i \neq j \leq N} \ol{a}_i B_{ij} a_j
\biggr\vert
& \leq&(\log N)^{2 \xi} \biggl[{\frac{B_o}{q} +
\biggl({\frac{1}{N^2} \sum_{i \neq j} \vert B_{ij} \vert^2}
\biggr)^{1/2}}\biggr],
\end{eqnarray}
where $\sigma_i^2$ denotes the variance of $a_i$ and we abbreviated
\[
B_d \deq\max_i \vert B_{ii} \vert,\qquad
B_o \deq\max_{i \neq j} \vert B_{ij} \vert.
\]

\textup{(iii)}
Let $a_1,\ldots, a_N$ and $b_1,\ldots, b_N$ be independent random
variables, each satisfying (\ref{boundonmomentsofsparseentries}).
Then there is a $\nu> 0$, depending only on $C$ in (\ref
{boundonmomentsofsparseentries}), such that for all $\xi$
satisfying (\ref{boundsonxi}) and $B_{ij} \in\C$ we have with
$(\xi,\nu)$-high probability
%
%
\begin{equation}\label{abB}\quad
\Biggl\vert\sum_{i,j = 1}^N a_i B_{ij} b_j \Biggr\vert\leq
(\log N)^{2 \xi} \biggl[{\frac{B_d}{q^2} + \frac{B_o}{q} +
\biggl({\frac{1}{N^2} \sum_{i \neq j} \vert B_{ij} \vert^2}
\biggr)^{1/2}}\biggr].
\end{equation}
\end{lemma}
%
%
\begin{remark}
Note that the estimates (\ref{aA}) and (\ref{aaBd}) are special cases
of (\ref{generalizedLDE}).
The right-hand side of the large deviation bound (\ref{generalizedLDE})
consists of two terms, which can be understood
as follows. The first term gives the large deviation bound for the
special case where $A_i$ vanishes for all but one
$i$; in this case it is immediate that $\vert A_i a_i \vert\leq(\log
N)^\xi\vert A_i \vert q^{-\alpha}$ with $(\xi,\nu)$-high probability.
The second term is
equal to the variance of $\sum_i A_i a_i$. In particular, (\ref
{generalizedLDE}) is optimal (up to factors of $\log
N$). The estimates (\ref{aA})--(\ref{abB}) can be interpreted
similarly. [Note that the powers of $q$ in the estimates
(\ref{aaBo})--(\ref{abB}) are not optimal; this is, however, of no
consequence for later applications.]
\end{remark}

For a family $F_1,\ldots, F_N$ we introduce the notation
\[
[F] \deq\frac{1}{N} \sum_{i = 1}^N F_i.
\]

The following lemma contains the self-consistent resolvent equation on
which our proof relies.
%
%
\begin{lemma} \label{lemmaself-consistentequation}
We have the identity
%
%
\begin{equation} \label{self-consistent}
G_{ii} = \frac{1}{-z - m_{\mathrm{sc}} - ([v] - \Upsilon_i)},
\end{equation}
where
\[
\Upsilon_i \deq h_{ii} - Z_i + \cal A_i
\]
and
%
%
\begin{equation}\label{Adef}
\cal A_i \deq\frac{1}{N} \sum_j \frac{G_{ij} G_{ji}}{G_{ii}}.
\end{equation}
\end{lemma}
\begin{pf}
The proof is a simple calculation using (\ref{Gijformula}) and (\ref{GijGijk}).
\end{pf}

\subsection{\texorpdfstring{Basic estimates on the event $\Omega(z)$}
{Basic estimates on the event Omega(z)}}
%
%
\begin{definition} \label{defOmega}
For $z \in D_L$ introduce the event
%
%
\begin{equation} \label{defOmegaz}
\Omega(z) \deq\{{\Lambda_d(z) + \Lambda_o(z) \leq(\log
N)^{-\xi}}\}
\end{equation}
and the control parameter
%
%
\begin{equation} \label{defPsi}
\Psi(z) \deq\sqrt{\frac{\Lambda(z) + \im m_{\mathrm{sc}}(z)}{N \eta
}}.
\end{equation}
\end{definition}

Note that $\Psi(z)$ is a random variable. Moreover, on $\Omega(z)$ we
have $\Psi(z) \leq C (\log N)^{-4 \xi}$ by
(\ref{lowerboundonL}).

Throughout the following we shall make use of the fundamental identity
%
%
\begin{eqnarray} \label{ward}
\sum_{j} \vert G_{ij} \vert^2 &=& \sum_j \sum_\alpha\frac
{\ol{u}_\alpha(i) u_\alpha(j)}{\lambda_\alpha- z} \sum
_\beta
\frac{u_\beta(i) \ol{u}_\beta(j)}{\lambda_\beta- \bar
z}\nonumber\\[-8pt]\\[-8pt]
&=&
\sum_\alpha
\frac{\vert u_\alpha(i) \vert^2}{\vert\lambda_\alpha- z \vert^2}
= \frac
{1}{\eta} \im G_{ii}.\nonumber
\end{eqnarray}
A similar identity holds for $H^{(\bb T)}$. Using
the lower bound $|m_{\mathrm{sc}}(z)|\geq c$ from (\ref{boundsonmsc})
and the definition (\ref{defOmegaz}), we find
%
%
\begin{equation}\label{lowerboundonGii}
c \leq\vert G_{ii}(z) \vert\leq C
\end{equation}
on $\Omega(z)$. Using (\ref{GijGijk}) repeatedly, we find that on
$\Omega(z)$ we have
%
%
\begin{equation} \label{GiionOmega}
c \leq\bigl\vert G_{ii}^{(\bb T)}(z) \bigr\vert\leq C
\end{equation}
for $\vert\bb T \vert\leq10$ (here 10 can be replaced with any fixed
number). Similarly, we have on $\Omega(z)$ that
%
%
\begin{equation} \label{GijonOmega}
\max_{i\ne j}
\bigl\vert G_{ij}^{(\bb T)}(z) \bigr\vert\leq C \Lambda_o(z) \leq C
(\log
N)^{-\xi}
\end{equation}
for $\vert\bb T \vert\leq10$.
%
%
\begin{lemma} \label{lemmaiTGT}
Fixing $z=E+i\eta\in D_L$,
we have for any $i$ and $\bb T \subset\{1,\ldots, N\}$ satisfying
$i \notin\bb T$ and $\vert\bb T \vert\leq10$ that
%
%
\begin{equation} \label{iTGT}
m^{(i \bb T)}(z) = m^{(\bb T)}(z) + O \biggl({\frac{1}{N \eta
}}\biggr)
\end{equation}
holds in $\Omega(z)$.\vadjust{\goodbreak}
\end{lemma}
\begin{pf}
We use (\ref{GijGijk}) to write
\[
\frac{1}{N} \sum_{j}^{(i \bb T)} G_{jj}^{(i \bb T)} = \frac
{1}{N} \sum_{j}^{(i \bb T)} G_{jj}^{(\bb T)} -
\frac{1}{N} \sum_{j}^{(i \bb T)} \frac{ G_{ji}^{(\bb T)}
G_{ij}^{(\bb T)}}{ G_{ii}^{(\bb T)}} =
\frac{1}{N} \sum_{j}^{(\bb T)} G_{jj}^{(\bb T)} - \frac{1}{N} \sum
_{j}^{(\bb T)} \frac{ G_{ji}^{(\bb T)}
G_{ij}^{(\bb T)}}{ G_{ii}^{(\bb T)}}.
\]
Therefore,
\begin{eqnarray*}
\frac{1}{N} \sum_{j}^{(i \bb T)} G_{jj}^{(i \bb T)} &=&
\frac{1}{N} \sum_{j}^{(\bb T)} G_{jj}^{(\bb T)} + O\Biggl({\frac
{1}{N} \sum_{j}^{(\bb T)} \bigl\vert G_{ij}^{(\bb T)} \bigr\vert
^2}\Biggr)\\
&=& \frac{1}{N} \sum_{j}^{(\bb T)} G_{jj}^{(\bb T)} + O
\biggl({\frac{1}{N \eta} \im G^{(\bb T)}_{ii}}\biggr).
\end{eqnarray*}
The claim now follows from (\ref{GiionOmega}).
\end{pf}
%
%
\begin{lemma} \label{lemmaoff-diagestimate}
For fixed $z \in D_L$ we have on $\Omega(z)$ with $(\xi,\nu)$-high
probability
%
%
\begin{eqnarray}
\label{Gijapriori}
\Lambda_o(z) & \leq & C \biggl({\frac{1}{q} + (\log N)^{2 \xi}
\Psi(z)}\biggr),
\\
\label{Ziapriori}
\max_i \vert Z_i(z) \vert& \leq & C \biggl({\frac{(\log N)^\xi
}{q} +
(\log N)^{2 \xi} \Psi(z)}\biggr).
\end{eqnarray}
\end{lemma}
\begin{pf}
We start with (\ref{Gijapriori}). Let $i \neq j$. Using (\ref
{Gijformula}), (\ref{GiionOmega}), (\ref{GijonOmega})
and (\ref{GijGijk}) we get on $\Omega(z)$ with $(\xi,\nu)$-high probability
%
%
\begin{eqnarray}
\label{estimateofGij}
\vert G_{ij} \vert &\leq& C ({\vert h_{ij} \vert+ \vert
Z_{ij} \vert})
\leq\frac{C}{q} + C \Biggl\vert\sum_{k,l}^{(ij)} h_{ik}
G_{kl}^{(ij)} h_{lj} \Biggr\vert
\nonumber\\[-8pt]\\[-8pt]
&\leq&\frac{C}{q} + C (\log N)^{2 \xi} \frac{\Lambda_o}{q} + C
(\log N)^{2 \xi}
\Biggl({\frac{1}{N^2} \sum_{k,l}^{(ij)} \bigl| G_{kl}^{(ij)} \bigr|
^2}\Biggr)^{1/2},
\nonumber
\end{eqnarray}
where the last step follows using (\ref{abB}) and (\ref{lowerboundond}).
Using (\ref{GijGijk}) repeatedly and recalling (\ref{GiionOmega}),
we find on $\Omega(z)$ that
$G_{kk}^{(ij)} = G_{kk} + O(\Lambda_o^2)$.
Thus, we get on $\Omega(z)$, by (\ref{ward}),
%
%
\begin{equation} \label{estimateofGij2}
\frac{1}{N^2} \sum_{k,l}^{(ij)} \bigl| G_{kl}^{(ij)} \bigr|^2 =
\frac
{1}{N^2 \eta} \sum_k^{(ij)} \im G^{(ij)}_{kk} \leq
\frac{\im m}{N \eta} + \frac{C \Lambda_o^2}{N \eta}.
\end{equation}
Taking the maximum over $i \neq j$ in (\ref{estimateofGij})
therefore yields, on $\Omega(z)$ with $(\xi,\nu)$-high probability,
\[
\Lambda_o \leq\frac{C}{q} + o(1) \Lambda_o + C (\log N)^{2 \xi
} \sqrt{\frac{\im m}{N \eta}},
\]
where we used (\ref{lowerboundond}) and the fact that $N \eta\geq
(\log N)^{8 \xi}$ by (\ref{lowerboundonL}). This
concludes the proof of (\ref{Gijapriori}).

In order to prove (\ref{Ziapriori}), we write, recalling the
definition (\ref{definitionofZi}),
\[
Z_i = \sum_{k}^{(i)} \biggl({| h_{kk} |^2 - \frac
{1}{N}}\biggr) G_{kk}^{(i)} + \sum_{k \neq l}^{(i)} h_{ik} G_{kl}^{(i)}
h_{li}.
\]
Using (\ref{aaBd}), (\ref{aaBo}) and (\ref{GiionOmega}), we
therefore get, on $\Omega(z)$ with $(\xi,\nu)$-high probability,
\begin{eqnarray*}
| Z_i |&\leq&\frac{C (\log N)^\xi}{q} + C (\log
N)^{2\xi}
\Biggl[{\frac{\Lambda_o}{q} + \Biggl({\frac{1}{N^2} \sum
_{k,l}^{(i)} \bigl| G_{kl}^{(i)} \bigr|^2}\Biggr)^{1/2}}\Biggr] \\
&\leq&\frac{C (\log N)^\xi}{q} + \frac{C (\log N)^{2 \xi} \Lambda
_o}{\sqrt{N \eta}} + C (\log N)^{2 \xi}
\sqrt{\frac{\im m}{N \eta}},
\end{eqnarray*}
similarly to above. Invoking (\ref{Gijapriori}) and recalling (\ref
{lowerboundonL}) finishes the proof.
\end{pf}

We may now estimate $\Lambda_d$ in terms of $\Lambda$.
%
%
\begin{lemma} \label{lemmadiagestimate}
Fix $z =E+i\eta\in D_L$. On $\Omega(z)$ we have with $(\xi,\nu
)$-high probability
%
%
\begin{equation}\label{lambdadleqlambda}
{\max_{i}} |G_{ii}(z)-m(z)|
\leq
C \biggl({\frac{(\log N)^\xi}{q} + (\log N)^{2 \xi} \Psi(z)}
\biggr).
\end{equation}
\end{lemma}
\begin{pf} We use the resolvent equation (\ref{self-consistent}). On
$\Omega(z)$ we have $|\cal A_i |\leq C
\Lambda_o^2$ and $| h_{ii} |\leq C / q$ with $(\xi,\nu)$-high
probability by Lem\-ma~\ref{lemmaOmegah}. Thus, Lem\-ma~\ref{lemmaoff-diagestimate} yields on
$\Omega(z)$ with $(\xi,\nu)$-high probability
%
%
\begin{equation} \label{Upsilonbound}
|\Upsilon_i |\leq
C \biggl({\frac{(\log N)^\xi}{q} + (\log N)^{2 \xi} \Psi(z)}
\biggr) \ll1.
\end{equation}
From (\ref{self-consistent}) we therefore get on $\Omega(z)$ with
$(\xi,\nu)$-high probability
%
%
\begin{equation}\hspace*{28pt}
|G_{ii}-G_{jj}| = | G_{ii} || G_{jj}
||\Upsilon_i-\Upsilon_j |\leq C \biggl({\frac
{(\log N)^\xi
}{q} + (\log N)^{2 \xi} \Psi(z)}\biggr).
\end{equation}
Since $m = \frac{1}{N}\sum_{j} G_{jj}$, the proof is complete.
\end{pf}

Note that (\ref{lambdadleqlambda}) implies
%
%
\begin{equation}\label{436}
\Lambda_d(z) \leq\Lambda(z) + C \biggl({\frac{(\log N)^\xi
}{q} + (\log N)^{2 \xi} \Psi(z)}\biggr)
\end{equation}
on $\Omega(z)$ with $(\xi,\nu)$-high probability.

\subsection{\texorpdfstring{Stability of the self-consistent equation of $[v]$ on $\Omega(z)$}
{Stability of the self-consistent equation of [v] on Omega(z)}}

We now expand the self-consistent equation into a form in which the
stability of the averaged quantity $[v]$ may be
analyzed. Recall the definition $v_i \deq G_{ii} - m_{\mathrm{sc}}$.

%
\begin{lemma} \label{lemmaexpandedself-consisteneq}
Fix $z \in D_L$. Then we have on $\Omega(z)$ with $(\xi,\nu)$-high
probability
%
%
\begin{eqnarray} \label{finalself-consistentequation}
(1 - m_{\mathrm{sc}}^2) [v] &=& m_{\mathrm{sc}}^3 [v]^2 + m_{\mathrm{sc}}^2 [Z]
\nonumber\\[-9pt]\\[-9pt]
&&{}+ O
\biggl({\frac{(\log N)^{2 \xi+ 1}}{q^2} + (\log N)^{4 \xi+ 1} \Psi^2 +
\frac{\Lambda^2}{\log N}}\biggr).\nonumber
\end{eqnarray}
\end{lemma}
\begin{pf} Recall that on $\Omega(z)$ we have $v_i = o(1)$. Moreover,
(\ref{identityofmsc}) and (\ref{boundsonmsc}) imply that $|
m_{\mathrm{sc}}(z) + z |= | m_{\mathrm{sc}}(z) |^{-1} \geq c$
for $z \in D_L$. With (\ref{Upsilonbound}) we may
therefore expand (\ref{self-consistent}) on $\Omega(z)$ up to second
order to get, with $(\xi,\nu)$-high probability,
%
%
\begin{equation} \label{expandedself-consistent}
v_i = m^2_{\mathrm{sc}} ([v] - \Upsilon_i) + m_{\mathrm{sc}}^3 ([v] - \Upsilon_i)^2
+ O ([v] - \Upsilon_i)^3.
\end{equation}
Averaging over $i$ in (\ref{expandedself-consistent}) yields with
$(\xi,\nu)$-high probability
\begin{eqnarray*}
(1 - m_{\mathrm{sc}}^2) [v] &=& - m_{\mathrm{sc}}^2[\Upsilon] + m_{\mathrm{sc}}^3 [v]^2 - 2
m_{\mathrm{sc}}^3 [v] [\Upsilon] + m_{\mathrm{sc}}^3 [\Upsilon^2]\\[-2pt]
&&{} + O
\Bigl({[v] + \max_i |\Upsilon_i |}\Bigr)^3.
\end{eqnarray*}
Recall the definition (\ref{Adef}) of $\cal A_i$.
Using (\ref{aA}) and (\ref{GiionOmega}), we find on $\Omega(z)$
with $(\xi,\nu)$-high probability
\begin{eqnarray*}
[\Upsilon] &=& \frac{1}{N} \sum_i h_{ii} - [Z] + [\cal A] =
-[Z] + O \biggl({\frac{(\log N)^\xi}{N} + \frac{1}{N^2} \sum_{i,j}
| G_{ij} |^2}\biggr)\\[-2pt]
&=& -[Z] + O \biggl({\frac{(\log
N)^\xi
}{N} + \Psi^2}\biggr),
\end{eqnarray*}
where in the last step we used (\ref{ward}).
Moreover, recalling that $|[v]|=\Lambda$, we get by Young's inequality
\[
-2 m_{\mathrm{sc}}^3 [v] [\Upsilon] = O \biggl({\frac{\Lambda^2}{\log N}
+ (\log N) |[\Upsilon] |^2}\biggr).
\]
Recalling (\ref{Upsilonbound}), we therefore have
\begin{eqnarray*}
&&
(1 - m_{\mathrm{sc}}^2)[v]\\[-2pt]
&&\qquad = m_{\mathrm{sc}}^3 [v]^2 + m_{\mathrm{sc}}^2 [Z] \\[-2pt]
&&\qquad\quad{} + O
\biggl({\frac{(\log N)^\xi}{N} + \Psi^2 + (\log N) |[\Upsilon]
|^2 + \max_i |\Upsilon_i |^2 + \Lambda^3 + \frac
{\Lambda^2}{\log N}}\biggr)
\\[-2pt]
&&\qquad = m_{\mathrm{sc}}^3 [v]^2 + m_{\mathrm{sc}}^2 [Z] + O \biggl({\frac{(\log N)^{2
\xi+ 1}}{q^2} + (\log N)^{4 \xi+ 1} \Psi^2 + \frac{\Lambda^2}{\log
N}}\biggr),
\end{eqnarray*}
where we used that on $\Omega(z)$ we have $\Lambda\leq\Lambda_d
\leq(\log N)^{- \xi} \leq(\log N)^{-1}$.\vadjust{\goodbreak}
\end{pf}

Note that, together with (\ref{Ziapriori}), Lemma \ref
{lemmaexpandedself-consisteneq} implies a weak
self-consistent equation on $[v]$:
%
%
\begin{equation}\label{442}
(1 - m_{\mathrm{sc}}^2) [v] = m_{\mathrm{sc}}^3 [v]^2 + O\biggl({\frac{\Lambda
^2}{\log N}}\biggr) + O \biggl({\frac{(\log N)^\xi}{q} + (\log
N)^{2 \xi} \Psi}\biggr)\hspace*{-35pt}
\end{equation}
on $\Omega(z)$ with $(\xi,\nu)$-high probability. Here we used (\ref
{lowerboundond}) and (\ref{lowerboundonL}). For the proof of the
weak semicircle law, Theorem~\ref{WLSCTHM}, we shall only use the
weaker form (\ref{442}) of the self-consistent
equation.

\subsection{\texorpdfstring{Initial estimates for large $\eta$}
{Initial estimates for large eta}} \label{sectinitialestimates}
In order to get the continuity argument of Section~\ref{subseccon}
started, we need some initial estimates on
$\Lambda_d + \Lambda_o$ for large~$\eta$. In other words, we need to
prove that $\Omega(E + \ii\eta)$ is an event of
high probability for $\eta\sim1$.
%
%
\begin{lemma}\label{5992}
Let $ \eta\geq2$. Then for $z = E + \ii\eta\in D_L$ we have
\[
\Lambda_d(z) + \Lambda_o(z) \leq\frac{C (\log N)^\xi}{q} +
\frac{C (\log N)^{2 \xi}}{\sqrt{N}} \leq C (\log
N)^{- 2\xi}
\]
with $(\xi,\nu)$-high probability.
\end{lemma}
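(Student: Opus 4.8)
The plan is to prove Lemma \ref{5992} by a direct estimate of all resolvent entries for $\eta \geq 2$, exploiting the fact that for such large $\eta$ the resolvent is controlled by the trivial bound $\norm{G(z)} \leq \eta^{-1} \leq 1/2$, so one is in a perturbative regime and no continuity argument is needed. Concretely, I would start from the Schur-complement identities of Lemma \ref{lemma: resolvent id}. For the diagonal entries, \eqref{Gijformula} gives $G_{ii} = (h_{ii} - z - Z_{ii})^{-1}$; since $|z| \geq \eta \geq 2$ and one can show $|h_{ii} - Z_{ii}|$ is small (using Lemma \ref{lemma: Omegah} for $h_{ii}$ and the large deviation bounds for $Z_{ii}$), this already pins down $G_{ii}$ up to a small error. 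For the off-diagonal entries, \eqref{Gijformula} gives $G_{ij} = -G_{ii} G_{jj}^{(i)}(h_{ij} - Z_{ij})$, and here $|G_{ii}|, |G_{jj}^{(i)}| \leq 2$ for free, $|h_{ij}| \leq C/q$ by Lemma \ref{lemma: Omegah}, and $|Z_{ij}|$ is bounded by the large deviation estimate \eqref{abB} together with the crude bound $\sum_{k,l}^{(ij)} |G_{kl}^{(ij)}|^2 = \eta^{-1} \sum_k^{(ij)} \im G_{kk}^{(ij)} \leq N/\eta^2 \leq N/4$, coming from \eqref{ward} applied to the minor.

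The key steps, in order, would be: (i) record the deterministic a priori bound $\max_{\bb T, i,j} |G_{ij}^{(\bb T)}(z)| \leq \eta^{-1} \leq 1/2$ valid for all $z$ with $\eta \geq 2$, directly from $\norm{(H^{(\bb T)} - z)^{-1}} \leq (\im z)^{-1}$; (ii) invoke Lemma \ref{lemma: Omegah} to get $|h_{ij}| \leq C/q$ \hp{\xi}{\nu} for all $i,j$; (iii) use the large deviation bounds of Lemma \ref{lemma: LDE} — specifically \eqref{aaBd}, \eqref{aaBo} for $Z_{ii}$ and \eqref{abB} for $Z_{ij}$ — together with the Ward-identity bound $\frac{1}{N^2}\sum_{k,l}^{(\bb T)} |G_{kl}^{(\bb T)}|^2 \leq \frac{1}{N\eta}\cdot\frac{1}{\eta}\cdot\max_k \im G_{kk}^{(\bb T)} \leq \frac{C}{N\eta^2} \cdot N = O(1/\eta^2)$ to conclude $\max_{i,j}|Z_{ij}| \leq C(\log N)^{2\xi}(q^{-1} + N^{-1/2})$ \hp{\xi}{\nu}; (iv) feed these into \eqref{Gijformula} to obtain $\Lambda_o \leq C(\log N)^{2\xi}(q^{-1} + N^{-1/2})$ and $|G_{ii} - (-z)^{-1}| \leq C(\log N)^{2\xi}(q^{-1} + N^{-1/2})$; (v) compare $(-z)^{-1}$ with $m_{sc}(z)$ for $\eta \geq 2$ — from \eqref{identity of msc}, $m_{sc} = -(z+m_{sc})^{-1}$ and $|m_{sc}| \leq 1/(\eta - |m_{sc}|) \leq C/\eta$, so $|m_{sc} - (-z)^{-1}| = |m_{sc}|^2/|z + m_{sc}| \leq C/\eta^3 = O(1)$; combining, $\Lambda_d \leq C(\log N)^{2\xi}(q^{-1} + N^{-1/2}) + O(1/\eta^3)$, but actually one wants the bound to go to zero, which it does once we also track that $\eta$ can be taken large or absorb $O(1/\eta^3)$ into the statement — here I note the lemma's RHS $C(\log N)^\xi/q + C(\log N)^{2\xi}/\sqrt N$ is already $o(1)$ by \eqref{lower bound on d}, so one should more carefully keep $z$ generic with $\eta \geq 2$, $\eta \leq 3$, and obtain the stated form.

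Actually, re-examining: for $\eta \geq 2$ one does \emph{not} automatically have $|m_{sc} - (-1/z)| $ small, since $\eta$ ranges only in $[2,3]$. The correct approach is to not compare with $-1/z$ at all, but rather to prove directly that $G_{ii}$ satisfies an approximate version of \eqref{self-consistent} and bootstrap within a single step: writing $G_{ii}^{-1} = -z - m_{sc} - ([v] - \Upsilon_i)$ via Lemma \ref{lemma: self-consistent equation}, and controlling $|\Upsilon_i| \leq C(\log N)^{2\xi}(q^{-1} + N^{-1/2})$ as above (the term $\cal A_i$ is $O(\Lambda_o^2)$ which is fine), I get that $v_i = G_{ii} - m_{sc}$ satisfies $v_i = m_{sc}^2([v] - \Upsilon_i) + O(([v]-\Upsilon_i)^2)$ — but this expansion needs $|v_i|$ small, which is not yet known. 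Instead, the cleanest route for $\eta \geq 2$: use $|1 - m_{sc}(z)^2| \sim \sqrt{\kappa_E + \eta} \gtrsim \sqrt{2}$ from Lemma \ref{lemma: msc}, so the self-consistent equation for $[v]$ is \emph{strictly stably invertible} with no smallness assumption. From \eqref{Gijformula}, $G_{ii}^{-1} + z + m_{sc} = -([v] - \Upsilon_i) + (\text{the }\cal A_i\text{ correction})$; multiplying through and using $|G_{ii}| \leq 1/2$, $|m_{sc}| \leq C$, $\sum_j|G_{ij}|^2 = \eta^{-1}\im G_{ii} \leq C$, one derives a closed inequality $\Lambda \leq C|m_{sc}|^2 \Lambda_o^2 / |1 - m_{sc}^2| + \ldots \leq C(\log N)^{2\xi}(q^{-1} + N^{-1/2}) + C\Lambda^2$, and since the linear-in-$\Lambda$ coefficient is absent and $\Lambda \leq |G_{11}| + |m_{sc}| \leq C$ is bounded, a soft argument (or just: the RHS forces $\Lambda$ small directly since there is no $\Lambda$ on the RHS except quadratically and $\Lambda$ is bounded, so $\Lambda \leq C\varepsilon + C\Lambda^2$ with $\varepsilon$ tiny and $\Lambda \leq C$ gives $\Lambda \leq C\varepsilon$ unless $\Lambda$ is bounded below, which is excluded by choosing... ) — in fact the honest fix is that $\Lambda_o$ and $\Lambda_d$ are controlled purely by $q^{-1}, N^{-1/2}$ and large deviation estimates \emph{without reference to the self-consistent equation at all}, as in steps (i)–(iv), and then $\Lambda = |m - m_{sc}|$ follows from $|m - (-1/z)| \leq \Lambda_d + $ small and a separate elementary check. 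The main obstacle is thus purely bookkeeping: getting the clean statement $\Lambda_d + \Lambda_o \leq C(\log N)^\xi q^{-1} + C(\log N)^{2\xi} N^{-1/2}$ without the continuity machinery, which I expect follows essentially verbatim from the arguments in Lemmas \ref{lemma: off-diag estimate} and \ref{lemma: diag estimate} once the event $\Omega(z)$ is replaced by the \emph{deterministic} bound $|G_{ij}^{(\bb T)}| \leq \eta^{-1} \leq 1/2$ available for $\eta \geq 2$.
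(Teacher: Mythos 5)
Your proposal correctly identifies the two main technical ingredients — the deterministic bound $|G_{ij}^{(\mathbb T)}|\le\eta^{-1}\le1/2$ for $\eta\ge2$ as a substitute for $\Omega(z)$, and the large-deviation estimates for $Z_{ij}$ — and these are exactly the ingredients the paper uses. But the argument never actually closes, and the final ``honest fix'' is wrong. First, your step (iv) as stated is false: $G_{ii}^{-1}=h_{ii}-z-Z_{ii}$ where $\E_iZ_{ii}=m^{(i)}$ is order one for $\eta\in[2,3]$, so $G_{ii}$ is \emph{not} close to $-1/z$; it is close to $(-z-m^{(i)})^{-1}$. You partially notice this (the $|m_{sc}-(-1/z)|=O(1)$ observation), but your closing paragraph reverts to claiming $\Lambda_d$ follows from ``steps (i)–(iv)'' without the self-consistent equation, with $\Lambda$ coming from ``$|m-(-1/z)|\le\Lambda_d+$ small and a separate elementary check.'' This is circular ($\Lambda_d$ already requires comparing $G_{ii}$ to $m_{sc}$) and, more to the point, the ``elementary check'' fails for exactly the reason you yourself flagged. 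Lemma \ref{lemma: diag estimate} only controls $\max_i|G_{ii}-m|$; passing to $\Lambda_d=\max_i|G_{ii}-m_{sc}|$ requires controlling $\Lambda=|m-m_{sc}|$, and for that you cannot avoid the self-consistent equation.

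The correct closing step — which is what the paper does, and which you glance at in your middle paragraph with the ``strictly stably invertible'' remark but then abandon — is a \emph{linear} contraction, not a quadratic bootstrap. Write $v_i = \dfrac{[v]-\Upsilon_i}{(z+m_{sc}+[v]-\Upsilon_i)(z+m_{sc})}$, note that for $\eta\ge2$ one has $|z+m_{sc}|=|m_{sc}|^{-1}\ge2$ and $|[v]|\le|m|+|m_{sc}|\le1$ deterministically, so the denominator is $\ge(2-1-o(1))\cdot2\ge3/2$. Then $|v_i|\le\tfrac{2}{3}\bigl(|[v]|+|\Upsilon_i|\bigr)\le\tfrac{2}{3}\bigl(\Lambda_d+\max_i|\Upsilon_i|\bigr)$, and taking the max over $i$ gives $\Lambda_d\le\tfrac{2}{3}\Lambda_d+O(\text{small})$, which solves cleanly. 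Your quadratic inequality $\Lambda\le\varepsilon+C\Lambda^2$ \emph{could} also be made to close (for $\eta\ge2$ the a priori bound $\Lambda\le1$ combined with the coefficient $|m_{sc}|^3/|1-m_{sc}^2|\le1/6<1$ gives a contraction), but you did not supply this constant-tracking and instead ended the paragraph with ``excluded by choosing\ldots'' and trailed off. As written, the proposal has a genuine gap at precisely the step where the self-consistent equation must be inverted.
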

\begin{pf}
Fix $z = E + \ii\eta\in D_L$ with $\eta\geq2$.
We shall repeatedly make use of the trivial estimates
%
%
\begin{equation} \label{trivialresolventestimates}
\bigl| G_{ij}^{(\bb T)} \bigr|\leq\frac{1}{\eta},\qquad
\bigl| m^{(\bb T)} \bigr|\leq\frac{1}{\eta},\qquad
| m_{\mathrm{sc}} |\leq\frac{1}{\eta},
\end{equation}
where $\bb T \subset\{1,\ldots, N\}$ is arbitrary. These estimates
follow immediately from the definitions of $G^{(\bb
T)}$ and $m_{\mathrm{sc}}$.

We begin by estimating $\Lambda_o$.
For $i \neq j$ we get, following the calculation in (\ref
{estimateofGij}) and recalling (\ref{ward}), with $(\xi,\nu)$-high probability,
\[
| G_{ij} |\leq\frac{C}{q} + o(1) \Lambda_o + C (\log N)^{2
\xi} \sqrt{ \frac{\im m^{(ij)}}{N \eta}} \leq
\frac{C}{q} + o(1) \Lambda_o + \frac{C (\log N)^{2 \xi}}{\sqrt
{N}}.
\]
Taking the maximum over $i \neq j$ yields with $(\xi,\nu)$-high probability
\[
\Lambda_o \leq\frac{C}{q} + \frac{C (\log N)^{2\xi}}{\sqrt
{N}}.
\]

What remains is an estimate on $\Lambda_d$. We begin by estimating
with $(\xi,\nu)$-high probability
\[
|\Upsilon_i |\leq\frac{C}{q} + | Z_i |+
|\cal A_i |.
\]
In order to estimate $|\cal A_i |$, we observe that (\ref
{Gijformula}) implies
\[
\frac{G_{ij}}{G_{ii}} = - G_{jj}^{(i)} (h_{ij} - Z_{ij})\qquad (i
\neq j).\vadjust{\goodbreak}
\]
Therefore, we have with $(\xi,\nu)$-high probability
%
%
\begin{eqnarray} \label{Aestimateforlargeeta}
|\cal A_i |&\leq&\frac{1}{N} | G_{ii} |+
\frac{1}{N}
\sum_{j}^{(i)} \bigl| G^{(i)}_{jj} \bigr|| G_{ji} |
(| h_{ij} |+ | Z_{ij} |) \nonumber\\[-8pt]\\[-8pt]
&\leq&\frac{C}{N} + C
\Lambda_o
\biggl({\frac{1}{q} + \sup_{i \neq j} | Z_{ij} |}\biggr)
\leq\frac{C}{q},\nonumber
\end{eqnarray}
where we used that with $(\xi,\nu)$-high probability
\[
| Z_{ij} |\leq(\log N)^{2 \xi} \biggl[{\frac{C}{q^2} +
\frac{\Lambda_o}{q} + \frac{C}{\sqrt{N}}}\biggr]
\]
as follows from (\ref{abB}) and (\ref{ward}). Similarly, from (\ref
{definitionofZi}) and using (\ref{aaBd}) and
(\ref{aaBo}), we find with $(\xi,\nu)$-high probability
\[
| Z_i |\leq\frac{C (\log N)^\xi}{q} + \frac{C (\log N)^{2
\xi}}{\sqrt{N}}.
\]
Thus we have proved that with $(\xi,\nu)$-high probability $|
\Upsilon_i |\leq C
(\log N)^\xi q^{-1} + C (\log N)^{2 \xi} N^{-1/2}$.

Next, using (\ref{identityofmsc}), we write the self-consistent
equation (\ref{self-consistent}) in the form
%
%
\begin{equation} \label{self-consistentequationlargeeta}
v_i = \frac{[v] - \Upsilon_i}{(z + m_{\mathrm{sc}} + [v] - \Upsilon_i)(z
+ m_{\mathrm{sc}})}.
\end{equation}
The denominator of (\ref{self-consistentequationlargeeta}) is with
$(\xi,\nu)$-high probability larger in absolute value than
\[
\bigl({2 - 1 - O\bigl({(\log N)^{\xi} q^{-1} + (\log N)^{2 \xi}
N^{-1/2}}\bigr)}\bigr) 2 \geq3/2,
\]
since $| z + m_{\mathrm{sc}} |= | m_{\mathrm{sc}} |^{-1} \geq2$ and
$|[v] |
\leq1$ by (\ref{trivialresolventestimates}). Thus,
\[
| v_i |\leq\frac{\Lambda_d + O({(\log N)^{\xi}
q^{-1} + (\log N)^{2 \xi} N^{-1/2}})}{3/2},
\]
which yields, after taking the maximum over $i$,
\[
\Lambda_d \leq\frac{\Lambda_d + O({(\log N)^{\xi}
q^{-1} + (\log N)^{2 \xi} N^{-1/2}})}{3/2}.
\]
This completes the estimate of $\Lambda_d$, and hence the proof.
\end{pf}

\subsection{\texorpdfstring{Dichotomy argument for $\Lambda$}
{Dichotomy argument for Lambda}}\label{subsecdic}
The following dichotomy argument serves as the basis for the continuity
argument of Section~\ref{subseccon}.

We introduce the control parameters
%
%
\begin{equation}\label{58}
\alpha\deq\biggl|\frac{
1-m^2_{\mathrm{sc}}}{m_{\mathrm{sc}}^{3}}
\biggr|,\qquad
\beta\deq\frac{(\log N)^{\xi}}{\sqrt{q}} + \frac
{(\log N)^{4 \xi/3}}{(N \eta)^{1/3}},
\end{equation}
where $\al=\al(z)$ and $\beta=\beta(z)$ depend on the spectral
parameter $z$.
For
any $z\in D_L$ we have the bound $\beta\leq(\log N)^{-\xi}$.

From Lemma~\ref{lemmamsc} it also follows that there is a constant
constant $K\geq1$, depending only on $\Sigma$, such that
%
%
\begin{equation}\label{Kdef}
\frac{1}{K}\sqrt{\kappa+\eta} \leq\alpha(z) \leq K\sqrt
{\kappa+\eta}
\end{equation}
for any $z\in D_L$.

We shall fix $E$ and vary $\eta$ from $2$ down to $(\log N)^L N^{-1}$.
Since $\sqrt{\kappa+ \eta}$ is increasing and $\beta(E + \ii\eta
)$ decreasing in $\eta$, we find that, for any $U >
1$, the equation $\sqrt{\kappa+\eta}= 2 U^2K\beta(E + \ii\eta)$
has a unique solution $\eta$, which we denote by
$\wt\eta=\wt\eta(U, E)$ (recall that $\kappa= || E |
- 2 |$
is independent of $\eta$). Moreover, it is easy to see
that for any fixed $U$ we have
%
%
\begin{equation}\label{teta}
\wt\eta\ll1.
\end{equation}

%
\begin{lemma}[(Dichotomy)]\label{592}
There exists a constant $U_0$ such that, for any fixed $U\geq U_0$,
there exists a constant $C_1(U)$, depending only on
$U$, such that the following estimates hold for any $z = E + \ii\eta
\in D_L$:
%
%
\begin{eqnarray}
\label{81}
\Lambda(z) & \leq & U \beta(z) \quad\mbox{or}\quad \Lambda
(z) \geq\frac{\alpha(z)}{ U }
\qquad\mbox{if } \eta\geq\wt\eta(U,E),\\
\label{82}
\Lambda(z) & \leq & C_1(U) \beta(z) \qquad\mbox{if } \eta<
\wt\eta(U,E)
\end{eqnarray}
on $\Omega(z)$ with $(\xi,\nu)$-high probability and for
sufficiently large $N$.
\end{lemma}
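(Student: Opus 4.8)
The plan is to read \eqref{442} as a perturbed quadratic equation for the scalar $[v]$ and run a stability/dichotomy analysis. First I would divide \eqref{442} by $m_{sc}^3$ and set $a\deq(1-m_{sc}^2)/m_{sc}^3$, so that $\abs{a}=\alpha$ by \eqref{58} and $\abs{m_{sc}}\sim1$ by \eqref{bounds on msc}. This gives, on $\Omega(z)$ \hp{\xi}{\nu}, the identity $[v]\,(a-[v])=\mathcal{E}$ with
\begin{equation*}
\abs{\mathcal{E}}\;\leq\;\frac{C\Lambda^2}{\log N}\,+\,C\,\frac{(\log N)^\xi}{q}\,+\,C\,(\log N)^{2\xi}\,\Psi\,.
\end{equation*}
Taking moduli and using $\abs{a-[v]}\geq\absb{\abs{a}-\abs{[v]}}=\abs{\alpha-\Lambda}$ produces the master inequality $\Lambda\,\abs{\alpha-\Lambda}\leq\abs{\mathcal{E}}$.

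Next I would convert the right-hand side into powers of $\beta$. From \eqref{58} one has $(\log N)^\xi q^{-1}\leq\beta^2$ and $(N\eta)^{-1}\leq(\log N)^{-4\xi}\beta^3$; inserting the latter into $\Psi=\sqrt{(\Lambda+\im m_{sc})/(N\eta)}$ (cf.\ \eqref{def Psi}) and using $\im m_{sc}\leq C\sqrt{\kappa+\eta}\leq CK\alpha$ from Lemma \ref{lemma: msc} and \eqref{Kdef} yields
\begin{equation*}
\Lambda\,\abs{\alpha-\Lambda}\;\leq\;\frac{C\Lambda^2}{\log N}\,+\,C\beta^2\,+\,C\beta^{3/2}\sqrt{\Lambda}\,+\,C\beta^{3/2}\sqrt{\alpha}\,.
\end{equation*}
In every case below $\Lambda$ will be at most a fixed multiple of $\alpha$ or of $\beta$, so the term $C\Lambda^2/\log N$ is dominated by the left-hand side for $N$ large and can be absorbed into it. The only delicate term is $C\beta^{3/2}\sqrt{\alpha}$, the edge-instability contribution stemming from $(\log N)^{2\xi}\sqrt{\im m_{sc}/(N\eta)}$, and the threshold $\wt\eta$ is arranged precisely to control it.

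For $\eta\geq\wt\eta(U,E)$ I would argue by contradiction: assume $U\beta<\Lambda<\alpha/U$. Then $\abs{\alpha-\Lambda}>\alpha/2$, hence $\Lambda\alpha/2\leq\abs{\mathcal{E}}$. From $\beta<\Lambda/U<\alpha/U^2$ the terms $C\beta^2$ and $C\beta^{3/2}\sqrt{\Lambda}$ are each a small fixed fraction of $\Lambda\alpha$ once $U$ is large; and from the defining relation $\sqrt{\kappa+\eta}\geq 2U^2K\beta$ of $\wt\eta$ together with \eqref{Kdef} one gets $\alpha\geq2U^2\beta$, which combined with $\Lambda>U\beta$ is exactly what is needed to bound $C\beta^{3/2}\sqrt{\alpha}$ by a small fraction of $\Lambda\alpha$ for $U\geq U_0$, with $U_0$ universal. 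Summing the four contributions then contradicts $\Lambda\alpha/2\leq\abs{\mathcal{E}}$, which proves \eqref{81}. For $\eta<\wt\eta(U,E)$ one has instead $\alpha\leq K\sqrt{\kappa+\eta}<2U^2K^2\beta$, so $C\beta^{3/2}\sqrt{\alpha}\leq C(U)\beta^2$, and the master inequality becomes $\Lambda\,\abs{\alpha-\Lambda}\leq C\Lambda^2(\log N)^{-1}+C(U)\beta^2+C\beta^{3/2}\sqrt{\Lambda}$. Assuming $\Lambda>C_1(U)\beta$ with $C_1(U)\geq 4U^2K^2$ forces $\Lambda>2\alpha$, hence $\abs{\alpha-\Lambda}>\Lambda/2$; absorbing the $\Lambda^2(\log N)^{-1}$ term leaves $\Lambda^2\leq C(U)\beta^2+C\beta^{3/2}\sqrt{\Lambda}$, and a short case split (according to which of the two terms on the right is larger) gives $\Lambda\leq C'(U)\beta$, contradicting the choice of $C_1(U)$ once it is taken larger than $C'(U)$. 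This proves \eqref{82}.

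The main obstacle is the single term $C\beta^{3/2}\sqrt{\alpha}$: it is comparable to $\Lambda\,\abs{\alpha-\Lambda}$ exactly when $\Lambda\sim\beta\sim\alpha$, that is near the spectral edge and near the crossover scale $\wt\eta$. The factor $2U^2K$ in the definition of $\wt\eta$ and the requirement $U\geq U_0$ are calibrated so that on $\{\eta\geq\wt\eta\}$ one has $\alpha$ much larger than $\beta$ with enough margin to absorb this term, while on $\{\eta<\wt\eta\}$ one has $\alpha$ of order at most $\beta$, so the term is harmless (of order $\beta^2$ up to a $U$-dependent constant). Everything else reduces to elementary estimates from the definitions of $\alpha$, $\beta$, $\Psi$ and the bounds of Lemma \ref{lemma: msc}.
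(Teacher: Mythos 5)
Your proof is correct and follows essentially the same strategy as the paper's: pass to the self-consistent relation \eqref{442}, normalize by $m_{sc}^3$, convert the error term into powers of $\beta$ using the bounds on $(\log N)^\xi q^{-1}$ and $(N\eta)^{-1}$ together with $\im m_{sc}\leq CK\alpha$, and then exploit the crossover scale $\wt\eta$ to compare $\alpha$ and $\beta$ in the two cases. The one cosmetic difference is in the bookkeeping of the fluctuation term: you carry $C\beta^{3/2}\sqrt{\Lambda}+C\beta^{3/2}\sqrt{\alpha}$ through the argument, forcing a short case split at the end of Case 2, whereas the paper applies AM--GM to linearize this into $C^*(\beta\Lambda+\alpha\beta+\beta^2)$ and then works with a purely quadratic inequality in $\Lambda$, making the resulting either--or statement immediate. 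Both bookkeepings yield identical constants up to inessential factors, and your master inequality $\Lambda\,\abs{\alpha-\Lambda}\leq\abs{\mathcal{E}}$ (obtained from $[v](a-[v])=\mathcal{E}$ via the reverse triangle inequality) is an equivalent repackaging of the paper's step $\alpha\Lambda\leq\Lambda^2+\abs{\mathcal{E}}$ (resp.\ $\Lambda^2\leq\alpha\Lambda+\abs{\mathcal{E}}$) in the two cases.
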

\begin{pf}
Fix $z = E + \ii\eta\in D_L$.
From (\ref{442}) and Lemma~\ref{lemmamsc} we find
\[
\frac{1 - m_{\mathrm{sc}}^2}{m_{\mathrm{sc}}^3} [v] = [v]^2 + O \biggl({\frac
{\Lambda^2 }{\log N}}\biggr) + O \biggl({\frac{(\log N)^\xi}{q} +
\sqrt{\beta^3 \Lambda+ \beta^3 \alpha}}\biggr)
\]
with $(\xi,\nu)$-high probability.
The third term on the right-hand side is bounded by $C^* (\beta\Lambda
+ \alpha\beta+ \beta^2)$ for some constant
$C^* \geq1$. We set $U_0 \deq9 (C^* + 1)$. We conclude that in
$\Omega(z)$ we have with $(\xi,\nu)$-high probability
%
%
\begin{equation} \label{dich}
\biggl|\frac{1 - m_{\mathrm{sc}}^2}{m_{\mathrm{sc}}^3} [v] - [v]^2 \biggr|
\leq O \biggl({\frac{\Lambda^2 }{\log N}}\biggr) + C^* (\beta
\Lambda+
\alpha\beta+ \beta^2).
\end{equation}

Depending on the size of $\beta$ relative to $\al$, which is
determined by $z$, we shall estimate either $ [v] $ or $
[v] ^2$ using (\ref{dich}). This gives rise to the two cases in
Lem\-ma~\ref{592}.

\textit{Case} 1: $\eta\geq\wt\eta$. From the definition of $\wt\eta$
and $C^*$ we find that
%
%
\begin{equation} \label{newalz}
\beta\leq\frac{\alpha}{2 U^2} \leq\frac{\alpha}{2 C^*}
\leq\alpha.
\end{equation}
Recalling that $\Lambda= |[v] |$, we therefore obtain from
(\ref
{dich}) with $(\xi,\nu)$-high probability
\[
\alpha\Lambda\leq2 \Lambda^2 + C^*(\beta\Lambda+ \alpha
\beta+ \beta^2) \leq2 \Lambda^2 + \frac{\alpha
\Lambda}{2} + 2 C^* \alpha\beta,\vadjust{\goodbreak}
\]
which gives
\[
\alpha\Lambda\leq4 \Lambda^2 + 4 C^* \alpha\beta.
\]
Thus, either $\alpha\Lambda/ 2 \leq4 \Lambda^2$ which implies
$\Lambda\geq\alpha/ 8 \geq\alpha/ U$, or $\alpha
\Lambda/ 2 \leq4 C^* \alpha\beta$ which implies $\Lambda\leq8 C^*
\beta\leq U \beta$. This proves (\ref{81}).

\textit{Case} 2: $\eta< \wt\eta$.
In this case the definition of $\wt\eta$ yields $\al\leq
2U^2K^2\beta$.
We express $ |[v] |^2 = \Lambda^2$ from
(\ref{dich}) and we get
%
%
\begin{equation}\label{13-2}
\Lambda^2 \leq2 \alpha\Lambda+ 2 C^* (\beta\Lambda+ \alpha
\beta+ \beta^2) \leq C' \beta\Lambda+ C'
\beta^2
\end{equation}
for some constant $C'$ depending on $U$. Now (\ref{82}) is an
immediate consequence.
\end{pf}

\subsection{\texorpdfstring{Continuity argument: Conclusion of the proof of Theorem \protect\ref{WLSCTHM}}
{Continuity argument: Conclusion of the proof of Theorem 3.1}}\label{subseccon}

We complete the proof of Theorem~\ref{WLSCTHM} using a continuity
argument in $\eta$ to go from $\eta= 2$ down to $\eta
= N^{-1} (\log N)^L$. We focus first on proving (\ref{weakSCL}).
We use Lemma~\ref{5992} for the initial estimate, and the dichotomy in
Lemma~\ref{592} to propagate a strong estimate on
$\Lambda$ to smaller values of $\eta$.

Choose a decreasing finite sequence $\eta_k$, $k=1,2,\ldots, k_0$,
satisfying $k_0\leq CN^{8}$, $|\eta_k-\eta_{k+1} |\leq
N^{-8}$, $\eta_1 = 2$, and $\eta_{k_0}= N^{-1}(\log N)^{L}$. We fix
$E \in[-\Sigma,\Sigma]$ and set $z_k \deq E + \ii\eta_k$.
Throughout this section we fix a $U\geq U_0$ in Lemma~\ref{592}, and
recall the definition of $\wt\eta(U,E)$ from Section
\ref{subsecdic}.

Consider first $z_1$. It is easy to see that, for large enough $N$, we
have $\eta_1\geq\wt\eta(U,E)$, for any $E \in
[-\Sigma,\Sigma]$. Therefore, Lemmas~\ref{5992} and~\ref{592} imply
that both $\Omega(z_1)$ and
\[
\Lambda(z_1) \leq U \beta(z_1)
\]
hold with $(\xi,\nu)$-high probability.
This estimate takes care of the initial point~$\eta_1$. The next lemma
extends this result to all $k \leq k_0$.
%
%
\begin{lemma}\label{lmind}
Define the event
\[
\Omega_k \deq\Omega(z_k) \cap\bigl\{{\Lambda(z_k) \leq
C^{(k)}(U) \beta(z_k)}\bigr\},
\]
where
\[
C^{(k)}(U) \deq
\cases{
U, &\quad if $\eta_k \geq\wt\eta(U,E)$,\cr
C_1(U), &\quad if $\eta_k < \wt\eta(U,E)$.}
\]
Then
%
%
\begin{equation} \label{continuityprobestimate}
\P(\Omega_k^c) \leq2 k \me^{-\nu(\log N)^\xi}.
\end{equation}
\end{lemma}
\begin{pf}
We proceed by induction on $k$. The case $k = 1$ was just proved. Let
us therefore assume that (\ref{continuityprobestimate}) holds for $k$.
We need to estimate
%
%
\begin{eqnarray} \label{Pk+1Pk}\qquad
\P(\Omega_{k+1}^c) &\leq&\P\bigl({\Omega_k \cap\Omega
(z_{k+1}) \cap\Omega_{k + 1}^c}\bigr) + \P\bigl({\Omega_k \cap
(\Omega(z_{k+1}))^c}\bigr)+
\P(\Omega_k^c)\nonumber\\[-8pt]\\[-8pt]
&=& B + A + \P(\Omega_k^c),\nonumber
\end{eqnarray}
where we defined
\begin{eqnarray*}
A & \deq & \P[{\Omega_k \cap\{{\Lambda_d(z_{k+1}) +
\Lambda_o(z_{k+1}) > (\log N)^{-\xi}}\}}],
\\
B & \deq & \P\bigl[{\Omega_k \cap\Omega(z_{k+1}) \cap\bigl\{
{\Lambda(z_{k+1}) > C^{(k+1)}(U) \beta(z_{k+1})}\bigr\}}\bigr].
\end{eqnarray*}
We begin by estimating $A$. For any $i,j$, we have
%
%
\begin{eqnarray} \label{w-z}
| G_{ij}(z_{k+1}) - G_{ij}(z_k) |&\leq&| z_{k+1} -
z_k |
\sup_{z \in D_L} \biggl|\frac{\partial G_{ij}(z)}{\partial z}
\biggr|\nonumber\\[-8pt]\\[-8pt]
&\leq& N^{-8} \sup_{z \in D_L}\frac{1}{(\im z)^2}
\leq N^{-6}.\nonumber
\end{eqnarray}
Therefore, by (\ref{Gijapriori}) and (\ref{436}), we have on
$\Omega_k$ with $(\xi,\nu)$-high probability
\begin{eqnarray*}
\Lambda_d(z_{k+1}) + \Lambda_o(z_{k+1}) &\leq&\Lambda_d(z_k) +
\Lambda_o(z_k) + 2 N^{-6} \\
&\leq& C \biggl({\frac{(\log N)^\xi
}{q} + (\log N)^{2 \xi} \Psi(z_k)}\biggr) + \Lambda(z_k)
\\
&\leq&
C \beta(z_k) \ll(\log N)^{-\xi}.
\end{eqnarray*}
Thus, we find that $A \leq\me^{- \nu(\log N)^\xi}$.

Next, we estimate $B$. Suppose first that $\eta_k \geq\wt\eta(U,
E)$. Then, similarly to (\ref{w-z}), we find
$|\Lambda(z_{k+1}) - \Lambda(z_k) |\leq N^{-6}$. Thus, we find
on $\Omega_k$ with $(\xi,\nu)$-high probability
%
%
\begin{equation} \label{611}
\Lambda(z_{k+1}) \leq\Lambda(z_k) + N^{-6} \leq U \beta
(z_k) + N^{-6} \leq\frac{3 U}{2} \beta(z_{k+1}).
\end{equation}
Suppose now that $\eta_{k+1} \geq\wt\eta(U, E)$. Then from (\ref
{611}) and (\ref{newalz}) we find $\Lambda(z_{k+1}) <
\frac{\alpha(z_{k+1})}{U}$. Now the dichotomy of (\ref{81}) yields
on $\Omega_k \cap\Omega(z_{k+1})$ with $(\xi,\nu)$-high
probability that
$\Lambda(z_{k+1}) \leq U \beta(z_{k+1})$. On the other hand, if $\eta
_{k+1} < \wt\eta(U, E)$, then (\ref{611})
immediately implies $\Lambda(z_{k+1}) \leq C_1(U) \beta(z_{k+1})$.
This concludes the proof of $B \leq\me^{-\nu(\log
N)^\xi}$ if $\eta_k \geq\wt\eta(U,E)$.

Finally, suppose that $\eta_k < \wt\eta(U,E)$. Thus, we also have
$\eta_{k+1} < \wt\eta(U,E)$. In this case we
immediately get from (\ref{82}) on $\Omega(z_{k+1})$ with $(\xi,\nu
)$-high probability
$\Lambda(z_{k+1}) \leq C_1(U) \beta(x_{k+1})$.

We have therefore proved, for all $k$, that $\P(\Omega_{k+1}^c)
\leq2 \me^{-\nu(\log N)^\xi} + \P(\Omega^c_k)$,
and the claim follows.
\end{pf}

In order to complete the proof of Theorem~\ref{WLSCTHM}, we invoke the
following simple lattice argument which
strengthens the result of Lemma~\ref{lmind} to a statement uniform in
$z \in D_L$. The main ingredient is the Lipschitz
continuity of the map $z \mapsto G_{ij}(z)$, with a Lipschitz constant
bounded by $\eta^{-2}\leq N^2$.
%
%
\begin{corollary}\label{cor414}
There is a constant $C$ such that
%
%
\begin{equation} \label{latticeestimate}
\P\biggl[{\bigcup_{z \in D_L} ({\Omega(z)})^c}\biggr]
+ \P\biggl[{\bigcup_{z \in D_L} \{{\Lambda(z) > C \beta
(z)}\}}\biggr] \leq\me
^{-\nu
(\log N)^\xi}.
\end{equation}
\end{corollary}
\begin{pf}
Take a lattice $\cal L \subset D_L$ such that $|\cal L |\leq C
N^6$ and for any $z\in D_L$ there is a $\tilde z \in
\cal L$ satisfying $| z - \tilde z |\leq N^{-3}$. From the
definition of $G$ it is easy to see that for $z, \tilde
z\in D_L$
%
%
\begin{equation} \label{net1}
| G_{ij}(z) - G_{ij}(\tilde z) |\leq\eta^{-2} | z
- \tilde z |\leq\frac{1}{N}.
\end{equation}
The same bound holds for $| m(z) - m(\tilde z) |$. Moreover, Lemma
\ref{lmind} immediately yields
%
%
\begin{equation} \label{net2}
\P\biggl[{\bigcap_{\tilde z \in\cal L} \biggl\{{\Lambda(\tilde z)
\leq\frac{C}{2} \beta(\tilde z)}\biggr\}}\biggr] \geq1 - \me
^{-\nu
(\log N)^\xi}
\end{equation}
for some $C$ large enough and some $\nu> 0$. From (\ref{net1}),
(\ref{net2}) and $N^{-1} \ll\beta(z)$ we get
\[
\P\biggl[{\bigcup_{z \in D_L} \{{\Lambda(z) > C \beta
(z)}\}}\biggr] \leq
\me^{-\nu(\log N)^\xi}.
\]
The first term of (\ref{latticeestimate}) is estimated similarly.
\end{pf}

We have proved (\ref{weakSCL}). In order to prove (\ref{weakGij}),
we note that (\ref{weakSCL}), (\ref{Gijapriori})
and (\ref{latticeestimate}) imply
\[
\Lambda_o(z) \leq\frac{C}{q} + \frac{C (\log N)^{2 \xi
}}{\sqrt{N \eta}}
\]
with $(\xi,\nu)$-high probability.
Now a lattice argument analogous to Corollary~\ref{cor414} yields
(\ref{weakGij}). The diagonal estimate (\ref{weakGii}) follows
similarly using (\ref{lambdadleqlambda}). This
concludes the proof of Theorem~\ref{WLSCTHM}.

\section{\texorpdfstring{Proof of Theorem \protect\ref{LSCTHM}}{Proof of Theorem 2.8}}
In the previous section we proved Theorem~\ref{WLSCTHM}, which is
weaker than the main result Theorem~\ref{LSCTHM}
(strong local semicircle law),
but will be used as an a priori bound in the proof of Theorem \ref
{LSCTHM}. The key ingredient that allows us to
strengthen Theorem~\ref{WLSCTHM} to Theorem~\ref{LSCTHM} is the
following lemma, which shows that $[Z]$, the average of
the $Z_i$'s, is much smaller than that of a typical~$Z_i$.
(Notice that in the proof of Theorem~\ref{WLSCTHM}, to arrive at
(\ref{442}), $[Z]$ was estimated
by the same quantity as each
individual~$Z_i$.)
This lemma is analogous to Lemma 5.2 in~\cite{EYY2} and Corollary 4.2
in~\cite{EYYrigidity}, but we will present a new
proof (in Section~\ref{sectproofof41}), which admits sparse matrix
entries and effectively tracks the dependence of
the exponent~$p$. Our new proof is based on an abstract decoupling
result, Theorem~\ref{abstractZlemma} below, which is
useful in other contexts as well, such as for proving Proposition~\ref
{propaverageforedge} below.
%
%
\begin{lemma}\label{zlemmasparse}
Recall the notation $[Z] = \frac1N \sum_i Z_i$. Suppose that $\xi$
satisfies (\ref{boundsonxi}),
$q\geq(\log N)^{5\xi}$ and that there exists
$\wt D\subset D_L$ with $L\geq14\xi$ such that we have with $(\xi
,\nu)$-high probability
%
%
\begin{equation}\label{111}
\Lambda(z) \leq\gamma(z) \qquad\mbox{for } z \in\wt D,
\end{equation}
where $\gamma$ is a deterministic function satisfying $\gamma(z)\leq
(\log N)^{-\xi}$. Then we have
with $(\xi-2,\nu)$-high probability
%
%
\begin{eqnarray}\label{reszlemmasparse}
|[Z](z) |\leq(\log N)^{14\xi} \biggl(\frac
1{q^2}+\frac
1{(N\eta)^2}
+ (\log N)^{ 4\xi}\frac{\im m_{\mathrm{sc}} (z)
+\gamma(z)}{N\eta}\biggr)\nonumber\\[-8pt]\\[-8pt]
&&\eqntext{\mbox{for } z \in\wt D.}
\end{eqnarray}
In particular, by (\ref{finalself-consistentequation}), we have with
$(\xi-2,\nu)$-high probability
%
%
\begin{eqnarray}\label{reszlemmasparse2}\qquad
&&
\biggl|\frac{1 - m_{\mathrm{sc}}^2}{m_{\mathrm{sc}}^3} [v] -
[v]^2\biggr|\nonumber\\[-8pt]\\[-8pt]
&&\qquad
\leq C \frac{\Lambda^2}{\log N}
+ C (\log N)^{14\xi} \biggl(\frac1{q^2}+\frac1{(N\eta)^2}
+ (\log N)^{ 4\xi}\frac{\im
m_{\mathrm{sc}}+\gamma}{N\eta}\biggr)\nonumber
\end{eqnarray}
for any value of the spectral parameter $z\in\wt D$.
\end{lemma}

The proof of Lemma~\ref{zlemmasparse} is given in Section \ref
{sectionZ-lemma}. In this section we use it to prove
Theorem~\ref{LSCTHM} and to derive an estimate on $\| H \|$ (Lemma
\ref{lemmaboundonHtilde}).

The basic idea behind the proof of Theorem~\ref{LSCTHM} using Lemma
\ref{zlemmasparse} is to iterate
(\ref{reszlemmasparse}) in order to obtain successively better
estimates for $\Lambda$. Each step of the iteration
improves the power $1 - \tau$ of the control parameter $(q^{-1} + (N
\eta)^{-1})^{1 - \tau}$. The iteration is started
with the weak local semicircle law, Theorem~\ref{WLSCTHM}, which
yields $1 - \tau= 1/3$. At each step of the iteration,
$\tau$ is halved at the expense of reducing the parameter $\xi$ to
$\xi- 2$, thus reducing the probability on which the
estimate holds. This iteration procedure is repeated an order $\log
\log N$ times, which allows us effectively to reach
$\tau= 0$.

The iteration step is based on the following lemma, which is entirely
deterministic.
%
%
\begin{lemma}\label{tauhalftau}
Let $1\leq\xi_1\leq\xi_2$ and $q>1$. Let $0 < \tau< 1$ and $L >
1$. Suppose that there is a number $\gamma(z)$
satisfying
%
%
\begin{equation}\label{112}
\gamma(z) \leq(\log N)^{19 \xi_2}\biggl(\frac1q+\frac
{1}{N\eta}\biggr)^{1-\tau} \qquad\mbox{for } z \in D_L
\end{equation}
such that (\ref{111}) holds with $\wt D \deq D_L$.
We also assume that
%
%
\begin{eqnarray}\label{reszlemmasparse3}
&&\biggl|\frac{1 - m_{\mathrm{sc}}^2}{m_{\mathrm{sc}}^3} [v] - [v]^2\biggr| \nonumber\\
&&\qquad\leq C
\frac{\Lambda^2}{\log N}
+ C (\log N)^{14\xi_1} \biggl(\frac1{q^2}+\frac1{(N\eta)^2}
+ (\log N)^{ 4\xi_1}\frac{\al+\gamma}{N\eta}\biggr) \\
&&\eqntext{\mbox
{for } z \in
D_L,}
\end{eqnarray}
where $\alpha$ was defined in (\ref{58}). Finally, we assume that if
$\eta\sim1$, then
%
%
\begin{equation}\label{iLam1}
\Lambda(z) \ll1.
\end{equation}
Then we have
%
%
\begin{equation}\label{yrbj}
\Lambda(z) \leq(\log N)^{19 \xi_2}\biggl(\frac1q+\frac
{1}{N\eta}\biggr)^{1-\tau/2}
\end{equation}
for $z \in D_L$ and large enough $N$.
\end{lemma}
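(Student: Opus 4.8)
The plan is to turn the quadratic estimate \eqref{reszlemmasparse3} into an elementary inequality for a single complex number and then read off the improved exponent by bookkeeping powers of $\Phi := q^{-1}+(N\eta)^{-1}$. We may assume $\Phi\le1$, since otherwise $\Phi^{1-\tau/2}\ge\Phi^{1-\tau}$ and \eqref{yrbj} already follows from \eqref{111}--\eqref{112}. Set $w:=[v]$ and $a:=(1-m_{sc}^2)/m_{sc}^3$, so $\Lambda=\abs{w}$ and, by \eqref{Kdef} and Lemma~\ref{lemma: msc}, $\alpha=\abs{a}\sim\sqrt{\kappa_E+\eta}$. Since $\absb{\abs{w}-\abs{a}}\le\abs{w-a}$, \eqref{reszlemmasparse3} gives
\begin{equation*}
\Lambda\,\absb{\Lambda-\alpha}\;\le\;\delta\;:=\;C\frac{\Lambda^2}{\log N}+\delta'\,,\qquad
\delta'\;:=\;C(\log N)^{14\xi_1}\pbb{\frac{1}{q^2}+\frac{1}{(N\eta)^2}+(\log N)^{4\xi_1}\frac{\alpha+\gamma}{N\eta}}\,.
\end{equation*}
Abbreviate the target $T:=(\log N)^{19\xi_2}\Phi^{1-\tau/2}$, and recall from \eqref{111}--\eqref{112} that $\Lambda\le\gamma\le\Gamma:=(\log N)^{19\xi_2}\Phi^{1-\tau}$, with $T\le\Gamma$.

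The mechanism producing the gain from $1-\tau$ to $1-\tau/2$ is the inequality $(N\eta)^{-1}\le\Phi$: whenever $\alpha\le2\Gamma$ (so $\alpha+\gamma\le3\Gamma$) one has $\frac{\alpha+\gamma}{N\eta}\le3\Gamma\Phi=3(\log N)^{19\xi_2}\Phi^{2-\tau}$, which together with $q^{-2}+(N\eta)^{-2}\le\Phi^2\le\Phi^{2-\tau}$ yields $\delta'\le C'(\log N)^{18\xi_1+19\xi_2}\Phi^{2-\tau}$; hence $\sqrt{\delta'}\le\tfrac12 T$ for $N$ large, since $9\xi_1+\tfrac{19}{2}\xi_2<19\xi_2$. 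A parallel computation, also using $\Phi\ge q^{-1}$ and $\Phi\ge(N\eta)^{-1}$ (so that $q^2T^2\ge(\log N)^{38\xi_2}$ and $(N\eta)^2T^2\ge(\log N)^{38\xi_2}$), shows $\frac{C\delta'}{\alpha}\le\tfrac12 T$ whenever $\alpha\ge T$; the only delicate term there is $(\log N)^{18\xi_1}\alpha^{-1}\cdot\frac{\alpha}{N\eta}=(\log N)^{18\xi_1}(N\eta)^{-1}\le(\log N)^{18\xi_1}\Phi$, in which the factor $\alpha^{-1}$ cancels.

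With these two bounds in hand I would settle pointwise every $z\in D_L$ for which $\Lambda(z)\notin\bigl(\tfrac12\alpha(z),2\alpha(z)\bigr)$. If $\Lambda\le\tfrac12\alpha$, then either $\alpha\le T$, so $\Lambda\le\tfrac12\alpha\le T$, or $\alpha>T$, so $\absb{\Lambda-\alpha}\ge\tfrac12\alpha$ and $\Lambda\le\frac{2\delta}{\alpha}$; since $\Lambda^2/\alpha\le\tfrac12\Lambda$ the self-referential term is absorbed and $\Lambda\le\frac{4\delta'}{\alpha}\le T$. If $\Lambda\ge2\alpha$, then $\alpha\le\tfrac12\gamma\le\Gamma$ and $\absb{\Lambda-\alpha}\ge\tfrac12\Lambda$, so $\tfrac12\Lambda^2\le\delta$; absorbing $C\Lambda^2/\log N$ yields $\Lambda^2\le4\delta'$, i.e.\ $\Lambda\le2\sqrt{\delta'}\le T$. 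In either case $\Lambda(z)\le T$.

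The residual case $\tfrac12\alpha<\Lambda<2\alpha$ forces $\alpha<2\Lambda\le2\Gamma$ -- this is the near-edge, small-$\eta$ regime, and it is the genuine obstacle, since there the equation only delivers $\Lambda\le\alpha\bigl(1+O(1/\log N)\bigr)+2\delta'/\alpha$, which beats $T$ only once $\alpha\lesssim T$. I would rule it out by a continuity argument in $\eta$ in the spirit of Section~\ref{subsec: con}: fix $E$, use that $z\mapsto\Lambda(z)$ is Lipschitz with constant at most $\eta^{-2}$, and decrease $\eta$ from $2$; at $\eta=2$ the assumption \eqref{iLam1} gives $\Lambda\ll1\le\tfrac12\alpha$, so the case above applies and in fact $\Lambda\le4\delta'/\alpha\ll T$. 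One then shows $\Lambda$ cannot first reach $T$ while inside the residual regime: at such an $\eta$ one would have $\Lambda=T$ with $\alpha\in(\tfrac12T,2T)$, hence $\delta\le C\Lambda^2/\log N+\delta'\le C''T^2/\log N$, so $\Lambda\,\absb{\Lambda-\alpha}\le\delta$ pins $\absb{\Lambda-\alpha}$ to size $O(T/\log N)$; letting $\eta$ drop a little more places $\Lambda$ strictly above both $T$ and $\alpha$, and unless the crossing is a ``gentle touch'' $\Lambda\le T\bigl(1+O(1/\log N)\bigr)$ -- harmless once absorbed into the constant $(\log N)^{19\xi_2}$ of \eqref{yrbj} for large $N$ -- one obtains $\Lambda(\Lambda-\alpha)\gtrsim T^2/\log N$, contradicting $\delta\lesssim T^2/\log N$. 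Making this gluing precise is the bulk of the work; everything else is the power-counting of the second paragraph, and the hypothesis \eqref{iLam1} (together with the ``large $N$'' in \eqref{yrbj}) is exactly what this step requires.
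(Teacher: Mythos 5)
Your power-counting in the two ``good'' cases is correct and closely matches what the paper does algebraically: the key mechanism is indeed that when $\alpha+\gamma$ is controlled by $\Gamma=(\log N)^{19\xi_2}\Phi^{1-\tau}$ one trades $(N\eta)^{-1}\le\Phi$ for an extra power of $\Phi$ in the term $(\alpha+\gamma)/(N\eta)$, upgrading $\Phi^{2-2\tau}$-type quantities to $\Phi^{2-\tau}$ and hence $\Lambda$ to $\Phi^{1-\tau/2}$ after a square root or a division by $\alpha$. The cases $\Lambda\le\alpha/2$ and $\Lambda\ge2\alpha$ are handled correctly.

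The gap is in the residual case $\alpha/2<\Lambda<2\alpha$, and it is a genuine one: the way you carve up the cases produces a regime with no built-in gap that a continuity argument can exploit. The inequality $\Lambda\,\abs{\Lambda-\alpha}\le\delta$ with $\delta\lesssim\Lambda^2/\log N+\delta'$ is \emph{always} satisfiable by $\Lambda\approx\alpha$ for arbitrary $\alpha$, so nothing in the equation rules out the residual case; all it yields is $\Lambda\le\alpha(1+O(1/\log N))+O(\delta'/\alpha)$, which exceeds your target $T$ whenever $\alpha\gtrsim T$ (and $\alpha$ can go up to $\sim\Gamma\gg T$). Your first-crossing argument then shows that at the crossing $\alpha\approx T$ and the overshoot is at most $T(1+C/\log N)$, but that multiplicative factor is \emph{not} absorbable into $(\log N)^{19\xi_2}$ as you claim, and there is no mechanism preventing $\Lambda$ from sitting at $T(1+C/\log N)$ indefinitely as $\eta$ decreases: you would need to iterate, and each iteration lets $\Lambda$ creep up again. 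What is missing is a quantity with a genuine forbidden band.

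The paper's proof avoids this by a different primary split: it compares $\alpha$ to a threshold $\alpha_0:=(\log N)^{(18+3/4)\xi_2}\Phi^{1-\tau/2}$ (of the same order as your $T$, but with a strictly smaller exponent, which is exactly what lets constants be absorbed). In the regime $\alpha\le10\alpha_0$ it completes the square in $[v]$ and reads off $\Lambda\le4\alpha+C(\log N)^{37\xi_2/2}\Phi^{1-\tau/2}+\cdots\ll T$ \emph{with no separation into cases according to $\Lambda$ versus $\alpha$} -- this covers all of your residual case with small $\alpha$. In the complementary regime $\alpha\ge10\alpha_0$ it establishes the dichotomy: if $\Lambda\le\alpha/2$ then in fact $\Lambda\le\alpha/4$. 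The interval $(\alpha/4,\alpha/2]$ is a genuine gap for the continuous function $\eta\mapsto\Lambda/\alpha$, so the continuity argument (together with $\Lambda\ll1\sim\alpha$ at $\eta\sim1$) cleanly forces $\Lambda\le\alpha/4$ throughout that regime, which excludes your residual case there; the final bound then comes from substituting $\Lambda\le\alpha/4$ back into \eqref{5959}. In short: split by $\alpha$ versus $\alpha_0$, not by $\Lambda$ versus $\alpha$. With your decomposition, the residual case needs exactly this extra subdivision -- and once you add it you have essentially reconstructed the paper's proof.
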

\begin{pf}
The proof is based on a dichotomy argument.
Define
%
%
\begin{equation}\label{al0def}
\al_0(z) \deq(\log N)^{(18+3/4)\xi_2}\biggl(\frac
1q+\frac{1}{N\eta}\biggr)^{1-\tau/2}.
\end{equation}
We consider two cases.

\textit{Case} 1: $\al\leq10 \al_0$.
Using the estimate (\ref{112}), we find
%
%
\begin{equation}\label{113}
\frac1{q^2}+\frac1{(N\eta)^2}
+ (\log N)^{ 4\xi_1}\frac{\gamma}{N\eta}
\leq2(\log N)^{23 \xi_2}\biggl(\frac1q+\frac{1}{N\eta}
\biggr)^{2-\tau}.
\end{equation}
Now in (\ref{reszlemmasparse3}) we may absorb the term $\Lambda^2 /
\log N$ into the term $|[v] |^2$ on the left-hand
side, at the expense of a constant $2$. Then we complete the square on
the left-hand side and take the square root of
the resulting equation; this yields
%
%
\begin{equation}\label{5858}\quad
\Lambda\leq4 \al+ C(\log N)^{{37}\xi_2/{2}}\biggl(\frac
1q+\frac{1}{N\eta}\biggr)^{1-\tau/2}
+C(\log N)^{9\xi_1}\sqrt{\frac{\al}{N\eta}},
\end{equation}
where we used (\ref{113}). Now (\ref{yrbj}) follows from (\ref{5858}).

\textit{Case} 2: $\al\geq10 \al_0$. Let us assume that $\Lambda\leq
\al/2$. Then in (\ref{reszlemmasparse3}) the terms
$[v]^2$ and $\Lambda^2$ can be absorbed into the term $\alpha|
[v] |$, so that we get
%
%
\begin{eqnarray}\label{5959}
\Lambda&\leq& C \frac{(\log N)^{14\xi_1}}{\al}\biggl(\frac
1q+\frac{1}{N\eta}\biggr)^{2}+ C
(\log N)^{18\xi_1}\frac{\gamma}{N\eta\alpha}\nonumber\\[-8pt]\\[-8pt]
&&{}+C(\log N)^{18\xi
_1}\frac1{N\eta}.\nonumber
\end{eqnarray}
By the definitions of $\gamma$ and $\alpha_0$, we have
%
%
\begin{equation}
C (\log N)^{18\xi_1}\frac{\gamma}{N\eta\alpha} \leq\frac{
\alpha_0 } { (\log N)^{1/4} } \leq\frac{10
\al}{(\log N)^{1/4}}
\end{equation}
and the first term in the right-hand side of (\ref{5959}) is bounded
by $\al/\log N$ thanks to (\ref{al0def}). The last
term can be estimated similarly.
Hence, (\ref{5959}) implies that $\Lambda\leq\al/4$ provided that
$\Lambda\leq\al/2$.

In other words, if $\al\geq10\al_0$, then either $\Lambda>\al/2$
or $\Lambda\leq\al/4$. Using the continuity of
$\Lambda(z)$ and $\alpha= \alpha(z)$ in $\eta=\im z$, and the assumption
\[
\Lambda(z) \ll1 = O(\al)
\]
for $\eta\sim1$,
we get $\Lambda\leq\al/4$ on the whole $D_L$. Together with (\ref
{5959}), we obtain~(\ref{yrbj}).
\end{pf}
\begin{pf*}{Proof of Theorem~\ref{LSCTHM}}
The main work is to prove Theorem~\ref{LSCTHM} for spectral parameters
$z \in D_L$, where
%
%
\begin{equation} \label{defofL}
L \deq120 \xi.
\end{equation}
Once this is done, the extension to all $z \in D$ is relatively
straightforward, and is given at the end of the proof.
Recall the definitions (\ref{defPsi}) of $\Psi$ and (\ref{defOmegaz})
of $\Omega(z)$. It is clear that if $D$ is replaced
everywhere by $D_L$, then (\ref{Gijestimate}) follows from (\ref
{scm}), (\ref{Gijapriori}) and (\ref{lambdadleqlambda}).
Therefore, we only need to
prove (\ref{scm}).

We begin by introducing
%
%
\begin{equation}
\wt\xi\deq2(\log\log N/\log2) +\xi.
\end{equation}
By the assumptions (\ref{assumptionsforSLSC}) and (\ref{defofL}),
we have $\wt\xi\leq3 \xi/2 \leq A_0 \log\log
N$, $L\geq60\wt\xi$, and $q\geq(\log N)^{60\wt\xi}$. To prove
(\ref{scm}) with $D$ replaced by $D_L$, it therefore suffices to establish
%
%
\begin{equation}\label{scmfake}
\bigcap_{z \in D_L} \biggl\{{| m(z) - m_{\mathrm{sc}}(z)
|\leq(\log N)^{20\wt\xi} \biggl({\min\biggl\{{\frac{
(\log N)^{20\wt\xi} }{\sqrt{\kappa_E+\eta}}\frac{1}{q^2}, \frac
{1}{q}}\biggr\} + \frac{1}{N \eta}}\biggr)}\biggr\}\hspace*{-35pt}
\end{equation}
with $(\xi,\nu)$-high probability.

The weak local semicircle law, Theorem~\ref{WLSCTHM} with $\wt\xi$
replacing $\xi$, yields
%
%
\begin{eqnarray}\label{s413}
\Lambda&\leq&(\log N)^{2\wt\xi} \biggl(\frac{1}{q}+\frac
{1}{N\eta}\biggr)^{1/3} \nonumber\\[-8pt]\\[-8pt]
&\leq&(\log N)^{19\wt\xi}
\biggl(\frac{1}{q}+\frac{1}{N\eta}\biggr)^{1-2/3} \qquad\mbox{for
} z\in D_L\nonumber
\end{eqnarray}
with $(\wt\xi,\nu)$-high probability. Thus, (\ref{111}) holds
with
%
%
\begin{equation}
\gamma(z) \deq(\log N)^{19\wt\xi} \biggl(\frac{1}{q}+\frac
{1}{N \eta}\biggr)^{1/3}.
\end{equation}
With\vspace*{1pt} $L \geq60\wt\xi$ and $q\geq(\log N)^{60\wt\xi}$, we also
have $ \gamma\leq(\log N)^{-\wt\xi} $. Thus, Lem\-ma~\ref{zlemmasparse} implies that, with $\wt\xi$ replacing $\xi$ and
$\wt D= D_L$, the statement
%
%
\begin{eqnarray}\quad
&&\biggl|\frac{1 - m_{\mathrm{sc}}^2}{m_{\mathrm{sc}}^3} [v] -
[v]^2\biggr|\nonumber\\
&&\qquad\leq C\frac{\Lambda^2}{\log N}
+ C(\log N)^{14\wt\xi} \biggl(\frac1{q^2}+\frac1{(N\eta)^2}
+ (\log N)^{ 4\wt\xi}\frac{\im
m_{\mathrm{sc}}+\gamma}{N\eta}\biggr)\\
&&\eqntext{\mbox{for }
z\in D_L}
\end{eqnarray}
holds with $(\wt\xi-2,\nu)$-high probability. This implies (\ref
{reszlemmasparse3}),
with the choice $\xi_1=\wt\xi$ in
(\ref{reszlemmasparse3}), since $\im m_{\mathrm{sc}} \leq C \alpha$.
Moreover, $\gamma$ satisfies (\ref{112}) with $\xi_2=\wt\xi$ and
$\tau=2/3$. We also find that $\Lambda$ satisfies
(\ref{iLam1}), since $\Lambda\leq\gamma\leq(\log N)^{-\wt\xi}$
[see~(\ref{s413})].
We may therefore apply Lemma~\ref{tauhalftau} with $\xi_1=\xi_2=\wt
\xi$ to get that
%
%
\begin{equation}\label{g417}
\Lambda\leq(\log N)^{19\wt\xi} \biggl(\frac{1}{q}+\frac{1}{N
\eta}\biggr)^{1-1/3} \qquad\mbox{for } z\in D_L
\end{equation}
holds with $(\wt\xi-2,\nu)$-high probability.
We now repeat this process $M$ times, each iteration yielding a
stronger bound on $\Lambda$ which holds with a smaller
probability. After $M$ iterations we get that
%
%
\begin{equation}\label{z418}
\Lambda\leq(\log N)^{19\wt\xi} \biggl(\frac{1}{q}+\frac{1}{N
\eta}\biggr)^{1-2(1/2)^M/3} \qquad\mbox{for } z \in D_L
\end{equation}
holds with $(\wt\xi-2M,\nu)$-high probability.

To clarify the iteration, we spell out the details of the second step.
We start from (\ref{g417}) and define $\gamma$ as
the right-hand side of (\ref{g417}),
%
%
\begin{equation}
\gamma(z) \deq(\log N)^{19\wt\xi} \biggl(\frac{1}{q}+\frac
{1}{N \eta}\biggr)^{1-1/3}.
\end{equation}
Thus, Lemma~\ref{zlemmasparse}, with $\wt\xi-2$ replacing $\xi$,
implies that
%
%
\begin{eqnarray}
&&\biggl|\frac{1 - m_{\mathrm{sc}}^2}{m_{\mathrm{sc}}^3} [v] -
[v]^2\biggr|\nonumber\\
&&\qquad \leq C\frac{\Lambda^2}{\log N}\nonumber\\[-8pt]\\[-8pt]
&&\qquad\quad{}+ C(\log N)^{14(\wt\xi-2)} \biggl(\frac1{q^2}+\frac1{(N\eta)^2}
+ (\log N)^{ 4(\wt\xi-2)}\frac{\im
m_{\mathrm{sc}}+\gamma}{N\eta}\biggr)\nonumber\\
&&\eqntext{\mbox{for } z\in D_L}
\end{eqnarray}
holds with $(\wt\xi-4,\nu)$-high probability. We now apply Lemma
\ref{tauhalftau} with
$\xi_1=\wt\xi-2$, $\xi_2=\wt\xi$ and $\tau=1/3$.
[Similarly, in the $k$th step we set $\xi_1 = \wt\xi-2(k-1)$, $\xi
_2=\wt\xi$, and
$\tau=(2/3)(1/2)^{k-1}$.] This shows
that
%
%
\begin{equation}\label{g417new}
\Lambda\leq(\log N)^{19\wt\xi} \biggl(\frac{1}{q}+\frac{1}{N
\eta}\biggr)^{1-1/6} \qquad\mbox{for } z\in D_L
\end{equation}
holds with $(\wt\xi-4,\nu)$-high probability. This is (\ref{z418})
for $M = 2$.

Now we return to (\ref{z418}) and choose $M \deq[\log\log N / \log
2] - 1$ (where $[\cdot]$ denotes the integer
part). Using $q^{-1}+(N\eta)^{-1} \geq c N^{-1/2}$ [by (\ref
{lowerboundond})], we get
\[
\biggl(\frac{1}{q}+\frac{1}{N \eta}\biggr)^{-2(1/2)^M/3}
\leq C \leq(\log N)^{\wt\xi/2}.
\]
Thus,
%
%
\begin{equation}\label{JWL516}
\Lambda\leq(\log N)^{{39}\wt\xi/{2}} \biggl(\frac
{1}{q}+\frac{1}{N \eta}\biggr) \qquad\mbox{for } z\in D_L
\end{equation}
holds with $(\xi+2,\nu)$-high probability. Recalling (\ref{Kdef}),
we find that (\ref
{JWL516}) implies (\ref{scmfake}), unless
%
%
\begin{equation}\label{419}
(\log N)^{-{39}\wt\xi/{2}}\al\geq\frac{1}{q} \geq
\frac{1}{N \eta}.
\end{equation}

Let us therefore assume that (\ref{419}) holds. Then it remains to
prove that with $(\xi,\nu)$-high probability
%
%
\begin{equation} \label{JWL517}
\Lambda\leq(\log N)^{40\wt\xi} \frac{1}{\al q^2}
+ (\log N)^{20\wt\xi} \frac{1}{N\eta}
\qquad\mbox{for } z\in D_L.
\end{equation}
Defining $\gamma$ as the right-hand side of (\ref{JWL516}), we use
Lemma~\ref{zlemmasparse}, with $ \xi+2$ replacing~$\xi$, to get
%
%
\begin{equation}
\biggl|\frac{1 - m_{\mathrm{sc}}^2}{m_{\mathrm{sc}}^3} [v] - [v]^2\biggr|
\leq
C\frac{\Lambda^2}{\log N}
+C (\log N)^{18\wt\xi} \biggl(\frac1{q^2} + \frac{\al}{N\eta
}\biggr)
\end{equation}
with $(\xi,\nu)$-high probability, where we used (\ref{419}) and
$|{\im m_{\mathrm{sc}} (z)}| \leq
C \alpha(z)$. We can estimate the term $[v]^2$ by
(\ref{JWL516}) and (\ref{419}), so that
%
%
\begin{equation}
\al\Lambda= \al|[v] | \leq C (\log N)^{39\wt\xi} \frac
1{q^2} + C (\log N)^{18\wt\xi} \frac{\al
}{N\eta}.
\end{equation}
This yields (\ref{JWL517}) and hence completes the proof of (\ref
{scm}) with $D$ replaced with~$D_L$. (Recall the simple lattice argument
of Corollary~\ref{cor414}.)

What remains is to extend (\ref{scm}) and (\ref{Gijestimate}) from
$z \in D_L$ to $z \in D$.
Let us therefore assume that $z = E + \ii\eta\in D$ with $0 < \eta
\leq\wt\eta\deq(\log N)^L N^{-1}$. For any $i,j = 1,\ldots, N$ we
get the bound
\[
| G_{ij}(E + \ii\eta) |= \biggl|\sum_\alpha
\frac{\ol{u}_\alpha(i) u_\alpha(j)}{\lambda_\alpha- z}
\biggr|\leq\max_l \sum
_\alpha\frac{| u_\alpha(l) |^2}{|\lambda_\alpha- z
|}.
\]
We define the dyadic decomposition of the eigenvalues
\[
U_0 \deq\{{\alpha\col|\lambda_\alpha- E |<
\eta
}\},\qquad
U_k \deq\{{\alpha\col2^{k - 1} \eta\leq|\lambda
_\alpha- E |< 2^k \eta}\}\qquad (k \geq1).
\]
This yields
\begin{eqnarray*}
\sum_\alpha\frac{| u_\alpha(l) |^2}{|\lambda_\alpha
- z |}
&=& \sum_{k \geq0} \sum_{\alpha\in U_k} \frac{| u_\alpha
(l) |^2}{|\lambda_\alpha- z |} \leq C \sum_{k
\geq0} \sum
_{\alpha\in U_k} \im\frac{| u_\alpha(l) |^2}{\lambda
_\alpha- E
- \ii2^k \eta} \\
&\leq& C \sum_{k \geq0} \im G_{ll}(E + \ii2^k
\eta).
\end{eqnarray*}

Next, we break the summation over $k$ into three pieces delimited by
$k_1 \deq\max\{{k \col2^k \eta< \wt\eta}\}$ and $k_2 \deq\max\{
{k \col2^k \eta< 3}\}$.
By spectral decomposition, it is easy to see that the function $y
\mapsto y \im G_{ll}(E + \ii y)$ is monotone
increasing. Therefore, we get
\begin{eqnarray*}
\sum_{k \geq0} \im G_{ll}(E + \ii2^k \eta) & \leq & \sum_{k =
0}^{k_1} \frac{\wt\eta}{2^k \eta} \im G_{ll}(E + \ii\wt\eta)\\
&&{} +
\sum_{k = k_1 + 1}^{k_2} \im G_{ll}(E + \ii2^k \eta) + \sum_{k =
k_2 + 1}^\infty\frac{1}{\eta2^k}
\\
& \leq & \frac{(\log N)^{C \xi}}{N \eta} + C (k_2 - k_1) + C
\\
& \leq & \frac{(\log N)^{C \xi}}{N \eta}
\end{eqnarray*}
with $(\xi,\nu)$-high probability, where in the second step we used
(\ref{Gijestimate}) for $z \in D_L$. Therefore, we have proved that
\[
\max_{i,j} | G_{ij}(E + \ii\eta) |\leq\frac{(\log N)^{C
\xi}}{N \eta}
\]
with $(\xi,\nu)$-high probability. This concludes the proof of
Theorem~\ref{LSCTHM}.
\end{pf*}

\subsection{\texorpdfstring{Estimate of $\| H \|$}{Estimate of ||H||}}
In this section we derive an upper bound on the norm of $H$.
A standard application of the moment method yields the following weak
bound on $\| H \|$. Its proof is given in the
\hyperref[app]{Appendix}.
%
%
\begin{lemma} \label{lemmaboundon_h_tildeweak}
Suppose that $H$ satisfies Definition~\ref{definitionofH}, that $\xi
$ satisfies (\ref{boundsonxi}) and that $q$
satisfies (\ref{lowerboundond}). Then with $(\xi,\nu)$-high
probability we have
%
%
\begin{equation} \label{boundonHtildeweak}
\| H \|\leq2 + (\log N)^{\xi} q^{-1/2}.
\end{equation}
\end{lemma}

Using the local semicircle law, Theorem~\ref{LSCTHM}, we may prove a
much stronger bound on $\| H \|$. Lemma
\ref{lemmaboundon_h_tildeweak} will be used as an a priori bound
in the proof of Lem\-ma~\ref{lemmaboundonHtilde}.\vadjust{\goodbreak}

%
\begin{lemma}\label{lemmaboundonHtilde}
Suppose that $H$ satisfies Definition~\ref{definitionofH}, and that
$\xi$ and $q$ satisfy (\ref{assumptionsforSLSC}).
Then with $(\xi,\nu)$-high probability we have
%
%
\begin{equation}\label{boundonHtilde}
\| H \|\leq2+(\log N)^{C\xi}(q^{-2}+N^{-2/3}
).
\end{equation}
\end{lemma}
\begin{pf}
We only consider the largest eigenvalue $\lambda_N = \max_\al\lambda
_\al$; the smallest eigenvalue $\lambda_1$ is
handled similarly. Set $L=120\xi$. Using (\ref{scm}) with $\xi+2$
replacing~$\xi$, we get with $(\xi+ 2,\nu)$-high probability
%
%
\begin{equation}\label{430J}
\Lambda(z)\leq(\log N)^{41 \xi} \biggl(\frac{1}{q} +\frac{1}{N
\eta} \biggr).
\end{equation}
Then applying Lemma~\ref{zlemmasparse} with
%
%
\begin{equation}
\gamma(z) \deq(\log N)^{41 \xi} \biggl(\frac{1}{q} + \frac
{1}{N \eta} \biggr)
\end{equation}
and $\xi+2$ replacing $\xi$, we have with $(\xi,\nu)$-high probability
%
%
\begin{eqnarray}\label{reszlemmasparse5}
&&\biggl|\frac{1 - m_{\mathrm{sc}}^2}{m_{\mathrm{sc}}^3} [v] - [v]^2\biggr|\nonumber\\
&&\qquad \leq
C\frac{\Lambda^2}{\log N}
+ C(\log N)^{C_1\xi} \biggl(\frac1{q^2}+\frac1{(N\eta)^2}
+ \frac{\im m_{\mathrm{sc}} }{N\eta}\biggr) \\
&&\eqntext{\mbox{for } z\in D_L,}
\end{eqnarray}
where $C_1$ is a sufficiently large constant.
Now if $E>2$ and $\kappa\geq\eta$, then Lem\-ma~\ref{lemmamsc} and
(\ref{Kdef}) yield
%
%
\begin{equation}\label{426QS}
\im m_{\mathrm{sc}} \sim\frac{\eta}{\sqrt\kappa},\qquad \al\sim
\sqrt{\kappa}.
\end{equation}
Inserting (\ref{426QS}) into (\ref{reszlemmasparse5}), we find with
$(\xi,\nu)$-high probability
%
%
\begin{eqnarray}\label{reszlemmasparse4}
\biggl|\frac{1 - m_{\mathrm{sc}}^2}{m_{\mathrm{sc}}^3} [v] - [v]^2\biggr| \leq C
\frac{\Lambda^2}{\log N}
+ C(\log N)^{C_1\xi} \biggl(\frac1{q^2}+\frac1{(N\eta)^2}
+ \frac{1 }{N\sqrt{\kappa}}\biggr).\hspace*{-34pt}
\end{eqnarray}

Next, for any fixed $C_1> 0$, we can find a large enough constant $C_2
> 2C_1$ such that if $E$ satisfies
%
%
\begin{equation}\label{E38}
2 + (\log N)^{C_2\xi}(q^{-2}+N^{-2/3}) \leq E \leq
3,
\end{equation}
then
%
%
\begin{equation}\label{911}
\min\{{N^{-1/2}\kappa^{1/4}, N^{-1}\kappa^{1/2}q^2,
\kappa}\} \geq(\log N)^{C_1\xi+2} N^{-1}\kappa^{-1/2}.
\end{equation}
(Here $\kappa=\kappa_E= E - 2$.)
From now on we assume that $E$ satisfies (\ref{E38}).
We define
%
%
\begin{equation}\label{E40}
\eta=\eta_E \deq(\log N)^{C_1\xi+1} N^{-1}\kappa^{-1/2}.
\end{equation}
Note that $\eta$ depends on $E$ via $\kappa$.
{F}rom (\ref{911}) we have
%
%
\begin{equation}\label{E412}
\kappa\geq\eta.
\end{equation}
Using (\ref{911}), (\ref{E40}) and (\ref{426QS}), we get
%
%
\begin{equation}\label{E41}
\frac{1}{N\eta} \gg\frac{\eta}{\sqrt\kappa} \sim\im
m_{\mathrm{sc}}(E+i\eta).
\end{equation}
Similarly, using (\ref{911}), we have
%
%
\begin{equation}\label{E411}
\frac{1}{N\eta} \geq\frac{1}{q^2 \sqrt{\kappa}}.
\end{equation}

Next, with the lower bound $\alpha\geq\sqrt\kappa/K$ from (\ref{Kdef})
and (\ref{E412}), we find, using (\ref{430J}), that
%
%
\begin{equation}\label{3999}
\al\geq c (\log N)^{C_1\xi+1}\biggl(\frac{1}{q}+\frac{1}{N
\eta}\biggr) \gg\Lambda
\end{equation}
with $(\xi,\nu)$-high probability,
where we used (\ref{E38}) to obtain the first term $q^{-1}$ on the
right-hand side and we used $N \eta\sqrt\kappa
=(\log N)^{C_1\xi+1}$ [see the definition (\ref{E40})
of $\eta$] for the second term. Now we can assume
%
%
\begin{equation}
q \geq(\log N)^{C_3 \xi}
\end{equation}
for some large $C_3>0$ [otherwise (\ref{boundonHtilde}) holds for
some constant $C$ by
Lem\-ma~\ref{lemmaboundon_h_tildeweak}]. We have $E+ \ii\eta\in D_L$
[recall that $E$ satisfies (\ref{E38})]. Using (\ref{3999}), we can
neglect the terms $\Lambda^2$ and $[v]^2$ in (\ref{reszlemmasparse4})
to get, with $(\xi,\nu)$-high probability,
%
%
\begin{equation} \label{333}
\Lambda\leq C(\log N)^{C_1\xi} \biggl(\frac1{ \alpha q^2}+\frac
1{ \alpha(N\eta)^2}
+ \frac{1 }{ \alpha N\sqrt{\kappa}}\biggr).
\end{equation}
Since $\alpha\geq K \sqrt\kappa$, the last term is bounded by
\[
\frac{1 }{ \alpha N\sqrt{\kappa}} \leq(\log N)^{-C_1\xi- 1}
\frac{\eta}{ \sqrt\kappa} \leq(\log
N)^{-C_1\xi- 1} \frac{1}{N\eta},
\]
where we have used (\ref{E41}). The first term on the right-hand side
of (\ref{333}) can be estimated similarly using
(\ref{E411}) and (\ref{911}).
Finally, the middle term on the right-hand side of (\ref{333}) can be
estimated by using (\ref{3999}).
Putting everything together, we obtain, for any $E$ satisfying (\ref
{E38}), that
%
%
\begin{equation}
\Lambda(z) \ll\frac{1}{N\eta} \qquad\mbox{for } z = E
+ \ii\eta\in D_L
\end{equation}
with $(\xi,\nu)$-high probability. Furthermore, with (\ref{426QS})
and (\ref{E41}), we
obtain that for any $E$ in (\ref{E38})
%
%
\begin{equation} \label{312YSY}
\im m(z) \leq\im m_{\mathrm{sc}}(z) + \Lambda(z) \ll\frac{1}{N\eta
} \qquad\mbox{for } z = E+\ii\eta
\in D_L
\end{equation}
with $(\xi,\nu)$-high probability. Since
%
%
\begin{equation}\label{123}
\im m(z) = \frac1 N \sum_{\al}\frac{ \eta}{(\lambda_\al
-E)^2+\eta^2},
\end{equation}
we have
\[
\im m(z) \geq\frac c { N \eta}
\]
if there is an eigenvalue in $[E-\eta, E+\eta]$.
Then (\ref{123}) and
(\ref{312YSY}) imply that, for any $E$ satisfying (\ref{E38}), there
is no eigenvalue in
$[E-\eta, E+\eta]$
with $(\xi,\nu)$-high probability. The regime $E\geq3$ is covered by
Lemma~\ref{lemmaboundon_h_tildeweak}.
This completes the proof.
\end{pf}

\section{Abstract decoupling lemma and applications} \label{sectionZ-lemma}

In this section we prove an abstract decoupling lemma which is
independent of the random matrix model. We shall apply this
abstract result to random matrices in
Sections~\ref{sectionGdecomposition},~\ref{sectproofof41} and~\ref{sectaverageforedge}.

\subsection{Abstract decoupling lemma}\label{GenZlemma}
Throughout this section we use the letters $A$ and $B$ to denote
abstract random variables. Note that $A$ in this
context has nothing to do with the matrix $A$ from Definition \ref
{definitionofA}.
We work on the probability space generated by the $N\times N$ random
matrices $H$.
Let $(A^{[\bb U]})$ be a family of random variables indexed by subsets
$\bb U \subset\{1,\ldots,N\}$, and denote $A \deq
A^{[\varnothing]}$. For $\bb U \subset\bb S \subset\{1,\ldots, N\}$
we define the random variable
%
%
\begin{equation} \label{56ASU}\quad
A^{\bb S,\bb U} \deq\sum_{\bb T \subset\bb U} (-1)^{|\bb
T |} A^{[(\bb S \setminus\bb U) \cup\bb T]} = (-1)^{|
\bb S \setminus\bb U |} \sum_{\bb V \st\bb S \setminus\bb U
\subset\bb V \subset
\bb S} (-1)^{|\bb V |} A^{[\bb V]}.
\end{equation}

%
\begin{lemma}[(Resolution of dependence)]
For any $\bb S$ we have
%
%
\begin{equation} \label{55ASU}
A = \sum_{\bb U \subset\bb S} A^{\bb S,\bb U}.
\end{equation}
\end{lemma}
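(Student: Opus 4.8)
The plan is to prove \eqref{55ASU} by a direct inclusion–exclusion computation: expand the right-hand side using the definition \eqref{56ASU}, interchange the two sums, group the resulting terms $A^{[\bb V]}$ by the index set $\bb V$, and check that the coefficient of $A^{[\bb V]}$ vanishes unless $\bb V=\emptyset$, in which case it is $1$. There is no probability here; the statement is a purely combinatorial identity valid for any family $(A^{[\bb U]})$ of objects in a vector space.

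First I would substitute \eqref{56ASU} into $\sum_{\bb U\subseteq\bb S}A^{\bb S,\bb U}$ to obtain the double sum $\sum_{\bb U\subseteq\bb S}\sum_{\bb T\subseteq\bb U}(-1)^{\abs{\bb T}}A^{[(\bb S\setminus\bb U)\cup\bb T]}$. The key structural observation is that since $\bb T\subseteq\bb U$, the sets $\bb S\setminus\bb U$ and $\bb T$ are disjoint, so $\bb V\deq(\bb S\setminus\bb U)\cup\bb T$ is a disjoint union and always satisfies $\bb V\subseteq\bb S$. I would then reorganize the sum by the value of $\bb V$. For fixed $\bb V\subseteq\bb S$, putting $\bb W\deq\bb S\setminus\bb U$ (so $\bb U=\bb S\setminus\bb W$), the pairs $(\bb U,\bb T)$ with $\bb T\subseteq\bb U\subseteq\bb S$ and $(\bb S\setminus\bb U)\cup\bb T=\bb V$ are exactly those obtained from a subset $\bb W\subseteq\bb V$ via $\bb T=\bb V\setminus\bb W$ (which is automatically contained in $\bb U=\bb S\setminus\bb W$ because $\bb V\subseteq\bb S$), and for such a pair $\abs{\bb T}=\abs{\bb V}-\abs{\bb W}=\abs{\bb V\setminus\bb W}$.

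Hence the coefficient of $A^{[\bb V]}$ in $\sum_{\bb U\subseteq\bb S}A^{\bb S,\bb U}$ equals $\sum_{\bb W\subseteq\bb V}(-1)^{\abs{\bb V\setminus\bb W}}$; re-indexing by $\bb W'\deq\bb V\setminus\bb W$ this is $\sum_{\bb W'\subseteq\bb V}(-1)^{\abs{\bb W'}}=(1-1)^{\abs{\bb V}}$, which is $1$ if $\bb V=\emptyset$ and $0$ otherwise. Summing over all $\bb V\subseteq\bb S$ therefore leaves exactly the term $A^{[\emptyset]}=A$, which is \eqref{55ASU}. The only point that needs a moment's care is verifying that $\bb W\mapsto(\bb S\setminus\bb W,\bb V\setminus\bb W)$ really is a bijection from the subsets of $\bb V$ onto the admissible pairs $(\bb U,\bb T)$ contributing the term $A^{[\bb V]}$; this is immediate from the disjointness of $\bb S\setminus\bb U$ and $\bb T$ noted above, which forces $\bb W=\bb S\setminus\bb U\subseteq\bb V$ and $\bb T=\bb V\setminus\bb W$. (Alternatively one can induct on $\abs{\bb S}$: the case $\bb S=\emptyset$ gives $A^{\emptyset,\emptyset}=A$, and for the step one fixes $i\in\bb S$, splits the sum over $\bb U\subseteq\bb S$ into the parts with $i\notin\bb U$ and $i\in\bb U$, and uses the defining recursion of \eqref{56ASU} in the variable $i$ to reduce to $\bb S\setminus\{i\}$.) I expect the bookkeeping of the set operations to be the only ``obstacle,'' and it is routine.
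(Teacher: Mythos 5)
Your proof is correct and is precisely the "standard inclusion–exclusion argument" the paper alludes to: expand via \eqref{56ASU}, reparametrize by $\bb V=(\bb S\setminus\bb U)\cup\bb T$, and observe that the coefficient of $A^{[\bb V]}$ is $\sum_{\bb W'\subseteq\bb V}(-1)^{\abs{\bb W'}}=(1-1)^{\abs{\bb V}}$, which kills everything except $\bb V=\emptyset$. The bijection check you flag is the right thing to verify, and it goes through exactly as you say, so there is nothing to add.
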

\begin{pf}
The proof is a standard inclusion-exclusion argument.
\end{pf}
%
%
\begin{definition} \label{definitionofminor2}
Let $A \deq A(H)$ be a random variable. Then we define the new random
variable $A^{(\bb T)}$ through
%
%
\begin{equation} \label{definitionofupT}
A^{(\bb T)}(H) \deq A(\pi_{\bb T}(H)),
\end{equation}
where $\pi_{\bb T}$ was defined in (\ref{definitionofpi}).
\end{definition}
%
%
\begin{remark}\label{rem73}
Note that the operation $(\cdot)^{(\bb T)}$ is compatible with
algebraic operations in the sense that
%
%
\begin{equation}
(A+B)^{(\bb T)} = A^{(\bb T)}+B^{(\bb T)},\qquad (A B)^{(\bb T)}
= A^{(\bb T)} B^{(\bb T)}.
\end{equation}
Since $\pi_{\bb U} \circ\pi_{\bb V} = \pi_{\bb U \cup\bb V}$, we
also have $(A^{(\bb U)})^{(\bb V)} = A^{(\bb U \cup
\bb V)}$.
\end{remark}
%
%
\begin{remark}
The matrices $H^{(\bb T)}$ and $G^{(\bb T)}$ defined through (\ref
{definitionofupT}) are \mbox{$N \times N$} matrices. We
adopt this convention only in this section. This is in contrast to
Definition~\ref{definitionofminor}, where the same
notation was used for the $(N - |\bb T |) \times(N - |
\bb T |)$ minors of the same matrices. This slight abuse of
notation will not cause ambiguity, however, because we shall only
consider matrix elements $H^{(\bb T)}_{ij}$ and
$G^{(\bb T)}_{ij}$ for $i$, \mbox{$j \notin\bb T$}; for these matrix elements
the two definitions coincide.
\end{remark}
%
%
\begin{definition}
We say that a random variable $A$ is \textit{independent} of the set
$\bb U \subset\{1,\ldots, N\}$ if $A = A^{(\bb U)}$ [or,
equivalently, if $A$ is independent of the family $(h_{ij} \st i \in
\bb U \mbox{ or } j \in\bb U)$].
\end{definition}

We shortly explain the idea behind these definitions. In many
applications we choose $A^{[\bb U]} \deq A^{(\bb U)}$, so
that $A^{[\bb U]}$ is independent of $\bb U$. In this case, the
decomposition (\ref{55ASU}) can be interpreted as
follows. We first fix a reference set $\bb S$. From (\ref{56ASU}) it
is clear that $A^{\bb S,\bb U}$ is independent of
$\bb S \setminus\bb U$, that is, it depends only on the set $\bb U$
(among the variables in $\bb S$). Therefore,
(\ref{55ASU}) can be viewed as a resolution of dependence of $A$ on
subsets of $\bb S$. We shall\vspace*{1pt} see that when we apply
this decomposition to resolvent matrix elements, that is, set $A =
G_{ij}$, then $G^{\bb S,\bb U}_{ij}$ will be comparable
in size with a product of at least $|\bb U |+1$ off-diagonal
resolvent matrix elements, which
are small with high probability. Hence, in this case, the decomposition
(\ref{55ASU}) is effectively a graded
resolution with a trade-off
between dependence and size. A~larger $\bb U$ means that $G^{\bb S,\bb
U}_{ij}$ is smaller, but it depends on more
variables. For smaller $\bb U$'s we will exploit that $G^{\bb S,\bb U}$
is independent of more variables.

The purpose of this graded decoupling is to obtain large deviation
estimates on the average $[\cal Z] \deq\frac{1}{N} \sum_i \cal Z_i$
of $N$ weakly dependent centered
random variables~$\cal Z_i$.
The precise result is given in
Theorem~\ref{abstractZlemma} below. Before stating it, we outline the
main ideas.

In our applications, the covariances between different variables $Z_i$
are too large to be controlled in terms of their variances and, hence,
standard methods
for sums of weakly dependent random variables relying on such ideas do
not apply.
Instead, the weak dependence will be expressed in terms of the
smallness of
$\cal Z_i^{\bb S, \bb U}$ for large~$\bb U$; the size of $\cal Z_i^{\bb
S, \bb U}$ reflects how strongly $\cal Z_i$ depends on the set $\bb U$.
The basic strategy is a high-moment estimate
\[
\E\f1 (\Xi) |[\cal Z] |^p = \frac{1}{N^p}
\sum_{i_1,\ldots, i_p} \E({\f1 (\Xi) \cal Z_{i_1} \cdots
\ol{\cal Z}_{i_p}})
\]
on some high-probability event $\Xi$,
whereby each term $\cal Z_{i_j}$ is expanded according to the graded
expansion of (\ref{55ASU}). The right-hand side is controlled using
the two following facts: (i) $\cal Z_i^{\bb S, \bb U}$ is small for
large $\bb U$ (weak dependence of $\cal Z_i$ on $\bb U$). (ii)~The
expectation vanishes if all factors are independent. Note that this
graded expansion differs from the conventional\vadjust{\goodbreak} martingale-type
arguments used to establish central limit theorems for correlated
random variables.

The basic idea of a graded expansion to control large deviations of
sums of weakly dependent
random variables was introduced in Lemma 5.2 of~\cite{EYY2} in the
context of
Wigner matrices. This result
considers the special case $\cal Z_i = Z_i$ [as defined in (\ref
{definitionofZi})]
and uses expansions in full rows and columns to detect dependencies.
For the applications
in~\cite{EYY2}, only large but $N$-independent powers $p$ were
considered. Hence, in~\cite{EYY2}
it was not necessary to keep track of the $p$-dependence or the
probability of $\Xi$.

A new proof was given in Lemma 4.1 of~\cite{EYYrigidity},
where the $p$-dependence and the probability of $\Xi$ were tracked precisely.
This proof relied on an expansion in terms of individual matrix
elements and not full rows and columns.
Thus, the expansion was more economical, but its combinatorial
structure was considerably more involved.

In this paper we present an abstract generalization of the row and
column expansion method of~\cite{EYY2}. It is formulated for an
arbitrary family of random variables $\cal Z_1,\ldots, \cal Z_N$. As
input, it needs bounds on the terms of the graded expansion of~$\cal
Z_i$. The abstract formulation thus streamlines the argument by
dissociating two unrelated steps of the proof: (i) the moment estimate
using the graded expansion (a~probabilistic estimate given in Theorem
\ref{abstractZlemma}) and (ii) controlling the size of the graded terms
for a concrete application (in the case of resolvent matrix elements, a
deterministic, almost entirely algebraic, argument given in Section
\ref{sectionGdecomposition}).

For our purposes, this increased generality is needed for two reasons.
First, it allows
for an efficient control of the strong fluctuations associated with
sparse matrix entries.
Second, we use it to control the average of not only $Z_i$ (Lemma \ref
{sp74}) but also
quantities like (\ref{resexperror}) with a different algebraic structure.
In the special case $\cal Z_i = Z_i$ and $q = N^{1/2}$ (Wigner matrix),
our result reduces to
that of Lem\-ma~4.1 in~\cite{EYYrigidity}.
%
%
\begin{theorem}[(Abstract decoupling lemma)]
\label{abstractZlemma} Let $\cal Z_1,\ldots, \cal Z_N$ be random
variables and recall the notation
\[
[\cal Z] = \frac{1}{N} \sum_{i = 1}^N \cal Z_i.
\]
Let $\Xi$ be an event and $p$ an even integer. Suppose that there
exists a family of random variables $(\cal Z_i^{[\bb U]})_{i,\bb U}$
indexed by $i \in\{1,\ldots,
N\}$ and $\bb U \subset\{1,\ldots, N\}$ satisfying $i \notin\bb U$,
such that $\cal Z_i^{[\varnothing]} = \cal Z_i$ and the
following assumptions hold with some constant~$C$:
\begin{longlist}
\item Recall the partial expectation $\E_i$ from
Definition~\ref{defEi}. For $i \notin\bb U$ we have that $\cal
Z_i^{[\bb U]}$ is independent of $\bb U$ and
%
%
\begin{equation} \label{Zmean0}
\E_i \cal Z_i^{[\bb U]} = 0.\vadjust{\goodbreak}
\end{equation}

\item($L^r$-norm in $\Xi$). For any $\bb U$, $\bb S$ with $\bb
U\subset\bb S$ and $i \notin\bb S$ we consider
$\cal Z_i^{\bb S, \bb U}$ defined by (\ref{56ASU}) from the family
$\cal Z_i^{[\bb U]}$.
Then for any numbers $r\leq p$ with $|\bb S |\leq p$
we have
%
%
\begin{equation}\label{511}
\E({\mathbf1}(\Xi) |\cal Z_i^{\bb S, \bb U}|^{r}) \leq
( Y (CXu )^{ u})^{r}
\qquad\mbox{with } u \deq|\bb U|+1,
\end{equation}
where
$X$ and $Y$ are deterministic and $X$ satisfies
%
%
\begin{equation}\label{512}
X \leq\frac1{p^5\log N}.
\end{equation}
\item(rough bound on the $L^2$-norm in $[\Xi]_i$).
Define
%
%
\begin{equation}\label{defXii}
[\Xi]_i \deq(\pi_i^{-1} \circ\pi_{i}) (\Xi).
\end{equation}
For any $\bb U$, $\bb S$ satisfying $\bb U\subset\bb S$, $i \notin\bb
S$, and $|\bb S |\leq p$
we have
%
%
\begin{equation}\label{5112}
\E({\mathbf1}([\Xi]_i)| \cal Z_i^{\bb S, \bb U}|^{2})
\leq N^{Cp}.
\end{equation}

\item(rough bound on $\cal Z_i$). For any $\bb U$ we have
%
%
\begin{equation}\label{ZUNC}
{\mathbf1}(\Xi)\bigl| \cal Z_i^{[\bb U]}\bigr| \leq Y N^C.
\end{equation}

\item($\Xi$ has high probability). We require that
%
%
\begin{equation}\label{513}
\P[\Xi^c] \leq\me^{-c(\log N)^{3/2} p}.
\end{equation}
\end{longlist}

Then, under the assumptions \textup{(i)--(v)}, we have
%
%
\begin{equation}\label{resZZ}
\P\bigl( {\mathbf1}(\Xi) |[\cal Z] |\geq p^{ 12} Y
(X^2+N^{-1})
\bigr) \leq\frac{C^{ p}}{p^{ p}}
\end{equation}
for some $C>0$ and sufficiently large $N$. The constant in (\ref
{resZZ}) depends on the constants in (\ref{511}),
(\ref{ZUNC}) and (\ref{513}).
\end{theorem}

The key assumptions in Theorem~\ref{abstractZlemma} are (i) and (ii);
the key (small) parameter is $X$. Assumption (i) simply ensures that
all terms of the graded expansion of $\cal Z_i$ have zero expectation.
Assumption (ii) defines the decay of $\cal Z_i^{\bb S, \bb U}$ in
the size of $\bb U$; roughly, it states that
\[
|\cal Z_i^{\bb S, \bb U} |\lesssim X^{|
\bb U |+1}
\]
in the sense of high moments.
This is in accordance with the principle outlined above
that terms of the graded expansion which depend on many variables have
a small size,
while those which are independent of many variables may be larger.
The parameter $Y$ is trivial in our applications, where we shall take
it to be a logarithmic factor.
In Lemma 4.1 of~\cite{EYYrigidity}, the role of $X$ was played by the
parameter $\Psi$ defined in (\ref{defPsi}).
\begin{pf*}{Proof of Theorem~\ref{abstractZlemma}}
We find
%
%
\begin{equation}
\E({\mathbf1}(\Xi) |\cal Z|^p) = N^{-p}\sum_{\al
_1,\al_2, \ldots, \al_p=1}^N \E\Biggl({\mathbf1}(\Xi)
\prod_{j=1}^p \cal Z^{\#}_{\al_j}\Biggr),
\end{equation}
where $\#$ stands for either nothing or complex conjugation. Let
$\ff{\alpha}= (\alpha_1,\ldots,\break \alpha_p)$ and define
$\bb S \equiv\bb S(\ff{\alpha}) \deq\{\al_1,\al_2, \ldots, \al
_p\}
$. Then we have
%
%
\begin{equation}\label{FCC}
\E({\mathbf1}(\Xi) |\cal Z|^p) \leq N^{-p}p^p \sum
_{s=1}^p N^{s} \max_{\ff{\alpha}\st|\bb S(\ff{\alpha})
|=s} \Biggl|\E\Biggl({\mathbf1}(\Xi) \prod_{j=1}^p \cal Z^{\#
}_{\al_j}\Biggr)\Biggr|.
\end{equation}
Abbreviating $\bb S_j \deq\bb S \setminus\{\alpha_j\}$, we find from
(\ref{55ASU}) that
%
%
\begin{equation}
\cal Z_{\al_j} = \sum_{\bb U'_j \subset\bb S_j} \cal Z^{\bb S_j,
\bb U'_j}_{\al_j}.
\end{equation}
Thus, (\ref{FCC}) implies
%
%
\begin{eqnarray}
&&\E({\mathbf1}(\Xi) |\cal Z|^p) \nonumber\hspace*{-35pt}\\[-8pt]\\[-8pt]
&&\qquad\leq N^{-p} p^p \sum
_{s=1}^p N^{s} \max_{\ff{\alpha}\st|\bb S(\ff{\alpha})
|=s} \Biggl|\E\Biggl({\mathbf1}(\Xi) \sum_{\bb U_1' \subset
\bb
S_1} \cdots\sum_{\bb U_p' \subset\bb S_p} \prod_{j=1}^p [\cal
Z^{\#}_{\al_j}]^{\bb S_j, \bb U_j'}\Biggr)\Biggr|\nonumber\hspace*{-35pt}
\end{eqnarray}
(abbreviating $\overline{A^{\bb S,\bb U}} = \ol{A}^{\bb S,\bb U}$).
Writing $\bb U_j \deq\bb U_j' \cup\{\alpha_j\}$, we have
%
%
\begin{eqnarray}\label{518}\quad
&&\E( {\mathbf1}(\Xi) |\cal Z|^p) \nonumber\\
&&\qquad\leq\biggl(\frac
{p}{N}\biggr)^p \sum_{s=1}^p \sum_{n=1}^{sp} N^{s} s^n n^p
\max\Biggl\{\Biggl|\E\Biggl({{\f1}(\Xi) \prod_{j=1}^p
[\cal Z^{\#}_{\al_j}]^{\bb S_j, \bb U_j'}}\Biggr) \Biggr|\st
|\bb S(\ff{\alpha}) |= s,\\
&&\qquad\quad\hspace*{198.2pt} \bb U_j' \subset\bb S_j
,
\sum
_{j = 1}^p |\bb U_j |= n \Biggr\}.\nonumber
\end{eqnarray}

Now we claim that
%
%
\begin{eqnarray}\label{jbd}\quad
&&N^{s} s^n n^p
\max\Biggl\{{\Biggl|\E\Biggl({{\f1}(\Xi) \prod_{j=1}^p
[\cal Z^{\#}_{\al_j}]^{\bb S_j, \bb U_j'}}\Biggr) \Biggr|\st
|\bb S(\ff{\alpha}) |= s, \bb U_j' \subset\bb S_j
,
\sum
_{j = 1}^p |\bb U_j |= n }\Biggr\}\hspace*{-20pt}\nonumber\\[-8pt]\\[-8pt]
&&\qquad\leq\bigl(CNp^{ 10}Y(X^2+N^{-1})\bigr)^p\nonumber
\end{eqnarray}
for some $C>0$. Then inserting (\ref{jbd}) into (\ref{518}), we find
%
%
\begin{equation}
\E({\f1}(\Xi) |\cal Z|^p) \leq\bigl(Cp^{
11}Y(X^2+N^{-1})\bigr)^p,
\end{equation}
which implies (\ref{resZZ}) by Markov's inequality.

It only remains to prove (\ref{jbd}). We consider two cases: $n\geq
2s$ and $n\leq2s-1$.\vadjust{\goodbreak}

We begin by proving (\ref{jbd}) for the case $n\geq2s$. Using
H\"older's inequality, we find
%
%
\begin{equation}\label{520}
\Biggl|
\E\Biggl({\f1}(\Xi) \prod_{j=1}^p [\cal Z^{\#}_{\al_j}]^{\bb S_j,
\bb U_j'}\Biggr)
\Biggr|
\leq\Biggl( \prod_{j=1}^p \E( {\f1}(\Xi) |[\cal
Z^{\#}_{\al_j}]^{\bb S_j,
\bb U_j'}|^p)\Biggr)^{1/p}.
\end{equation}
Applying (\ref{511}) to the right-hand side, we obtain that
%
%
\begin{equation}\label{521}
\Biggl( \prod_{j=1}^p \E( {\f1}(\Xi) |[\cal Z^{\#
}_{\al_j}]^{\bb S_j, \bb U_j'}|^p)\Biggr)^{1/p}
\leq Y^p (Cn X)^{ n}
\end{equation}
since $\sum_j(|\bb U'_j|+1)=\sum_j |\bb U'_j|=n$.
Combining (\ref{520}), (\ref{521}) and the factor $n\geq
p$, we have bounded the left-hand side of (\ref{jbd}) as follows:
\begin{eqnarray*}
&&N^{s} s^n n^p
\max\Biggl\{\Biggl|\E\Biggl({{\f1}(\Xi) \prod_{j=1}^p
[\cal Z^{\#}_{\al_j}]^{\bb S_j, \bb U_j'}}\Biggr) \Biggr|\st
|\bb S(\ff{\alpha}) |= s, \bb U_j' \subset\bb S_j
,
\sum
_{j = 1}^p |\bb U'_j |= n \Biggr\}\\
&&\qquad \leq N^s Y ^p (Cn^2Xs)^{ n}
\\
&&\qquad \leq N^s Y ^p (Cn^2Xs)^{2s}
\\
&&\qquad \leq \bigl(CN p^{ 10}Y (X^2+N^{-1}) \bigr)^p,
\end{eqnarray*}
where in the second inequality we used
\[
Cn^2Xs\leq CXs^3p^2\leq CXp^5\ll1
\]
[see (\ref{512})] and $n\geq2s$, and in the third inequality $s\leq
p$ and $n\leq sp$. This completes the proof of (\ref{jbd}) for
the case $n\geq2s$.

Now we prove (\ref{jbd}) for the case $n\leq2s-1$. Fix sets $\bb U_j'$
with $\sum_j |\bb U_j |= n$, where we recall that $\bb U_j \deq\bb U_j'
\cup\{\alpha_j\}$ and $|\bb U_j |= |\bb U_j' |+ 1$. By
definition\vspace*{1pt} of $\bb U_j$, we have $\alpha _j \in\bb U_j$
for all $j$. Since $n\leq2s-1$, we therefore find that there exists a
$k$ such that $\al_k \in\bb U_k$ and $ \al_k \notin\bb U_{j}$ for $j
\neq k$. By the definitions (\ref {56ASU}) and (\ref{Zmean0}), $[\cal
Z^{\#}_{\al_j}]^{\bb S_j, \bb U_j'}$ is independent of $\bb S_j
\setminus\bb U_j'$, that is, of $\bb S \setminus\bb U_j$. We conclude
that
%
%
\begin{equation}\label{527}
\prod_{j\neq k}^p [\cal Z^{\#}_{\al_j}]^{\bb S_j, \bb U_j'}
\end{equation}
is independent of $\{\al_k\}$. Therefore,
%
%
\begin{eqnarray}
&&
\E\Biggl({\f1}([ \Xi]_{\al_k}) \prod_{j=1}^p [\cal Z^{\#}_{\al
_j}]^{\bb S_j, \bb U_j'}\Biggr)\nonumber\\[-8pt]\\[-8pt]
&&\qquad= \E\Biggl(
\Biggl[ \prod_{j\neq k}^p
[\cal Z^{\#}_{\al_j}]^{\bb S, \bb U_j} \Biggr] {\f1}([ \Xi]_{\al
_k}) \E_{\al_k} [\cal Z^{\#}_{\al_k}]^{\bb S_k, \bb U_k'}
\Biggr)
= 0.\nonumber
\end{eqnarray}
Thus,
%
%
\begin{equation}
\E\Biggl({\f1}(\Xi) \prod_{j=1}^p [\cal Z^{\#}_{\al_j}]^{\bb S_j,
\bb U_j'}\Biggr) = -\E\Biggl( {\f1}([ \Xi]_{\al_k}
\setminus\Xi) \prod_{j=1}^p [\cal Z^{\#}_{\al_j}]^{\bb S_j, \bb
U_j'}\Biggr),
\end{equation}
which yields
%
%
\begin{eqnarray}\label{530}
&&\Biggl|\E\Biggl({\f1}(\Xi) \prod_{j=1}^p [\cal Z^{\#}_{\al
_j}]^{\bb S_j, \bb U_j'}\Biggr)\Biggr|\nonumber\\
&&\qquad \leq \E\Biggl( {\f1}([ \Xi]_{\al_k} \setminus\Xi) \prod
_{j=1}^p |[\cal Z^{\#}_{\al_j}]^{\bb S_j,
\bb U_j'}|\Biggr)
\\
&&\qquad \leq \Biggl\|
{\f1}([ \Xi]_{\al_k} \setminus\Xi) \prod_{j\neq k}^p
|[\cal Z^{\#}_{\al_j}]^{\bb S_j, \bb U_j'}| \Biggr\|_\infty
\E\bigl( {\f1}([ \Xi]_{\al_k} \setminus\Xi) |[\cal Z^{\#
}_{\al_k}]^{\bb S_k, \bb U_k'}|\bigr)
.\nonumber
\end{eqnarray}
Since (\ref{527}) is independent of $\al_k$, we get
%
%
\begin{eqnarray}\label{531}
\Biggl\|{\f1}([ \Xi]_{\al_k} \setminus\Xi) \prod_{j\neq k}^p
|[\cal Z^{\#}_{\al_j}]^{\bb S_j, \bb U_j'}| \Biggr\|_\infty
&\leq&
\Biggl\|{\f1}([\Xi]_{\alpha_k}) \prod_{j\neq k}^p
|[\cal Z^{\#}_{\al_j}]^{\bb S_j, \bb U_j'}| \Biggr\|
_{\infty}\nonumber\\[-8pt]\\[-8pt]
&=&\Biggl\|{\f1}(\Xi) \prod_{j\neq k}^p |[\cal Z^{\#}_{\al
_j}]^{\bb S_j, \bb U_j'}| \Biggr\|_{\infty}.\nonumber
\end{eqnarray}
Using the definition of $\cal Z_i^{\bb S, \bb U}$ in (\ref{56ASU}) and
(\ref{ZUNC}), we have
%
%
\begin{equation}
|{\f1}(\Xi)[\cal Z^{\#}_{\al_j}]^{\bb S_j, \bb U_j'}
|\leq
YN^C 2^{|\bb U_j|}
\end{equation}
and
%
%
\begin{equation}\label{533}
\Biggl|{\f1}(\Xi)\prod_{j\neq k}^p[\cal Z^{\#}_{\al_j}]^{\bb
S_j, \bb U_j'} \Biggr|\leq
(YN^{C})^{p-1} 2^{n} \leq(YN^{C})^{p-1} 2^{2p},
\end{equation}
where we used $s\leq p$ and $n\leq2s$ in the last inequality.
Combining (\ref{530}), (\ref{531}) and (\ref{533}), we
get
%
%
\begin{equation}\quad
\Biggl|\E\Biggl({\f1}(\Xi) \prod_{j=1}^p [\cal Z^{\#}_{\al
_j}]^{\bb S_j, \bb U_j'}\Biggr)\Biggr|
\leq(YN^{C})^{p-1} 2^{2p}
\bigl(\E{\f1}([ \Xi]_{\al_k}\setminus\Xi) |[\cal Z^{\#
}_{\al_k}]^{\bb S_k, \bb U_k'}|\bigr).\hspace*{-28pt}
\end{equation}
Applying Schwarz's inequality on the right-hand side, we find
%
%
\begin{eqnarray}\quad
&&\Biggl|\E\Biggl({\f1}(\Xi) \prod_{j=1}^p [\cal Z^{\#}_{\al
_j}]^{\bb S_j, \bb U_j'}\Biggr)\Biggr|\nonumber\\[-8pt]\\[-8pt]
&&\qquad\leq(YN^{C})^{p-1} 2^{2p}
\bigl(\P([ \Xi]_{\al_k}\setminus\Xi)\bigr)^{1/2} (\E{\f
1}([ \Xi]_{\al_k})|[\cal Z^{\#}_{\al_k}]^{\bb S_k, \bb
U_k'} |^2)^{1/2}.\nonumber
\end{eqnarray}
Using (\ref{512}), (\ref{513}), (\ref{5112}) and that $n\leq
2s-1\leq2p$, we get for any $\wt C>0$
%
%
\begin{equation}
\Biggl|\E\Biggl({\f1}(\Xi) \prod_{j=1}^p [\cal Z^{\#}_{\al
_j}]^{\bb S_j, \bb U_j'}\Biggr)\Biggr|
\leq(YN^{C})^{p } 2^{2p}
\P(\Xi^c)^{1/2} \leq Y^pN^{- \wt Cp}.
\end{equation}
Since $s\leq p$, the proof of (\ref{jbd}) in the case $n\leq2s-1$ is complete.
\end{pf*}

\subsection{Decomposition of $G_{ij}$} \label{sectionGdecomposition}

In order to apply Theorem~\ref{abstractZlemma} to estimate $[Z]$, we
need to derive bounds, and hence formulas, for the
decomposition $G_{ij}^{\bb S, \bb U}$ of resolvent matrix elements $G_{ij}$.
As usual, $G$ refers to the resolvent of $H$ at a fixed spectral
parameter $z$ (which is suppressed
in the notation), that is, $G= G(H)$ is viewed as a function of $H$.
The main result of this section is the bound
(\ref{616}) below.

Note that the results in this subsection are entirely deterministic.
%
%
\begin{lemma}\label{YJM}
Let $z=E+i\eta\in D$, where $D\subset\C$ is some compact domain. Let
$\bb U\subset\{1,2,\ldots, N\}$ and
%
%
\begin{equation}\label{631i}
|\bb U| \leq\frac{1}{(\Lambda_o+\Lambda_d)\log N}.
\end{equation}
Then for any $i,j\notin\bb U$, we have
%
%
\begin{equation} \label{Gijdeltaij}
\bigl|G^{(\bb U)}_{ij}-m_{\mathrm{sc}} \delta_{ij}\bigr| \leq C\bigl( {\f1
}(i=j)\Lambda_d+\Lambda_o\bigr).
\end{equation}
In particular, if $\Lambda_d + \Lambda_o \leq(\log N)^{-1}$, then
%
%
\begin{equation} \label{Giilowerbound}
\inf_{i\notin\bb U}\bigl|G^{(\bb U)}_{ii}\bigr| \geq c.
\end{equation}
Here the constants $c$ and $C$ depend only on $D$.
\end{lemma}
\begin{pf}
Define
%
%
\begin{equation}
B_m \deq\max\bigl\{ \bigl| G_{ij}^{(\bb V)}-\delta_{ij}G_{ii}
\bigr| \st i,j\notin\bb V, |\bb V|=m\bigr\}.
\end{equation}
In the case $m=0$, (\ref{Gijdeltaij}) follows from the definitions of
$\Lambda_o$ and $\Lambda_d$. The estimate
(\ref{Giilowerbound}) follows from (\ref{Gijdeltaij}), noting that
$|m_{\mathrm{sc}}(z)|\geq c$ on a compact domain $z\in D$
with $c$ depending on $D$. Next, from (\ref{GijGijk}) we get
%
%
\begin{equation}
G_{ij}^{(k \bb T)} = G^{(\bb T)}_{ij}-\frac{G^{(\bb T)}_{ik}G^{(\bb
T)}_{kj}}{G^{(\bb T)}_{kk}}\qquad \mbox{where } i,j
\notin\{k\}\cup\bb T \mbox{ and } k\notin\bb T.
\end{equation}
Assuming (\ref{Giilowerbound}) for $|\bb U|=m$, we therefore obtain
%
%
\begin{equation}
B_{m+1} \leq B_{m}+C_0B_{m}^2
\end{equation}
for some constant $C_0>0$ independent of $m$. This implies that
%
%
\begin{equation}
B_{m+1} \leq C_0\sum_{k=0}^mB_{k}^2+B_0.
\end{equation}
By induction on $m$ one obtains $B_m\leq2B_0$ as long as $C_0mB_0 \leq1/2$.
\end{pf}

In order to state the next result, we introduce a class of rational
functions in resolvent matrix elements. Fix two sets
$\bb U \subset\bb S$ satisfying $\bb U \neq\varnothing$. For fixed
$n \in\N$ let the following be given:
\begin{longlist}
\item
a sequence of integers $(i_r)_{r = 1}^{n+1}$ satisfying $i_k \neq i_{k
+ 1}$ for $1 \leq k \leq n$;
\item
a collection of sets $(\bb U_\alpha)_{\alpha= 1}^n$ satisfying
$i_\alpha, i_{\alpha+ 1} \notin\bb U_\alpha$ as well as $\bb S
\setminus\bb U \subset\bb U_\alpha\subset\bb S$ for $1 \leq\alpha
\leq n$;
\item
a collection of sets $(\bb T_\beta)_{\beta= 2}^n$ satisfying $i_\beta
\notin\bb T_\beta$ as well as $\bb S \setminus\bb U \subset
\bb T_\beta\subset\bb S$ for $2 \leq\beta\leq n$.
\end{longlist}
Then we define the random variable, parametrized by $(i_r)_{r =
1}^{n+1}, (\bb U_\al)_{\al= 1}^n, (\bb T_\beta)_{\beta=
2}^{n}$,
%
%
\begin{equation}\label{6488new}
F((i_r)_{r = 1}^{n+1}, (\bb U_\al)_{\al= 1}^n, (\bb T_\beta
)_{\beta= 2}^{n} ) \deq\frac PQ,
\end{equation}
where
\[
P = \prod_{\al=1}^n G^{(\bb U_{\al})}_{i_\al,i_{\al+1}},\qquad
Q = \prod_{\beta=2}^{n } G^{(\bb T_{\beta})}_{i_\beta,i_\beta}.
\]
Note that $F$ depends on the randomness via the resolvent matrix elements.
All matrix elements are off-diagonal in the numerator and diagonal in
the denominator; $n$ counts the number of
off-diagonal elements in the numerator.
The sequence of indices of these matrix elements is consecutive if
$P/Q$ is written as an alternating product of
off-diagonal elements from the numerator $P$ and reciprocals of
diagonal elements from the denominator $Q$, that is, in the
form
%
%
\begin{equation}\label{product}
\frac{P}{Q} = G_{i_1 i_2}^{(\bb
U_1)} \bigl[ G_{i_2 i_2}^{(\bb T_2)}\bigr]^{-1}G_{i_2 i_3}^{(\bb U_2)}
\bigl[G_{i_3 i_3}^{(\bb T_3)}\bigr]^{-1} \cdots G_{i_n i_{n+1}}^{(\bb U_n)}.
\end{equation}

%
\begin{definition}\label{58G}
For $\bb U \subset\{1,\ldots, N\}$ and $i,j \notin\bb U$ define
$G_{ij}^{[\bb U]} \deq G_{ij}^{(\bb U)}$. For $i,j \notin\bb S$ and
$\bb U
\subset\bb S$ define $G_{ij}^{\bb S,\bb U}$ through (\ref{56ASU}).
\end{definition}
%
%
\begin{lemma}\label{STG}
Let $\bb S \subset\{1,\ldots, N\}$ and $i,j \notin\bb S$.
Then
%
%
\begin{equation} \label{656}
G^{\bb S, \varnothing}_{ij} = G^{(\bb S)}_{ij}.
\end{equation}
If $\varnothing\neq\bb U \subset\bb S$, then $G_{ij}^{\bb S,\bb U}$
can be written as
%
%
\begin{equation} \label{657}
G_{ij}^{\bb S,\bb U} = \sum_{n = |\bb U |+1}^{2 |
\bb U |}
F_n,\qquad F_n = \sum_{k = 1}^{K_n} F_{n,k},\vadjust{\goodbreak}
\end{equation}
where $\sum_{n=|\bb U|+1}^{2|\bb U|}K_n\leq4^{|\bb U| }|\bb U|!$ and
each $F_{n,k}$ is of the form (\ref{6488new})
(with a possible minus sign), with $i_2,\ldots,i_n \in\bb U$, $i_1 =
i$, $i_{n + 1} = j$, and with some
appropriately chosen sets $(\bb U_\al)_{\al= 1}^n$ and $(\bb T_\beta
)_{\beta= 2}^{n}$ which may be different for each
$F_{n,k}$.
\end{lemma}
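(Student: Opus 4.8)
The plan is to prove \eqref{656} and \eqref{657} by induction on $m := \abs{\bb U}$. The case $\bb U = \emptyset$ is \eqref{656}, immediate from \eqref{56ASU} since only $\bb T = \emptyset$ contributes. For $m \ge 1$ the engine is the recursion obtained by fixing $k \in \bb U$, setting $\bb U' := \bb U \setminus \{k\}$, and splitting the sum in \eqref{56ASU} according to whether $k \in \bb T$:
\begin{equation*}
G_{ij}^{\bb S, \bb U} \;=\; G_{ij}^{\bb S \setminus \{k\},\, \bb U'} \;-\; G_{ij}^{\bb S,\, \bb U'}\,,
\end{equation*}
which one checks using $\bb S \setminus \bb U = (\bb S \setminus \{k\}) \setminus \bb U'$ and $(\bb S \setminus \bb U) \cup \{k\} = \bb S \setminus \bb U'$. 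For $m = 1$, $\bb U = \{k\}$, this reads $G_{ij}^{\bb S,\{k\}} = G_{ij}^{(\bb S \setminus \{k\})} - G_{ij}^{(\bb S)}$, and the minor form of \eqref{GijGijk} (Remark \ref{remark: identities fo HT}) turns it into $G_{ik}^{(\bb S \setminus \{k\})}\,\big[G_{kk}^{(\bb S \setminus \{k\})}\big]^{-1}\,G_{kj}^{(\bb S \setminus \{k\})}$, which is of the form \eqref{product} with $n = 2$, $(i_1,i_2,i_3)=(i,k,j)$, all minor sets equal to $\bb S \setminus \bb U$, and $K_2 = 1 \le 4^1\cdot 1!$ (here $i \ne k \ne j$ since $i,j \notin \bb S \ni k$).

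For the inductive step I will carry a slightly stronger hypothesis: the expansion \eqref{657} of $G_{ij}^{\bb S',\bb U'}$ is manufactured by a procedure that treats the base set $\bb S' \setminus \bb U'$ as an inert spectator, so that enlarging $\bb S'$ by a vertex $w \notin \bb S' \cup \{i,j\}$ (keeping $\bb U'$ fixed) merely adjoins $w$ to every minor set appearing, without changing the index sequences, the signs, or the number of terms. Granting this for cardinality $m-1$ and applying it to $\bb U'$ with the two ambient sets $\bb S \setminus \{k\}$ and $\bb S$ (which differ exactly by $k$, and $k \notin \bb U' \cup \{i,j\}$), the right-hand side of the recursion becomes $\sum_r \epsilon_r(F^{(r)} - \widetilde F^{(r)})$, where $\epsilon_r = \pm 1$ and $\widetilde F^{(r)}$ is $F^{(r)}$ with every minor set $\bb W$ replaced by $\bb W \cup \{k\}$ (note $k \notin \bb W \subseteq \bb S \setminus \{k\}$). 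It remains to expand each difference $F^{(r)} - \widetilde F^{(r)}$: writing the length-$n'$ rational function $F^{(r)}$ as the product of its $2n'-1$ scalar factors --- the $n'$ off-diagonal numerator entries $G_{i_\alpha i_{\alpha+1}}^{(\bb U_\alpha)}$ and the $n'-1$ reciprocals $\big[G_{i_\beta i_\beta}^{(\bb T_\beta)}\big]^{-1}$ --- and telescoping, one gets a sum over the $2n'-1$ factor positions of a shifted prefix times a single-factor difference times an unshifted suffix, where each single-factor difference is evaluated by \eqref{GijGijk}:
\begin{equation*}
G_{ab}^{(\bb W)} - G_{ab}^{(\bb W k)} \;=\; \frac{G_{ak}^{(\bb W)} G_{kb}^{(\bb W)}}{G_{kk}^{(\bb W)}}\,, \qquad \frac{1}{G_{aa}^{(\bb W)}} - \frac{1}{G_{aa}^{(\bb W k)}} \;=\; -\,\frac{G_{ak}^{(\bb W)} G_{ka}^{(\bb W)}}{G_{aa}^{(\bb W)}\, G_{aa}^{(\bb W k)}\, G_{kk}^{(\bb W)}}\,.
\end{equation*}
These are legitimate because $k \notin \bb W$ and $k \ne a,b$ (all indices run through $\bb U' \cup \{i,j\}$). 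Substituting, each telescoping summand is again of the form \eqref{product}: $k$ is inserted into the index sequence --- between $i_\alpha$ and $i_{\alpha+1}$ in the numerator case, raising $n'$ to $n'+1$, or around the diagonal index $i_\beta$ (producing $\ldots i_\beta, k, i_\beta \ldots$) in the denominator case, raising $n'$ to $n'+2$ --- so from $n' \in \{m,\dots,2m-2\}$ we land in $n \in \{m+1,\dots,2m\}$, and every interior index now lies in $\bb U' \cup \{k\} = \bb U$.

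It remains to check the bookkeeping. The constraints in the definition of \eqref{6488new} --- namely $i_\alpha, i_{\alpha+1} \notin \bb U_\alpha$, $i_\beta \notin \bb T_\beta$, $i_\alpha \ne i_{\alpha+1}$, and $\bb S \setminus \bb U \subseteq (\cdot) \subseteq \bb S$ for every minor set --- are inherited: shifted and unshifted minor sets still lie between $\bb S \setminus \bb U = (\bb S \setminus \{k\}) \setminus \bb U'$ and $\bb S$, the newly created factors involving $k$ satisfy $k \notin \bb W$ with $k$ distinct from its neighbours, and the old indices keep their old constraints; the spectator/equivariance property persists because the telescoping and the identities above are purely formal in the base set. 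For the count, each of the at most $4^{m-1}(m-1)!$ terms $F^{(r)}$ produces at most $2n'-1 \le 4m-5$ telescoping summands, so the total number of terms is at most $(4m-5)\,4^{m-1}(m-1)! \le 4^m m!$, which is the asserted bound $\sum_{n=m+1}^{2m} K_n \le 4^{\abs{\bb U}}\abs{\bb U}!$. I expect the only genuine subtlety to be this bookkeeping: setting up the induction so that the two terms of the recursion line up term-by-term (hence the equivariance reinforcement), and confirming at each step that $k$ stays disjoint from all indices in play --- which is precisely what licenses \eqref{GijGijk}.
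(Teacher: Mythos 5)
Your proof is correct and follows essentially the same route as the paper's: the same one-step recursion $G_{ij}^{\bb S,\bb U}=G_{ij}^{\bb S\setminus\{k\},\bb U\setminus\{k\}}-\bigl(G_{ij}^{\bb S\setminus\{k\},\bb U\setminus\{k\}}\bigr)^{(k)}$ drives the induction (your ``spectator/equivariance'' is precisely the paper's identity \eqref{ATSU} together with Remark \ref{rem: 7.3}), and your term-by-term telescoping of $F-F^{(k)}$ using the two single-factor differences is exactly the content of the paper's Lemma \ref{BQXnew} via identity \eqref{naid}, with matching insertion rules (off-diagonal factor gives length $+1$, diagonal reciprocal gives the $\ldots i_\beta,k,i_\beta\ldots$ pattern and length $+2$) and the same $(4m-5)\cdot4^{m-1}(m-1)!\le 4^m m!$ count. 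Incidentally, your sign on the reciprocal-factor difference is right; the paper's \eqref{612a} carries a sign typo which is harmless given the ``possible minus sign'' in the statement.
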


Note: the index $n$ in $F_n$ and $F_{n,k}$ refers to the number of
off-diagonal elements appearing in the rational
functions (\ref{6488new}), while $k$ is just a counting index.
\begin{pf*}{Proof of Lemma~\ref{STG}}
First, (\ref{656}) follows from (\ref{56ASU}).

It remains to prove (\ref{657}) in the case $\bb U\neq\varnothing$.
Using Definition~\ref{58G} and Remark~\ref{rem73},
one readily sees that, for a set $\bb T$ satisfying $\bb T \cap\bb U =
\varnothing$ and $i,j \notin\bb S \cup\bb T$, we
have
%
%
\begin{equation}\label{ATSU}
(G_{ij}^{\bb S, \bb U})^{(\bb T)} = G_{ij}^{\bb S\cup\bb T, \bb
U}.
\end{equation}
Thus, if ${a}\in\bb U\subset\bb S$, we get from (\ref{56ASU}) and
(\ref{ATSU}) that
%
%
\begin{eqnarray}\label{619}
G_{ij}^{\bb S, \bb U} &=& G_{ij}^{\bb S\setminus\{{a}\}, \bb
U\setminus\{{a}\}} - G_{ij}^{\bb S, \bb
U\setminus\{{a}\}}\nonumber\\[-8pt]\\[-8pt]
&=& G_{ij}^{\bb S\setminus\{{a}\}, \bb U\setminus\{{a}\}} -
\bigl(G_{ij}^{\bb S\setminus\{{a}\}, \bb
U\setminus\{{a}\}}\bigr)^{({a})} \qquad\mbox{for } i,j \notin
\bb S.\nonumber
\end{eqnarray}
In the special case $|\bb U|=1$, writing $\bb U=\{{a}\}$, we have
%
%
\begin{equation}\label{620}
G_{ij}^{\bb S, \bb U} = G_{ij}^{\bb S, \{{a}\}} = G_{ij}^{(\bb
S\setminus\{{a}\})}-G_{ij}^{(\bb S)}.
\end{equation}
Using (\ref{GijGijk}), we obtain (\ref{657}) for the case $|\bb
U|=1$, that is,
%
%
\begin{equation}
G_{ij}^{\bb S,\{{a}\}} = G_{ij}^{(\bb S\setminus\{{a}\})} -
G_{ij}^{(\bb S)} = \frac{G_{i{a}}^{(\bb S\setminus\{{a}\})}G_{{a}
j}^{(\bb S\setminus\{{a}\})}}{G_{{a}{a}}^{(\bb S\setminus\{{a}\})}}.
\end{equation}

For a general set $\bb U$ with $|\bb U|\geq2$, using (\ref{619}), we
can write $G^{\bb S,\bb U}$ iteratively as $F -
F^{({a})}$, where $F$ itself is of the form $E - E^{({b})}$ for some
appropriate $E$. For example, for ${a}, {b}\in
\bb U$ we have
\begin{eqnarray*}
G_{ij}^{\bb S, \bb U} & = & G_{ij}^{\bb S\setminus\{{a}\}, \bb
U\setminus\{{a}\}} - \bigl(G_{ij}^{\bb
S\setminus\{{a}\}, \bb U\setminus\{{a}\}}\bigr)^{({a})} \\
& = & G_{ij}^{\bb S\setminus\{{a}{b}\}, \bb U\setminus\{{a}{b}\}}
-\bigl(G_{ij}^{\bb S\setminus\{{a}{b}\}, \bb
U\setminus\{{a}{b}\}} \bigr)^{({b})}\\
&&{} - \bigl(G_{ij}^{\bb S\setminus\{{a}{b}\}, \bb U\setminus\{{a}{b}\}
} -\bigl(G_{ij}^{\bb S\setminus\{{a}{b}\},
\bb U\setminus\{{a}{b}\}} \bigr)^{({b})}\bigr)^{({a})}.
\end{eqnarray*}
Recall $F^{({a})} = F\circ\pi_{a}$ from Definition \ref
{definitionofminor2}. Then to prove (\ref{657}) in the case
$\bb U$ with $|\bb U|\geq2$, we use induction on $|\bb U|$. The key
step is Lemma~\ref{BQXnew} below, which contains
the required properties of $F-F^{({a})}$. Its proof will be given later.
%
%
\begin{lemma}\label{BQXnew}
Let $F$ be of the form (\ref{6488new}).
We assume that
%
%
\begin{equation}
\Biggl|\Biggl({\bigcup_{\al= 1}^n \bb U_\al}\Biggr)\cup
\Biggl({\bigcup_{\beta= 2}^n \bb T_{\beta}}\Biggr)\Biggr| \leq
\frac{1}{(\Lambda_o+\Lambda_d)\log N}-1.
\end{equation}
If
%
%
\begin{equation}
s \notin\{i_1, i_2,i_3, \ldots, i_{n+1}\} \cup\Biggl({\bigcup
_{\al=1}^n \bb U_{\al}}\Biggr)
\cup
\Biggl({\bigcup_{\beta=1}^{n - 1} \bb T_{\beta}}\Biggr),
\end{equation}
then $F-F^{(s)}$ is equal to the sum (with signs $\pm$) of $2n - 1$
terms of the form~(\ref{6488new}),
%
%
\begin{eqnarray}\label{747}
F-F^{(s)} &=& \sum_{l=1}^{n}F_l^A((\tilde i^A_{l,r})_{r =
1}^{\tilde n^A + 1}, (\wt{\bb U}^A_{l,\al})_{\al=
1}^{\tilde n^A}, (\wt{\bb T}^A_{l,\beta})_{\beta= 2}^{\tilde
n^A})\nonumber\\[-8pt]\\[-8pt]
&&{}+\sum_{l=1}^{n-1}F_l^B((\tilde i^B_{l,r})_{r
= 1}^{\tilde n^B + 1}, (\wt{\bb U}^B_{l,\al})_{\al= 1}^{\tilde n^B},
(\wt{\bb T}^B_{l,\beta})_{\beta= 2}^{\tilde
n^B}),\nonumber
\end{eqnarray}
where the new arguments, carrying a tilde, satisfy the following relations:
\begin{longlist}
\item
%
%
\begin{equation}\label{749-}
\tilde n^A = n+1 \quad\mbox{and}\quad \tilde n^B = n+2.
\end{equation}
\item
For $1\leq l\leq n$, the family $(\tilde i^A_{l,r})$ is given by
%
%
\begin{equation}\label{749}
(\tilde i^A_{l,1}, \tilde i^A_{l,2}, \tilde i^A_{l,3}, \ldots,
\tilde i^A_{l,n+2}) \deq(i_1, i_2,\ldots,i_l, s,
i_{l+1},\ldots, i_{n+1}).
\end{equation}
For $1\leq l\leq n-1$, the family $(\tilde i^B_{l,r})$ is given by
%
%
\begin{equation}\label{750}\qquad
(\tilde i^B_{l,1}, \tilde i^B_{l,2}, \tilde i^B_{l,3}, \ldots,
\tilde i^B_{l,n+3})
\deq(i_1, i_2,\ldots,i_{l },i_{l+1}, s, i_{l+1}, i_{l+2},\ldots,
i_{n+1}).
\end{equation}
\item All sets $\wt{\bb U}^{A} _{l,\al}$, $\wt{\bb T}^{A} _{l,\beta
}$, $\wt{\bb U}^{B} _{l,\al}$ and $\wt{\bb
T}^{B} _{l,\beta}$ appearing in (\ref{747}) are subsets of
%
%
\begin{equation}\label{751}
\Biggl({\bigcup_{\al= 1}^n \bb U_{\al}}\Biggr) \cup
\Biggl({\bigcup_{\beta= 2}^n \bb T_{\beta}}\Biggr) \cup\{s\}.
\end{equation}
\end{longlist}
\end{lemma}

Now we return to complete the proof for Lemma~\ref{STG}. Using (\ref
{619}), we get for $s\in\bb U$ and $i,j \notin\bb
S$ that
%
%
\begin{equation}\label{ZLL}
G_{ij}^{\bb S, \bb U}
= G_{ij}^{\bb S\setminus\{s\}, \bb U\setminus\{s\}} -
\bigl(G_{ij}^{\bb S\setminus\{s\},
\bb U\setminus\{s\}}\bigr)^{(s)}.
\end{equation}
Using induction on $|\bb U|$ and applying the decomposition (\ref
{657}) to $G_{ij}^{\bb S\setminus\{s\}, \bb
U\setminus\{s\}}$, we get
%
%
\begin{equation} \label{657f}
G_{ij}^{\bb S\setminus\{s\}, \bb U\setminus\{s\}} = \sum_{n =
|\bb U |}^{2 |\bb U |- 2} F_n,\qquad F_n =
\sum_{k = 1}^{K_n'} F_{n,k},
\end{equation}
where\vspace*{1pt} $\sum_{n=|\bb U|}^{2|\bb U|-2} K_n' \leq4^{| \bb U|-1 }(|\bb
U|-1)!$ and each $F_{n,k}$ is of the form
(\ref{6488new}) (with a possible minus sign) with $i_2,\ldots,i_n \in
\bb U\setminus\{s\}$, $i_1 = i$, $i_{n + 1} = j$,
and with some appropriately chosen sets $(\bb U_\al)_{\al= 1}^n, (\bb
T_\beta)_{\beta= 2}^{n}$ satisfying
\[
\bb
S\setminus\bb U \subset\bb U_\al,\qquad \bb T_\beta
\subset\bb S \setminus\{s\},\qquad 1 \leq\al\leq n
, 2 \leq\beta\leq n.
\]
Now from (\ref{ZLL}) we get
%
%
\begin{equation}\label{9754}
G_{ij}^{\bb S, \bb U} = \sum_{n = |\bb U |}^{2 |
\bb U |
- 2} \sum_{k = 1}^{K_n'}
\bigl( F_{n,k}-( F_{n,k})^{(s)}\bigr).
\end{equation}
Moreover, using (\ref{747}), we get
%
%
\begin{equation}
F_{n,k}-( F_{n,k})^{(s)} = \sum_{l=1}^n F^A_{n,k,l}
+\sum_{l=1}^{n-1} F^B_{n,k,l},
\end{equation}
where each $ F^{A}_{n,k,l}$
and $ F^{B}_{n,k,l}$ is of the form (\ref{6488new}) (with a possible
minus sign) with $i_1 = i$, $i_{m + 1} = j$,
where $m=n+2$ for $F^A_{n,k,l}$ and $m=n+3$ for $F^B_{n,k,l}$, and the
other indices belong to $ \bb U $. Here the
sets $(\wt{\bb U}_\al)_{\al= 1}^m$ and $(\wt{\bb T}_\beta)_{\beta
= 2}^{m}$ satisfy
\[
\bb S\setminus\bb U \subset\wt{\bb U}_\al, \qquad\wt{\bb
T}_\beta
\subset\bb S, \qquad 1 \leq\al\leq m, 2
\leq\beta\leq m.
\]
Furthermore, with (\ref{749-}), the number of off-diagonal elements in
the numerators of $ F^{A}_{n,k,l}$ and
$F^{B}_{n,k,l}$ are $n+1$ and $n+2$, respectively. Hence, together with
(\ref{9754}), we obtain
\[
G_{ij}^{\bb S, \bb U} = \sum_{n = |\bb U |}^{2 |
\bb U |
- 2} \sum_{k = 1}^{K_n'}
\Biggl(\sum_{l=1}^n F^A_{n,k,l}
+\sum_{l=1}^{n-1} F^B_{n,k,l}\Biggr).
\]
With the assumption of $\sum_{n=|\bb U|}^{2|\bb U|-2} K'_n\leq4^{|
\bb U|-1 }(|\bb U|-1)!$ for the summation bounds in
(\ref{657f}), we know that $G_{ij}^{\bb S, \bb U}$ can be written in
the form (\ref{657}) with $\sum_{n = |\bb U |+1}^{2 |
\bb U |} K_n\leq4^{| \bb U| } |\bb U| !$. This completes
the proof of Lemma~\ref{STG}.
\end{pf*}
\begin{pf*}{Proof of Lemma~\ref{BQXnew}}
Using (\ref{GijGijk}), it is easy to derive the following two
identities for $s\notin\bb U$:
%
%
\begin{eqnarray}\label{612}
G_{ij}^{(\bb U)} &=& G_{ij}^{(\bb U s)}
+\frac{G_{is}^{(\bb U)}G_{sj}^{(\bb U)}}{G_{ss}^{(\bb U)}}
\qquad\mbox{for } i,j
\notin\bb U\cup\{ s\},
\\
\label{612a}
\frac{1}{G_{kk}^{(\bb U)}} &=& \frac{1}{G_{kk}^{(\bb U s)}}
+\frac{G_{ks}^{(\bb U)}G_{sk}^{(\bb U)}}{G_{kk}^{(\bb U
s)}G_{ss}^{(\bb U)}G_{kk}^{(\bb U )}} \qquad\mbox{for } k \notin
\bb U\cup
\{s\}.
\end{eqnarray}
Now (\ref{612}) implies that Lemma~\ref{BQXnew} holds in the case
$n=1$. We shall first prove it for the case $n = 2$,\vadjust{\goodbreak}
and then give the proof of the general case. If $n=2$, then by
assumption $F$ has the form
%
%
\begin{equation}
F = \frac{G_{ij}^{(\bb U)}G^{(\bb V)}_{jk}}{G^{(\bb T)}_{jj}}
\end{equation}
with some sets $\bb U$, $\bb V$, $\bb T$ and indices $i$, $j$, $k$. For
$s\notin\bb U\cup\bb V\cup\bb T\cup\{ijk\}$
we get from (\ref{612}) that
%
%
\begin{equation}
F = \frac{G_{ij}^{(\bb U s)}G^{(\bb V)}_{jk}}{G^{(\bb
T)}_{jj}}+\frac{G_{is}^{(\bb U)}G_{sj}^{(\bb U)}G^{(\bb
V)}_{jk}}{G_{ss}^{(\bb U)}G^{(\bb T)}_{jj}}.
\end{equation}
Next, using (\ref{612a}) on the first term, we obtain
%
%
\begin{equation}
F = \frac{G_{ij}^{(\bb U s)}G^{(\bb V)}_{jk}}{G^{(\bb T s)}_{jj}}
+\frac{G_{ij}^{(\bb U s)}G_{js}^{(\bb T)}G_{sj}^{(\bb T)}G^{(\bb
V)}_{jk}}{G_{jj}^{(\bb T s)}G_{ss}^{(\bb
T)}G_{jj}^{(\bb T)}} +\frac{G_{is}^{(\bb U)}G_{sj}^{(\bb U)}G^{(\bb
V)}_{jk}}{G_{ss}^{(\bb U)}G^{(\bb T)}_{jj}}.
\end{equation}
Using (\ref{612}) again on the first term, we have
%
%
\begin{equation}\hspace*{28pt}
F = F^{(s)}+\frac{G_{ij}^{(\bb U s)}G^{(\bb V)}_{js}G^{(\bb
V)}_{sk}}{G^{(\bb T s)}_{jj}G^{(\bb V)}_{ss}}
+\frac{G_{ij}^{(\bb U s)}G_{js}^{(\bb T)}G_{sj}^{(\bb T)}G^{(\bb
V)}_{jk}}{G_{jj}^{(\bb T s)}G_{ss}^{(\bb
T)}G_{jj}^{(\bb T)}} +\frac{G_{is}^{(\bb U)}G_{sj}^{(\bb U)}G^{(\bb
V)}_{jk}}{G_{ss}^{(\bb U)}G^{(\bb T)}_{jj}}.
\end{equation}
One can easily check that the last three terms are of the form (\ref
{6488new}), and the indices satisfy
(\ref{749-})--(\ref{751}). This completes the proof for Lemma \ref
{BQXnew} in the case $n=2$.

Now we consider the case of a general $n$. Inserting (\ref{612}) and
(\ref{612a}) into each term in (\ref{6488new}), we have
%
%
\begin{equation}
F((i_r)_{r = 1}^{n+1}, (\bb U_\al)_{\al= 1}^n, (\bb T_\beta
)_{\beta= 2}^{n} ) = \frac PQ,
\end{equation}
where
\[
P = \prod_{\al=1}^n G^{(\bb U_{\al})}_{i_\al,i_{\al+1}}
= \prod_{\al=1}^n\biggl(G_{i_\al,i_{\al+1}}^{(\bb U_{\al} s)}
+\frac{G_{i_\al,s}^{(\bb U_{\al})}G_{s i_{\al+1}}^{(\bb U_{\al
})}}{G_{ss}^{(\bb U_{\al})}}\biggr)
\]
and
\[
Q^{-1} = \prod_{\beta=2}^{n } \bigl(G^{(\bb T_{\beta
})}_{i_\beta,i_\beta} \bigr)^{-1}
= \prod_{\beta=2}^{n } \biggl(
\frac{1}{G_{i_\beta,i_\beta}^{( \bb T_{\beta} s)}}
+\frac{G_{i_\beta s}^{( \bb T_{\beta} )}G_{s i_\beta}^{( \bb
T_{\beta} )}}{G_{i_\beta i_\beta}^{( \bb T_{\beta}
)}G_{ss}^{( \bb T_{\beta} )}G_{i_\beta i_\beta}^{(\bb T_{\beta} s)}}
\biggr).
\]
On the other hand,
%
%
\begin{equation}
(F((i_r)_{r = 1}^{n+1}, (\bb U_\al)_{\al= 1}^n, (\bb
T_\beta)_{\beta= 2}^{n} ) )^{(s)} = \frac
{P^{(s)}}{Q^{(s)}},
\end{equation}
where
\[
P^{(s)} = \prod_{\al=1}^n G^{(\bb U_{\al} s)}_{i_\al,i_{\al+1}}
\quad\mbox{and}\quad \bigl(Q^{(s)}\bigr) ^{-1} = \prod_{\beta=2}^{n }
\bigl(G^{(\bb T_{\beta}
s)}_{i_\beta,i_\beta} \bigr)^{-1}.\vadjust{\goodbreak}
\]

For $m\in\N$, we write, using (\ref{612}) and (\ref{612a}),
%
%
\begin{equation}
G^{(\bb U_{ m})}_{i_{ m},i_{ m+1}} =
A_{2m-1} +B_{2m-1},
\end{equation}
where
%
%
\begin{equation}
A_{2m-1} \deq G^{(\bb U_{ m} s)}_{i_{ m},i_{ m+1}}
\quad\mbox{and}\quad
B_{2m-1} \deq\frac{G^{(\bb U_{ m})}_{i_{m},s}G^{(\bb
U_{m})}_{s,i_{ m+1}}}
{G^{(\bb U_{m})}_{ss}}.
\end{equation}
Similarly, we write
%
%
\begin{equation}
\bigl(G^{(\bb T_{ m+1})}_{i_{ m+1},i_{ m+1}}\bigr)^{-1} =
A_{2m}+B_{2m},
\end{equation}
where
%
%
\begin{equation}
A_{2m} \deq\bigl(G^{(\bb T_{ m+1} s)}_{i_{ m+1},i_{ m+1}}\bigr)^{-1}
\quad\mbox{and}\quad
B_{2m} \deq\frac{G_{i_{ m+1} s}^{( \bb T_{{ m+1}} )}G_{s i_{
m+1}}^{( \bb T_{{ m+1}} )}}
{G_{i_{ m+1} i_{ m+1}}^{( \bb T_{{ m+1}} )}G_{ss}^{( \bb T_{{ m+1}}
)}G_{i_{ m+1} i_{ m+1}}^{(\bb T_{{ m+1}}
s)}}.\hspace*{-35pt}
\end{equation}
Then
\[
F = \prod_{m=1}^{2n-1}(A_m+B_m),\qquad F^{(s)} = \prod
_{m=1}^{2n-1} A_m.
\]

To complete the proof, we use the identity
%
%
\begin{equation}\label{naid}
\prod_{m=1}^{2n-1}(A_m+B_m)-\prod_{m=1}^{2n-1}A_m
= \sum_{m=1}^{2n-1} \Biggl(\prod_{j=1}^{m-1}A_j\Biggr) B_m
\Biggl(\prod_{j=m+1}^{2n-1}(A_j+B_j)\Biggr).\hspace*{-35pt}
\end{equation}
It is easy to check that, for any term of the form
\[
\Biggl(\prod_{j=1}^{m-1}A_j\Biggr) B_m \Biggl(\prod
_{j=m+1}^{2n-1}(A_j+B_j)\Biggr)
\]
in the sum (\ref{naid}), the desired properties (\ref{747})--(\ref
{751}) hold.
\end{pf*}

We may now easily obtain the following bound on $G_{ij}^{\bb S,\bb U}$.
%
%
\begin{lemma}\label{lemma612}
Let $\bb U\subset\bb S\subset\{1,2,\ldots, N\}$ and
%
%
\begin{equation}\label{655e}
|\bb S| \leq\frac{1}{(\Lambda_o+\Lambda_d)\log N}.
\end{equation}
Then
%
%
\begin{equation}\label{616a}
|G^{\bb S, \varnothing}_{ij}-m_{\mathrm{sc}} \delta_{ij}| \leq C
\bigl({\f1 }(i=j)\Lambda_d+\Lambda_o\bigr).
\end{equation}
If in addition $\bb U\neq\varnothing$ and $i,j\notin\bb S$, then
%
%
\begin{equation}\label{616}
|G_{ij}^{\bb S,\bb U}| \leq(C |\bb U| \Lambda
_o)^{|\bb U|+1}
\end{equation}
and
%
%
\begin{equation}\label{newGiiSU}
|(1/G_{ii})^{\bb S,\bb U} | \leq(C |\bb U|
\Lambda_o)^{|\bb U|+1}.
\end{equation}
\end{lemma}
\begin{pf}
The estimate (\ref{616a}) follows (\ref{656}) and (\ref{Gijdeltaij}).
In order to prove (\ref{616}), we apply Lemma
\ref{STG} to each $G_{ij}^{\bb S,\bb U}$, and get
%
%
\begin{equation}
G_{ij}^{\bb S,\bb U} = \sum_{n = |\bb U |+1}^{2 |
\bb U |}
F_n,\qquad F_n = \sum_{k = 1}^{K_n} F_{n,k},
\end{equation}
where $\sum_{n = |\bb U |+1}^{2 |\bb U |}K_n \leq
4^{|\bb
U|} {|\bb U| }!$. Here each $F_{n,k}$ is of the form
(\ref{6488new}) (with a possible minus sign), where $n$ counts the
number of off-diagonal elements in the numerator;
the indices satisfy $i_2,\ldots,i_n \in\bb U$, $i_1 = i$, $i_{n + 1}
= j$. Note that the factors $P$ in
(\ref{6488new}) are the product of off-diagonal terms and the factors
$Q$ the product of diagonal terms. Applying
(\ref{Gijdeltaij}) and (\ref{Giilowerbound}) on the off-diagonal
and diagonal terms in $P$ and $Q$, we get
%
%
\begin{equation}
F_{n,k} \leq\frac{(C \Lambda_o)^{n }}{c^{n-1}} \leq(C
\Lambda_o)^{n }.
\end{equation}
Together with $ \sum_{n = |\bb U |+1}^{2 |\bb U |
}K_n \leq
4^{|\bb U|} {|\bb U| }!$, this implies (\ref{616}).\vspace*{2pt}

In order to prove (\ref{newGiiSU}), we observe that, similarly to
Lemma~\ref{STG}, we have
%
%
\begin{equation}
|(1/G_{ii})^{\bb S,\bb U} | \leq(C|\bb U|)^{|\bb U|+1}
\frac{
(\max_{k,j\notin\bb T, \bb T\subset{\bb S}} |
G_{kj}^{( \bb T)} |)^{|\bb U|+1}
}
{
(\min_{j\notin\bb T, \bb T\subset{\bb S} } |
G_{jj}^{( \bb T)} |)^{|\bb U|+2}}
\end{equation}
provided that
\[
\max_{k,j\notin\bb T, \bb T\subset{\bb S}} \bigl| G_{kj}^{( \bb
T)} \bigr|\leq\min_{j\notin\bb T, \bb T\subset{\bb S} }
\bigl| G_{jj}^{( \bb T)} \bigr|.
\]
Hence, (\ref{newGiiSU}) follows.\vspace*{-2pt}
\end{pf}

\subsection{\texorpdfstring{Proof of Lemma \protect\ref{zlemmasparse}}{Proof of Lemma 4.1}} \label{sectproofof41}
Observe first that (\ref{reszlemmasparse2}) follows immediately from
(\ref{reszlemmasparse}) and Lemma~\ref{lemmaexpandedself-consisteneq}.
It therefore remains to prove (\ref
{reszlemmasparse}).

We define the event $\Xi$ by requiring that on it (\ref{111}) and the
following two events hold:
\begin{longlist}
\item
For every $z\in\wt D $ we have
%
%
\begin{eqnarray}\label{L71}
\Lambda_o(z) &\leq& C\biggl(\frac1q+ (\log N)^{2\xi}\Psi(z)
\biggr) \nonumber\\[-8pt]\\[-8pt]
&\leq& C \Biggl(\frac1q+ (\log
N)^{2\xi}\sqrt{\frac{ \im m_{\mathrm{sc}}(z)+\gamma(z)}{N
\eta}}\Biggr).\nonumber
\end{eqnarray}
\item
For every $z\in\wt D $ we have
%
%
\begin{eqnarray}\label{L72}\qquad
\max_i |G_{ii}(z) - m(z)| & \leq & C\biggl(\frac{(\log
N)^{\xi}}q+(\log N)^{2\xi}\Psi(z) \biggr)
\nonumber\\[-8pt]\\[-8pt]
& \leq & C\Biggl(\frac{(\log N)^{\xi}}q+(\log N)^{2\xi}\sqrt
{\frac{ \im m_{\mathrm{sc}}(z)+\gamma(z)}{N \eta}}\Biggr).\nonumber\vadjust{\goodbreak}
\end{eqnarray}
\end{longlist}
Now Theorem~\ref{WLSCTHM}, Lemmas~\ref{lemmaoff-diagestimate} and
\ref{lemmadiagestimate}, as well as (\ref{111})
and $\wt D \subset D_L$ imply that $\Xi$ holds with $(\xi-1/2,\nu
)$-high probability.
Note that here we reduced the $\xi$ to $\xi- 1/2$ to
account for the intersection of three events of $(\xi,\nu)$-high
probability. It is crucial that $\nu$ remain constant
in this step, as in some applications, such as Theorem~\ref{LSCTHM},
it is iterated.

We write $Z_i$ as
%
%
\begin{equation} Z_i =
\sum_k^{(i)} \biggl({h_{ik}^2 - \frac{1}{N}}\biggr) G^{(i)}_{kk} +
\sum_{k \neq l}^{(i)} h_{ik} G^{(i)}_{kl} h_{li}.
\end{equation}
Lemma~\ref{zlemmasparse} follows from the next two lemmas. As before,
we shall consistently omit the spectral parameter
$z \in\wt D$ from the notation in the following arguments.
%
%
\begin{lemma}\label{sp72}
On $\Xi$ we have with $(\xi,\nu)$-high probability
%
%
\begin{eqnarray}\label{ressp72}
&&\Biggl|{\f1}(\Xi) \frac1N\sum_i\sum_k^{(i)} \biggl({h_{ik}^2 - \frac
{1}{N}}\biggr) G^{(i)}_{kk}\Biggr|
\nonumber\\[-8pt]\\[-8pt]
&&\qquad\leq(\log N)^{4\xi}\biggl(\frac1{q^2}+\frac1{ (N\eta
)^2}+\frac{ \im m_{\mathrm{sc}} +\gamma}{N \eta}\biggr).\nonumber
\end{eqnarray}
\end{lemma}
%
%
\begin{lemma}\label{sp74}
On $\Xi$ we have with $(\xi- 2,\nu)$-high probability
%
%
\begin{eqnarray}\label{ressp74}
&&\Biggl|{\f1}(\Xi) \frac1N\sum_i\sum_{k \neq l}^{(i)} h_{ik} G^{(i)}_{kl}
h_{li}\Biggr|\nonumber\\[-8pt]\\[-8pt]
&&\qquad\leq(\log N)^{14\xi}\biggl(\frac1{q^2}+\frac1{ (N\eta
)^2}+(\log N)^{ 4\xi}\frac{ \im m_{\mathrm{sc}} +\gamma}{N
\eta}\biggr).\nonumber
\end{eqnarray}
\end{lemma}
\begin{pf*}{Proof of Lemma~\ref{sp72}}
We split the sum inside the absolute value on the left-hand side of
(\ref{ressp72}) as
%
%
\begin{eqnarray} \label{715}
&&\frac1N \sum_{i\neq k} \biggl({h_{ik}^2 - \frac{1}{N}}\biggr)
m + \frac1N\sum_{i\neq k}\biggl({h_{ik}^2 - \frac{1}{N}}\biggr)
\bigl(m^{(i)} -m \bigr)\nonumber\\[-8pt]\\[-8pt]
&&\qquad{}+ \frac1N\sum_{i\neq k} \biggl({h_{ik}^2 - \frac{1}{N}}\biggr)
\bigl(G^{(i)}_{kk}-m^{(i)} \bigr).\nonumber
\end{eqnarray}
In order to estimate the first term of (\ref{715}), we use the
estimate (\ref{generalizedLDE}) [with
$(h_{ik}^2-N^{-1})$ playing the role of $a_i$, and setting
$A_i=N^{-1}$, $\alpha=2$, $\beta=-2$ and\vadjust{\goodbreak} $\gamma=1$]
to get, with $(\xi,\nu)$-high probability,
%
%
\begin{equation}
\biggl|\frac1N \sum_{i\neq k} \biggl({h_{ik}^2 - \frac{1}{N}}
\biggr) \biggr| \leq( \log N)^{\xi}\frac{1}{N^{1/2}q}.
\end{equation}
Therefore,
%
%
\begin{eqnarray}\label{716}
\biggl|{\f1}(\Xi)\biggl(\frac1N \sum_{i\neq k} \biggl({h_{ik}^2 - \frac
{1}{N}}\biggr) \biggr) m \biggr|
&\leq&
( \log N)^{\xi}
\biggl| {\f1}(\Xi)
\frac{m}{N^{1/2}q} \biggr|\nonumber\\[-8pt]\\[-8pt]
&\leq& C( \log N)^{\xi} \frac{1}{N^{1/2}q}.\nonumber
\end{eqnarray}

Similarly, in order to estimate the second term of (\ref{715}), we fix
$i$ and sum over~$k$, which yields
with $(\xi,\nu)$-high probability
%
%
\begin{equation}
\Biggl|\max_i \sum_{k}^{(i)}
\biggl(h_{ik}^2 - \frac{1}{N}\biggr)\Biggr| \leq( \log N)^{\xi
}q^{-1},
\end{equation}
where the sum over $k$ was estimated by (\ref{generalizedLDE}). This
yields with $(\xi,\nu)$-high probability
%
%
\begin{eqnarray}
&&\Biggl|{\f1}(\Xi) \frac1N
\sum_i
\sum_{k}^{(i)}
\biggl(h_{ik}^2 - \frac{1}{N}\biggr)
\bigl({m^{(i)}-m}\bigr) \Biggr| \nonumber\\[-8pt]\\[-8pt]
&&\qquad\leq\frac1N\sum_i\bigl| {\f
1}(\Xi) ( \log N)^{\xi}q^{-1}
\bigl({m^{(i)} -m}\bigr) \bigr|.\nonumber
\end{eqnarray}
Using (\ref{iTGT}), we have in $\Xi$
\[
\bigl|m^{(i)}-m \bigr| = \biggl| - \frac{1}{N} \sum_{j} \frac{
G_{ji} G_{ij} }{ G_{ii} } \biggr| \leq
O\biggl(\frac{\im G_{ii}}{ \eta}\biggr).
\]
Thus, we get with $(\xi,\nu)$-high probability
%
%
\begin{equation}\label{718}
\biggl| {\f1}(\Xi) \frac1N\sum_{i\neq k}\biggl({h_{ik}^2 - \frac
{1}{N}}\biggr) \bigl(m^{(i)} -m \bigr) \biggr| \leq C(
\log N)^{\xi}\frac{\im m_{\mathrm{sc}}+\gamma}{qN \eta}.
\end{equation}

Finally, we estimate the third term of (\ref{715}). First, with (\ref
{GijGijk}) and $| m_{\mathrm{sc}} |\geq c$, we note that
if $\Lambda_d\ll1$, then
%
%
\begin{equation}
\bigl|G_{ij}-G_{ij}^{(k)}\bigr| \leq C\Lambda_o^2 \qquad\mbox
{for } i,j\neq
k.
\end{equation}
Together with (\ref{L72}) we get with $(\xi,\nu)$-high probability
%
%
\begin{equation}
\max_{k\neq i}\bigl| \bigl(G^{(i)}_{kk}-m^{(i)} \bigr) \bigr|
\leq C(\log N)^{2\xi}\Biggl(\frac1q+\sqrt{\frac{
\im m_{\mathrm{sc}}+\gamma}{N \eta}}\Biggr).
\end{equation}
Then we use (\ref{aA}) [with $(h_{ik}^2-N^{-1})$ playing the role of
$a_k$ and $G^{(i)}_{kk}-m^{(i)} $ playing the role
of $A_k$, and setting $\alpha=2$, $\beta=-2$ and $\gamma=1$] to get,
with $(\xi,\nu)$-high probability,
%
%
\begin{equation}
\max_i\Biggl|\sum_{k}^{(i)}\!
\biggl(h_{ik}^2 - \frac{1}{N}\biggr) \bigl(G^{(i)}_{kk}-m^{(i)}
\bigr) \Biggr| \leq(\log N)^{4\xi}\Biggl(\frac1q+\sqrt
{\frac{ \im m_{\mathrm{sc}}+\gamma}{N \eta}}\Biggr)q^{ -1}.\hspace*{-40pt}
\end{equation}
Hence, we have with $(\xi,\nu)$-high probability
%
%
\begin{eqnarray}\label{719}
&&\Biggl|{\f1}(\Xi) \frac1N
\sum_i
\Biggl({\sum_{k}^{(i)} \biggl(h_{ik}^2 - \frac{1}{N}\biggr)
\bigl(G^{(i)}_{kk}-m^{(i)} \bigr) }\Biggr)
\Biggr|\nonumber\\[-8pt]\\[-8pt]
&&\qquad \leq(\log N)^{4\xi}\Biggl(\frac1q+\sqrt{\frac{ \im
m_{\mathrm{sc}}+\gamma}{N \eta}}\Biggr)q^{ -1}.\nonumber
\end{eqnarray}
Note that, in applying (\ref{aA}), we used that the family $\{
h_{ik}^2-N^{-1}\}_k$ is independent of the family
$\{G^{(i)}_{kk}-m^{(i)}\}_k$. Combining (\ref{716}), (\ref{718}) and
(\ref{719}), we obtain (\ref{ressp72}).
\end{pf*}
\begin{pf*}{Proof of Lemma~\ref{sp74}}
We shall apply Theorem~\ref{abstractZlemma} to the quantities
%
%
\begin{equation}\label{735new}
\cal Z_i \deq\sum_{k \neq l}^{(i )} h_{ik}G^{(i )}_{kl} h_{li},\qquad
\cal Z_i^{[\bb V]} \deq\f1 (i \notin\bb V) \sum_{k \neq
l}^{(i\bb V)} h_{ik} G^{(i\bb V)}_{kl} h_{li},
\end{equation}
and define $\Xi$ as in the beginning of Section~\ref{sectproofof41},
that is, $\Xi$ is defined by requiring that
(\ref{111}) and (\ref{L71})--(\ref{L72}) hold. Recall that the
collection of random variables $Z_i^{[\bb V]}$ generates
random variables $Z_i^{\bb S,\bb U}$ for any $\bb U\subset\bb S$ by~(\ref{56ASU}).
Let\looseness=-1
%
%
\begin{equation}
p \deq(\log N)^{\xi-3/2}.
\end{equation}\looseness=0
Next, choose
%
%
\begin{eqnarray}\label{736new}
X & \deq & \frac1q+(\log N)^{2\xi}\sqrt{\frac{ \im m_{\mathrm{sc}} +\gamma
}{N \eta}},
\nonumber\\[-8pt]\\[-8pt]
Y & \deq & (\log N)^{2\xi}.\nonumber
\end{eqnarray}
[In other words, $X$ is defined as the right-hand side of (\ref{L71})
up to a constant.] We now derive a bound which
implies both (\ref{511}) and (\ref{5112}), that is, we establish the
assumptions (ii) and (iii) of Theorem
\ref{abstractZlemma}. To this end, we shall prove the stronger
statement that, for $i\notin\bb S$, $r\leq p$ and any
sets $\bb U\subset\bb S$ with $|\bb S|\leq p$, we have
%
%
\begin{equation} \label{737new}
\E({\f1}([ \Xi]_i)
|\cal Z_i^{\bb S, \bb U}|^{r}) \leq( Y (CX u
)^{u})^{r} \qquad\mbox{for } u = |\bb U|+1.
\end{equation}

Using the assumptions of Lemma~\ref{zlemmasparse}, we have in $\wt D$ that
%
%
\begin{equation}
q \geq(\log N)^{5\xi},\qquad
N\eta\geq(\log N)^{14\xi},\qquad \gamma\leq(\log
N)^{-\xi}.\vadjust{\goodbreak}
\end{equation}
It is therefore easy to check that $\cal Z_i$ and $\Xi$ satisfy the
assumptions (i), (iv) and (v) of Theorem
\ref{abstractZlemma}. Thus, the conclusion of Theorem \ref
{abstractZlemma}, (\ref{resZZ}), implies the claim~(\ref{ressp74}).

It remains to prove (\ref{737new}). By the definition of $\cal
Z_i^{\bb S,\bb U}$ in (\ref{56ASU}) and (\ref{735new}),
for $i\notin\bb S$, we have
%
%
\begin{eqnarray}
\cal Z_i^{\bb S,\bb U} & = & (-1)^{|\bb S \setminus\bb U |}
\sum
_{\bb V \st\bb S \setminus\bb U \subset\bb V \subset\bb S}
(-1)^{|\bb V |} \sum_{k
\neq l}^{(i\bb V)} h_{ik} G^{(i\bb V)}_{kl} h_{li}\nonumber\\
& = & (-1)^{|\bb S \setminus\bb U |} \sum_{k \neq
l}^{(i\cup\bb
S \setminus\bb U)}
\sum_{\bb V \st\bb S \setminus\bb U \subset\bb V \subset\bb S
\setminus\{k,l\}} (-1)^{|\bb V |} h_{ik} G^{(i\bb V)}_{kl} h_{li}
\nonumber\\[-8pt]\\[-8pt]
& = & \sum_{k \neq l}^{(i\cup\bb S \setminus\bb U)} h_{ik} h_{li}
\sum_{\bb V \st\bb S \setminus\bb U \subset\bb V \subset\bb S
\setminus\{k,l\}} (-1)^{|\bb S \setminus\bb U |}(-1)^{|
\bb V |}
G^{(i\bb V)}_{kl}\nonumber\\
& = & \sum_{k \neq l}^{(i\cup\bb S \setminus\bb U)} h_{ik} h_{li}
[G_{kl}]^{( \bb Si)\setminus\{k,l\}, \bb U\setminus\{
k,l\}},\nonumber
\end{eqnarray}
where in the last equality we used the definition of $G^{\bb S,\bb U}$,
Definition~\ref{58G}. Thus, we may write
%
%
\begin{equation}
\cal Z_i^{\bb S,\bb U}
= A_1+A_2+A_3+A_4,
\end{equation}
where
\begin{eqnarray*}
A_1 & \deq & \sum_{ k \in\bb U } \sum_{ l\in\bb U\setminus\{ k\}}
h_{ik} h_{li} [G_{kl}]^{( \bb
Si)\setminus\{k,l\}, \bb U\setminus\{ k,l\}},\\
A_2 &\deq&\sum_{k\in\bb U }\sum_l^{( \bb Si k)} h_{ik} h_{li}
[G_{kl}]^{( \bb
Si)\setminus\{k\}, \bb U\setminus\{ k\}},
\\
A_3 & \deq & \sum_{l\in\bb U }\sum_k^{( \bb Sil)} h_{ik} h_{li}
[G_{kl}]^{( \bb Si)\setminus\{l\}, \bb
U\setminus\{ l\}},\\
A_4 &\deq&\sum_{k\neq l }^{(\bb Si)} h_{ik} h_{li}
[G_{kl}]^{( \bb Si), \bb U }.
\end{eqnarray*}
Now we have
%
%
\begin{eqnarray}\label{745new}
\E({\f1}([ \Xi]_i) |\cal Z_i^{\bb
S, \bb U} |^r)
&=& \E\bigl({{\f1}([ \Xi]_i)|
A_1+A_2+A_3+A_4 |^r}\bigr) \nonumber\\[-8pt]\\[-8pt]
&\leq&4^r\sum_{j=1}^4
\E({\f1}([ \Xi]_i)| A_j|^r),\nonumber
\end{eqnarray}
and we are going to bound
$\E({\f1}([ \Xi]_i)| A_j|^r)$
for each $j=1,2,3,4$. Using the assumption (\ref{111}),
that is,
%
%
\begin{equation}
\Lambda\leq\gamma\leq(\log N)^{-\xi},
\end{equation}
(\ref{L71}) and (\ref{L72}), we get $\Lambda_o+\Lambda_d\leq C(\log
N)^{-\xi}$, which implies the assumption
(\ref{655e}) of Lemma~\ref{lemma612}.

Throughout the following we set $u \deq|\bb U |+ 1$.
We begin by estimating the contribution of $A_1$. Observe that if
$i\neq k,l$, $i\in\bb A$ and $i\notin\bb B $, then
$[G_{kl}]^{\bb A, \bb B}$ is independent of the $i$th row
and column of $H$. (The same argument will
be repeatedly used in the rest of the proof below.) Thus, we have
\[
\bigl\|{\f1}([ \Xi]_i) [G_{kl}]^{(
\bb Si)\setminus\{k,l\}, \bb U\setminus\{ k,l\}} \bigr\|
_\infty
=
\bigl\|{\f1}(\Xi) [G_{kl}]^{( \bb Si)\setminus\{
k,l\}, \bb U\setminus\{ k,l\}} \bigr\|_\infty
\leq(C |\bb U | X)^{|\bb U |- 1},
\]
where in the second step we used (\ref{616}) and $\Lambda_o \leq C X$
on $\Xi$. Thus, we find, using $|\bb U |\leq
|\bb S |\leq p = (\log N)^{\xi- 3/2}$ and $q^{-1} \leq X$, that
%
%
\begin{eqnarray}\label{r5100}
\E{\f1}([ \Xi]_i)| A_1|^r
& \leq & (\log N)^{2\xi} \max_{i,k,l} \E| h_{ik} |^r
| h_{li} |^r \bigl({(C |\bb U | X)^{|\bb U
|- 1}}\bigr)^r
\nonumber\\
& \leq & (\log N)^{2 \xi} q^{-2r} \bigl({(C |\bb U |
X)^{|\bb U |- 1}}\bigr)^r
\\
& \leq & ({Y (C X u)^u})^r.\nonumber
\end{eqnarray}

In order to bound the contribution of $A_2$, we estimate, as above,
\[
\bigl\|\f1 ({[\Xi]_i}) [G_{kl}]^{(
\bb Si)\setminus\{k\}, \bb U\setminus\{ k\}} \bigr\|_{\infty}
=
\bigl\|\f1 (\Xi) [G_{kl}]^{( \bb Si)\setminus\{
k\}, \bb U\setminus\{ k\}} \bigr\|_{\infty} \leq(C |
\bb U | X)^{|\bb U |},
\]
where in the last step we used (\ref{616}) and $\Lambda_o \leq C X$
on $\Xi$. Thus, we may apply the moment estimate
(\ref{aaBoapp}) from the \hyperref[app]{Appendix} with
\[
B_{kl} \deq\f1 (k \in\bb U) \f1 (l \notin\bb S \cup\{{i}\})
\f1 ({[\Xi]_i}) [G_{kl}]^{( \bb
Si)\setminus\{k\}, \bb U\setminus\{ k\}}.
\]
This yields
\begin{eqnarray*}
\E{\f1}([ \Xi]_i)| A_2|^r &\leq&
(C r)^{2 r} \biggl({\biggl({\frac{1}{q} + \biggl({\frac{1}{N^2}
(\log N)^\xi N}\biggr)^{1/2}}\biggr) (C |\bb U | X)^{|
\bb U |}}\biggr)^r\\
&\leq&(C r)^{2 r} \bigl({X (C |\bb U | X)^{|\bb U
|}}\bigr)^r,
\end{eqnarray*}
where we used that $|\bb U |\leq(\log N)^\xi$, that the $B_{kl}$
defined above are independent of the randomness
in the $i$th column of $H$, and that
\[
\frac{1}{q} + \frac{(\log N)^{\xi/2}}{\sqrt{N}} \leq\frac
{1}{q} + (\log N)^{2 \xi} \sqrt{\frac{\im m_{\mathrm{sc}}}{N \eta}} \leq
X
\]
as follows from $\im m_{\mathrm{sc}} \geq\sqrt{\eta}$ and $\eta\leq3$.
Thus, we get
%
%
\begin{equation}\label{A2qyxqnew}
\E{\f1}([ \Xi]_i)| A_2|^r \leq
( Y (CX u )^{u})^{r}.
\end{equation}
(Recall that $u = |\bb U |+ 1$.)

Exchanging $k$ and $l$ in the above estimate of $A_2$, we obtain
%
%
\begin{equation}\label{A3qyxqnew}
\E{\f1}([ \Xi]_i)| A_3|^r \leq
( Y (CX u)^{u})^{r}.
\end{equation}

Finally, we estimate the contribution of $A_4$. As above, we estimate
\[
\bigl\|\f1 ({[\Xi]_i}) [G_{kl}]^{( \bb
Si), \bb U } \bigr\|_\infty=
\bigl\|\f1 (\Xi) [G_{kl}]^{( \bb Si), \bb U }
\bigr\|_\infty
\leq(C |\bb U | X)^{|\bb U |+ 1}
\]
by (\ref{616}) and $\Lambda_o \leq C X$ on $\Xi$. We may now apply
the moment estimate (\ref{aaBoapp}) the \hyperref[app]{Appendix} with
\[
B_{kl} \deq\f1 ({k,l \notin\bb S \cup\{i\}}) \f
1 ({[\Xi]_i}) [G_{kl}]^{( \bb Si), \bb U }
.
\]
This yields
\[
\E{\f1}([ \Xi]_i)| A_2|^r \leq
\bigl({r^2 (C |\bb U | X)^{|\bb U |+ 1}}
\bigr)^r,
\]
where we used that the $B_{kl}$ are independent of the randomness in
the $i$th column of $H$. This gives
%
%
\begin{equation}\label{A4qyxqnew}
\E{\f1}([ \Xi]_i)| A_2|^r \leq
( Y (CX u )^{u})^{r}.
\end{equation}

Combining (\ref{r5100}), (\ref{A2qyxqnew}), (\ref{A3qyxqnew}) and
(\ref{A4qyxqnew}), we obtain (\ref{737new}). This
completes the proof.
\end{pf*}

\section{The largest eigenvalue of $A$} \label{sectionlambdamax}
\subsection{Eigenvalue interlacing}
We now concentrate on the spectrum of $A$. We begin by proving the
following interlacing property. Recall that
$\lambda_1 \leq\cdots\leq\lambda_N$ denote the eigenvalues of $H$
and $\mu_1 \leq\cdots\leq\mu_N \eqd\mu_{\max}$
the eigenvalues of $A$. The associated eigenvectors of $H$ are denoted
by $\f u_1,\ldots, \f u_N$, and those of $A$ by
$\f v_1,\ldots, \f v_N \eqd\f v_{\max}$. Also, we set $\nc G(z)
\deq(A - z)^{-1}$.
%
%
\begin{lemma}
The eigenvalues of $H$ and $A$ are interlaced,
%
%
\begin{equation} \label{interlacing}
\lambda_1 \leq\mu_1 \leq\lambda_2 \leq\mu_2
\leq\cdots\leq\mu_{N-1} \leq\lambda_N
\leq\mu_N.
\end{equation}
\end{lemma}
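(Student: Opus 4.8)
The plan is to recognize $A = H + f\ket{\f e}\bra{\f e}$ as a rank-one perturbation of $H$ and invoke the classical Cauchy interlacing theorem. First I would recall that $\ket{\f e}\bra{\f e}$ is an orthogonal projection onto the one-dimensional subspace $\C \f e$, so $f\ket{\f e}\bra{\f e}$ is a positive semidefinite matrix of rank one (since $f \geq 0$ by \eqref{upper bound on f}). By the Weyl/Cauchy interlacing inequalities for a rank-one positive semidefinite perturbation, one has $\lambda_k \leq \mu_k \leq \lambda_{k+1}$ for $k = 1, \dots, N-1$, together with $\mu_N \geq \lambda_N$; stringing these together gives exactly \eqref{interlacing}.

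The cleanest self-contained route I would take is via the min-max (Courant--Fischer) characterization of eigenvalues. For the lower bound $\mu_k \geq \lambda_k$: since $f\ket{\f e}\bra{\f e} \geq 0$, we have $\avg{\f x, A \f x} \geq \avg{\f x, H \f x}$ for every $\f x$, so $\mu_k = \min_{\dim V = k} \max_{\f x \in V, \norm{\f x}=1} \avg{\f x, A\f x} \geq \min_{\dim V = k}\max_{\f x \in V, \norm{\f x}=1}\avg{\f x, H\f x} = \lambda_k$. For the upper bound $\mu_k \leq \lambda_{k+1}$: by Courant--Fischer, $\mu_k = \min_{\dim V = k}\max_{\f x \in V, \norm{\f x}=1}\avg{\f x, A\f x}$; take $V$ to be the span of the first $k+1$ eigenvectors $\f u_1, \dots, \f u_{k+1}$ of $H$ intersected with the orthogonal complement of $\f e$, which has dimension at least $k$. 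On any such $\f x$ we have $\avg{\f x, A\f x} = \avg{\f x, H\f x} \leq \lambda_{k+1}$ because $\f x \perp \f e$ kills the perturbation and $\f x$ lies in the top-$(k+1)$ eigenspace of $H$. Hence $\mu_k \leq \lambda_{k+1}$.

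The argument involves no estimates specific to the sparse model — it is pure linear algebra valid for any symmetric $H$ and any $f \geq 0$ — so there is no real obstacle; the only point requiring a half-line of care is the dimension count $\dim\big(\mathrm{span}(\f u_1,\dots,\f u_{k+1}) \cap \f e^\perp\big) \geq k$, which follows from the fact that intersecting a $(k+1)$-dimensional subspace with a hyperplane drops the dimension by at most one. I would also remark that $\mu_N \geq \lambda_N$ is just the $k=N$ case of the lower bound, completing the chain \eqref{interlacing}. The interlacing is strict generically, but only the non-strict form is needed for later applications, so I would not pursue strictness.
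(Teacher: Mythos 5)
Your proof is correct, and it takes a genuinely different route from the paper's. You invoke the classical Weyl/Courant--Fischer interlacing for a rank-one positive semidefinite perturbation: $f\ket{\f e}\bra{\f e}\geq 0$ gives $\mu_k\geq\lambda_k$ by monotonicity of the min-max, and the dimension count $\dim\bigl(\mathrm{span}(\f u_1,\dots,\f u_{k+1})\cap\f e^\perp\bigr)\geq k$ gives $\mu_k\leq\lambda_{k+1}$. This is pure linear algebra, uses no randomness, and is entirely self-contained. The paper instead derives the resolvent identity
\begin{equation*}
\scalar{\f e}{\nc G\,\f e}^{-1}\;=\;f+\scalar{\f e}{G\,\f e}^{-1}
\end{equation*}
(equation \eqref{perturbation identity}), rewrites both sides spectrally as in \eqref{R-G inv}, and reads interlacing off the pole/zero structure of the resulting rational function of $z$: the zeros of the left-hand side are the $\mu_\alpha$, and interlacing follows because the right-hand side equals $f$ precisely when $z$ is an eigenvalue of $H$. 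The paper's route is slightly less elementary but has a concrete payoff: the same secular equation (see \eqref{f-1 mu}) and the rank-one perturbation identity are reused in Section \ref{section: lambda max} to analyze $\mu_{\rm max}$ and $\f v_{\rm max}$ precisely, so introducing it here is economical. Your approach buys simplicity and independence from the resolvent formalism; there is nothing wrong with it, and the minor point (that $V$ may have dimension $k+1$ rather than exactly $k$) is handled automatically since restricting to a $k$-dimensional subspace only decreases the max.
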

\begin{pf}
We use the identity
%
%
\begin{equation} \label{perturbationidentity}
\langle{\f e}, {\nc G \f e}\rangle^{-1} = f + \langle{\f e}
, {G \f e}\rangle^{-1},
\end{equation}
which follows by taking $\langle{\f e}, { \cdot\f e}\rangle$ in
\[
\nc G(z) (A - z) G(z) = \nc G(z) (H - z) G(z) + f \nc G(z) | \f e
\rangle\langle\f e | G(z).
\]
From (\ref{perturbationidentity}) we get
%
%
\begin{equation} \label{R-Ginv}
\biggl({\sum_\alpha\frac{|\langle{\f v_\alpha}, {\f
e}\rangle|^2}{\mu
_\alpha- z}}\biggr)^{-1}
=
f +
\biggl({\sum_\alpha\frac{|\langle{\f u_\alpha}, {\f
e}\rangle|^2}{\lambda_\alpha- z}}\biggr)^{-1}.
\end{equation}
It is easy to see that the left-hand side of (\ref{R-Ginv}) defines a
function of $z \in\R$ with $N - 1$ singularities
and $N$ zeros, which is smooth and decreasing away from the
singularities. Moreover, its zeros are the eigenvalues of $A$.
The interlacing property now follows from the fact that $z$ is an
eigenvalue of $H$ if and only if the right-hand
side of (\ref{R-Ginv}) is equal to $f$.
\end{pf}

\subsection{\texorpdfstring{The laws of $\mu_{\max}$ and $v_{\max}$}
{The laws of mu max and v max}}

In this section we establish the basic properties of $\mu_{\max}$
and $\f v_{\max}$. We make the assumption that
$f \geq1 + \epsilon_0$ uniformly in $N$ [see (\ref{lowerboundonf})
below], which is necessary to guarantee that $\mu_{N - 1}$
and $\mu_N$ are separated by a gap of order one. Note that in \cite
{PecheFeral} it was proved, in the case where $H$
is a Hermitian Wigner matrix, that if $f \leq1$ no such gap exists.

The following result collects the main properties of $\mu_{\max}$
and $\f v_{\max}$ for the rank-one perturbation $A = H + f | \f e
\rangle\langle\f e |$ of the sparse matrix $H$. The most important technical
result is (\ref{uNe}). It states that, for large $f$, the
eigenvector $\f v_{\max}$
is almost parallel to the perturbation $\f e$. Consequently, $\f e$ is
almost orthogonal to the eigenvectors $\f v_\alpha$ for $\alpha=
1,\ldots, N - 1$ (Corollary~\ref{corollary67}). As it turns out, this
near orthogonality is the key input for establishing the local
semicircle law for $A$ in Section~\ref{sectwtG}. We refer to the
discussion at the beginning of Section~\ref{sectgoodeventforwtG}
for more details on the use of Corollary~\ref{corollary67}.
%
%
\begin{theorem}\label{theoremlargesteigenvalue}
Suppose that $A$ satisfies Definition~\ref{definitionofA} and that
in addition to (\ref{upperboundonf}) we have the
lower bound
%
%
\begin{equation} \label{lowerboundonf}
f \geq1 + \epsilon_0
\end{equation}
for some constant $\epsilon_0 > 0$.

Then we have with $(\xi,\nu)$-high probability
%
%
\begin{equation} \label{positionofmumax}
\mu_{\max} = f + \frac{1}{f} + o(1).
\end{equation}
In particular, there is a constant $c$, depending on $\epsilon_0$,
such that with $(\xi,\nu)$-high probability we have
%
%
\begin{equation} \label{gap}
\mu_{\max} \geq2 + c.
\end{equation}
Also, we have
%
%
\begin{equation} \label{ElambdaN}
\E\mu_{\max} = f + \frac{1}{f} + O \biggl({\frac{1}{f^3} +
\frac{1}{f^2 q} + \frac{1}{f N}}\biggr)
\end{equation}
as well as, with $(\xi,\nu)$-high probability,
%
%
\begin{equation} \label{lambdaN}
\mu_{\max} = f + \frac{1}{f} + O \biggl({\frac{1}{f^3} +
\frac{1}{f^2 q} + \frac{(\log N)^\xi}{\sqrt{N}}}\biggr).
\end{equation}
Note that (\ref{ElambdaN}) and (\ref{lambdaN}) locate $\mu_{\max}$ more
precisely than (\ref{positionofmumax})
in the large-$f$ regime.

Moreover, the phase of $\f v_{\max}$ can be chosen so that we have
with $(\xi,\nu)$-high probability
%
%
\begin{equation} \label{uNe}
\langle{\f v_{\max}}, {\f e}\rangle= 1 - \frac{1}{2f^2} +
O
\biggl({\frac{1}{f^3} + \frac{(\log N)^{2 \xi}}{f \sqrt{N}}}\biggr).
\end{equation}

Finally, there is a constant $C_0$ such that if
%
%
\begin{equation} \label{largerlowerboundonf}
f \geq C_0 (\log N)^{2 \xi} \quad\mbox{and}\quad \xi\geq2,\vadjust{\goodbreak}
\end{equation}
then we have with $(\xi,\nu)$-high probability
%
%
\begin{equation} \label{CLTforlambdamax}
\mu_{\max} = \E\mu_{\max} + \frac{1}{N}\sum_{i,j} h_{ij}
+ O\biggl({\frac{(\log N)^{2 \xi}}{f \sqrt{N}}}\biggr).
\end{equation}
In particular, if (\ref{largerlowerboundonf}) holds, we have (by
the central limit theorem)
%
%
\begin{equation} \label{simpleCLT}
\sqrt{\frac{N}{2}} (\mu_{\max} - \E\mu_{\max})
\longrightarrow\cal N(0,1)
\end{equation}
in distribution, where $\cal N(0,1)$ denotes a standard normal random variable.
\end{theorem}

%
\begin{remark} \label{rem64}
In analogy to Definition~\ref{definitionofminor}, we define $A^{(\bb
T)}$ as the $(N - |\bb T |) \times(N -
|\bb T |)$ minor of $A$ obtained by removing all columns of $A$
indexed by $i \in\bb T$; here $\bb T \subset\{1,\ldots, N\}$. If $A$
satisfies Definition~\ref{definitionofA}, then
so does $({N / (N - |\bb T |)})^{1/2} A^{(\bb
T)}$. Therefore, all results of this section also hold for $A^{(\bb
T)}$ provided $|\bb T |\leq10$. (Here $10$ can
be any fixed number.)
Throughout Sections~\ref{sectionlambdamax} and~\ref{sectwtG} we
abbreviate $\mu^{(\bb T)}_{\max} \deq\mu^{(\bb
T)}_{N - |\bb T |}$ and $\f v^{(\bb T)}_{\max} \deq\f
v^{(\bb
T)}_{N - |\bb T |}$.
\end{remark}
%
%
\begin{remark}
Statistical properties of the $k$ largest eigenvalues of
a random Wigner matrix with a large rank-$k$ perturbation
have been studied in~\cite{PecheFeral,Capitaine,BGRao,SoshPert}.
Theorem~\ref{theoremlargesteigenvalue} collects analogous results
for the more singular case of sparse matrices. We
restrict our attention to the special case where the perturbation is $f
| \f e \rangle\langle\f e |$. (Note that in
\cite{Capitaine,BGRao,SoshPert} the authors allow quite general
finite-rank perturbations of Wigner matrices.)
\end{remark}

The rest of this section is devoted to the proof of Theorem \ref
{theoremlargesteigenvalue}. It is based on the
following standard observation. Let $\mu$ be an eigenvalue of $A$ with
associated normalized eigenvector $\f v$. This means that
\[
(\mu- H) \f v = f \langle{\f e}, {\f v}\rangle\f e.
\]
Suppose now that $\mu$ is not an eigenvalue of $H$. Thus, we can
choose $\f v$ and $K > 0$ such that
%
%
\begin{eqnarray}
\label{uKlambda}
\f v & = & K (\mu- H)^{-1} \f e,
\\
\label{1fe}
1 & = & f \langle{\f e}, {(\mu- H)^{-1} \f e}\rangle.
\end{eqnarray}
Using the spectral decomposition of $H$, we rewrite (\ref{1fe}) as
%
%
\begin{equation} \label{f-1mu}
\frac{1}{f} = \sum_\alpha\frac{|\langle{\f e}, {\f
u_\alpha}\rangle|^2}{\mu- \lambda_\alpha}.
\end{equation}
It is easy to see that (\ref{f-1mu}) has a unique solution, $\mu
_{\max}$, greater than $\lambda_N$. Moreover,
(\ref{f-1mu}) readily yields $\mu_{\max} - \lambda_N \leq f \leq
\mu_{\max} - \lambda_1$, that is,
%
%
\begin{equation} \label{domainofmumax}
\mu_{\max} \in[f + \lambda_1, f + \lambda_N].
\end{equation}
Our proof is based on the series expansions
%
%
\begin{eqnarray} \label{lambdaNsum}
\mu_{\max} & = & f \sum_{k \geq0} \langle{\f e}, {(H /
\mu_{\max})^k \f e}\rangle,
\\[-2pt]
\label{uNsum}
\f v_{\max} & = & K \sum_{k \geq0} (H / \mu_{\max})^k \f e.
\end{eqnarray}
Note that the expansions (\ref{lambdaNsum}) and (\ref{uNsum}) can
be interpreted as
perturbative corrections around the matrix $f | \f e \rangle\langle
\f e |$.

In order to control the expansions (\ref{lambdaNsum}) and (\ref
{uNsum}), we shall need the
following large deviation bound, proved in the \hyperref[app]{Appendix}.
%
%
\begin{lemma} \label{lemmae_he}
Let $1 \leq k \leq\log N$. Then
%
%
\begin{equation} \label{eHe}
|\langle{\f e}, {H^k \f e}\rangle- \E\langle{\f e},
{H^k \f e}\rangle|\leq
C\frac{(\log N)^{k \xi}}{N^{1/2}}
\end{equation}
with $(\xi,\nu)$-high probability provided that $1 \leq q \leq C N^{1/2}$.
\end{lemma}
\begin{pf*}{Proof of (\ref{positionofmumax})}
The key observation is that
%
%
\begin{equation} \label{analysisoffintemoments}
\biggl|\E\langle{\f e}, {H^k \f e}\rangle- \int x^k
\varrho_{\mathrm{sc}}(x)
\,\dd x \biggr|\leq\frac{C(k)}{q}
\end{equation}
for some constant $C(k)$ depending on $k$.
Indeed, a standard application of the moment method (see, e.g., \cite
{guibook}, Section 1.2) shows that $\E\langle{\f e}, {H^{2n} \f
e}\rangle= C_n + O_n(q^{-2})$, where $C_n \deq\frac{1}{n +
1}{2n\choose n} = \int x^{2n} \varrho_{\mathrm{sc}}(x) \,\dd
x$ is the $n$th Catalan number. If $k$ is odd, one finds by a similar
moment estimate that $\E\langle{\f e}, {H^k \f e}\rangle
= O_k(q^{-1})$. We omit the details.

For the following we work on the event of $(\xi, \nu)$-high
probability on which (\ref{boundonHtildeweak}) holds. We
consider solutions $\mu$ of (\ref{1fe}) in the interval $I \deq[2 +
\epsilon_0^2/20, \infty)$. By monotonicity of the
right-hand side of (\ref{f-1mu}) in $I$, we know that (\ref{1fe})
has at most one solution in $I$. For any $k_0 \in
\N$, using (\ref{eHe}) and (\ref{analysisoffintemoments}) we may
expand (\ref{1fe}) in $I$ [see (\ref{lambdaNsum})]
as
\begin{eqnarray*}
\mu& = &f \sum_{k = 0}^{k_0} \int\biggl({\frac{x}{\mu}}
\biggr)^k \varrho_{\mathrm{sc}}(x) \,\dd x\\[-2pt]
&&{} + O \biggl({f \sum_{k > k_0}
\biggl({\frac{\| H \|}{\mu}}\biggr)^k + \frac{C(k_0)}{q} + \frac
{k_0 (\log N)^{\xi k_0}}{\sqrt{N}}}\biggr)
\\[-2pt]
& = & - f \mu m_{\mathrm{sc}}(\mu)\\[-2pt]
&&{} + O \biggl(f \sum_{k > k_0}
\biggl({\frac{\| H \|}{2 + \epsilon_0^2/20}}\biggr)^k + f \sum_{k
> k_0} \biggl({\frac{2}{2 + \epsilon_0^2 / 20}}\biggr)^k\\[-2pt]
&&\hspace*{116pt}{} + \frac
{C(k_0)}{q} + \frac{k_0 (\log N)^{\xi k_0}}{\sqrt{N}}\biggr),\vadjust{\goodbreak}
\end{eqnarray*}
where the first term comes from extending the sum over $k$ to infinity
and using that
\[
\sum_{k = 0}^\infty\int\biggl({\frac{x}{\mu}}\biggr)^k
\varrho_{\mathrm{sc}}(x) \,\dd x = \mu\int\frac{\varrho_{\mathrm{sc}}(x) \,\dd
x}{\mu- x}
\]
for $\mu> 2$.
It is easy to see that the second term is $o(1)$ by an appropriate
choice of $k_0(N)$. Thus, we have proved that, for
$\mu\in I$, the equation (\ref{1fe}) reads
$m_{\mathrm{sc}}(\mu) = -f^{-1} + r(\mu)$, where $r(\mu) \to0$ as $N \to
\infty$ uniformly in $\mu$.

Next, the function $\mu\mapsto m_{\mathrm{sc}}(\mu)$ is continuous and
monotone increasing on $(2, \infty)$, with range
$(-1,0)$. Let $\bar\mu$ be the unique solution of $m_{\mathrm{sc}}(\bar\mu)
= - f^{-1}$. (Note that here we need the assumption
$f > 1$.) Using (\ref{identityofmsc}), we find that $\bar\mu= f +
f^{-1} \geq2 + \epsilon_0^2 / 10$. We
therefore find that, for $N$ large enough, the equation $m_{\mathrm{sc}}(\mu) =
- f^{-1} + r(\mu)$ [which is equivalent to
(\ref{1fe}) on $I$] has a unique solution $\mu\in I$ which satisfies
$\mu= \bar\mu+ o(1)$. Since $\mu$ is the only
solution of (\ref{1fe}) in $I$, we must have $\mu= \mu_{\max}$.
\end{pf*}

Note that (\ref{positionofmumax}) remains valid if $\f e$ in (\ref
{A=H+fee}) is replaced with any
$\ell^2$-normalized vector. It is a simple matter to check that (\ref
{analysisoffintemoments}) is valid for
arbitrary vectors $\f e$. Moreover, Lemma~\ref{lemmae_he} remains
correct for arbitrary $\f e$ provided one replaces
$N^{-1/2}$ on the right-hand side of (\ref{eHe}) with $q^{-1}$. We
omit the details, as we shall not need this result.

From (\ref{gap}) and (\ref{boundonHtildeweak}) we find that with
$(\xi,\nu)$-high probability
%
%
\begin{equation} \label{conditionforgeometricseries}
\frac{\| H \|}{\mu_{\max}} \leq1 - c
\end{equation}
for some constant $c > 0$. In particular, (\ref{lambdaNsum}) and
(\ref{uNsum}) converge
with $(\xi,\nu)$-high probability.
\begin{pf*}{Proof of (\ref{lambdaN})}
From (\ref{lambdaNsum}) and Lemma~\ref{lemmaboundon_h_tildeweak} we find
$\mu_{\max} = f(1 + r(f))$ with $(\xi,\nu)$-high probability, where
$\lim
_{f \to\infty} r(f)
= 0$. Together with the simple identities
%
%
\begin{equation} \label{expeHe}
\E\langle{\f e}, {H \f e}\rangle= 0,\qquad \E\langle{\f
e}, {H^2 \f e}\rangle
= 1,
\end{equation}
(\ref{lambdaNsum}), (\ref{conditionforgeometricseries}) and
Lemma~\ref{lemmae_he} yield with $(\xi,\nu)$-high probability
%
%
\begin{equation}
\mu_{\max} = f + \frac{1}{f} + \frac{\E\langle{\f e}, {H^3
\f e}\rangle}{f^2} + O\biggl({\frac{1}{f^3}}\biggr) +
O\biggl({\frac{(\log N)^\xi}{\sqrt{N}}}\biggr).
\end{equation}
By explicit computation we find that
%
%
\begin{equation} \label{thirdorderterm}
\E\langle{\f e}, {H^3 \f e}\rangle= O(q^{-1}).
\end{equation}
Thus, (\ref{lambdaN}) follows.
\end{pf*}
\begin{pf*}{Proof of (\ref{ElambdaN})}
From (\ref{lambdaNsum}) and (\ref{conditionforgeometricseries})
we get
with $(\xi,\nu)$-high probability
%
%
\begin{equation} \label{self-consistenteqforlambdamax}
\mu_{\max} = f + \frac{f}{\mu_{\max}} \langle{\f e}, {H
\f e}\rangle+ \frac{f}{\mu_{\max}^2}
\langle{\f e}, {H^2 \f e}\rangle+ O\biggl({\frac{1}{f^3} + \frac{1}{f^2
q}}\biggr),
\end{equation}
where we used (\ref{thirdorderterm}). Iterating (\ref
{self-consistenteqforlambdamax}) yields with $(\xi,\nu)$-high probability
%
%
\begin{eqnarray} \label{expansionforElambdaN}\hspace*{32pt}
\mu_{\max} &=& f + \langle{\f e}, {H \f e}\rangle- \langle
{\f e}, {H \f e}\rangle^2 / f + \langle{\f e}, {H \f e}\rangle^3 /
f^2 - \langle{\f e}, {H \f e}\rangle\langle{\f e}, {H^2 \f
e}\rangle/ f^2
\nonumber\\[-8pt]\\[-8pt]
&&{}+ \langle{\f e}, {H^2 \f e}\rangle/ f - 2 \langle{\f e}, {H \f
e}\rangle\langle{\f e}, {H^2 \f e}\rangle/ f^2 + O
\biggl({\frac{1}{f^3} + \frac{1}{f^2 q}}\biggr),\nonumber
\end{eqnarray}
where we used Lemma~\ref{lemmaboundon_h_tildeweak}. In order to
complete the proof of (\ref{ElambdaN}), we use the
rough estimate $\E\mu^2_{\max} \leq\E\tr A^2 \leq C N f^2 + N
\leq N^C$, by
(\ref{upperboundonf}). Recalling (\ref{eHe}), we also get
\[
| \E\langle{\f e}, {H \f e}\rangle^2 | \leq\frac{C}{N},\qquad |
\E\langle{\f e}, {H \f e}\rangle^3 | \leq\frac{C}{Nq},\qquad |\E\langle{\f
e}, {H \f e}\rangle\langle{\f e}, {H^2 \f e}\rangle| \leq\frac{C}{Nq}
\]
by explicit calculation using (\ref{momentconditions}). Now taking
the expectation in (\ref{expansionforElambdaN}),
using (\ref{expeHe}) yields (\ref{ElambdaN}).
\end{pf*}
\begin{pf*}{Proof of (\ref{uNe})}
We compute the normalization constant $K$ in (\ref{uNsum}) from
%
%
\begin{eqnarray} \label{computationofK}
K^{-2} &=& \sum_{k,k' \geq0} \mu_{\max}^{-k - k'} \langle{\f
e}, {H^{k+k'} \f e}\rangle\nonumber\\
&=& 1 + 2 \mu_{\max}^{-1} \langle{\f e}, {H \f
e}\rangle+ 3 \mu_{\max}^{-2}
\langle{\f e}, {H^2 \f e}\rangle+ O(\mu_{\max}^{-3})
\\
&=& 1 + \frac{3}{f^2} + O\biggl({\frac{1}{f^3} + \frac{(\log N)^{2
\xi}}{f \sqrt{N}}}\biggr)\nonumber
\end{eqnarray}
with $(\xi,\nu)$-high probability, where we used Lemmas~\ref{lemmae_he}
and \ref
{lemmaboundon_h_tildeweak}, as well as (\ref{lambdaN})
and (\ref{conditionforgeometricseries}). Now (\ref{uNe}) is an
easy consequence of (\ref{uNsum}),
(\ref{lambdaN}) and Lemmas~\ref{lemmae_he} and~\ref{lemmaboundon_h_tildeweak}.
\end{pf*}

What remains is to prove (\ref{CLTforlambdamax}).
\begin{pf*}{Proof of (\ref{CLTforlambdamax})}
We assume (\ref{largerlowerboundonf}), and, in particular, $\mu
_{\max} \geq\break\frac{C_0}{2} (\log N)^{ 2 \xi}$ by
(\ref{positionofmumax}). Thus,
from Lemma~\ref{lemmaboundon_h_tildeweak} we get with $(\xi,\nu
)$-high probability
\[
\frac{\| H \|}{\mu_{\max}} \leq\frac{6}{C_0 (\log N)^{2
\xi}}.
\]
From (\ref{lambdaNsum}) and (\ref{lambdaN}) we therefore get with
$(\xi,\nu)$-high probability
\[
\mu_{\max} = f \sum_{k = 0}^{c_0 \log N} \frac{\langle{\f
e}, {H^k \f e}\rangle}{\mu_{\max}^k} +
O ({\me^{- c_0 \log N \log\log N}}),\vadjust{\goodbreak}
\]
where $c_0 \leq1$ is a positive constant to be chosen later.
Thus, we find with $(\xi,\nu)$-high probability
\begin{eqnarray*}
\mu_{\max} &=& f \sum_{k = 0}^{c_0 \log N} \frac{\E\langle
{\f e}, {H^k \f e}\rangle}{\mu_{\max}^k} +
\frac{f}{\mu_{\max}} \langle{\f e}, {H \f e}\rangle+ f \sum
_{k = 2}^{c_0
\log N} \frac{\langle{\f e}, {H^k \f e}\rangle- \E
\langle{\f e}, {H^k \f e}\rangle}{\mu_{\max}^k}\\
&&{} + O ({\me
^{- c_0
\log N \log\log N}}).
\end{eqnarray*}
Therefore, we get, for any $0 < c_0 \leq1$ and using (\ref{lambdaN})
and Lemma~\ref{lemmae_he}, that with $(\xi,\nu)$-high probability we
have
\[
\mu_{\max} = f \sum_{k = 0}^{c_0 \log N} \frac{\E\langle
{\f e}, {H^k \f e}\rangle}{\mu_{\max}^k} +
\frac{f}{\mu_{\max}} \langle{\f e}, {H \f e}\rangle+ O
\biggl({\frac
{(\log N)^{2 \xi}}{f \sqrt{N}}}\biggr).
\]
Here the constant in $O(\cdot)$ depends on $c_0$.

Next, Lemma~\ref{lemmaboundon_h_tildeweak} yields
%
%
\begin{equation} \label{boundonEeHe}
|\E\langle{\f e}, {H^k \f e}\rangle|\leq
(5/2)^k
+ N^{Ck} \me^{-\nu(\log N)^\xi} \leq3^k
\end{equation}
for $k \leq(\nu/C) (\log N)^{\xi- 1}$.
[Here we used Schwarz's inequality and the trivial estimate
$\E\langle{\f e}, {H \f e}\rangle\leq N^C$ to estimate the
contribution of
the low-probability event on which (\ref{boundonHtildeweak}) does
not hold.]
By the assumption (\ref{largerlowerboundonf}) on $\xi$, (\ref
{boundonEeHe}) holds for $k \leq c_0 \log N$ for
$c_0$ small enough. It is therefore easy to see that the
equation
\[
\bar\mu= f \sum_{k = 0}^{c_0 \log N} \frac{\E\langle{\f e},
{H^k \f e}\rangle}{\bar\mu^k}
\]
has a unique solution $\bar\mu> 0$, which satisfies $\bar\mu= f +
O(f^{-1})$.
Writing $\mu_{\max} = \bar\mu+ \zeta$, we get with $(\xi,\nu
)$-high probability
%
%
\begin{eqnarray} \label{zetaNsum}
\zeta&=& \frac{f}{\mu_{\max}} \langle{\f e}, {H \f e}\rangle
+ f \sum
_{k = 0}^{c_0 \log N} \frac{\E\langle{\f e}, {H^k \f e}\rangle
}{\bar\mu^k} \biggl[{\biggl({1 + \frac{\zeta}{\bar\mu}}
\biggr)^{-k} - 1}\biggr]\nonumber\\[-8pt]\\[-8pt]
&&{} + O \biggl({\frac{(\log N)^{2 \xi}}{f \sqrt
{N}}}\biggr).\nonumber
\end{eqnarray}

Next, by (\ref{lambdaN}) we find $\zeta= O(f^{-1})$ with $(\xi,\nu
)$-high probability. Moreover, (\ref{expeHe}) and Lemma~\ref{lemmae_he} imply
that $\langle{\f e}, {H \f e}\rangle= O((\log N)^\xi N^{-1/2})$
with $(\xi
,\nu)$-high probability, and that the sum in (\ref{zetaNsum})
starts at $k =
2$. This yields the expression with $(\xi,\nu)$-high probability
\[
\zeta= \langle{\f e}, {H \f e}\rangle+ \frac{1}{f} \sum_{l
\geq1} a_l
\zeta^l + O \biggl({\frac{(\log N)^{2 \xi}}{f \sqrt{N}}}\biggr)
\]
for some coefficients $a_l = O(1)$, by (\ref{boundonEeHe}). We
conclude that with $(\xi,\nu)$-high probability we have
\[
\zeta= \langle{\f e}, {H \f e}\rangle\bigl({1 +
O(f^{-1})}\bigr) + O
\biggl({\frac{(\log N)^{2 \xi}}{f \sqrt{N}}}\biggr) = \langle
{\f e}, {H \f e}\rangle+ O \biggl({\frac{(\log N)^{2 \xi}}{f
\sqrt{N}}}\biggr),
\]
where we used that $|\langle{\f e}, {H \f e}\rangle|\leq
(\log N)^\xi
N^{-1/2}$ with $(\xi,\nu)$-high probability.

Summarizing, we have proved that with $(\xi,\nu)$-high probability we have
%
%
\begin{equation} \label{lambdamaxollambda}
\mu_{\max} = \bar\mu+ \frac{1}{N} \sum_{i,j} h_{ij} + R,
\end{equation}
where $| R |\leq O ({\frac{(\log N)^{2 \xi}}{f \sqrt
{N}}})$.
Using $\E\mu_{\max}^2 \leq N^C$, we therefore get
\[
\E| R |\leq O \biggl({\frac{(\log N)^{2 \xi}}{f \sqrt
{N}}}\biggr),
\]
and (\ref{CLTforlambdamax}) follows by taking the expectation in
(\ref{lambdamaxollambda}).
\end{pf*}

This concludes the proof of Theorem~\ref{theoremlargesteigenvalue}.

For future reference,\vspace*{2pt} we record two simple corollaries which we shall
use in Section~\ref{sectwtG} to control the
matrix elements of $\wt G$.
%
%
\begin{corollary}
Suppose that $A$ satisfies Definition~\ref{definitionofA}. Then we
have with $(\xi,\nu)$-high probability
%
%
\begin{equation} \label{boundonbulkeigenvalues}
|\mu_\alpha|\leq\max_\beta|\lambda_\beta
|=
\| H \|\leq2 + (\log N)^\xi q^{-1/2} \qquad\mbox{for
}
\alpha= 1,\ldots, N-1.\hspace*{-32pt}
\end{equation}
\end{corollary}
\begin{pf}
Use (\ref{interlacing}) and Lemma~\ref{lemmaboundon_h_tildeweak}.
\end{pf}
%
%
\begin{corollary} \label{corollary67}
Suppose that $A$ satisfies Definition~\ref{definitionofA} and that,
in addition, $f \leq C_0 N^{1/2}$.
Then we have with $(\xi,\nu)$-high probability
%
%
\begin{equation} \label{ualphae}
\sum_{\alpha\neq N} |\langle{\f v_\alpha}, {\f e}\rangle
|^2 =
O(f^{-2}).
\end{equation}
\end{corollary}
\begin{pf}
The statement is trivial unless $f \geq1 + \epsilon_0$, in which case
we use (\ref{uNe}) and $\| {\f e}\|=1$.
\end{pf}

\section{\texorpdfstring{Control of $\nc G$: Proofs of Theorems \protect\ref{LSCTHMA}
and \protect\ref{theoremdelocalization}}
{Control of G: Proofs of Theorems 2.9 and 2.16}} \label{sectwtG}

In this section we adopt the convention that if $F = F(H)$ is any
function of $H$, then $F(A)$ is denoted by $\wt F$, that is, we use
the tilde
$\wt{(\cdot)}$ to indicate quantities defined in terms of $A = H + f
| \f e \rangle\langle\f e |$. Thus, for example, we have
\begin{eqnarray*}
A &=& \nc H,\qquad \mu_\alpha= \nc\lambda_\alpha,\qquad
\f v_\alpha= \nc{\f u}_\alpha,\\
\nc G(z) :\!&=&
(A - z)^{-1},\qquad \nc m(z) \deq\frac{1}{N} \sum\nc
G_{ii}(z)
\end{eqnarray*}
and
%
%
\begin{eqnarray}
\nc\Lambda_o &\deq& \max_{i \neq j} |\nc G_{ij} |,\qquad
\nc\Lambda_d \deq\max_i |\nc G_{ii} - m_{\mathrm{sc}}
|,\nonumber\\[-8pt]\\[-8pt]
\nc\Lambda&\deq&|\nc m - m_{\mathrm{sc}} |,
\qquad
\nc v_i \deq \nc G_{ii} - m_{\mathrm{sc}}.\nonumber
\end{eqnarray}
Note that $\nc G$ and $\nc m$ were already introduced in (\ref{tildeg}).

We begin by using the interlacing property (\ref{interlacing}) to
derive a bound on~$\Lambda$. Recall the convention
that if $F = F(H)$ is any function of $H$, then $\nc F$ is defined as
$F(A)$, where, we recall, $A = H + f | \f e \rangle\langle\f e |$.
%
%
\begin{lemma} \label{lemmaLambdaLambdaA}
Let $A$ satisfy Definition~\ref{definitionofA}.
Then for any $z \in D_L$ we have
\[
|\nc\Lambda(z) - \Lambda(z) |\leq\frac{\pi}{N
\eta}.
\]
\end{lemma}
\begin{pf}
Define the empirical density $\nc\varrho(x) \deq\frac{1}{N} \sum
_\alpha\delta(x - \mu_\alpha)$. Thus, the integrated
empirical density defined in (\ref{empiricalintegrateddensity}) can
be written as $\nc{\fra n}(E) =
\int_{-\infty}^E \nc\varrho(x) \,\dd x$. Similarly, define the
quantities $\varrho$ and $\fra n$ in terms of the
eigenvalues $\lambda_1,\ldots, \lambda_N$ of~$H$.
Using integration by parts, we find
\[
\nc\Lambda(z) - \Lambda(z) = \int\frac{ \nc\varrho(x) -
\varrho(x)}{x - z} \,\dd x = - \int\frac{\nc
{\fra n}(x) - \fra n(x)}{(x - z)^2} \,\dd x.
\]
By (\ref{interlacing}) we have $|\nc{\fra n}(x) - \fra n(x)
|
\leq N^{-1}$ for all $x$. Thus, we find
\[
|\nc{\Lambda}(z) - \Lambda(z) |\leq\frac{1}{N}
\int
\frac{1}{| x - z |^2} \,\dd x = \frac{\pi}{N
\eta}.
\]
\upqed
\end{pf}

We note that the claim (\ref{scmA}) of Theorem~\ref{LSCTHMA} is now
an immediate consequence of Lemma~\ref{lemmaLambdaLambdaA} and the
strong local semicircle law (\ref{scm}) for $H$.

The rest of this section is devoted to the proof of the estimate (\ref
{GijestimateA}) for the matrix elements of $\nc
G$. From now on we consistently assume the upper bound (\ref
{fupperbound}) on $f$.

\subsection{Basic estimates on the good events} \label{sectgoodeventforwtG}
In this section we control the individual matrix elements $\nc G_{ij}$
in terms of $\nc\Lambda$,
which in turn will be estimated using Lemma~\ref{lemmaLambdaLambdaA}.
Our basic strategy is similar to that of Section~\ref{sectionWLSC},
but, owing
to the nonvanishing expectation of $a_{ij}$, the self-consistent equation
for $\nc G_{ii}$ has several additional error terms as compared to Lemma
\ref{lemmaself-consistentequation}; see Lemma~\ref{self-consistenteqwt}
and Proposition~\ref{propositionoff-diagestimate} below. The most dangerous
of these error terms is estimated in Lemma~\ref{lemmaestimateoffGh} below.
We will use the spectral decomposition of $\nc H$, combined with bounds on
$\langle{\f e}, {\f v_\alpha}\rangle$ and $\|\f v_\alpha\|
_\infty$. The
former quantities are estimated
using Corollary~\ref{corollary67}, while the latter are estimated by
bootstrapping.
The spectral decomposition requires simultaneous control of all
eigenvectors,\vadjust{\goodbreak} whose associated eigenvalues are distributed throughout
the spectrum. Since bounds on $\|\f v_\alpha\|_\infty$
(delocalization bounds)
may be derived from a priori bounds on $\nc\Lambda_d(z)$ for $\re z$
being near the
corresponding eigenvalue, we will therefore need bounds on $\nc\Lambda_d(z)$
that are uniform for all $z \in D_L$ with a fixed imaginary part.
Hence, the bootstrapping now occurs simultaneously for all $E \in
[-\Sigma, \Sigma]$ (see Definition~\ref{defDeta} below).

We use the following self-consistent equation for $\nc G$, whose proof
is an elementary calculation using
(\ref{GijGijk}) and (\ref{Gijformula}) applied to $\nc G$; see also
Lemma~\ref{lemmaself-consistentequation}.

%
\begin{lemma} \label{self-consistenteqwt}
We have the identity
%
%
\begin{equation} \label{self-consistentA}
\nc G_{ii} = \frac{1}{-z - m_{\mathrm{sc}} - ([\nc v] - \nc\Upsilon_i)},
\end{equation}
where
\[
\nc\Upsilon_i \deq h_{ii} - \nc Z_i + \nc{\cal A}_i
\]
and
%
%
\begin{eqnarray}\label{AdefA}
\nc Z_i &\deq&\IE_i \sum_{k,l}^{(i)} a_{ik} \nc G^{(i)}_{kl}
a_{li},\nonumber\\[-8pt]\\[-8pt]
\nc{\cal A}_i &\deq&\frac{f}{N} - \frac{f^2}{N} \frac{N - 1}{N}
\bigl\langle{\f e}, {\nc G^{(i)} \f e}\bigr\rangle+ \frac{1}{N}
\sum_j \frac{\nc G_{ij} \nc G_{ji}}{\nc G_{ii}}.\nonumber
\end{eqnarray}
\end{lemma}

Recall that in expressions such as (\ref{AdefA}) the vector $\f e$
stands for $\f e_{N - 1}$; see~(\ref{defofbe}).

%
\begin{definition} \label{defDeta}
For $N^{-1} (\log N)^L \leq\eta\leq3$ introduce the set $D(\eta)
\deq\{z \in D_L \st\im z = \eta\}$. We define the
event
%
%
\begin{equation}\label{Omgdef}
\nc\Omega(\eta) \deq\Bigl\{{\sup_{z\in D(\eta)}
\bigl({\nc\Lambda_d(z) + \nc\Lambda_o(z)}\bigr) \leq(\log
N)^{-\xi}}\Bigr\}.
\end{equation}
\end{definition}

Recall the definition of $A^{(\bb T)}$ from Remark~\ref{rem64}.
Similarly to Lemma~\ref{lemmaiTGT}, we have the following result for
the matrix $A$.
%
%
\begin{lemma} \label{lemmaiTGTA}
Fixing $z=E+i\eta\in D_L$, we have for any $i$ and $\bb T \subset\{
1,\ldots, N\}$ satisfying
$i \notin\bb T$ and $|\bb T |\leq10$ that
%
%
\begin{equation} \label{iTGTA}
\nc m^{(i \bb T)}(z) = \nc m^{(\bb T)}(z) + O \biggl({\frac{1}{N
\eta}}\biggr)
\end{equation}
holds in $\nc\Omega(\eta)$.
\end{lemma}

The following lemma is crucial in dealing with error terms arising from
the nonvanishing expectation of $a_{ij}$.
Recall that, when indexing the eigenvalues and eigenvectors of $A^{(\bb
T)}$, we defined $\alpha_{\max} \deq N -
|\bb T |$.\vadjust{\goodbreak}
%
%
\begin{lemma} \label{lemmaestimateoffGh}
Fixing $z=E+\ii\eta\in D_L$, we have for any $\bb T \subset\{1,\ldots,
N\}$ satisfying
$|\bb T |\leq10$ and for any $i\in\bb T$ that
%
%
\begin{equation} \label{estimateoffGh}
\Biggl|\sum_{k,l}^{(\bb T)} \frac{f}{N} \nc G^{(\bb T)}_{kl}
h_{li} \Biggr|
\leq
C (\log N)^\xi\Biggl({\frac{1}{q} + \sqrt{\frac{\im\nc m}{N \eta
}} + \frac{1}{N \eta}}\Biggr)
\end{equation}
on $\nc\Omega(\eta)$ with $(\xi,\nu)$-high probability.
\end{lemma}
\begin{pf}
For technical reasons, it is convenient to avoid the situation where
$\mu_{\max}$ is close to $\Sigma$. In order to
ensure this, we may if necessary increase $\Sigma$ slightly and hence
assume that $f \leq\Sigma- 3$ or $f \geq
\Sigma+ 3$.
We start by proving the following delocalization bound. Define
%
%
\begin{equation} \label{delocbounddefinition}\quad
R \deq\max_{|\bb T |\leq10} \max_{\alpha\neq
\alpha
_{\max}} \max_j \bigl| v_\alpha^{(\bb T)}(j) \bigr|,\qquad
R_{\max} \deq\max_{|\bb T |\leq10} \max_j \bigl|
v_{\max}^{(\bb T)}(j) \bigr|.
\end{equation}
First we claim that on $\nc\Omega( \eta)$ we have with $(\xi,\nu
)$-high probability
%
%
\begin{equation} \label{delocbound}
R \leq C \sqrt{\eta}
\end{equation}
and, assuming $f \leq\Sigma- 3$, we have with $(\xi,\nu)$-high probability
%
%
\begin{equation} \label{delocboundmax}
R_{\max} \leq C \sqrt{\eta}.
\end{equation}

In order to prove (\ref{delocbound}) and (\ref{delocboundmax}), we
note that on $\nc\Omega(\eta)$ we have, in
analogy to (\ref{GiionOmega}),
%
%
\begin{equation} \label{GiionOmegaA}
c \leq\bigl|\nc G_{jj}^{(\bb T)}( z) \bigr|\leq C
\end{equation}
for all $ z \in D_L$ such that $\im z=\eta$ and $N$ large enough.
From (\ref{boundonbulkeigenvalues}) we find that
$ z \deq\mu_\alpha^{(\bb T)} + \ii\eta\in D_L$ with $(\xi,\nu
)$-high probability
for $\alpha\neq\alpha_{\max}$; see Remark~\ref{rem64}.
Thus, we get with $(\xi,\nu)$-high probability
\[
C \geq\im\nc G^{(\bb T)}_{jj}\bigl(\mu^{(\bb T)}_\alpha+ \ii\eta
\bigr) = \sum_\beta\frac{\eta| v^{(\bb T)}_\beta(j) |
^2}{(\mu^{(\bb T)}_\beta- \mu^{(\bb T)}_\alpha)^2 +
\eta^2} \geq\frac{| v^{(\bb T)}_\alpha(j) |^2}{\eta
}.
\]
This concludes the proof of (\ref{delocbound}). Next, if $f \leq
\Sigma- 3$, then by (\ref{positionofmumax}) and
Lem\-ma~\ref{lemmaboundon_h_tildeweak} we have $\mu_{\max}^{(\bb T)}
\in[- \Sigma, \Sigma]$ with $(\xi,\nu)$-high probability. Thus, we get
(\ref{delocboundmax}) just like above.

Having established (\ref{delocbound}) and (\ref{delocboundmax}), we may
now estimate the left-hand side of~(\ref{estimateoffGh}), using the
spectral decomposition of $ \nc G^{(\bb T)}$, by
%
%
\begin{eqnarray} \label{max+restsplit}
&&\frac{f}{\sqrt{N - |\bb T |}} \Biggl|\frac{\langle
{\f e}, {\f v^{(\bb T)}_{\max}}\rangle}{\mu_{\max}^{(\bb T)}
- z} \sum_l^{(\bb T)} v^{(\bb T)}_{\max}(l) h_{li}
\Biggr|\nonumber\\[-8pt]\\[-8pt]
&&\qquad{}+
\frac{f}{\sqrt{N - |\bb T |}} \Biggl|\sum_{\alpha
\neq\alpha_{\max}} \frac{\langle{\f e}, {\f v^{(\bb
T)}_\alpha}\rangle}{\mu_\alpha^{(\bb T)} - z} \sum_l^{(\bb T)}
v^{(\bb T)}_\alpha(l) h_{li} \Biggr|.\nonumber
\end{eqnarray}
By the delocalization bound (\ref{delocbound}) and the large
deviation estimate (\ref{aA}), we find for $\alpha\neq
\alpha_{\max}$ on $\nc\Omega(\eta)$ with $(\xi,\nu)$-high probability
\[
\Biggl|\sum_l^{(\bb T)} v_\alpha^{(\bb T)}(l) h_{li}
\Biggr|\leq(\log N)^\xi\biggl({\frac{R}{q} + \frac{1}{\sqrt
{N}}}\biggr).
\]
Similarly, we have
\[
\Biggl|\sum_l^{(\bb T)} v_{\max}^{(\bb T)}(l) h_{li}
\Biggr|\leq(\log N)^\xi\biggl({\frac{R_{\max}}{q} + \frac
{1}{\sqrt{N}}}\biggr).
\]
Next, we estimate the first term of (\ref{max+restsplit}). If $f
\leq\Sigma- 3$, then $f \leq C$, and the first term
of (\ref{max+restsplit}) is bounded, with $(\xi,\nu)$-high
probability, by
\[
\frac{C}{\sqrt{N} \eta} (\log N)^\xi\biggl({\frac{\sqrt{\eta
}}{q} + \frac{1}{\sqrt{N}}}\biggr) \leq C (\log N)^\xi
\biggl({\frac{1}{q} + \frac{1}{N \eta}}\biggr).
\]
If $f \geq\Sigma+ 3$, then by (\ref{positionofmumax}) and (\ref
{lambdaN}) we get $|\mu^{(\bb T)}_{\max} - z
|
\geq c f$ with $(\xi,\nu)$-high probability. Thus, the first term of
(\ref{max+restsplit}) is bounded with $(\xi,\nu)$-high probability by
\[
\frac{C f}{\sqrt{N}} \frac{(\log N)^\xi}{f} \biggl({\frac{1}{q} +
\frac{1}{\sqrt{N}}}\biggr) \leq\frac{C (\log
N)^\xi}{\sqrt{N} q},
\]
where we used the trivial bound $R_{\max} \leq1$. We therefore get
that the left-hand side of (\ref{estimateoffGh}) is bounded with
$(\xi,\nu)$-high probability by
\begin{eqnarray*}
&&
C (\log N)^\xi\biggl({\frac{1}{q} + \frac{1}{N \eta}}\biggr)+ C
(\log N)^\xi\frac{f}{\sqrt{N}} \biggl({\frac{R}{q} + \frac
{1}{\sqrt{N}}}\biggr) \sum_{\alpha\neq\alpha_{\max}} \frac
{|\langle{\f e}, {\f v^ {(\bb T)}_\alpha}\rangle
|
}{|\mu_\alpha^{(\bb T)} - z |}
\\
&&\qquad\leq
C (\log N)^\xi\biggl({\frac{1}{q} + \frac{1}{N \eta}}\biggr)\\
&&\qquad\quad{} +
C (\log N)^\xi\frac{f}{\sqrt{N}} \biggl({\frac{R}{q} + \frac
{1}{\sqrt{N}}}\biggr) \biggl({\sum_{\alpha\neq\alpha_{\max}}
\bigl|\bigl\langle{\f e}, {\f v^{(\bb T)}_\alpha}\bigr\rangle
\bigr|^2}\biggr)^{1/2} \\
&&\hspace*{11pt}\qquad\quad{}\times\biggl({\sum_{\alpha} \frac{1}{|\mu
_\alpha^{(\bb T)} - z |^2}}\biggr)^{1/2}.
\end{eqnarray*}
By (\ref{ualphae}) this becomes
%
%
\begin{eqnarray} \label{term3ofZij}
&&C (\log N)^\xi\biggl({\frac{1}{q} + \frac{1}{N \eta}}\biggr)
\nonumber\\[-8pt]\\[-8pt]
&&\qquad{}+
C (\log N)^\xi\frac{1}{\sqrt{N}} \biggl({\frac{R}{q} + \frac
{1}{\sqrt{N}}}\biggr) \biggl({\sum_{\alpha} \frac{1}{|\mu
_\alpha^{(\bb T)} - z |^2}}\biggr)^{1/2}.\nonumber
\end{eqnarray}
Using (\ref{iTGTA}), we get
\[
\sum_\alpha\frac{1}{|\mu_\alpha^{(\bb T)} - z |^2} =
\frac
{1}{\eta} \im\sum_\alpha\frac{1}{\mu^{(\bb
T)}_\alpha- z} = \frac{1}{\eta} \im\tr\nc G^{(\bb T)} =
\frac{N}{\eta} \im\nc m +
O\biggl({\frac{1}{\eta^2}}\biggr),
\]
which therefore yields the bound
\[
C (\log N)^\xi\biggl({\frac{1}{q} + \frac{1}{N \eta}}\biggr) +
C (\log N)^\xi\biggl({\frac{\sqrt{\eta}}{\sqrt{N} q} + \frac
{1}{N}}\biggr) \Biggl({\sqrt{\frac{N}{\eta} \im\nc m} + \frac
{1}{\eta}}\Biggr)
\]
on $\nc\Omega(\eta)$ with $(\xi,\nu)$-high probability.
Here we used (\ref{delocbound}). The claim follows.~%
\end{pf}

For the following statements it is convenient to abbreviate
%
%
\begin{equation} \label{defPhi}
\Phi(z) \deq\frac{(\log N)^{\xi}}{q} + (\log N)^{2 \xi}
\Biggl({\sqrt{\frac{\im\nc m(z)}{N \eta}} + \frac{1}{N \eta
}}\Biggr).
\end{equation}

%
\begin{proposition} \label{propositionoff-diagestimate}
Assume (\ref{fupperbound}). Then for $z=E+i\eta\in D_L$ we have
%
%
\begin{eqnarray}
\label{GijaprioriA}
\nc\Lambda_o(z) & \leq & C \Phi(z),
\\
\label{ZiaprioriA}
\max_i |\nc Z_i(z) |
& \leq &
C \Phi(z),
\\
\label{AiaprioriA}
\max_i |\nc{\cal A}_i(z) |& \leq&\frac{C}{N \eta}
\end{eqnarray}
in $\nc\Omega(\eta)$ with $(\xi,\nu)$-high probability.
\end{proposition}
\begin{pf}
We start with (\ref{GijaprioriA}).
Let $i \neq j$. Using (\ref{Gijformula}) for $A$ instead of~$H$, and
writing $a_{ij} = f/N + h_{ij}$, we get
with $(\xi,\nu)$-high probability
%
%
\begin{eqnarray} \label{estimateofGijA}
C^{-1} |\nc G_{ij} |&\leq&\frac{1}{q} + \Biggl|
\sum_{k,l}^{(ij)} h_{ik} \nc G^{(ij)}_{kl} h_{lj} \Biggr|
+ \Biggl|\sum_{k,l}^{(ij)} \frac{f}{N} \nc G^{(ij)}_{kl} h_{lj}
\Biggr|\nonumber\\[-8pt]\\[-8pt]
&&{}
+ \Biggl|\sum_{k,l}^{(ij)} h_{ik} \nc G^{(ij)}_{kl} \frac{f}{N}
\Biggr|+ \Biggl|\sum_{k,l}^{(ij)} \frac{f}{N} \nc G^{(ij)}_{kl} \frac
{f}{N} \Biggr|\nonumber
\end{eqnarray}
by Lemma~\ref{lemmaOmegah} and (\ref{GiionOmegaA}).

The second term of (\ref{estimateofGijA}) is bounded exactly as in
(\ref{estimateofGij}) and (\ref{estimateofGij2}); using (\ref
{abB}) and (\ref{GiionOmegaA}), we estimate it by
\[
(\log N)^{\xi} \frac{C}{q} +C (\log N)^{2 \xi} \Biggl({\sqrt
{\frac{\im\nc m}{N \eta}} + \frac{1}{N \eta}}\Biggr)
\]
on $\nc\Omega(\eta)$ with $(\xi,\nu)$-high probability.\vadjust{\goodbreak}

The last term of (\ref{estimateofGijA}) is bounded with $(\xi,\nu
)$-high probability by
%
%
\begin{eqnarray}\label{f2N2bound}
\frac{f^2}{N^2} (N - 2) \bigl|\bigl\langle{\f e}, {\nc G^{(ij)} \f
e}\bigr\rangle
\bigr|&\leq& C \frac{f^2}{N} \frac{|\langle{\f
e}, {\f v_{\max}^{(ij)}}\rangle|^2}{|\mu
_{\max}^{(ij)} - z |}
+ \frac{f^2}{N} \sum_{\alpha\neq\alpha_{\max}} \frac{
|\langle{\f e}, {\f v_\alpha^{(ij)}}\rangle|
^2}{|\mu_\alpha^{(ij)} - z |}
\nonumber\\
&\leq&
\frac{C f}{N} + \frac{C}{N \eta} + \frac{f^2}{N \eta} \sum
_{\alpha\neq\alpha_{\max}} \bigl|\bigl\langle{\f e}, {\f
v_\alpha^{(ij)}}\bigr\rangle\bigr|^2\\
&\leq&\frac{C f}{N} + \frac{C}{N \eta},\nonumber
\end{eqnarray}
where in the first step we used (\ref{lambdaN}), and in the last step
(\ref{ualphae}). Here we estimated the term
arising from $\mu_{\max}^{(ij)}$ by $C (N \eta)^{-1}$ if $f \leq2
\Sigma$, and by $C f / N$ if $f \geq2 \Sigma$.

Using Lemma~\ref{lemmaestimateoffGh}, the third and fourth terms
in (\ref{estimateofGijA}) are bounded on $\nc
\Omega(\eta)$ with $(\xi,\nu)$-high probability by the right-hand
side of (\ref
{estimateoffGh}). This concludes the proof of (\ref{GijaprioriA}).

Next, we prove (\ref{ZiaprioriA}). By definition,
%
%
\begin{equation} \label{Zsplit}
\nc Z_i = \sum_{k,l}^{(i)} h_{ik} \nc G^{(i)}_{kl} \frac{f}{N} +
\sum_{k,l}^{(i)} \frac{f}{N} \nc G^{(i)}_{kl}
h_{li} + \IE_i \sum_{k,l}^{(i)} h_{ik} \nc G^{(i)}_{kl} h_{li}.
\end{equation}
The first two terms are bounded using Lemma~\ref{lemmaestimateoffGh},
and the last one exactly as (\ref{Ziapriori}).

Finally, we prove (\ref{AiaprioriA}). Using (\ref{lambdaN}),
(\ref{uNe}), (\ref{ualphae}) and (\ref{GiionOmegaA}), we find
on $\nc\Omega(\eta)$ with $(\xi,\nu)$-high probability
%
%
\begin{eqnarray}\label{Aestimate}
\nc{\cal A}_i & = & \frac{f}{N} - \frac{f^2}{N} \frac{N - 1}{N}
\frac{|\langle{\f e}, {\f v_{\max}^{(i)}}\rangle|
^2}{\mu_{\max}^{(i)} - z} \nonumber\\
&&{}- \frac{f^2}{N} \frac{N - 1}{N}
\sum_{\alpha\neq\alpha_{\max}}\frac{|\langle{\f e}, {\f
v_\alpha^{(i)} }\rangle|^2}{\mu_\alpha^{(i)} - z}+ \frac{1}{N}
\sum_j \frac{ \nc G_{ij} \nc G_{ji}}{ \nc G_{ii}}
\nonumber\\
& = &\frac{f}{N} - \frac{f^2}{N} \frac{N - 1}{N} \frac{1}{f}
\biggl[{1 + O \biggl({\frac{1}{f}}\biggr)}\biggr]\\
&&{} + O
\biggl({\frac{1}{N \eta}}\biggr) +
O \biggl({\frac{f^2}{N \eta} \sum_{\alpha\neq\alpha_{\max}}
\bigl|\bigl\langle{\f e}, {\f v_\alpha^{(i)}}\bigr\rangle\bigr|^2}\biggr) + O
\biggl({\frac{1}{N} \sum_j |\nc G_{ij} |^2}\biggr)
\nonumber\\
& = & O \biggl({\frac{1}{N \eta}}\biggr),\nonumber
\end{eqnarray}
where in the second step we distinguished the two cases $f \leq2
\Sigma$ and $f \geq2 \Sigma$, as in
(\ref{f2N2bound}).
\end{pf}

We may now estimate $\nc\Lambda_d$ in terms of $\nc\Lambda$.
%
%
\begin{lemma}
Assume (\ref{fupperbound}).
For $z =E+i\eta\in D_L$ we have
%
%
\begin{equation}\label{lambdadleqlambdaA}
\max_{i} | \nc G_{ii}(z) - \nc m(z) | \leq C \Phi(z)
\end{equation}
on $\nc\Omega(\eta)$ with $(\xi,\nu)$-high probability. In
particular, on $\nc\Omega
(\eta)$ we have with $(\xi,\nu)$-high probability
%
%
\begin{equation}\label{436A}
\nc\Lambda_d(z) \leq\nc\Lambda(z) + C \Phi(z).
\end{equation}
\end{lemma}
\begin{pf}
Using (\ref{ZiaprioriA}), (\ref{AiaprioriA}) and Lemma \ref
{lemmaOmegah}, we find
%
%
\begin{equation}
\max_i |\nc\Upsilon_i |\leq(\log N)^\xi\frac
{C}{q} + C
(\log N)^{2 \xi} \Biggl({\sqrt{\frac{\im\nc m}{N \eta}} + \frac
{1}{N \eta}}\Biggr)
\end{equation}
on $\nc\Omega( \eta)$ with $(\xi,\nu)$-high probability. From
(\ref{self-consistentA}) we therefore get
%
%
\begin{eqnarray}
| \nc G_{ii}- \nc G_{jj}| &=& |\nc G_{ii} |
|\nc G_{jj} ||\nc\Upsilon_i-\nc\Upsilon_j |\nonumber\\[-8pt]\\[-8pt]
&\leq&
(\log N)^\xi\frac{C}{q} + C (\log N)^{2 \xi} \Biggl({\sqrt{\frac
{\im\nc m}{N \eta}} + \frac{1}{N \eta}}\Biggr)\nonumber
\end{eqnarray}
on $\nc\Omega(\eta)$ with $(\xi,\nu)$-high probability. Since $\nc
m = \frac
{1}{N}\sum_{j} \nc G_{jj}$, the proof is complete.
\end{pf}

\subsection{\texorpdfstring{Establishing $\nc\Omega(\eta)$ with high probability}
{Establishing Omega(eta) with high probability}}
What remains to complete the proof of Theorem~\ref{LSCTHMA} is to
prove that the events $\nc\Omega(\eta)$ hold with
$(\xi, \nu)$-high probability. We do this using a simplified version
of the continuity argument of Sections
\ref{sectinitialestimates} and~\ref{subseccon}.
%
%
\begin{lemma} \label{lemmalargeetaA}
If $\eta\geq2$, then $\nc\Omega(\eta)$ holds with $(\xi, \nu
)$-high probability.
\end{lemma}
\begin{pf}
The proof is similar to that of Lemma~\ref{5992}; we merely sketch the
modifications.

Let\vspace*{1pt} $z = E + \ii\eta\in D_L$ for $\eta\geq2$. We estimate $\nc
\Lambda_o(z)$ following closely the proof of
(\ref{GijaprioriA}), using (\ref{estimateoffGh}) and setting $R
= 1$ in (\ref{term3ofZij}). Using the rough bound
$|\nc G_{ij} |+ |\nc m |\leq1$ as in (\ref
{trivialresolventestimates}), we find
\[
\nc\Lambda_o \leq(\log N)^\xi\frac{C}{q} + (\log N)^{2 \xi}
\frac{C}{\sqrt{N}} + \frac{Cf}{N} \leq C (\log
N)^{- 2 \xi}
\]
with $(\xi,\nu)$-high probability.
Similarly, we find
\[
|\nc Z_i |\leq(\log N)^\xi\frac{C}{q} + (\log N)^{2
\xi}
\frac{C}{\sqrt{N}} \leq(\log N)^{- 2 \xi}
\]
with $(\xi,\nu)$-high probability.
In order to estimate $\nc{\cal A}_i$, we proceed similarly to (\ref
{Aestimateforlargeeta}) and find
\[
|\nc{\cal A}_i |\leq\frac{f}{N} + \frac{f^2}{N}
\bigl|\bigl\langle{\f e}, {\nc G^{(i)} \f e}\bigr\rangle\bigr|+ \frac
{1}{N} \sum_j
\bigl|\nc G_{jj}^{(i)} \bigr||\nc G_{ji} |({|
a_{ij} |+ |\nc Z_{ij} |}).
\]
The term $\nc Z_{ij}$ is estimated exactly as $\nc\Lambda_o$ above;
using the calculation of~(\ref{f2N2bound}), we
therefore get
\[
|\nc{\cal A}_i |\leq
(\log N)^\xi\frac{C}{q} + (\log N)^{2 \xi} \frac{C}{\sqrt{N}} +
\frac{Cf}{N} \leq C (\log N)^{- 2 \xi}
\]
with $(\xi,\nu)$-high probability.

Now we may\vspace*{2pt} follow the proof of Lemma~\ref{5992} to the letter,
starting from (\ref{self-consistentequationlargeeta})
to get $\nc\Lambda_d \leq C (\log N)^{-2\xi}$ with $(\xi,\nu
)$-high probability.

Thus, we have proved that $\nc\Lambda_d(z) + \nc\Lambda_o(z) \leq C
(\log N)^{-2\xi}$ with $(\xi,\nu)$-high probability. A simple lattice
argument along the lines of Corollary~\ref{cor414} then concludes the proof.
\end{pf}

The following simple continuity argument establishes $\nc\Omega(\eta
)$ with $(\xi,\nu)$-high probability for smaller $\eta$. Let $\eta_k$
be a sequence as in Section~\ref{subseccon}.

Note that, unlike in Section~\ref{subseccon}, each step $k \to k+1$
of the continuity argument has to establish a
statement for all $z \in D(\eta_{k+1})$.
%
%
\begin{lemma}
We have
\[
\P({\nc\Omega(\eta_k)^c}) \leq k \me^{-\nu
(\log N)^\xi}.
\]
\end{lemma}
\begin{pf}
We proceed by induction on $k$. The case $k = 1$ was proved in
Lem\-ma~\ref{lemmalargeetaA}. We write
\[
\P({\nc\Omega(\eta_{k+1})^c}) \leq\P\bigl({\nc
\Omega(\eta_k) \cap\nc\Omega(\eta_{k + 1})^c}\bigr) + \P
({\nc\Omega(\eta_k)^c}).
\]
Now for any $w \in D(\eta_k)$ and on $\nc\Omega(\eta_k)$ we have,
using (\ref{GijaprioriA}), (\ref{436A}) and
(\ref{scmA}),
\[
\nc\Lambda_d(w) + \nc\Lambda_o(w) \leq C (\log N)^{-2\xi}
\]
with $(\xi,\nu)$-high probability. Using the estimate (\ref{w-z}),
we find, for any $z
\in D(\eta_{k+1})$,
\[
\nc\Lambda_d(z) + \nc\Lambda_o(z) \leq C (\log N)^{-2\xi}
\]
with $(\xi,\nu)$-high probability. Using a lattice argument similar
to Corollary~\ref
{cor414}, we therefore find
\[
\P\bigl({\nc\Omega(\eta_k) \cap\nc\Omega(\eta_{k + 1})^c}
\bigr) \leq\me^{-\nu(\log N)^\xi}.
\]
The claim follows.
\end{pf}

This estimate (\ref{GijestimateA}) now follows from (\ref
{GijaprioriA}), (\ref{436A}), (\ref{scmA}) and the
lattice argument of Corollary~\ref{cor414}. This concludes the proof
of Theorem~\ref{LSCTHMA}.

\subsection{\texorpdfstring{Eigenvector delocalization: Proof of Theorem \protect\ref{theoremdelocalization}}
{Eigenvector delocalization: Proof of Theorem 2.16}} \label{sectionproofofdeloc}

We may now prove Theorem~\ref{theoremdelocalization}. Delocalization
for the eigenvectors $\f v_1,\ldots, \f v_{N -
1}$ is an immediate consequence of the weak local semicircle law. From
(\ref{interlacing}) and Lemma~\ref{lemmaboundon_h_tildeweak} we find that
$\mu_1,\ldots, \mu_{N - 1} \in
[-\Sigma, \Sigma]$ with $(\xi,\nu)$-high probability. Let $L = 8
\xi$ and set
$\eta\deq(\log N)^L$. Using (\ref{weakSCL}), Lemma \ref
{lemmaLambdaLambdaA} and (\ref{436A}), we therefore find
with $(\xi,\nu)$-high probability
%
%
\begin{equation} \label{pfofdeloc}\quad
C \geq\im\nc G_{jj}(\mu_\alpha+ \ii\eta) = \sum_\beta
\frac{\eta| v_\beta(j) |^2}{(\mu_\beta-
\mu_\alpha)^2 + \eta^2} \geq\frac{| v_\alpha(j) |
^2}{\eta
} \qquad(\alpha< N).\hspace*{-28pt}
\end{equation}
This concludes the proof of (\ref{delocforbulk}). Moreover, the same
argument [with $\alpha= N$ in (\ref{pfofdeloc})] proves (\ref
{delocforconstantf}) if $f \leq\Sigma- 3$,
since in that case $\mu_{\max} \in D$
with $(\xi,\nu)$-high probability by (\ref{positionofmumax}) and
Lemma~\ref{lemmaboundon_h_tildeweak}.

Next, we note that (\ref{l2estimate}) is an immediate consequence of
(\ref{uNe}).

In order to prove (\ref{delocalizationofumax}), we use the
following large deviation estimate which is proved in
the \hyperref[app]{Appendix}.
%
%
\begin{lemma} \label{lemmapinned_lde}
For $k \leq\log N$ and fixed $i$ we have with $(\xi,\nu)$-high probability
%
%
\begin{equation} \label{pinnedLDEestimate}
|(H^k \f e)(i) |= \biggl|\sum_{i_1,\ldots, i_k} h_{i i_1} h_{i_1
i_2} \cdots h_{i_{k - 1} i_k}
\biggr|\leq(\log
N)^{k \xi}.
\end{equation}
\end{lemma}

Now from the expansion (\ref{uNsum}) we get with $(\xi,\nu)$-high
probability
\[
K^{-1} v_{\max}(i) = \frac{1}{\sqrt{N}} + O \biggl({\frac
{(\log N)^\xi}{\sqrt{N} f}}\biggr),
\]
and (\ref{delocalizationofumax}) follows since $K = 1 + O(f^{-2})$
[see (\ref{computationofK})].
In this argument we used that $f\sim\mu_{\max} \geq C_0 (\log
N)^\xi$ for some large enough $C_0$ to overcome the
logarithmic factors
in (\ref{uNsum}) that arise from (\ref{pinnedLDEestimate}). This
concludes the proof of Theorem~\ref{theoremdelocalization}.

Finally, we outline the proof of (\ref{generaldeloc}) for $1 \ll f
\leq C (\log N)^\xi$. The idea is to use the same
proof as for (\ref{delocforconstantf}), relying\vspace*{1pt} on the estimate
(\ref{pfofdeloc}). In order to do this, we need the
pointwise bound $C \geq\nc G_{ii}(\mu_N + \ii\eta)$ which we get by
extending the proof of Theorem~\ref{LSCTHMA} to a
larger set $D_L$.
Here $D_L$ has to contain $\mu_N$, so that we have to choose
$\Sigma=C(\log N)^\xi$ in the definition (\ref{definitionDL}) of $D_L$
with some large constant C.

This extension requires some modifications in our proof of the local
semicircle law. Now\vspace*{1pt} instead of the bounds
(\ref{boundsonmsc}), we have $c (\log N)^{-\xi} \leq|
m_{\mathrm{sc}}(z) |\leq1$ for $z \in D_L$. We modify the definitions
(\ref{defOmegaz}) of $\Omega(z)$ and (\ref{Omgdef}) of
$\nc\Omega(\eta)$ by replacing $(\log N)^{-\xi}$ with $(\log N)^{- 2
\xi}$. Then, on these events, we get the lower bound $| G_{ii}(z) |\geq
c (\log N)^{-\xi}$ instead of (\ref{lowerboundonGii}). One can then
check that all estimates of Sections~\ref{sectionWLSC}--\ref{sectwtG}
remain valid with some deterioration in the form of larger powers of
$(\log N)^\xi$, provided that $L \geq C \xi$ for some large enough $C$;
we omit the details.

\subsection{\texorpdfstring{Control of the average of $\sum_{k \neq i} \nc G_{1k}^{(i)} h_{ki}$}
{Control of the average of sum (k not equal i) G 1k (i) h ki}} \label{sectaverageforedge}

In this final section we estimate the averaged quantity
%
%
\begin{equation} \label{resexperror}
\Pi\deq\frac{1}{N} \sum_{i \neq1} \sum_{k \neq i} \nc
G_{1k}^{(i)} h_{ki}.
\end{equation}
The estimate of $\Pi$ is not needed for the local semicircle law, but
we give its proof here, as it is a natural
application of the abstract decoupling lemma, Theorem \ref
{abstractZlemma}. The expression (\ref{resexperror}) arises
as an error term when controlling resolvent expansions of the
noncentered matrix $A$. Such expansions are used in the
companion paper~\cite{EKYY2} to establish the universality of the
extreme eigenvalues; see Section 6.3 in~\cite{EKYY2}.

Note that a naive application of the large deviation bound (\ref{aA})
yields $|\Pi|\leq(\log N)^{C \xi} q^{-1}$
with $(\xi,\nu)$-high probability. In order to establish universality
of the extreme
eigenvalues in~\cite{EKYY2}, it is crucial that the
factor $q^{-1}$ be improved to $q^{-2}$. This is the content of the
following proposition.
%
%
\begin{proposition} \label{propaverageforedge}
Suppose that the assumptions of Theorem~\ref{LSCTHMA} hold. Then for
any $z \in D_{120 (\xi+ 2)}$ we have
with $(\xi,\nu)$-high probability
%
%
\begin{equation} \label{boundonPi}
|\Pi(z) |\leq(\log N)^{C \xi} \biggl({\frac
{1}{q^2} + \frac{\im m_{\mathrm{sc}}(z)}{N \eta} + \frac{1}{(N \eta
)^2}}\biggr).
\end{equation}
\end{proposition}
\begin{pf}
We shall apply Theorem~\ref{abstractZlemma} to the quantities
%
%
\begin{eqnarray} \label{defZionesided}
\cal Z_i &\deq&\f1 (i \neq1) \sum_{k}^{(i)} \wt G_{1k}^{(i)}
h_{ki},\nonumber\\[-8pt]\\[-8pt]
\cal Z_i^{[\bb U]} &\deq&\f1 (i \neq1) \f1
(i,1 \notin\bb U) \sum_{k}^{(\bb U i)} \wt
G_{1k}^{(\bb U i)} h_{ki}.\nonumber
\end{eqnarray}
Thus, $\Pi= [\cal Z]$ and $\cal Z_i^{[\varnothing]} = \cal Z_i$.

We define the deterministic control parameters
\[
X(z) \deq(\log N)^{40 (\xi+ 2)} \Biggl({\frac{1}{q} + \sqrt
{\frac{\im m_{\mathrm{sc}}(z)}{N \eta}} + \frac{1}{N
\eta}}\Biggr),\qquad
Y(z) \deq(\log N)^\xi
\]
and the event
\[
\Xi\deq\bigcap_{z \in D_{120 (\xi+ 2)}} \Bigl\{{\max_{1
\leq i,j \leq N} |\nc G_{ij}(z) - \delta_{ij} m_{\mathrm{sc}}(z)
|\leq X(z)}\Bigr\}.
\]
Recall that the collection of random variables $({\cal Z_i^{[\bb
U]}})_{\bb U}$ generates random variables $\cal
Z_i^{\bb S, \bb U}$ through (\ref{56ASU}). We choose $p \deq(\log
N)^\xi$ in Theorem~\ref{abstractZlemma}. It is
immediate that the assumptions (i) and (iv) of Theorem \ref
{abstractZlemma} are satisfied. By Theorem~\ref{LSCTHMA},
the assumption (v) of Theorem~\ref{abstractZlemma} holds as well.\vadjust{\goodbreak}

We shall prove that, for any $\bb U \subset\bb S$ with $1,i \notin\bb
S$, $|\bb S |\leq p$, and $r \leq p$, we have
%
%
\begin{equation} \label{mainestimatefor711}
\E({\f1 ({[\Xi]_i}) |\cal Z_i^{\bb S,
\bb U} |^r}) \leq({Y (C u X)^u})^r,
\end{equation}
where $u \deq|\bb U |+ 1$. Supposing (\ref{mainestimatefor711})
is proved, both assumptions (ii) and (iii) of
Theorem~\ref{abstractZlemma} are satisfied. Then the claim of Theorem
\ref{abstractZlemma}, (\ref{resZZ}) and Markov's
inequality yield (\ref{boundonPi}).

It remains to prove (\ref{mainestimatefor711}). Throughout the
following we abbreviate $u \deq|\bb U |+ 1$. By
the definition of $\cal Z_i^{\bb S, \bb U}$ in (\ref{56ASU}) and
(\ref{defZionesided}) we find, for $1,i \notin\bb
S$,
\begin{eqnarray*}
\cal Z_i^{\bb S, \bb U} & = & \f1 (i \neq1) (-1)^{|\bb S
\setminus\bb U |} \sum_{\bb V \col\bb S \setminus\bb U
\subset\bb V \subset\bb S} (-1)^{|\bb V |} \sum_k^{(\bb V i)}
\nc G_{1k}^{(\bb V i)} h_{ki}
\\
& = &\f1 (i \neq1) \sum_{k}^{((i \bb S) \setminus\bb U)} h_{ki}
(-1)^{|\bb S \setminus\bb U |}
\sum_{\bb V \col\bb S \setminus\bb U \subset\bb V \subset\bb S
\setminus\{k\}} (-1)^{|\bb V |} \nc G_{1 k}^{(\bb
V i)}
\\
& = &\f1 (i \neq1) \sum_{k}^{((i \bb S) \setminus\bb U)} h_{ki}
\nc G_{1k}^{(\bb S i) \setminus\{k\}, \bb U
\setminus\{k\}},
\end{eqnarray*}
where in the last step we again used (\ref{56ASU}), as well as
Definition~\ref{58G} and the fact that $((\bb S i)
\setminus\{k\}) \setminus(\bb U \setminus\{k\}) = (\bb S i)
\setminus\bb U$.
We split
\[
\cal Z_i^{\bb S, \bb U} = D_1 + D_2,
\]
where
\[
D_1 \deq\f1 (i \neq1) \sum_{k \in\bb U} h_{ki} \nc
G_{1k}^{(\bb S i) \setminus\{k\}, \bb U \setminus\{k\}},\qquad
D_2 \deq\f1 (i \neq1) \sum_{k}^{(\bb S i)} h_{ki} \nc
G_{1k}^{(\bb S i), \bb U}.
\]
Thus, we may estimate
\[
\E({\f1 ({[\Xi]_i}) |\cal Z_i^{\bb S,
\bb U} |^r}) \leq2^r \sum_{j = 1}^2 \E
({\f1 ({[\Xi]_i}) | D_j |^r}).
\]
To that end, we shall make use of (\ref{616}). Note that Lemma \ref
{lemma612} is entirely deterministic. In
particular, it applies if all quantities are defined in terms of $A$
rather than $H$ (and hence bear a tilde in our
convention). We shall apply it to the Green function $\nc G$.

We start by estimating $D_1$. Since $\nc G_{1k}^{(\bb S i) \setminus\{
k\}, \bb U \setminus\{k\}}$ in $D_1$ is by
definition independent of the $i$th column of $H$, for $|\bb U
|
\leq|\bb S |\leq p = (\log N)^\xi$ we get from
(\ref{616}) that
\[
\bigl\|\f1 ({[\Xi]_i}) \nc G_{1k}^{(\bb S i)
\setminus\{k\}, \bb U \setminus\{k\}} \bigr\|_{\infty}
=
\bigl\|\f1 (\Xi) \nc G_{1k}^{(\bb S i) \setminus\{k\}, \bb U
\setminus\{k\}} \bigr\|_{\infty} \leq(C |\bb U |
X)^{|\bb U |}.
\]
Here we used that $\nc\Lambda_o \leq X$ on $\Xi$.
Now we may estimate, using $|\bb U |\leq(\log N)^\xi$,
\begin{eqnarray*}
\E({\f1 ([\Xi]_i) | D_1 |^r}) & \leq &(\log
N)^{r \xi} \max
_{k,i} \E| h_{ki} |^r ({C |\bb U | X})^{r
|\bb U |}
\\
& \leq &\bigl({C (\log N)^\xi q^{-1} (C |\bb U |
X)^{|\bb U |}}\bigr)^r
\\
& \leq &({Y (C u X)^u})^r.
\end{eqnarray*}

Next, we estimate $D_2$. As above, since $\nc G_{1k}^{(\bb S i), \bb
U}$ in $D_2$ is by definition independent of the
$i$th column of $H$, for $|\bb U |\leq(\log N)^\xi$ we get from
(\ref{616}) that
\[
\bigl\|\f1 ({[\Xi]_i}) \nc G_{1k}^{(\bb S i),
\bb U} \bigr\|_{\infty} =
\bigl\|\f1 (\Xi) \nc G_{1k}^{(\bb S i), \bb U} \bigr\|
_{\infty} \leq
(C |\bb U | X)^{|\bb U |+ 1}.
\]
Now we use the moment estimate (\ref{generalizedLDEapp}) with
$\alpha= 1$, $\beta= -2$, $\gamma= 1$ and
\[
A_k \deq\f1 (k \notin\bb S \cup\{i\}) \f1 ({[\Xi
]_i}) \nc G_{1k}^{(\bb S i), \bb U}.
\]
This yields
\[
\E({\f1 ([\Xi]_i) | D_2 |^r}) \leq
\bigl({r (C |\bb U | X)^{|\bb U |+ 1}}\bigr)^r.
\]
Here we used that $A_k$ defined above is independent of the randomness
in the $i$th column of $H$. Thus, we conclude
that
\[
\E({\f1 ([\Xi]_i) | D_2 |^r}) \leq(Y (C u
X)^u)^r.
\]
This completes the proof of (\ref{mainestimatefor711}), and hence
of (\ref{boundonPi}).
\end{pf}

\section{Density of states and eigenvalue locations}

\subsection{Local density of states}
The following estimate is the key tool for controlling the local
density of states---and hence proving Theorems
\ref{thmlocaldensityofstates} and~\ref{thmgeneralintegrateddensity}.
%
%
\begin{lemma} \label{lemmacountingfunctionestimate}
Recall the definition (\ref{defkappa}) of the distance $\kappa_E$
from $E$ to the spectral edge.
Suppose that the event
%
%
\begin{equation} \label{lschypothesis}\qquad
\bigcap_{z \in D_L} \biggl\{{|\nc m(z) - m_{\mathrm{sc}}(z)
|\leq(\log N)^{C \xi} \biggl(\min\biggl\{ \frac{1}{q^2 \sqrt
{\kappa_E + \eta}}, \frac{1}{q} \biggr\} +\frac{1}{N \eta}
\biggr) }\biggr\}
\end{equation}
holds with $(\xi,\nu)$-high probability for $L \deq C_0 \xi$, where
$C_0$ is a positive
constant.
For given $E_1 < E_2$ in $[-\Sigma, \Sigma]$ we abbreviate
%
%
\begin{equation} \label{defofcalE}\quad
\kappa\deq\min\{{\kappa_{E_1}, \kappa_{E_2}}\},\qquad \cal
E \deq\max\{{E_2 - E_1, (\log N)^L N^{-1}}\}.
\end{equation}
Then, for any $-\Sigma\leq E_1 < E_2 \leq\Sigma$, we have
%
%
\begin{equation} \label{mainestimateonn-nsc}
\bigl|\bigl({\nc{\fra n}(E_2) - \nc{\fra n}(E_1)}\bigr) -
\bigl({n_{\mathrm{sc}}(E_2) - n_{\mathrm{sc}}(E_1)}\bigr) \bigr|\leq
(\log N)^{C \xi} \biggl[{\frac{1}{N} + \frac{\cal E}{q^2 \sqrt
{\kappa+ \cal E}}}\biggr]\hspace*{-35pt}
\end{equation}
with $(\xi,\nu)$-high probability.
\end{lemma}
\begin{pf}
Recall the definitions (\ref{defrhosc}) and (\ref{defmsc}).
Similarly, we have
\[
\nc\varrho(x) = \frac{1}{N} \sum_{\alpha= 1}^N \delta(x - \mu
_\alpha),\qquad \nc{\fra n}(E) =
\int_{-\infty}^E \nc\varrho(x) \,\dd x = \frac{1}{N} |
\{{\alpha\col\mu_\alpha\leq E}\} |.
\]
Thus, we may write
\[
\nc m(z) = \frac{1}{N} \tr G(z) = \int\frac{\nc\varrho(x)
\,\dd x}{x - z}.
\]
We introduce the differences
\[
\varrho^\Delta\deq\nc\varrho- \varrho_{\mathrm{sc}},\qquad
m^\Delta\deq\nc m - m_{\mathrm{sc}}.
\]

Following~\cite{ERSY}, we use the Helffer--Sj\"ostrand functional
calculus. Set $\wt\eta\deq(\log N)^L N^{-1}$.
(Recall that $L = C_0 \xi$.) Let $\chi$ be a smooth cutoff function
equal to $1$ on $[-\cal E, \cal E]$ and vanishing
on $[-2 \cal E, 2 \cal E]^c$, such that $|\chi'(y) |\leq C
\cal
E^{-1}$. Set $\eta\deq N^{-1}$ and let $f$ be a
characteristic function of the interval $[E_1, E_2]$ smoothed on the
scale $\eta\dvtx f(x) = 1$ on $[E_1, E_2]$, $f(x) =
0$ on $[E_1 - \eta, E_2 + \eta]^c$, $| f'(x) |\leq C \eta^{-1}$,
and $| f''(x) |\leq C \eta^{-2}$. Note that the
supports of $f'$ and $f''$ have measure $O(\eta)$.

Then we have the estimate (see equation (B.13) in~\cite{ERSY})
%
%
\begin{eqnarray} \label{HSsplit}\quad
\biggl|\int f(\lambda) \varrho^\Delta(\lambda) \,\dd
\lambda\biggr|&\leq& C \biggl|\int\dd x \int
_0^\infty\dd y\, \bigl(f(x) + y f'(x)\bigr) \chi'(y) m^\Delta(x + \ii
y) \biggr|
\nonumber\\
&&{}+
C \biggl|\int\dd x \int_0^\eta\dd y\, f''(x) \chi(y) y
\im m^\Delta(x + \ii y) \biggr|\\
&&{}+
C \biggl|\int\dd x \int_\eta^\infty\dd y\, f''(x) \chi(y)
y \im m^\Delta(x + \ii y) \biggr|.\nonumber
\end{eqnarray}
Since $\chi'$ vanishes away from $[\cal E, 2 \cal E]$, the first term
on the right-hand side of (\ref{HSsplit}) is
bounded with $(\xi,\nu)$-high probability by
\begin{eqnarray*}
&&\frac{C}{\cal E}\int\dd x \int_{\cal E}^{2 \cal E} \dd y\, |
f(x) + y f'(x) |(\log N)^{C \xi} \biggl({\frac{1}{q^2 \sqrt
{\kappa
+ \cal E}} + \frac{1}{N \cal E}}\biggr)\\
&&\qquad\leq
(\log N)^{C \xi} \biggl({\frac{\cal E}{q^2 \sqrt{\kappa+ \cal E}}
+ \frac{1}{N}}\biggr),
\end{eqnarray*}
where we abbreviated $\kappa\deq\min\{{\kappa_{E_1}, \kappa
_{E_2}}\}$.
In order to estimate the two remaining terms of (\ref{HSsplit}), we
estimate $\im m^\Delta(x + \ii y)$. If $y \geq\wt
\eta$, we may use~(\ref{lschypothesis}). Consider therefore the case
$0 < y \leq\wt\eta$. From Lemma~\ref{lemmamsc} we find
%
%
\begin{equation} \label{immscbound}
|{\im m_{\mathrm{sc}}(x + \ii y) }|\leq C \sqrt{\kappa_x + y}.
\end{equation}
By spectral decomposition of $A$, it is easy to see that the function
$y \mapsto y \im\nc m(x + \ii y)$ is monotone\vadjust{\goodbreak}
increasing. Thus, we get, using (\ref{immscbound}), $x + \ii\wt
\eta\in D_L$ and (\ref{lschypothesis}), that
%
%
\begin{eqnarray} \label{immforsmally0}\qquad
y \im\nc m(x + \ii y) &\leq&\wt\eta\im\nc m(x + \ii\wt\eta)
\leq(\log N)^{C\xi} \wt\eta
\biggl({\sqrt{\kappa_x + \wt\eta} + \frac{1}{q} + \frac{1}{N \wt
\eta}}\biggr) \nonumber\\[-8pt]\\[-8pt]
&\leq&\frac{(\log N)^{C \xi}}{N}\qquad (y \leq
\wt\eta)\nonumber
\end{eqnarray}
with $(\xi,\nu)$-high probability. From (\ref{immscbound}),
$m^\Delta= \nc m -
m_{\mathrm{sc}}$, and the definition of $\wt\eta$, we find
%
%
\begin{equation} \label{immforsmally}
| y \im m^\Delta(x + \ii y) |\leq\frac{(\log N)^{C
\xi
}}{N} \qquad(y \leq\wt\eta)
\end{equation}
with $(\xi,\nu)$-high probability.
Since $\eta\leq\wt\eta$, we therefore find that the second term of
(\ref{HSsplit}) is bounded with $(\xi,\nu)$-high probability by
\[
\frac{C(\log N)^{C \xi}}{N} \int\dd x\, | f''(x) |\int
_0^\eta
\dd y\, \chi(y) \leq\frac{(\log N)^{C
\xi}}{N}.
\]

In order to estimate the third term on the right-hand side of (\ref
{HSsplit}), we integrate by parts, first in $x$ and
then in $y$, to obtain the bound
%
%
\begin{eqnarray} \label{HSafterIBP}
&&
C \biggl|\int\dd x\, f'(x) \eta\re m^\Delta(x + \ii\eta)
\biggr|\nonumber\\
&&\qquad{}+
C \biggl|\int\dd x \int_\eta^\infty\dd y\, f'(x) \chi'(y) y
\re m^\Delta(x + \ii y) \biggr|
\\
&&\qquad{}+
C \biggl|\int\dd x \int_\eta^\infty\dd y\, f'(x) \chi(y)
\re m^\Delta(x + \ii y) \biggr|.\nonumber
\end{eqnarray}
The second term of (\ref{HSafterIBP}) is similar to the first term
on the right-hand side of (\ref{HSsplit}), and is
easily seen to be bounded by
\[
(\log N)^{C \xi} \biggl({\frac{\cal E}{q^2 \sqrt{\kappa+ \cal E}}
+ \frac{1}{N}}\biggr).
\]

In order to bound the first and third terms of (\ref{HSafterIBP}),
we estimate, for any $y \leq\wt\eta$,
\[
| m^\Delta(x + \ii y) |\leq|
m^\Delta(x + \ii\wt\eta) |+ \int_y^{\wt\eta} \dd u\,
\bigl({|\partial_u \wt m(x + \ii u) |+
|\partial_u m_{\mathrm{sc}}(x + \ii u) |}\bigr).
\]
Moreover, using (\ref{immforsmally0}) and (\ref{ward}), we find
for any $u \leq\wt\eta$ that
\begin{eqnarray*}
|\partial_u \wt m(x + \ii u) |&=&
\biggl|\frac{1}{N} \tr\wt G^2(x + \ii u) \biggr|\leq\frac
{1}{N} \sum_{i,j} | G_{ij}(x + \ii u) |^2 \\
&=&
\frac{1}{u} \im\wt m(x + \ii u) \leq\frac{1}{u^2} \wt\eta
\im\wt m(x + \ii\wt\eta)
\end{eqnarray*}
with $(\xi,\nu)$-high probability.
Similarly, we find from (\ref{defmsc}) that
\[
|\partial_u m_{\mathrm{sc}}(x + \ii u) |\leq\frac
{1}{u^2} \wt\eta\im m_{\mathrm{sc}}(x + \ii\wt\eta).
\]
Thus, (\ref{lschypothesis}) yields
%
%
\begin{equation} \label{boundforfullmdelta}
| m^\Delta(x + \ii y) |\leq(\log N)^{C \xi
} \biggl({1 + \int_y^{\wt\eta} \dd u\, \frac{\wt\eta
}{u^2}}\biggr) \leq(\log N)^{C \xi} \frac{\wt\eta}{y}\qquad
(y \leq\wt\eta)\hspace*{-26pt}
\end{equation}
with $(\xi,\nu)$-high probability.

Using (\ref{boundforfullmdelta}), we may now bound the first term
of (\ref{HSafterIBP}) by $(\log N)^{C \xi} N^{-1}$.

What remains is the third term of (\ref{HSafterIBP}). We first split
the $y$-integration domain $[\eta, \infty)]$ into
the pieces $[\eta, \wt\eta]$ and $[\wt\eta, \infty)$. Using
(\ref{boundforfullmdelta}), we estimate the integral over
the first piece, with $(\xi,\nu)$-high probability, by
\[
\int\dd x\, | f'(x) |\int_\eta^{\wt\eta} \dd y\,
|
m^\Delta(x + \ii y) |\leq\frac{(\log N)^{C\xi
}}{N}.
\]
Using (\ref{lschypothesis}), we may therefore estimate the third term
of (\ref{HSafterIBP}), with $(\xi,\nu)$-high probability, by
\begin{eqnarray*}
&&\frac{(\log N)^{C \xi}}{N} +
(\log N)^{C \xi} \int\dd x \int_{\wt\eta}^{2 \cal E} \dd y\,
| f'(x) |\biggl({\frac{1}{N y} + \frac{1}{\sqrt{\kappa_x
+ y}}
\frac{1}{q^2}}\biggr)
\\
&&\qquad \leq\frac{(\log N)^{C \xi}}{N} +
(\log N)^{C \xi} \int\dd x\, | f'(x) |\biggl[{\int_{\wt
\eta}^{2 \cal E} \dd y\, \frac{1}{N y} + \frac{1}{q^2} \int_{\wt
\eta}^{2 \cal E} \dd y\, \frac{1}{\sqrt{\kappa+ y}}}\biggr]
\\
&&\qquad \leq
(\log N)^{C \xi} \biggl({\frac{1}{N} + \frac{\cal E}{q^2 \sqrt
{\kappa+ \cal E}}}\biggr).
\end{eqnarray*}

Summarizing, we have proved that
%
%
\begin{equation} \label{smoothedcountingfunction}
\biggl|\int f(\lambda) \varrho^\Delta(\lambda) \,\dd
\lambda\biggr|\leq(\log N)^{C \xi} \biggl[{\frac{1}{N}
+ \frac{\cal E}{q^2 \sqrt{\kappa+ \cal E}}}\biggr]
\end{equation}
with $(\xi,\nu)$-high probability.

In order to estimate $|\nc{\fra n}(E) - n_{\mathrm{sc}}(E) |$, we observe
that (\ref{immforsmally0}) implies
\[
|\nc{\fra n}(x + \eta) - \nc{\fra n}(x - \eta) |\leq
C
\eta\im\nc m(x + \ii\eta) \leq\frac{(\log N)^{C
\xi}}{N}
\]
with $(\xi,\nu)$-high probability. Thus, we get
\begin{eqnarray*}
\biggl|\nc{\fra n}(E_1) - \nc{\fra n}(E_2) - \int f(\lambda)
\varrho(\lambda) \,\dd\lambda\biggr|&\leq& C \sum
_{i =
1,2} \bigl({\nc{\fra n}(E_i + \eta) - \nc{\fra n}(E_i - \eta
)}\bigr) \\
&\leq&\frac{C(\log N)^{C \xi}}{N}
\end{eqnarray*}
with $(\xi,\nu)$-high probability.
Similarly, since $\varrho_{\mathrm{sc}}$ has a bounded density, we~find
\[
\biggl| n_{\mathrm{sc}}(E_1) - n_{\mathrm{sc}}(E_2) - \int f(\lambda)
\varrho _{\mathrm{sc}}(\lambda) \,\dd\lambda\biggr|\leq C \eta= \frac{C}{N}.
\]
Together with (\ref{smoothedcountingfunction}), we therefore get
(\ref{mainestimateonn-nsc}).\vspace*{-2pt}
\end{pf}

We draw two simple consequences from Lemma~\ref{lemmacountingfunctionestimate}.
%
%
\begin{proposition}[(Uniform local density of states)]
\label{thmgenerallocaldensityofstates}
Suppose that $A$ satisfies Definition~\ref{definitionofA} and that
$\xi$ and $q$ satisfy (\ref{assumptionsforSLSC}).
Then, for any $E_1$ and $E_2$ satisfying $E_2 \geq E_1 + (\log N)^{C
\xi} N^{-1}$ we have
%
%
\begin{eqnarray} \label{generaldensityofstatesestimate}\qquad
&&\nc{\cal N}(E_1, E_2) \nonumber\\[-8pt]\\[-8pt]
&&\qquad= \cal N_{\mathrm{sc}}(E_1, E_2) \biggl[{1 + O
\biggl({\frac{(\log N)^{C \xi}}{\cal N_{\mathrm{sc}}(E_1, E_2)} \biggl({1 +
\frac{N}{q^2} \frac{E_2 - E_1}{\sqrt{\kappa+ E_2 - E_1}}}
\biggr)}\biggr)}\biggr]\nonumber
\end{eqnarray}
with $(\xi,\nu)$-high probability, where we abbreviated $\kappa\deq
\min\{{\kappa_{E_1}, \kappa_{E_2}}\}$.\vspace*{-2pt}
\end{proposition}
\begin{pf}
By (\ref{scmA}), the estimate (\ref{lschypothesis}) holds. Assuming
$-\Sigma\leq E_1 \leq E_2 \leq\Sigma$, we get
from (\ref{mainestimateonn-nsc}), with $(\xi,\nu)$-high probability,
\[
|\nc{\cal N}(E_1, E_2) - \cal N_{\mathrm{sc}}(E_1, E_2) |
\leq(\log N)^{C \xi} \biggl({1 + \frac{N}{q^2} \frac{E_2 -
E_1}{\sqrt{\kappa+ E_2 - E_1}}}\biggr),
\]
from which the claim follows. If $E_1 < -\Sigma$ and $\E_2 \leq
\Sigma$, the claim follows by replacing $E_1$ with
$-\Sigma$ and using Lemma~\ref{lemmaboundonHtilde}. The other
cases where $-\Sigma\leq E_1 \leq E_2 \leq\Sigma$
does not hold are treated similarly using
Lemma~\ref{lemmaboundonHtilde}.\vspace*{-2pt}~%
\end{pf}

The proof of Theorem~\ref{thmlocaldensityofstates} is completed
by observing that both its statements, (\ref{localdensityinbulk})
and (\ref{localdensityatedge}), are special cases
of (\ref{generaldensityofstatesestimate}).
[Recall that in the bulk we have $\cal N_{\mathrm{sc}}(E_1, E_2) \sim N (E_2 -
E_1)$; at the spectral edge we have $\cal
N_{\mathrm{sc}}(E_1, E_2) \geq N (E_2 - E_1)^{3/2}$, which is sharp for $E_1 = -2$.]\vspace*{-2pt}
\begin{pf*}{Proof of Theorem~\ref{thmgeneralintegrateddensity}}
Let us assume that $E \leq0$; the case $E > 0$ is treated similarly. Setting
\[
E_1 \deq-2 - (\log N)^{C_1 \xi} (q ^{-2} + N^{-2/3})
\]
for some constant $C_1 > 0$, we find that $n_{\mathrm{sc}}(E_1) = 0$ and $\nc
{\fra n}(E_1) = 0$ with $(\xi,\nu)$-high probability for $C_1$ large
enough, by Lemma~\ref{lemmaboundonHtilde}. We may assume that $E
\geq E_1$.

By (\ref{scmA}), the estimate (\ref{lschypothesis}) holds.
Therefore, setting $E_2 = E$ in Lem\-ma~\ref
{lemmacountingfunctionestimate} yields
\begin{eqnarray*}
|\nc{\fra n}(E) - n_{\mathrm{sc}}(E) |&\leq&(\log
N)^{C \xi} \biggl({\frac{1}{N} + \frac{1}{q^2} \sqrt{E - E_1 +
(\log N)^{C \xi} N^{-1}}}\biggr)\\
&\leq&(\log N)^{C \xi} \biggl({\frac{1}{N} + \frac{1}{q^3} +
\frac{\sqrt{\kappa_E}}{q^2}}\biggr)\vadjust{\goodbreak}
\end{eqnarray*}
with $(\xi,\nu)$-high probability.
This holds for any fixed $E$.
The claim (\ref{n-nsc}), which is uniform in $E$, now follows by a
lattice argument similar to Corollary~\ref{cor414}, whereby we choose a lattice of points $E_i \in[-\Sigma
,\Sigma]$ with $| E_{i + 1} - E_i |\leq N^{-1}$;
we omit the details.
\end{pf*}

\subsection{Eigenvalue locations}
The following result contains the main estimate on the locations of the
eigenvalues $\mu_\alpha$ of $A$. Recall the
definition (\ref{defofgamma}) of the classical location $\gamma
_\alpha$ of the $\alpha$th eigenvalue.
%
%
\begin{proposition} \label{propeigenvaluelocations}
Suppose that $A$ satisfies Definition~\ref{definitionofA} and that
$\xi$ satisfies~(\ref{assumptionsforSLSC}).
Let $\phi$ be an exponent satisfying $0 < \phi\leq1/2$ and assume
that $q = N^{\phi}$. Then the following statements
hold with $(\xi,\nu)$-high probability for all $\alpha= 1,\ldots, N
- 1$, for some
sufficiently large constant $K$:
\begin{longlist}
\item
If $\max\{{\kappa_{\mu_\alpha}, \kappa_{\gamma_\alpha}}\} \leq
(\log N)^{K\xi} (N^{-2/3} + N^{-2 \phi})$, then
%
%
\begin{equation} \label{evalueresult1}
|\mu_\alpha- \gamma_\alpha|\leq(\log N)^{C \xi}
({N^{-2/3} + N^{-2\phi}}).
\end{equation}
\item
If $\max\{{\kappa_{\mu_\alpha}, \kappa_{\gamma_\alpha}}\} \geq
(\log N)^{K \xi} (N^{-2/3} + N^{-2 \phi})$, then
%
%
\begin{equation} \label{evalueresult2}\quad
|\mu_\alpha- \gamma_\alpha|\leq(\log N)^{C \xi}
({N^{-2/3} \wh\alpha^{-1/3} + N^{2/3 - 4 \phi} \wh\alpha
^{-2/3} + N^{-2 \phi}}),
\end{equation}
where we abbreviated $\wh\alpha\deq\min\{{\alpha, N - \alpha}\}$.
\end{longlist}
\end{proposition}
\begin{pf}
To simplify the presentation, we concentrate only on the eigenvalues
$\mu_1,\ldots, \mu_{N/2}$.
The remaining eigenvalues $\mu_{N/2 + 1},\ldots, \mu_{N - 1}$ are
dealt with similarly, using Lemma~\ref{lemmaboundonHtilde} and the
estimate $\mu_N \geq2 + c$ which holds with $(\xi
,\nu)$-high probability.

We define the event $\wt\Omega$ as the intersection of the events on
which (\ref{boundonHtilde}) holds and on which
%
%
\begin{equation} \label{densitylocinproof}
|\nc{\fra n}(E) - n_{\mathrm{sc}}(E) |\leq(\log
N)^{C_0 \xi} \biggl({\frac{1}{N} + \frac{1}{q^3} + \frac{\sqrt
{\kappa_E}}{q^2}}\biggr)
\end{equation}
holds for all $E \in[-\Sigma, \Sigma]$ and some positive constant
$C_0$. Recalling (\ref{n-nsc}) and (\ref{boundonHtilde}), we
find that $\wt\Omega$ holds with $(\xi,\nu)$-high probability for large
enough~$C_0$. Note that on $\wt\Omega$ we have
$\mu_{N/2} \leq1$. Indeed, the condition $\mu_{N/2} \leq1$ is
equivalent to $\nc{\fra n}(1) \geq1/2$, which follows
from (\ref{densitylocinproof}) and the fact that $n_{\mathrm{sc}}(1) >
1/2$.\looseness=1

Abbreviate $\zeta\deq\min\{{2 \phi, 2/3}\}$ and let $C_1 > C_0$. We
use the dyadic decomposition
\[
\{1,\ldots, N/2\} = \bigcup_{k = 0}^{2 \log N} U_k,
\]
where we defined
\begin{eqnarray}
U_0 & \deq & \bigl\{{\alpha\leq N/2 \col2 + \max\{{\gamma_\alpha
, \mu_\alpha}\} \leq2 (\log N)^{C_1 \xi} N^{-\zeta}}\bigr\},
\nonumber\\
U_k & \deq & \bigl\{{\alpha\leq N/2 \col2^k (\log N)^{C_1 \xi}
N^{-\zeta} < 2 + \max\{{\gamma_\alpha, \mu_\alpha}\} \leq2^{k+1}
(\log N)^{C_1 \xi} N^{-\zeta}}\bigr\}\nonumber\\
&&\eqntext{\mbox{for } k \geq
1.}
\end{eqnarray}
By definition of $U_0$ and Lemma~\ref{lemmaboundonHtilde}, on
$\wt\Omega$ we have
\[
|\mu_\alpha- \gamma_\alpha|\leq(\log N)^{C \xi}
N^{-\zeta} \qquad(\alpha\in U_0).
\]
This proves (\ref{evalueresult1}).

Next, let $k \geq1$. From (\ref{densitylocinproof}) we find that
on $\wt\Omega$ we have
%
%
\begin{equation} \label{compnnscforevaluelocations}\quad
\frac{\alpha}{N} = n_{\mathrm{sc}}(\gamma_\alpha) = \nc{\fra n}
(\mu_\alpha) = n_{\mathrm{sc}}(\mu_\alpha) + (\log N)^{C_0
\xi} O\biggl({\frac{1}{N} + \frac{1}{q^3} + \frac{\sqrt{\kappa
_{\mu_\alpha}}}{q^2}}\biggr).\hspace*{-32pt}
\end{equation}
On $\wt\Omega$ and for $\alpha\in U_k$, the second term on the
right-hand side of (\ref{compnnscforevaluelocations}) may be
estimated as
\begin{eqnarray*}
&&(\log N)^{C_0 \xi} O\biggl({\frac{1}{N} + \frac{1}{q^3} + \frac
{\sqrt{\kappa_{\mu_\alpha}}}{q^2}}\biggr)\\
&&\qquad\leq
(\log N)^{C_0 \xi} (N^{-1} + N^{- 3 \phi}) + C 2^{(k + 1)/2} (\log
N)^{(C_0 + C_1 /2)\xi} N^{- \zeta/2 - 2 \phi},
\end{eqnarray*}
since $\kappa_{\mu_\alpha} \leq2 + \mu_\alpha$.
Moreover, on $\wt\Omega$ and for $\alpha\in U_k$ we have
\[
n_{\mathrm{sc}}(\gamma_\alpha) + n_{\mathrm{sc}}(\mu_\alpha) \geq c 2^{3k/2}
(\log N)^{(3/2) C_1 \xi} N^{-3\zeta/2},
\]
where we used the simple estimate $n_{\mathrm{sc}}(-2 + x) \sim x^{3/2}$ for $0
\leq x \leq3$. Thus, we have, on $\wt\Omega$ and
for $\alpha\in U_k$,
\[
(\log N)^{C_0 \xi} O\biggl({\frac{1}{N} + \frac{1}{q^3} + \frac
{\sqrt{\kappa_{\mu_\alpha}}}{q^2}}\biggr) \ll
n_{\mathrm{sc}}(\gamma_\alpha) + n_{\mathrm{sc}}(\mu_\alpha),
\]
from which we deduce using (\ref{compnnscforevaluelocations}) that
\[
n_{\mathrm{sc}}(\mu_\alpha) = n_{\mathrm{sc}}(\gamma_\alpha) \bigl({1 + O
\bigl[{(\log N)^{-(C_1 - C_0) \xi}}\bigr]}\bigr).
\]
Thus, we find, on $\wt\Omega$ and for $\alpha\in U_k$, that $2 +
\gamma_\alpha\sim2 + \mu_\alpha$ and, hence, $n_{\mathrm{sc}}'(x) \sim
n_{\mathrm{sc}}'(\gamma_\alpha)$ for any $x$ between $\gamma _\alpha$
and $\mu_\alpha$. Here\vspace*{1pt} we used that $n_{\mathrm{sc}}'(x)
\sim(n_{\mathrm{sc}}(x))^{1/3} \sim\sqrt{2 + x}$ for $-2 \leq x \leq1$.
Thus, the mean value theorem and (\ref{compnnscforevaluelocations})
imply, on $\wt\Omega$ and for $\alpha\in U_k$,
\begin{eqnarray*}
&&
|\mu_\alpha- \gamma_\alpha|\\
&&\qquad \leq \frac{C
|
n_{\mathrm{sc}}(\mu_\alpha) - n_{\mathrm{sc}}(\gamma_\alpha) |
}{n_{\mathrm{sc}}'(\gamma_\alpha)}
\\
&&\qquad \leq \frac{C (\log N)^{C_0 \xi}}{(\alpha/N)^{1/3}}
\bigl({N^{-1} + N^{- 3 \phi} + N^{-2 \phi} \sqrt{\kappa_{\mu_\alpha
}}}\bigr)
\\
&&\qquad \leq \frac{C (\log N)^{C_0 \xi}}{\alpha^{1/3}}
\bigl({N^{-2/3} + N^{1/3 - 3 \phi} + N^{-2 \phi}\alpha^{1/3} + N^{1/3 -2
\phi} \sqrt{|\mu_\alpha- \gamma_\alpha|}}\bigr),
\end{eqnarray*}
where we used that $\kappa_{\mu_\alpha} \leq\kappa_{\gamma_\alpha
} + |\mu_\alpha- \gamma_\alpha|$ and
$\kappa_{\gamma_\alpha} \sim(\alpha/ N)^{2/3}$. Thus, we find, on
$\wt\Omega$ and for $\alpha\in
U_k$,
\[
|\mu_\alpha- \gamma_\alpha|\leq(\log N)^{C \xi}
({N^{-2/3} \alpha^{-1/3} + N^{2/3 - 4 \phi}\alpha^{-2/3} + N^{-2
\phi}}).
\]
This proves (\ref{evalueresult2}).
\end{pf}
\begin{pf*}{Proof of Theorem~\ref{thmeigenvaluelocations}}
We apply Proposition~\ref{propeigenvaluelocations}. As before, we
only deal with the eigenvalues $\alpha\leq N/2$;
the proof for the eigenvalues $N/2 < \alpha\leq N - 1$ is the same.
Suppose that $\alpha$ satisfies Case (i) of
Proposition~\ref{propeigenvaluelocations}. Using $\alpha/ N =
n_{\mathrm{sc}}(\gamma_\alpha) \sim(2 + \gamma_\alpha)^{3/2}$,
we find that
%
%
\begin{equation} \label{boundonalphaatedge}
\alpha\leq(\log N)^{K \xi} (1 + N^{1 - 3 \phi}).
\end{equation}
Therefore, we get, squaring (\ref{evalueresult1}) and (\ref
{evalueresult2}) and summing over $\alpha$,
\begin{eqnarray*}
\sum_{\alpha= 1}^{N - 1} |\mu_\alpha- \gamma_\alpha|^2
&\leq&
(\log N)^{C \xi} ({1 + N^{1 - 3 \phi}})({N^{-4/3} +
N^{-4 \phi}}) \\
&&{}+ (\log N)^{C \xi} ({N^{-1} + N^{4/3 - 8
\phi} + N^{1 - 4 \phi}})
\end{eqnarray*}
with $(\xi,\nu)$-high probability. This concludes the proof of (\ref
{mainestimateonQ}).

Finally, we note that (\ref{detailedestimateforlargephi}) follows
from (\ref{evalueresult1}) and (\ref{evalueresult2}) as well as
the above observation that Case (i) in Proposition
\ref{propeigenvaluelocations} implies
(\ref{boundonalphaatedge}).
\end{pf*}

%
\begin{appendix}\label{app}

\section*{\texorpdfstring{Appendix: Moment estimates: Proofs of Lemmas
\lowercase{\protect\ref{lemma_lde},
\protect\ref{lemmaboundon_h_tildeweak},
\protect\ref{lemmae_he} and
\protect\ref{lemmapinned_lde}}}
{Appendix: Moment estimates: Proofs of Lemmas 3.8, 4.3, 6.5 and 7.10}}

In order to prove Lemma~\ref{lemma_lde}, we prove the following high
moment bounds, which are also independently
useful.
%
%
\setcounter{theorem}{0}
\begin{lemma} \label{lemmamomentbounds}
\textup{(i)} Let $(a_i)$ be a family of centered and independent random variables satisfying
%
%
\setcounter{equation}{0}
\begin{equation} \label{generalizedmomentconditionapp}
\E| a_i |^p \leq\frac{C^p}{N^\gamma q^{\alpha p +
\beta}}
\end{equation}
for all $2 \leq p \leq(\log N)^{A_0 \log\log N}$, where $\alpha\geq
0$ and $\beta, \gamma\in\R$. Then for all even
$p$ satisfying $2 \leq p \leq(\log N)^{A_0 \log\log N}$ we have
%
%
\begin{equation} \label{generalizedLDEapp}
\E\biggl|\sum_i A_i a_i \biggr|^p \leq(Cp)^{p}
\biggl[{\frac{\sup_i | A_i |}{q^\alpha} + \biggl({\frac
{1}{N^\gamma q^{\beta+ 2 \alpha}} \sum_i | A_i |^2}
\biggr)^{1/2}}\biggr]^p
\end{equation}
for some constant $C > 0$ depending only on the constant in (\ref
{generalizedmomentconditionapp}).

\textup{(ii)}
Let $a_1,\ldots, a_N$ be centered and independent random variables satisfying
%
%
\begin{equation} \label{boundonmomentsofsparseentriesapp}
\E| a_i |^p \leq\frac{C^p}{N q^{p - 2}}
\end{equation}
for all $2 \leq p \leq(\log N)^{A_0 \log\log N}$. Then for all even
$p$ satisfying $2 \leq p \leq(\log N)^{A_0 \log\log
N}$ and all $B_{ij} \in\C$ we have
%
%
\begin{equation} \label{aaBoapp}\quad
\E\biggl|\sum_{i \neq j} \ol{a}_i B_{ij} a_j
\biggr|^p \leq(Cp)^{2p}
\biggl[{\frac{\max_{i \neq j} | B_{ij} |}{q} +
\biggl({\frac{1}{N^2} \sum_{i \neq j} | B_{ij} |^2}
\biggr)^{1/2}}\biggr]^p
\end{equation}
for some $C$ depending only on the constant in (\ref
{boundonmomentsofsparseentriesapp}).
\end{lemma}
\begin{pf}
We begin with (i). To prove (\ref{generalizedLDEapp}), we set $p =
2r$ and compute
%
%
\begin{equation} \label{expectationofp-thmomentexpanded}\quad
\E\biggl|\sum_i A_i a_i \biggr|^{2 r} = \sum_{i_1,\ldots, i_{2r}} \ol
{A}_{i_1} \cdots\ol{A}_{i_{r}}
A_{i_{r+1}} \cdots
A_{i_{2r}} \E\ol{a}_{i_1} \cdots\ol{a}_{i_r} a_{i_{r+1}}
\cdots a_{i_{2r}}.\hspace*{-32pt}
\end{equation}
Each configuration of labels $(i_1,\ldots, i_{2r})$ defines an
equivalence relation (or partition) $\Gamma$ on the index
set $\{1,\ldots, 2r\}$ by requiring that the indices $j$ and $k$ are
in the same equivalence class if and only if their
labels satisfy $i_j = i_k$. We
organize the summation over the labels $i_1,\ldots, i_{2r}$ by (i)
prescribing a partition $\Gamma$ of the set of
indices, (ii) summing over all label configurations yielding the
partition $\Gamma$, and (iii) summing over all
partitions $\Gamma$. Thus, let a partition $\Gamma$ be given. Let $l$
denote the number of equivalence classes of
$\Gamma$, and order the equivalence classes in some arbitrary fashion.
Let $r_s$ be the size of equivalence class $s$;
clearly, we have $r_1 + \cdots+ r_l = 2r$. Moreover, since the random
variables $a_i$ are centered, we find that each
equivalence class has size at least $2$; in particular, $r_s \geq2$
for each $s$ and, hence, $l \leq r$. Using the
independence of the $a_i$'s, we thus find that the contribution of the
partition $\Gamma$ to (\ref{expectationofp-thmomentexpanded}) is
bounded in absolute value by
%
%
\begin{equation} \label{expectationofpartition}
\sum_{i_1,\ldots, i_l} \prod_{s = 1}^l | A_{i_s} |^{r_s} \E
| a_{i_s} |^{r_s} \leq\prod_{s = 1}^l \biggl({\sum_i
| A_i |^{r_s} \frac{C^{r_s}}{N^\gamma q^{\alpha r_s + \beta
}}}\biggr).
\end{equation}
Abbreviating $A \deq\max_i | A_i |$, we find that (\ref
{expectationofpartition}) is bounded by
\begin{eqnarray*}
&&\prod_{s = 1}^l \biggl({(CA q^{-\alpha})^{r_s} A^{- 2} N^{-\gamma}
q^{-\beta} \sum_i | A_i |^2 }\biggr)\\
&&\qquad = (C A
q^{-\alpha})^{2r} \biggl({\frac{1}{A^2 N^\gamma q^\beta} \sum_i
| A_i |^2}\biggr)^l
\\
&&\qquad \leq (C A q^{-\alpha})^{2r} \max\biggl\{{1, \biggl({\frac
{1}{A^2 N^\gamma q^\beta} \sum_i | A_i |^2}
\biggr)^{r}}\biggr\}
\\
&&\qquad \leq C^r \biggl[{\frac{A}{q^\alpha} + \biggl({\frac
{1}{N^\gamma q^{\beta+ 2 \alpha}} \sum_i | A_i |^2}
\biggr)^{1/2}}\biggr]^{2r}.
\end{eqnarray*}

Next,\vspace*{1pt} it is easy to see that the total number of partitions of $2r$
elements is bounded by $(C r)^{2r}$, so that we get
\[
\E\biggl|\sum_i A_i a_i \biggr|^{2r} \leq(Cr)^{2r}
\biggl[{\frac{A}{q^\alpha} + \biggl({\frac{1}{N^\gamma q^{\beta+
2 \alpha}} \sum_i | A_i |^2}\biggr)^{1/2}}\biggr]^{2r}.
\]
This concludes the proof of (\ref{generalizedLDEapp}).

The proof of (\ref{aaBoapp}) needs more effort. Without loss of
generality, we set $B_{ii} = 0$ for all $i$. As above,
we set $p = 2r$. We find
%
%
\begin{eqnarray} \label{momentofaaBo}\qquad
\E\biggl|\sum_{i \neq j} \ol{a}_i B_{ij} a_j
\biggr|^{2r}
&=& \sum_{i_1,\ldots, i_{4r}} \ol{B}_{i_1 i_2} \cdots
\ol{B}_{i_{2r - 1} i_{2r}} B_{i_{2r+1} i_{2r+2}} \cdots B_{i_{4r
- 1} i_{4r}}
\nonumber\\[-8pt]\\[-8pt]
&&\hspace*{26.6pt}{}\times\E a_{i_1} \ol{a}_{i_2} \cdots a_{i_{2r - 1}}
\ol{a}_{i_{2r}}
\ol{a}_{i_{2r+1}} a_{i_{2r+2}} \cdots\ol{a}_{i_{4r - 1}}
a_{i_{4r}}.\nonumber
\end{eqnarray}
As above, we associate a partition $\Gamma(\f i) \equiv\Gamma= \{
\gamma\}$ of the index set $\{1,\ldots, 4r\}$ with
every label configuration $\f i = (i_1,\ldots, i_{4r})$ by requiring
that $k$ and $l$ are in the same equivalence class
of $\Gamma$ if and only if $i_k = i_l$. We rewrite (\ref{momentofaaBo})
by first specifying a partition $\Gamma$ and
summing over all label configurations $\f i$ satisfying $\Gamma(\f i)
= \Gamma$, and subsequently summing over all
partitions $\Gamma$. Note that a partition $\Gamma$ yields a nonzero
contribution to the right-hand side of
(\ref{momentofaaBo}) only if (i) each equivalence class contains at
least two indices, and (ii) $[2k - 1] \neq[2k]$
for all $k = 1,\ldots, 2r$; here $[n]$ denotes the equivalence class
$\gamma\ni n$ of $n$ in $\Gamma$.

Next, we encode $\Gamma$ using a multigraph (i.e., a graph which may
have multiple edges) $G \equiv G(\Gamma)$ defined
as follows. The vertex set of $G$ is the set of equivalence classes $\{
\gamma\}$ of $\Gamma$. Each factor $\ol{B}_{i_{2
k - 1} i_{2k}}$ or $B_{i_{2k - 1} i_{2k}}$ gives rise to an edge of $G$
connecting the vertices $[2k - 1]$ and $[2k]$.
Note that, by property (ii) of $\Gamma$, no edge of $G$ connects a
vertex to itself. Moreover, $G$ has $2r$ edges.

Let $G$ be a multigraph with $v$ vertices. We define the \textit{value}
of $G$ through
%
%
\begin{equation} \label{definitionofvalueofgraph}
\scr V(G) \deq\sum_{i_1,\ldots, i_v} \biggl({\prod_{\{\gamma
, \gamma'\} \in E(G)} | B_{i_\gamma i_{\gamma'}} |}\biggr)
\prod_{\gamma= 1}^v \frac{1}{N q^{ [\delta_\gamma- 2]_+}},
\end{equation}
where $\delta_\gamma$ is the degree of $\gamma$ in $G$.

Fix a partition $\Gamma$. We claim that the contribution to the
right-hand side of (\ref{momentofaaBo}) of all label
configurations $\f i$ satisfying $\Gamma(\f i) = \Gamma$ is bounded
in absolute value by $C^r \scr V(G(\Gamma))$.
This is an easy consequence of the definition of $G(\Gamma)$: each
vertex $\gamma$ carries a label $i_\gamma$, and the
contribution of vertex $\gamma$ is bounded by $\E| a_{i_\gamma}
|^{\delta_\gamma} \leq C^{\delta_\gamma} (N
q^{\delta_\gamma- 2})^{-1}$. [Note that, by the property (i) of
$\Gamma$, we have $\delta_\gamma\geq2$. Here we also
used that $\sum_\gamma\delta_\gamma= 4r$.]

Next, we estimate $\scr V(G(\Gamma))$. Let $G_0 \deq G(\Gamma)$. The
idea is to construct a sequence of multigraphs
$G_0, G_1,\ldots, G_s$ by successively removing edges incident to
vertices of degree greater than two, until all vertices
have degree at most two.

If all vertices of $G_0$ have degree at most two, set $s = 0$.
Otherwise, pick a vertex $\wt\gamma$ of $G_0$ with
degree greater than two, and let $\wt\gamma'$ be adjacent to~$\wt
\gamma$. Define $R(G_0)$ as the multigraph obtained
from $G_0$ by removing an edge connecting $\wt\gamma$ and $\wt\gamma
'$. We claim that
%
%
\begin{equation} \label{removinganedge}
\scr V(G_0) \leq\frac{B_o}{d} \scr V(R(G_0))
\end{equation}
(regardless of the choice of the removed edge). Here we abbreviated
$B_o \deq\max_{i \neq j} | B_{ij} |$. The
estimate (\ref{removinganedge}) is obtained by estimating
$| B_{i_{\wt\gamma} i_{\wt\gamma'}} |\leq B_o$ in~(\ref
{definitionofvalueofgraph}), and by noting that
$[\delta_\gamma- 2]_+$ in $G_0$ is strictly greater than in $R(G_0)$.
Now set $G_1 \deq R(G_0)$.

We continue inductively in this manner, generating a sequence
$G_0,\ldots, G_s$ of multigraphs with the properties that
$G_{k+1} = R(G_k)$ (for an immaterial choice of~$R$), $G_s$ has $2r -
s$ edges, and all vertices of $G_s$ have degree at
most two. By~(\ref{removinganedge}), we have
%
%
\begin{equation} \label{removingexcessedges}
\scr V(G_0) \leq\biggl({\frac{B_o}{q}}\biggr)^s \scr V(G_s).
\end{equation}

Next, it is immediate from its definition that $G_s$ is a disjoint
union of simple closed and open paths. Here a simple
open path of length $l \geq0$ is the graph with vertices $1,\ldots, l
+ 1$ and edges $\{1,2\},\ldots, \{l, l + 1\}$;
similarly, a simple closed path of length $l \geq2$ is the graph with
vertices $1,\ldots, l$ and edges $\{1,2\},\ldots,
\{l - 1, l\}, \{l,1\}$.

From the definition (\ref{definitionofvalueofgraph}) we
immediately find
%
%
\begin{equation} \label{factorizationofgraph}
\scr V(G \cup G') = \scr V(G) \scr V(G'),
\end{equation}
where $\cup$ denotes disjoint union. We shall now prove that, if $G$
is a simple (open or closed) path of length $l$, we
have
%
%
\begin{equation} \label{estimateofsimplepath}
\scr V (G) \leq\biggl({\frac{1}{N^2} \sum_{i,j} | B_{ij}
|^2}\biggr)^{l/2}.
\end{equation}
Using (\ref{removingexcessedges}), (\ref{factorizationofgraph})
and (\ref{estimateofsimplepath}), we find that
%
%
\begin{equation} \label{boundoncompletemultigraph}
\scr V(G(\Gamma)) \leq\biggl({\frac{B_o}{q}}\biggr)^s
\biggl({\frac{1}{N^2} \sum_{i,j} | B_{ij} |^2}\biggr)^{(2r -
s)/2}.
\end{equation}

Let us now prove (\ref{estimateofsimplepath}). We start with a
simple closed path of length $l$, whose value
(\ref{definitionofvalueofgraph}) is given by
\[
\cal C_l \deq\frac{1}{N^l} \sum_{i_1,\ldots, i_l} | B_{i_1
i_2} |\cdots| B_{i_{l -1} i_l} || B_{i_l i_1}
|.
\]
Assume first that $l = 2k$ is even. Then
\begin{eqnarray*}
\cal C_{2k} & \leq & \frac{1}{N^{2k}} \biggl({\sum_{i_1,\ldots,
i_{2k}} | B_{i_1 i_2} |^2 | B_{i_3 i_4} |^2 \cdots
| B_{i_{2k - 1} i_{2k}} |^2 }\biggr)^{1/2}\\
&&{}\times \biggl({\sum
_{i_1,\ldots, i_{2k}} | B_{i_2 i_3} |^2 | B_{i_4 i_5}
|^2 \cdots| B_{i_{2k} i_1} |^2}\biggr)^{1/2}
\\
& \leq & \biggl({\frac{1}{N^2} \sum_{i,j} | B_{ij} |
^2}
\biggr)^{l/2}.
\end{eqnarray*}
If $l = 2k + 1$ is odd, we find
\begin{eqnarray*}
\cal C_{2k + 1} & = & \frac{1}{N^{2k + 1}} \sum_{i_1, i_2} |
B_{i_1 i_2} |\biggl({\sum_{i_3,\ldots, i_{2k+1}} | B_{i_2
i_3} |\cdots| B_{i_{2k + 1} i_1} |}\biggr)
\\
& \leq &\frac{1}{N^{2k + 1}} \biggl({\sum_{i_1, i_2} | B_{i_1
i_2} |^2}\biggr)^{1/2}\\
&&{}\times \biggl({\sum_{i_1,\ldots, i_{2k+1}}
\sum_{i_3',\ldots, i_{2k+1}'} | B_{i_2 i_3} |\cdots|
B_{i_{2k + 1} i_1} || B_{i_2 i_3'} |\cdots|
B_{i_{2k + 1}' i_1} |}\biggr)^{1/2}
\\
& \leq &\biggl({\frac{1}{N^2} \sum_{i,j} | B_{ij} |
^2}\biggr)^{1/2} \cal
C_{4k}^{1/2}
\leq\biggl({\frac{1}{N^2} \sum_{i,j} | B_{ij} |
^2}
\biggr)^{l/2}.
\end{eqnarray*}
This proves (\ref{estimateofsimplepath}) for closed simple paths.
Consider now an open simple path of length $l$,
whose value (\ref{definitionofvalueofgraph}) is
\[
\cal O_l \deq\frac{1}{N^{l+1}} \sum_{i_1,\ldots, i_{l+1}}
| B_{i_1 i_2} |\cdots| B_{i_{l} i_{l+1}} |.
\]
If $l = 2k$ is even, we get
\begin{eqnarray*}
\cal O_l & \leq &\frac{1}{N^{2k + 1}} \biggl({\sum_{i_1,\ldots,
i_{2k+1}} | B_{i_1 i_2} || B_{i_3 i_4} |\cdots
| B_{i_{2k - 1} i_{2k}} |}\biggr)^{1/2}\\
&&{}\times \biggl({\sum_{i_1,\ldots, i_{2k+1}}
| B_{i_2 i_3} || B_{i_4 i_5} |
\cdots| B_{i_{2k} i_{2k + 1}} |}\biggr)^{1/2}
\\
& \leq &\biggl({\frac{1}{N^2} \sum_{i,j} | B_{ij} |
^2}
\biggr)^{l/2}.
\end{eqnarray*}
Finally, if $l = 2k + 1$ is odd, we find
\begin{eqnarray*}
\cal O_l & \leq &\frac{1}{N^{2k + 2}} \sum_{i_1, i_2} | B_{i_1
i_2} |\biggl({\sum_{i_3,\ldots, i_{2k + 2}} | B_{i_2 i_3}
|\cdots| B_{i_{2k + 1} i_{2k + 2}} |}\biggr)
\\
& \leq &\frac{1}{N^{2k + 2}} \biggl({\sum_{i_1, i_2} | B_{i_1
i_2} |^2}\biggr)^{1/2}\\
&&{}\times \biggl({\sum_{i_1,\ldots, i_{2k + 2}}
\sum_{i_3',\ldots, i_{2k + 2}'} | B_{i_2 i_3} |^2 \cdots
| B_{i_{2k + 1} i_{2k + 2}} |^2 | B_{i_2 i_3'} |^2
\cdots| B_{i_{2k + 1}' i_{2k + 2}'} |^2 }\biggr)^{1/2}
\\
& \leq &\biggl({\frac{1}{N^2} \sum_{i,j} | B_{ij} |
^2}
\biggr)^{1/2} \cal O_{4k}^{1/2} \leq
\biggl({\frac{1}{N^2} \sum_{i,j} | B_{ij} |^2}
\biggr)^{l/2}.
\end{eqnarray*}
This concludes the proof of (\ref{estimateofsimplepath}).

Thus, we get from (\ref{boundoncompletemultigraph}) that the
contribution to the right-hand side of (\ref{momentofaaBo}) of all
label configurations $\f i$ satisfying $\Gamma(\f i) =
\Gamma$ is bounded in absolute value by
\[
C^r \biggl({\frac{B_o}{q} + \biggl({\frac{1}{N^2} \sum_{i \neq j}
| B_{ij} |^2}\biggr)^{1/2}}\biggr)^{2r}.
\]
In order to conclude the proof of (\ref{aaBoapp}), we need a
combinatorial bound on the number of multigraphs of the
above type containing $2r$ edges, as well as on the number of
partitions $\Gamma$ associated with any given multigraph
$G$. Their product is easily seen to be bounded by $(Cr)^{4r}$. This
completes the proof of (\ref{aaBoapp}).
\end{pf}
\begin{pf*}{Proof of Lemma~\ref{lemma_lde}}
The proof is a simple application of Lemma~\ref{lemmamomentbounds}
and Markov's inequality.

In order to prove (i), we choose $p = \nu(\log N)^\xi$ in (\ref
{generalizedLDEapp}) and apply a high moment Markov
inequality.

Next, we prove (ii).
The bound (\ref{aA}) follows immediately from (i) by setting $\alpha=
1$, $\beta= -2$ and $\gamma= 1$. Similarly,
the bound (\ref{aaBd}) follows easily by applying (i) to the random
variables $| a_i |^2 - \sigma^2_i$ and setting $A_i
= B_{ii}$; here $\alpha= 2$, $\beta= -2$ and $\gamma= 1$, as can be
easily seen using (\ref{boundonmomentsofsparseentries}).
Moreover, the claim (\ref{aaBo}) follows by setting $p = \nu(\log
N)^\xi$ in (\ref{aaBoapp}) and applying a high moment
Markov inequality.

Finally, we prove (iii). Write
\[
\biggl|\sum_{i,j} a_i B_{ij} b_j \biggr|\leq
\biggl|\sum_i a_i B_{ii} b_i \biggr|+ \biggl|\sum_{i \neq
j} a_i B_{ij} b_j \biggr|.
\]
The first term is dealt with by noting that the random variables $a_1
b_1,\ldots,\break a_N b_N$ are independent and satisfy
(\ref{generalizedmomentcondition}) for\vadjust{\goodbreak} $\alpha= 2$, $\beta= -4$
and $\gamma= 2$. Therefore, (\ref{generalizedLDE}) yields with
$(\xi,\nu)$-high probability
\begin{eqnarray*}
\biggl|\sum_i a_i B_{ii} b_i \biggr|&\leq&(\log
N)^\xi\biggl[{\frac{B_d}{q^2} + \biggl({\frac{1}{N^2} \sum_i
| B_{ii} |^2}\biggr)^{1/2}}\biggr] \\
&\leq&2 (\log N)^\xi
\frac{B_d}{q^2}.
\end{eqnarray*}

In order to bound the off-diagonal terms, we set $A_i \deq\sum_{j
\neq i} B_{ij} b_j$. Then we may again apply
(\ref{generalizedLDE}) to get with $(\xi,\nu)$-high probability
\[
| A_i |\leq(\log N)^\xi\biggl[{\frac{B_o}{q} +
\biggl({\frac{1}{N} \sum_{j \neq i} | B_{ij} |^2}
\biggr)^{1/2}}\biggr].
\]
Since $A_i$ is independent of $a_j$, we therefore get from (\ref
{generalizedLDE})
\begin{eqnarray*}
\biggl|\sum_{i \neq j} a_i B_{ij} b_j \biggr|& =&
\biggl|\sum_i A_i a_i \biggr|
\\
& \leq &(\log N)^\xi\biggl[{\frac{\max_i | A_i |}{q} +
\biggl({\frac{1}{N} \sum_i | A_i |^2}\biggr)^{1/2}}\biggr]
\\
& \leq & C (\log N)^{2 \xi} \biggl[{\frac{B_o}{q} +
\biggl({\frac{1}{N^2} \sum_{i \neq j} | B_{ij} |^2}
\biggr)^{1/2}}\biggr]
\end{eqnarray*}
with $(\xi,\nu)$-high probability.
We remark finally that the constant $C$ may be absorbed into the small
constant $\nu$ when applying the high moment
Markov inequality used to prove (\ref{generalizedLDE}).
\end{pf*}
\begin{pf*}{Proof of Lemma~\ref{lemmae_he}}
To prove (\ref{eHe}) for $k = 1$, we estimate with $(\xi,\nu)$-high
probability
\[
|\langle{\f e}, {H \f e}\rangle|=
\biggl|
\frac{1}{N} \sum_{i,j} h_{ij} \biggr|= O({(\log
N)^\xi N^{-1/2}}),
\]
where we invoked (\ref{momentconditions}) and applied (\ref
{generalizedLDE}) to the $O(N^2)$ variables $\{h_{ij}
\col i < j\}$ (and similarly for $i \geq j$) with $\alpha= 1$, $\beta
= -2$ and $\gamma= 1$.

If $k \geq2$, we use a high moment expansion. The following notation
will prove helpful. We abbreviate $\alpha= (i,j)$
and write $h_\alpha\deq h_{ij}$. Defining
\[
B_{(i,j) (k,l)} \deq\delta_{jk},
\]
we may thus write
%
%
\begin{equation} \label{Bhhstep}\quad
\IE\langle{\f e}, {H^k \f e}\rangle= \frac{1}{N} \sum
_{\alpha_1,\ldots, \alpha_k} B_{\alpha_1 \alpha_2} B_{\alpha_2
\alpha_3} \cdots B_{\alpha_{k - 1} \alpha_k} \IE
({h_{\alpha_1} \cdots h_{\alpha_k}}),
\end{equation}
where $\IE(\cdot) \deq(\cdot) - \E(\cdot)$.
In order to make all matrix entries independent of each other, we split
$H = H' + H''$ into two triangular
matrices, where
\[
h'_{ij} \deq h_{ij} \f1 (i \leq j),\qquad h_{ij}'' \deq
h_{ij} \f1 (i > j).
\]
This results in a splitting of (\ref{Bhhstep}) into $2^k$ terms, of
which we only consider
\[
X_k \deq\frac{1}{N} \sum_{\alpha_1,\ldots, \alpha_k}
B_{\alpha_1 \alpha_2} B_{\alpha_2 \alpha_3} \cdots
B_{\alpha_{k - 1} \alpha_k} \IE({h'_{\alpha_1} \cdots
h'_{\alpha_k}})
\]
(the other terms are dealt with in exactly the same manner and the
resulting factor $2^k$ is immaterial).

We abbreviate $\ff{\alpha}= (\alpha_1,\ldots, \alpha_k)$ and write
\[
X_k = \sum_{\ff{\alpha}} ({\zeta_{\ff{\alpha}} - \E
\zeta
_{\ff{\alpha}}}),
\]
where we defined
\[
\zeta_{\ff{\alpha}} \deq\frac{1}{N} B_{\alpha_1 \alpha_2}
B_{\alpha_2 \alpha_3} \cdots B_{\alpha_{k - 1} \alpha_k}
h'_{\alpha_1} \cdots h'_{\alpha_k}.
\]
For even $p \in\N$ we get therefore
%
%
\begin{equation} \label{Xpdef}
\E X_k^p = \sum_{\ff{\alpha}^1,\ldots, \ff{\alpha}^p} \E
[{(\zeta_{\ff{\alpha}^1} - \E\zeta_{\ff{\alpha}^1})
\cdots(\zeta_{\ff{\alpha}^p} - \E\zeta_{\ff{\alpha}^p})}
].
\end{equation}
By independence of the family $\{h'_\alpha\}$, we find that a summand
in (\ref{Xpdef}) indexed by $\ff{\alpha}$
vanishes if there is an $r$ such that $[\ff{\alpha}^r] \cap[\ff
{\alpha}
^{r'}] = \varnothing$ for all $r' \neq r$. Here
$[(\alpha_1,\ldots, \alpha_k)] \deq\{\alpha_1,\ldots, \alpha_k\}
$. Thus, we find
%
%
\begin{equation} \label{Xpdef2}\quad
\E X_k^p = \sum_{\ff{\alpha}^1,\ldots, \ff{\alpha}^p} \E
[{(\zeta_{\ff{\alpha}^1} - \E\zeta_{\ff{\alpha}^1})
\cdots(\zeta_{\ff{\alpha}^p} - \E\zeta_{\ff{\alpha}^p})}
] \chi(\ff
{\alpha}^1,\ldots, \ff{\alpha}^p),
\end{equation}
where
\[
\chi(\ff{\alpha}^1,\ldots, \ff{\alpha}^p) \deq\prod_{r = 1}^p
\f1 ({\exists r' \col[\ff{\alpha}^r] \cap[\ff{\alpha
}^{r'}] \neq\varnothing}).
\]

For each given label configuration $\ff{\alpha}= (\ff{\alpha}^r) =
(\alpha^r_l)$, we define a partition $\Gamma(\ff{\alpha})$
of the index set $\{(r,l) \col r = 1,\ldots, p, l = 1,\ldots, k\}$
by imposing that $(r,l)$ and $(r',l')$ are in the
same equivalence class of $\Gamma(\ff{\alpha})$ if and only if
$\alpha
_{rl} = \alpha_{r'l'}$. We now perform the sum over
$\ff{\alpha}$ in (\ref{Xpdef2}) by first specifying a partition
$\Gamma
$ and summing over all $\ff{\alpha}$ satisfying
$\Gamma= \Gamma(\ff{\alpha})$, and then summing over all partitions
$\Gamma$. Note that any partition $\Gamma$ yielding a
nonzero contribution to (\ref{Xpdef2}) satisfies the two following conditions:
\begin{longlist}
\item
Each equivalence class of $\Gamma$ contains at least two elements.
\item
For each $r = 1,\ldots, p$ there are $r' = 1,\ldots, p$ and $l,l' =
1,\ldots, k$ such that $(r,l)$ and $(r',l')$ are in
the same equivalence class of $\Gamma$.
\end{longlist}
Condition (i) follows from the fact that $h'_{\alpha}$ is centered,
and condition (ii) from the definition of $\chi$.\vadjust{\goodbreak}

Let us fix a partition $\Gamma$ satisfying (i) and (ii). Its
contribution to (\ref{Xpdef2}) is
%
%
\begin{eqnarray} \label{Xpdef3}
&&\biggl|\sum_{\ff{\alpha}\col\Gamma(\ff{\alpha}) = \Gamma}
\E[{(\zeta_{\ff{\alpha}^1} - \E\zeta_{\ff{\alpha}^1})
\cdots(\zeta_{\ff{\alpha}_p} - \E\zeta_{\ff{\alpha}^p})}
] \chi(\ff{\alpha}) \biggr|\nonumber\\[-8pt]\\[-8pt]
&&\qquad\leq
\sum_{\ff{\alpha}\col\Gamma(\ff{\alpha}) = \Gamma} \E
[{(|\zeta_{\ff{\alpha}^1} |+ \E|\zeta_{\ff{\alpha
}^1} |) \cdots(|\zeta_{\ff{\alpha}^p} |+ \E|
\zeta_{\ff{\alpha}^p} |)}]
\chi
(\ff{\alpha}).\nonumber
\end{eqnarray}

Next, we note that $\Gamma$ gives rise to a multigraph $G \equiv
G(\Gamma)$ defined as follows. The vertex set $V(G)$ is
given by the equivalence classes of $\Gamma$. Each pair $\{{(r,l),
(r,l+1)}\}$, $l = 1,\ldots, k - 1$, gives rise to an
edge that connects the vertices $\gamma\ni(r,l)$ and $\gamma' \ni
(r,l+1)$. Thus, the set of edges $E(G)$ of $G$
contains $p(k - 1)$ edges. The interpretation of the edges is that each
factor $B_{\alpha_{r,l} \alpha_{r,l+1}}$ on the
right-hand side of (\ref{Xpdef3}) is represented with an edge.

The expectation on the right-hand side of the identity
\begin{eqnarray*}
&&\E[{(|\zeta_{\ff{\alpha}^1} |+ \E|\zeta
_{\ff{\alpha} ^1} |) \cdots(|\zeta_{\ff{\alpha}^p}
|+ \E|\zeta_{\ff{\alpha}^p} |)}] \\
&&\qquad= \frac
{1}{N^p} \Biggl[{\prod_{l = 1}^{k - 1} B_{\alpha_l^r \alpha
_{l+1}^r}}\Biggr] \E\Biggl[{\prod_{r = 1}^p \Biggl({\prod_{l =
1}^p | h'_{\alpha^r_l} |+ \E\prod_{l = 1}^p |
h'_{\alpha^r_l} |}\Biggr)}\Biggr]
\end{eqnarray*}
is bounded by
\[
2^p \prod_{\gamma\in V(G)} \frac{C^{|\gamma|}}{N
q^{|\gamma|- 2}},
\]
where $|\gamma|$ denotes the size of the equivalence class
$\gamma$; this is a simple consequence of (\ref{momentconditions})
and the constraint $q \leq C N^{1/2}$.
By construction of $G$, each vertex $\gamma$ of $G$ carries a label
$\alpha_\gamma$. Thus, we may bound (\ref{Xpdef3}) by
\[
\frac{2^p}{N^p} \sum_{\alpha_1,\ldots, \alpha_v} \biggl[{\prod
_{\{ \gamma,\gamma'\} \in E(G)} B_{\alpha_{\gamma} \alpha_{\gamma
'}}}\biggr] \prod_{\gamma\in V(G)} \frac{C^{|\gamma|
}}{N q^{|\gamma|- 2}},
\]
where $v = | V(G) |$ denotes the number of vertices of $G$.
Here we
dropped the factor $\chi$, and the restriction
that $\alpha_1,\ldots, \alpha_v$ be distinct, to obtain an upper bound.
By property (i) above, we have $|\gamma|- 2 \geq0$ and we get
the bound
%
%
\begin{equation} \label{Xpdef5}
\frac{2^p C^{pk}}{N^{p + v}} \sum_{\alpha_1,\ldots, \alpha_v}
\prod_{\{\gamma,\gamma'\} \in E(G)} B_{\alpha_{\gamma}
\alpha_{\gamma'}}.
\end{equation}

Next, we split $G = G_1 \cup\cdots\cup G_l$ into its connected
components; here $l$ denotes the number of connected
components. An immediate consequence of the property (ii) of $\Gamma$
is the bound
%
%
\begin{equation} \label{lleqp2}
l \leq p/2.
\end{equation}
Thus, (\ref{Xpdef5}) becomes
%
%
\begin{equation} \label{connestimate}
\frac{C^{pk}}{N^{p+v}} \prod_{j = 1}^l \biggl[{\sum_{\alpha_1,\ldots, \alpha
_{v_j}} \prod_{\{\gamma,\gamma'\} \in E(G_j)}
B_{\alpha_{\gamma} \alpha_{\gamma'}}}\biggr],
\end{equation}
where $v_j = | V(G_j) |$ denotes the number of vertices in $G_j$.

In order to estimate the contribution of the $j$th connected component,
we pick a root $r_j \in V(G_j)$ and a spanning
tree $T_j$ of $G_j$. First, we use the trivial bound $B_{\alpha_\gamma
\alpha_{\gamma'}} \leq1$ for edges that do not
belong to $T_j$. Second, we sum over all of the $v_j - 1$ nonroot
labels $\alpha_\gamma$, starting from the leaves of
$T_j$, and using the identity
\[
\sum_{\alpha_{\gamma'}} B_{\alpha_\gamma\alpha_{\gamma'}} = N
\]
at each step. Third, we sum over the root label $\gamma_{r_j}$, which
yields a factor bounded by $N^2$. Putting
everything together yields
\[
\sum_{\alpha_1,\ldots, \alpha_{v_j}} \prod_{\{\gamma,\gamma'\}
\in E(G_j)} B_{\alpha_{\gamma} \alpha_{\gamma'}} \leq
N^{v_j + 1}.
\]
Returning to (\ref{connestimate}), we thus find that the right-hand
side of (\ref{Xpdef3}) is bounded by
\[
\frac{C^{pk}}{N^{p+v}} N^{v + l} \leq\frac{C^{pk}}{N^{p/2}},
\]
where we used (\ref{lleqp2}).

Since the number of partitions $\Gamma$ is bounded by $(kp)^{kp}$, we
get the bound
\[
\E X_k^p \leq\biggl({\frac{(C kp)^k}{N^{1/2}}}\biggr)^p.
\]
Choosing $p = \frac{1}{2Ck} (\log N)^\xi$ and applying a high moment
Markov inequality completes the proof.
\end{pf*}
\begin{pf*}{Proof of Lemma~\ref{lemmapinned_lde}}
The proof is similar to (in fact, considerably simpler than) the proof
of Lemma~\ref{lemmae_he}. We only sketch the
argument, using the notation of the proof of Lemma~\ref{lemmae_he}
without further comment. Write
\[
X_k \deq\sum_{i_1,\ldots, i_k} h_{i i_1} h_{i_1 i_2} \cdots
h_{i_{k - 1} i_k} = \sum_{\alpha_1,\ldots, \alpha_k} B_{\alpha_0 \alpha
_1} B_{\alpha_1 \alpha_2}
\cdots B_{\alpha_{k - 1} \alpha_k} h_{\alpha_1} \cdots
h_{\alpha_k},
\]
where $\alpha_0 \deq(1,i)$. Then, as in the proof of Lemma \ref
{lemmae_he}, we write $\E X_k^p$ as a sum over
partitions $\Gamma$ which give rise to multigraphs $G \equiv G(\Gamma
)$ whose edges are given by the factors $B$ and
whose vertices are given by equivalence classes $\gamma$ of the set $\{
1,\ldots, k\} \times\{1,\ldots, p\}$, to which
has been adjoined a distinguished vertex $\gamma_0$. The vertex
$\gamma_0$ corresponds to the fixed label $\alpha_0$,
and it has degree~$p$. Each multigraph $G$ has $pk$ edges, and is
connected. In this fashion we find that the\vadjust{\goodbreak}
contribution of the multigraph $G$ to $\E X_k^p$ is bounded by
%
%
\begin{equation} \label{exprforconnectedtree}
\sum_{(\alpha_\gamma)_{\gamma\neq\gamma_0}} \biggl[{\prod_{\{
\gamma, \gamma'\} \in E(G)} B_{\alpha_\gamma\alpha_{\gamma
'}}}\biggr] \prod_{\gamma\in V(G)\setminus\{\gamma_0\}} \E
| h_{\alpha_\gamma} |^{|\gamma|},
\end{equation}
where the first sum ranges over families $(\alpha_\gamma)_{\gamma\in
V(G) \setminus\{\gamma_0\}}$ of labels; every
vertex $\gamma\neq\gamma_0$ carries a label $\alpha_\gamma$ which
is summed over. The vertex $\gamma_0$ carries the
label $\alpha_0$ which is fixed.

Since $\E h_\alpha= 0$, it is easy to see that $|\gamma|
\geq2$
for all $\gamma$. Choosing a spanning tree of
the connected graph $G$, one therefore finds that (\ref
{exprforconnectedtree}) is bounded~by
\[
N^{| V(G) |- 1} \prod_{\gamma\in V(G)\setminus\{\gamma_0\}}
\Bigl({\max_\alpha\E| h_{\alpha} |^2}\Bigr) =
1.
\]
Since the number of partitions $\Gamma$ is bounded by $(kp)^{kp}$, we
find $\E X_k^p \leq(C kp)^{kp}$ for $p \leq(\log
N)^\xi$. Choosing $p = \frac{1}{2C k} (\log N)^\xi$ and applying
Markov's inequality completes the proof.
\end{pf*}
\begin{pf*}{Proof of Lemma~\ref{lemmaboundon_h_tildeweak}}
Our proof is a standard application of the moment method, along the
lines of~\cite{EYY}, Lemma 7.2.

In a first step, we truncate the entries $h_{ij}$. Let $C_1 = C$ be a
constant for which Lemma~\ref{lemmaOmegah}
holds. Define
\[
\mu_{ij} \deq\E h_{ij} \f1 (| h_{ij} |\leq C_1
q^{-1}).
\]
Choose an independent family $(X_{ij})$ of random variables,
independent of $H$, such that
\[
\P(X_{ij} = q^{-1}) = \mu_{ij} q,\qquad \P(X_{ij} = 0) =
1 - \mu_{ij} q.
\]
Now set
\[
\wh h_{ij} \deq h_{ij} \f1 (| h_{ij} |\leq C_1 q^{-1}) -
X_{ij}.
\]
It is easy to see that $|\mu_{ij} |\leq\me^{-\nu(\log
N)^\xi
}$ and, therefore,
%
%
\begin{equation} \label{htildehhat}
\P(h_{ij} \neq\wh h_{ij}) \leq\me^{-\nu(\log N)^\xi}.
\end{equation}
Moreover, we have
%
%
\begin{equation} \label{propertiesofhhat}\quad
\E\wh h_{ij} = 0,\qquad |\wh h_{ij} |\leq\frac{C_1
+ 1}{q},\qquad \E|\wh h_{ij} |^2 \leq
\frac{1}{N}
\bigl(1 + \me^{-\nu(\log N)^\xi}\bigr).
\end{equation}

By (\ref{htildehhat}), it suffices to prove that $\|\wh H
\|
\leq2 + (\log N)^\xi q^{-1/2}$ with $(\xi,\nu)$-high probability.
We shall
prove that, for even $k \leq c \sqrt{q}$, we have
%
%
\begin{equation} \label{Nk2k}
|\E\tr\wh H^k |\leq3 N k
2^k.
\end{equation}
In order to prove (\ref{Nk2k}), we write
%
%
\begin{equation} \label{EtrHk}
\E\tr\wh H^k = \sum_{i_1,\ldots, i_k} \E\wh h_{i_1 i_2}
\cdots\wh h_{i_{k - 1} i_k} \wh h_{i_k i_1}\vadjust{\goodbreak}
\end{equation}
and apply a
graphical expansion to the right-hand side. Before giving its precise
definition, we outline how it arises from
(\ref{EtrHk}). Let the label configuration $i_1,\ldots, i_k$ be
fixed. We represent each index $j = 1,\ldots, k$ by
a vertex $[j]$, whereby two indices $j$ and $j'$ correspond to the same
vertex if their labels agree, $i_j = i_{j'}$.
Let $p$ be the number of vertices. We then construct a closed walk
through the sequence of edges $([1],[2]), ([2],[3]),\ldots,
([k],[1])$. The walk has $k$ steps. We name the $p$ vertices
$1,\ldots, p$, whereby vertex $v$ is reached after all vertices
$1,\ldots, v - 1$. Since $\E\wh h_{ij} = 0$, it is easy
to see from (\ref{EtrHk}) that each edge of the walk must appear at
least twice.

We may now give a precise definition of such walks. Let $\f w =
(w_1,\ldots, w_k)$ be a sequence with $w_v \in\{1,\ldots, p\}$. With
$\f w$ we associate a multigraph $G(\f w)$ as
follows. The vertex set of $G(\f w)$ is $\{1,\ldots,
p\}$; the edge set of $G(\f w)$ is given by the undirected edges $\{
w_1, w_2\},\ldots, \{w_{k - 1}, w_k\}, \{w_k,
w_1\}$. [Note that $G(\f w)$ may contain multiple edges as well as
loops.] We say that $\f w$ is an \textit{ordered closed
walk} of length $k$ on $p$
vertices if:
\begin{longlist}
\item
A vertex that is visited for the first time at time $j$ is greater than
all vertices visited before time $j\dvtx
\max_{j' \leq j} w_{j'} \leq\max_{j' < j} w_{j'} + 1$.
\item
All vertices are visited: $\{w_1,\ldots, w_k\} = \{1,\ldots, p\}$.
\item
Every edge of $G(\f w)$ appears at least twice.
\end{longlist}

Let $\cal W(k,p)$ denote the set of ordered closed walks of length $k$
on $p$ vertices. The key combinatorial estimate
of our proof is the bound
\[
|\cal W(k,p) |\leq\pmatrix{k\cr2p - 2} p^{2(k - 2p + 2)}
2^{2p - 2},
\]
proved in~\cite{vu}. Using the notion of ordered closed walks, it is
not hard to see that (\ref{EtrHk}) may be rewritten
as
%
%
\begin{equation} \label{trHintermsofwalks}
\E\tr\wh H^k = \sum_{p = 1}^{k/2 + 1} \sum_{\f w \in\cal
W(k,p)} \sum_{\ff{\ell}\in\cal L(p)} \E\wh h_{\ell(w_1)
\ell(w_2)} \cdots\wh h_{\ell(w_{k - 1}) \ell(w_k)} \wh h_{\ell
(w_k) \ell(w_1)},\hspace*{-40pt}
\end{equation}
where $\cal L(p)$ is the set of all $p$-tuples $\ff{\ell}= (\ell
(1),\ldots, \ell(p)) \in\{1,\ldots, N\}^p$ whose
components are disjoint. See~\cite{EYY}, Section 7.1, for a detailed proof.

Next, associate with the multigraph $G(\f w)$ its \textit{skeleton}
$S(\f w)$, obtained from $G(\f w)$ by discarding the
multiplicity of every edge (i.e., by successively
removing edges until it has no multiple edges). For $e \in E(S(\f w))$,
we denote by $\nu(e)$ the multiplicity of the
edge $e$ in $G(\f w)$. We have the obvious relation $\sum_{e \in
E(S(\f w))} \nu(e) = k$. If $e = \{v,v'\}$, we write
$\ell(e) \deq(\ell(v), \ell(v'))$ [the chosen order of the pair
$\ell(e)$ is immaterial]. Then it is easy to see that
\begin{eqnarray*}
&&\bigl|\E\wh h_{\ell(w_1) \ell(w_2)} \cdots\wh h_{\ell(w_{k -
1}) \ell(w_k)} \wh h_{\ell(w_k) \ell(w_1)} \bigr|\\
&&\qquad \leq
\prod_{e \in E(S(\f w))} \E
\bigl|\wh h_{\ell(e)} \bigr|^{\nu(e)}
\\
&&\qquad \leq
\prod_{e \in E(S(\f w))} \frac{1}{N}
\bigl(1 + \me^{-\nu(\log N)^\xi}\bigr) \frac{C^{\nu(e) - 2}}{q^{\nu(e) - 2}}
\\
&&\qquad \leq \biggl[{\frac{q^2}{C^2 N} \bigl(1 + \me^{-\nu(\log N)^\xi
}\bigr)}\biggr]^{E_S} \biggl({\frac{C}{q}}\biggr)^k,
\end{eqnarray*}
where we used (\ref{propertiesofhhat}) and introduced the shorthand
$E_S \deq| E(S(\f w)) |$. Therefore, summing
over $\ell\in\cal L(p)$ in (\ref{trHintermsofwalks}) yields
\[
|\E\tr\wh H^k |\leq\sum_{p = 1}^{k/2 +
1} \sum_{\f w \in\cal W(k,p)} N^p \biggl[{\frac{q^2}{C^2 N} \bigl(1 +
\me^{-c (\log N)^\xi}\bigr)}\biggr]^{E_S} \biggl({\frac{C}{q}}
\biggr)^k.
\]

Next, it is immediate that we have the relations $p - 1 \leq E_S \leq
k/2$; these inequalities follow from the
above properties (ii) and (iii), respectively. Since $d^2/N \ll1$, we
therefore get
\[
|\E\tr\wh H^k |\leq N \sum_{p = 1}^{k/2
+ 1} |\cal W(k,p) |\bigl(1 + \me^{-c
(\log N)^\xi}\bigr)^p \biggl({\frac{C}{q}}\biggr)^{k - 2p + 2}.
\]
For $k \leq N$ this yields
\[
|\E\tr\wh H^k |\leq3 N \sum_{p =
1}^{k/2 + 1} S(k,p),
\]
where
\[
S(k,p) \deq\pmatrix{k\cr2p - 2} p^{2(k - 2p + 2)} 2^{2p - 2}
\biggl({\frac{C}{q}}\biggr)^{k - 2p + 2}.
\]
It is elementary to check that $S(k, k/2 + 1) = 2^k$ and
\[
S(k,p) \leq\frac{k^4}{8} \biggl({\frac{C}{q}}\biggr)^2
S(k,p+1).
\]
Therefore, choosing $k \leq c \sqrt{q}$ implies $S(k,p) \leq2^k$.
This concludes the proof of~(\ref{Nk2k}).

The claim now follows by setting $k = c \sqrt{q}$ with a sufficiently
small constant~$c$, applying a high moment Markov
inequality and recalling that
$\sqrt{q} \gg(\log N)^\xi$ by~(\ref{lowerboundond}).
\end{pf*}
\end{appendix}
%



\printaddresses

\end{document}